\newtheorem{thm}{THEOREM}[section]
\newtheorem{lem}[thm]{LEMMA}
\newtheorem{cor}[thm]{COROLLARY}
\newtheorem{prop}[thm]{PROPOSITION}
\newtheorem{as}[thm]{ASSUMPTION}
\theoremstyle{definition}
\newtheorem{defi}[thm]{DEFINITION}
\newtheorem{remark}[thm]{REMARK}
\newcommand{\be}{\begin{equation}}
\newcommand{\ee}{\end{equation}}
\newcommand{\bes}{\begin{equation*}}
\newcommand{\ees}{\end{equation*}}
\newcommand{\mt}[1]{\mathrm{#1}}
\newcommand{\norm}[1]{\left\Vert#1\right\Vert}
\newcommand{\abs}[1]{\left|#1\right|}
\newcommand{\R}{{\mathord{\mathbb R}}}
\newcommand{\Rd}{{\mathord{\mathbb R}^d}}
\newcommand{\N}{{\mathord{\mathbb N}}}
\newcommand{\loc}{{\rm loc}}
\newcommand{\supp}{{\mathop{\rm supp\ }}}
\newcommand{\id}{\mathop{\rm id}}
\newcommand{\grad}{\nabla}
\newcommand{\wsto}{\stackrel{*}{\rightharpoonup}}
\newcommand{\la}{\left\langle}
\newcommand{\ra}{\right\rangle}
\newcommand{\F}{\mathcal{F}}
\newcommand{\G}{\mathcal{G}}
\newcommand{\ird}{\int_{\mathord{\mathbb R}^d}}
\newcommand{\irdrd}{\int_{\mathord{\mathbb R}^d \times \mathord{\mathbb R}^d}}
\newcommand{\E}{\mathcal{E}}
\renewcommand{\:}{\colon}
\def\P{{\mathcal P}}
\def\epsilon{\varepsilon}
\def\e{\varepsilon}
\def\F{\mathcal{F}}
\DeclareMathOperator{\Tan}{Tan}
\title{A Blob Method For Diffusion}
\author{Jos\'e Antonio Carrillo}
\address{Department of Mathematics, Imperial College London, South Kensington Campus, London SW7 2AZ, UK}
\email{carrillo@imperial.ac.uk}
\author{Katy Craig}
\address{Department of Mathematics, University of California, Santa Barbara, CA 93117, USA}
\email{kcraig@math.ucsb.edu}
\author{Francesco S. Patacchini}
\address{Department of Mathematical Sciences, Carnegie Mellon University, Pittsburgh, PA 15203, USA}
\email{fpatacch@math.cmu.edu}
\thanks{JAC was partially supported by the Royal Society via a Wolfson Research Merit Award and by EPSRC grant number EP/P031587/1. KC was supported by a UC President's Postdoctoral Fellowship and NSF DMS-1401867. FSP was partially supported by a 2015 Doris Chen mobility award through Imperial College London, and also acknowledges a 2015 SIAM student travel award. The authors were supported by NSF RNMS (KI-Net) grant \#11-07444, and acknowledge the CNA at CMU for their kind support of a visit to Pittsburgh in the final stages of this work. This work used XSEDE Comet at the San Diego Supercomputer Center through allocation ddp287, which is supported by NSF ACI-1548562.}
\subjclass[2010]{35Q35 35Q82 65M12 82C22; \newline \indent
\emph{Key words and phrases.} Particle method, porous medium equation, Wasserstein gradient flow, vortex blob method}
\begin{document}
 
\begin{abstract}
As a counterpoint to classical stochastic particle methods for diffusion, we develop a deterministic particle method for linear and nonlinear diffusion. At first glance, deterministic particle methods are incompatible with diffusive partial differential equations since initial data given by sums of Dirac masses would be smoothed instantaneously: particles do not remain particles. Inspired by classical vortex blob methods, we introduce a nonlocal regularization of our velocity field that ensures particles do remain particles and apply this to develop a numerical blob method for a range of diffusive partial differential equations of Wasserstein gradient flow type, including the heat equation, the porous medium equation, the Fokker--Planck equation, and the Keller--Segel equation and its variants. Our choice of regularization is guided by the Wasserstein gradient flow structure, and the corresponding energy has a novel form, combining aspects of the well-known interaction and potential energies. 
In the presence of a confining drift or interaction potential, we prove that minimizers of the regularized energy exist and, as the regularization is removed, converge to the minimizers of the unregularized energy. We then restrict our attention to nonlinear diffusion of porous medium type with at least quadratic exponent. Under sufficient regularity assumptions, we prove that gradient flows of the regularized porous medium energies converge to solutions of the porous medium equation. As a corollary, we obtain convergence of our numerical blob method. We conclude by considering a range of numerical examples to demonstrate our method's rate of convergence to exact solutions and to illustrate key qualitative properties preserved by the method, including asymptotic behavior of the Fokker--Planck equation and critical mass of the two-dimensional Keller--Segel equation.
\end{abstract}

\maketitle

\vspace{-1cm}
\section{Introduction}
For a range of partial differential equations, from the heat and porous medium equations to the Fokker--Planck and Keller--Segel equations, solutions can be characterized as \emph{gradient flows} with respect to the \emph{quadratic Wasserstein distance}.
In particular,  solutions of the equation 
\begin{align} \label{W2PDE1}
\partial_t \rho = \underbrace{\nabla \cdot (\nabla V \rho)}_\text{drift} + \underbrace{\nabla \cdot ((\nabla W*\rho) \rho)}_{\text{interaction}} + \underbrace{\Delta \rho^m \vphantom{\nabla \cdot(\grad v \rho)}}_{\text{diffusion}} \qquad V\: \Rd \to \R, \quad W\: \Rd \to \R , \quad m \geq 1,
\end{align}
where $\rho$ is a curve in the space of probability measures, are formally \emph{Wasserstein gradient flows} of the energy
\begin{align} \label{W2energy1}
	\E(\rho) = \int V\,d\rho  + \frac12 \int (W*\rho) \,d\rho +  \F^m(\rho), \quad \F^m(\rho) = 
 \begin{cases}
 \displaystyle\int \rho \log(\rho)\,d\mathcal{L}^d &\text{ for } m=1, \rho \ll \mathcal{L}^d , \\[2mm]
  \displaystyle \int  \frac{\rho^m}{m-1}\,d\mathcal{L}^d &\text{ for } m >1, \rho \ll \mathcal{L}^d \\[2mm]
 +\infty &\text{ otherwise,}
  \end{cases}
\end{align}
where $\mathcal{L}^d$ is $d$-dimensional Lebesgue measure.
This implies that solutions $\rho(t,x)$ of (\ref{W2PDE1}) satisfy
\[ \partial_t \rho = - \grad_{W_2} \E(\rho) , \]
for a generalized notion of gradient $\grad_{W_2}$, which is formally given by 
\bes
	\grad_{W_2} E(\rho) = - \grad \cdot \left(\rho \grad \frac{\delta \E}{\delta \rho} \right),
\ees 
where $\delta \E/\delta \rho$ is the first variation density of $\E$ at $\rho$ (c.f. \cite{Villani,cmcv-03,AGS,cmcv-06}).

Over the past twenty years, the Wasserstein gradient flow perspective has led to several new theoretical results, including asymptotic behavior of solutions of nonlinear diffusion and aggregation-diffusion equations \cite{Otto,cmcv-03,cmcv-06}, stability of steady states of the Keller--Segel equation \cite{BCL,BlanchetCarlenCarrillo}, and uniqueness of bounded solutions \cite{CarrilloLisiniMainini}. The underlying gradient flow theory has been well developed in the case of convex (or, more generally, semiconvex) energies \cite{JKO,Villani,Villani2,AGS,AmGi,AmSa,5person,Santambrogio}, and more recently, is being extended to consider energies with more general moduli of convexity \cite{CraigNonconvex,cmcv-06,CarrilloLisiniMainini, AmbrosioSerfaty}.

Wasserstein gradient flow theory has also inspired new numerical methods, with a common goal of maintaining the gradient flow structure at the discrete level, albeit in different ways. Recent work has considered finite volume, finite element, and discontinuous Galerkin methods \cite{Filbet,BCW,CCH,ZCS,LiuWangZhou}. Such methods are energy decreasing, positivity preserving, and mass conserving at the semidiscrete level, leading to high-order approximations.  They naturally preserve stationary states, since dissipation of the free energy provides inherent stability, and often also capture the rate of asymptotic decay. Another common strategy for preserving the gradient flow structure at the discrete level is to leverage the discrete-time variational scheme introduced by Jordan, Kinderlehrer, and Otto \cite{JKO}. 
A wide variety of strategies have been developed for this approach: 
working with different discretizations of the space of Lagrangian maps \cite{during2010gradient,MO1,MO2,MO3,JMO}, using alternative formulations of the variational structure \cite{ESG2005},  
making use of convex analysis and computational geometry to solve the optimality conditions \cite{BCMO}, and many others \cite{GT,gosse2006identification,BlanchetCalvezCarrillo,CM,CG,CRW, WW,CCWW}.

In this work, we develop a deterministic particle method for Wasserstein gradient flows. The simplest implementation of a particle method for equation (\ref{W2PDE1}), in the absence of diffusion, begins by first discretizing the initial datum $\rho_0$ as a finite sum of $N$ Dirac masses, that is,
\begin{align} \label{partinitial1}
 \rho_0 \approx \rho_0^N = \sum_{i = 1}^N \delta_{x_i} m_i, \qquad x_i \in \Rd, \quad m_i \geq 0 , 
 \end{align}
where $\delta_{x_i}$ is a Dirac mass centered at $x_i \in \Rd$.
Without diffusion and provided sufficient regularity of $V$ and $W$, the solution $\rho^N$ of (\ref{W2PDE1}) with initial datum $\rho^N_0$ remains a sum of Dirac masses at all times $t$, so that
 \begin{align} \label{partsol1} 
 	\rho^N(t) =  \sum_{i = 1}^N \delta_{x_i(t)} m_i, 
\end{align}
 and solving the partial differential equation (\ref{W2PDE1}) reduces to solving a system of ordinary differential equations for the locations of the Dirac masses,
\begin{align} \label{particle odes1}
\dot{x}_i = - \grad V(x_i) - \sum_{j =1}^N \grad W(x_i - x_j) m_j, \quad i\in\{1,\dots,N\}.
\end{align}
The particle solution $\rho^N(t)$ is the Wasserstein gradient flow of the energy (\ref{W2energy1}) with initial data $\rho_0^N$, so in particular the energy decreases in time along this spatially discrete solution. The ODE system \eqref{particle odes1} can be solved 
using range of fast numerical methods, and the resulting discretized solution $\rho^N(t)$ can be interpolated in a variety of ways for graphical visualization.

This simple particle method converges to exact solutions of equation (\ref{W2PDE1}) under suitable assumptions on $V$ and $W$, as has been shown in the rigorous derivation of this equation as  the mean-field limit of particle systems \cite{CarrilloChoiHauray,5person,Jabin}. Recent work, aimed at capturing competing effects in repulsive-attractive systems and developing methods with higher-order accuracy, has considered enhancements of  standard particle methods inspired by techniques from classical fluid dynamics, including \emph{vortex blob methods} and \emph{linearly transformed particle methods}  \cite{GHL,Hauray,CB,CCCC}. Bertozzi and the second author's blob method for the aggregation equation obtained improved rates of convergence to exact solutions for singular interaction potentials $W$ by convolving $W$ with a mollifier $\varphi_\e$. In terms of the Wasserstein gradient flow perspective this translates into regularizing the interaction energy $(1/2) \int (W*\rho) \,d\rho$ as $(1/2) \int (W*\varphi_\e*\rho)\,d\rho$.

   When diffusion is present in equation \eqref{W2PDE1}, the fundamental assumption underlying basic particle methods breaks down: particles do not remain particles, or in other words,  the solution of (\ref{W2PDE1}) with initial datum (\ref{partinitial1}) is not of the form (\ref{partsol1}). A natural way to circumvent this difficulty, at least in the case of linear diffusion ($m=1$), is to consider a stochastic particle method, in which the particles evolve via Brownian motion. Such approaches were originally developed in the classical fluids case \cite{CottetKoumoutsakos}, and several recent works have considered analogous methods for equations of Wasserstein gradient flow type, including the Keller--Segel equation \cite{Jabin,JW,Liu1,Liu2}. The main practical disadvantage of these stochastic methods is that their results must be averaged over a large number of runs to compensate for the inherent randomness of the approximation. Furthermore, to the authors' knowledge, such methods have not been extended to the case of degenerate diffusion $m>1$.

Alternatives to stochastic methods have been explored for similar equations, motivated by particle-in-cell methods in classical fluid, kinetic, and plasma physics equations. These alternatives proceed by introducing a suitable regularization of the flux of the continuity equation \cite{Russo2,CR}. Degond and Mustieles considered the case of linear diffusion ($m=1$) by interpreting the Laplacian as induced by a velocity field $v$, $\Delta \rho = \grad \cdot (v \rho)$, $v = \grad \rho/\rho$, and regularizing the numerator and denominator separately by convolution with a mollifier \cite{DegondMustieles,Russo}. For this regularized equation, particles do remain particles, and a standard particle method can be applied. Well-posedness of the resulting system of ordinary differential equations and a priori estimates relevant to the method were studied by Lacombe and Mas-Gallic \cite{LacombeMasGallic} and extended to the case of the porous medium equation by Oelschl{\"a}ger and  Lions and Mas-Gallic \cite{oelschlager1990large,LionsMasGallic, MGallic}. In the case $m=2$  on bounded domains, Lions and Mas-Gallic succeeded in showing that solutions to the regularized equation converge to solutions of the unregularized equation, as long as the initial data has uniformly bounded entropy. Unfortunately, this assumption fails to hold when the initial datum is given by a particle approximation  (\ref{partinitial1}), and consequently Lions and Mas-Gallic's result doesn't guarantee convergence of the particle method. Oelschl{\"a}ger \cite{oelschlager1990large}, on the other hand, succeeded in proving convergence of the deterministic particle method, as long as the corresponding solution of the porous medium equation is smooth and positive.
 An alternative approach, now known as the particle strength exchange method, incorporates instead the effects of diffusion by allowing the weights of the particles $m_i$ to vary in time. Degond and Mas-Gallic developed such a method for linear diffusion ($m=1$) and proved second order convergence with respect to the initial particle spacing \cite{DegondMasGallic1, DegondMasGallic2}. The main disadvantage of these existing deterministic particle methods is that, with the exception of Lions and MasGallic's work when $m=2$, they do not preserve the gradient flow structure \cite{LionsMasGallic}. Other approaches that respect the method's variational structure have been recently proposed in one dimension by approximating particles by non-overlapping blobs \cite{CHPW,CPSW}. For further background on deterministic particle methods, we refer the reader to Chertock's comprehensive review \cite{Chertock}.

The goal of the present paper is to introduce a new deterministic particle method for equations of the form \eqref{W2PDE1}, with linear and nonlinear diffusion ($m \geq 1$), that respects the problem's underlying gradient flow structure and naturally extends to all dimensions. In contrast to the above described work, which began by regularizing the flux of the continuity equation, we follow an approach analogous to Bertozzi and the second author's blob method for the aggregation equation and  regularize the associated internal energy $\F$. For a mollifier $\varphi_\e(x) = \varphi(x/\epsilon)/\epsilon^d$, $x \in \Rd$, $\e>0$, we define
\begin{align} \label{regularizedentropy} \F^m_\e(\rho) =  \begin{cases}
	\displaystyle \int \log(\varphi_\e*\rho)\,d \rho &\text{ for } m=1  , \\[2mm]
	\displaystyle \int \frac{(\varphi_\e*\rho)^{m-1}}{m-1}\,d \rho &\text{ for } m >1. \end{cases}
\end{align}
For more general nonlinear diffusion, we define 
\begin{align} \label{regularizedentropygen} 
	\F_\e(\rho) =  \int F(\varphi_\e*\rho)\, d\rho, \qquad F\: (0,\infty) \to \R. 
\end{align}

As $\e \to 0$, we prove that the regularized internal energies $\F^m_\e$ $\Gamma$-converge to the unregularized energies $\F^m$ for all $m \geq 1$; see Theorem \ref{Gamma convergence theorem2}. In the presence of a confining drift or interaction potential, so that minimizers exist, we also show that minimizers converge to minimizers; see Theorem \ref{minimizers converge theorem}. For $m \geq 2$ and semiconvex potentials $V,W \in C^2(\Rd)$, we show that the gradient flows of the regularized energies $\E_\e^m$ are well-posed and are characterized by solutions to the partial differential equation  
\begin{align} \label{W2PDE1e}
\partial_t \rho = \nabla \cdot ((\nabla V + \nabla W*\rho) \rho) + \grad \cdot \left[ \rho \left( \grad \varphi_\e* \left((\varphi_\e*\rho)^{m-2} \rho \right) + (\varphi_\e* \rho)^{m-2} (\grad \varphi_\e * \rho)\right) \right] .
\end{align}
Under sufficient regularity conditions, we prove that solutions of the regularized gradient flows converge to solutions of equation (\ref{W2PDE1}); see Theorem \ref{Gamma GF theorem}. When $m=2$ and the initial datum has bounded entropy, we show that these regularity conditions automatically hold, thus generalizing Lions and Mas-Gallic's result for the porous medium equation on bounded domains to the full equation \eqref{W2PDE1} on all of $\Rd$; see Corollary \ref{m2 theorem} and \cite[Theorem 2]{LionsMasGallic}.

For this regularized equation \eqref{W2PDE1e}, particles do remain particles; see Corollary \ref{particles well posed}. Consequently, our numerical blob method for diffusion  consists of taking a particle approximation for \eqref{W2PDE1e}. We conclude by showing that, under sufficient regularity conditions, our blob method's particle solutions converge to exact solutions of \eqref{W2PDE1}; see Theorem \ref{numerics convergence}. We then give several numerical examples illustrating the rate of convergence of our method and its qualitative properties.

A key advantage of our approach is that, by regularizing the energy functional and not the flux, we preserve the problem's gradient flow structure. Still, at first glance, our regularization of the energy (\ref{regularizedentropy}) may seem less natural than other potential choices. For example, one could instead consider the following more symmetric regularization
\bes
	\mathcal{U}^m_\e(\rho) := \mathcal{F}^m(\varphi_\e* \rho) =  \begin{cases}
\displaystyle \int (\varphi_\e* \rho) \, \log(\varphi_\e*\rho)\,d\mathcal{L}^d &\text{ for } m=1  , \\[2mm]
\displaystyle \int  \frac{(\varphi_\e*\rho)^{m}}{m-1}\,d\mathcal{L}^d &\text{ for } m >1,
  \end{cases}\ees
 for more general nonlinear diffusion, 
\[ \mathcal{U}_\e(\rho) = \int U(\varphi_\e*\rho)\,d\mathcal{L}^d, \qquad U\: [0,\infty) \to \R . \]
Although studying the above regularization is not without interest, we focus our attention on the regularization in \eqref{regularizedentropy} and \eqref{regularizedentropygen} for numerical reasons. Indeed, computing the first variation density of $\mathcal{U}_\e$ gives
\bes
	\frac{\delta \mathcal{U}_\e}{\delta \rho} = \varphi_\e * (U' \circ (\varphi_\e*\rho)),
\ees
as compared to 
\bes
	\frac{\delta \mathcal{F}_\e}{\delta \rho} = \varphi_\e * (F' \circ (\varphi_\e*\rho)\rho) + F\circ(\varphi_\e *\rho)
\ees
for $\F_\e$. In the first case, one can see that replacing $\rho$ by a sum of Dirac masses still requires the computation of an integral convolution with $\varphi_\e$. Indeed, if $\rho=  \sum_{i=1}^N \delta_{x_i} m_i$, where $(x_i)_{i=1}^N$ are $N$ particles in $\R^d$ with masses $m_i > 0$, then, for all $x\in\R^d$,
\bes
	\frac{\delta \mathcal{U}_\e}{\delta \rho}(x) = \varphi_\e * \left[ U' \left(   \sum_{i=1}^N \varphi_\e(x-x_i) m_i \right) \right] = \ird \varphi_\e(x-y) \left[ U' \left(   \sum_{i=1}^N \varphi_\e(y-x_i) m_i \right) \right] \,d y,
\ees
which does not allow for a complete discretization of the integrals. On the contrary, in the second case, all convolutions involve $\rho$, so a similar computation (as it can be found in the proof of Corollary \ref{particles well posed}) shows that they  reduce to finite sums, which are numerically less costly.

Another advantage of our approach, in the $m=2$ case, is that our regularization of the energy can naturally be interpreted as an approximation of the porous medium equation by a very localized nonlocal interaction potential. In this way, our proof of the convergence of the associated particle method provides a theoretical underpinning to approximations of this kind in the computational math and swarming literature \cite{LeverentzTopazBernoff, Klar}. Further advantages our blob method include the ease with which it may be combined with particle methods for interaction and drift potentials, its simplicity in any dimension, and the good numerical performance we observe for a wide choice of interaction and drift potentials.

Our paper is organized as follows. In Section \ref{preliminaries section}, we collect preliminary results concerning the regularization of measures via convolution with a mollifier, including a mollifier exchange lemma (Lemma \ref{move mollifier prop}), and relevant background on Wasserstein gradient flow and weak convergence of measures. In Section \ref{energies section}, we prove several results on the general regularized energies (\ref{regularizedentropygen}), which are of a novel form from the perspective of Wasserstein gradient flow theory, combining aspects of the well-known interaction and internal energies. We show that these regularized energies are semiconvex and differentiable in the Wasserstein metric and characterize their subdifferential with respect to this structure; see Propositions \ref{diff prop}--\ref{subdiffchar}. 
In Section \ref{Gamma convergence section}, we prove that $\F_\e$ $\Gamma$-converges to $\F$ as $\epsilon \to 0$ and that minimizers converge to minimizers, when in the presence of a confining drift or interaction term; see Theorems \ref{Gamma convergence theorem2} and \ref{minimizers converge theorem}.
With this $\Gamma$-convergence in hand, in Section \ref{Gamma convergence GF section} we then turn to the question of convergence of gradient flows, restricting to the case $m \geq 2$. Using the framework introduced by Sandier and Serfaty \cite{Serfaty,SaSe}, we prove that, under sufficient regularity assumptions, gradient flows of the regularized energies converge as $\e \to 0$ to gradient flows of the unregularized energy, recovering a generalization of Lions and Mas-Gallic's results when $m=2$; see Theorem \ref{Gamma GF theorem} and Corollary \ref{m2 theorem}. Finally, in Section \ref{numerics section}, we prove the convergence of our numerical blob method, under sufficient regularity assumptions, when the initial particle spacing $h$ scales with the regularization like $h = o(\e)$; see Theorem \ref{numerics convergence}.

We close with several numerical examples, in one and two dimensions, analyzing the rate of convergence to exact solutions with respect to the $2$-Wasserstein metric, $L^1$-norm, and $L^\infty$-norm and illustrating qualitative properties of the method, including asymptotic behavior of the Fokker--Planck equation and critical mass of the two-dimensional Keller--Segel equation; see Section \ref{simulations}. In particular, for the heat equation and porous medium equations ($V=W=0$, $m=1,2,3$), we observe that the $2$-Wasserstein error depends linearly on the grid spacing $h \sim N^{-1/d}$ for $m=1,2,3$, while the $L^1$-norm depends quadratically on the grid spacing for $m=1,2$ and superlinearly for $m=3$. We apply our method to study long time behavior of the nonlinear Fokker--Planck equation ($V=\abs{\cdot}^2/2$, $W = 0$, $m=2$), showing that the blob method accurately captures convergence to the unique steady state. Finally, we conduct a detailed numerical study of equations of Keller--Segel type, including a one-dimensional variant ($V=0, W = 2\chi \log\abs{\cdot}, \chi>0, m=1,2$) and the original two-dimensional equation ($V=0$, $W = \Delta^{-1}$, $m=1$). The one-dimensional equation has a critical mass $1$, and the two-dimensional equation has critical mass $8 \pi$, at which point the concentration effects from the nonlocal interaction term balance with linear diffusion ($m=1$) \cite{DP,BlanchetDolbeaultPerthame}. We show that the same notion of criticality is present in our numerical solutions and demonstrate convergence of the critical mass as the grid spacing $h$ and regularization $\e$ are refined. 

There are several directions for future work. Our convergence theorem for $m \geq 2$ requires additional regularity assumptions, which we are only able to remove in the case $m=2$ when the initial data has bounded entropy.  In the case of $m>2$ or more general initial data, it remains an open question how to control certain nonlocal norms of the regularized energies, which play an important role in our convergence result; see Theorem \ref{Gamma GF theorem}. Formally, we expect these to behave as approximations of the $BV$-norm of $\rho^m$, which should remain bounded by the gradient flow structure; see equations \eqref{BV heuristic 1} and \eqref{BV heuristic 2}.  When $1\leq m<2$, it is not clear how to use  these nonlocal norms to get the desired convergence result or whether an entirely different approach is needed. Perhaps related to these questions is the fact that our estimate on the semiconvexity of the regularized energies \eqref{regularizedentropy} deteriorates as $\e \to 0$, while we expect that the semiconvexity should not deteriorate along smooth geodesics; see Proposition \ref{prop:conv}. Finally, while our results show convergence of the blob method for diffusive Wasserstein gradient flows, they do not quantify the rate of convergence in terms of $h$ and $\e$. In particular, a theoretical result on the optimal scaling relation between $h$ and $\e$ remains open, though we observe good numerical performance for $\e = h^{1-p}$, $0< p \ll 1$. In a less technical direction, we foresee a use of the presented ideas in conjunction with splitting schemes for certain nonlinear kinetic equations \cite{CarGang,Agueh}, as well as in the fluids \cite{Hauray}, since our numerical results demonstrate comparable rates of convergence to the particle strength exchange method, which has already gained attention in these contexts \cite{DegondMustieles}.




\section{Preliminaries} \label{preliminaries section}

\subsection{Basic notation}

For any $r>0$ and $x \in \Rd$ we denote the open ball of center $x$ and radius $r$ by $B_r(x)$. Given a set $S \subset \R^d$, we write $1_{S}\colon \R^d \to \{0,1\}$ for the indicator function of $S$, i.e., $1_S(x) = 1$ for $x \in S$ and $1_S(x) = 0$ otherwise. We say a function $A:\Rd \to \R$ has \emph{at most quadratic growth} if there exist $c_0, c_1 >0$ so that $|A(x)| \leq c_0 + c_1|x|^2$ for all $x \in \Rd$.

Let $\P(\Rd)$ denote the set of Borel probability measures on $\Rd$, and for, any $p\in\N$, $\P_p(\Rd)$ denotes  elements of $\P(\Rd)$ with finite $p$th moment, $M_p(\R^d) := \textstyle \ird |x|^p\,d\mu(x) < +\infty$. We write $\mathcal{L}^d$ for the $d$-dimensional Lebesgue measure, and for given $\mu\in\P(\Rd)$, we write $\mu \ll \mathcal{L}^d$ if $\mu$ is absolutely continuous with respect to the Lebesgue measure. Often we use the same symbol for both a probability measure and its Lebesgue density, whenever the latter exists.  We let $L^p(\mu;\Rd)$ denote the Lebesgue space of functions with $p$th power integrable against $\mu$. 

Given $\sigma$ a finite, signed Borel measure on $\R^d$, we denote its variation by $|\sigma|$. For a Borel set $E \subset \Rd$ we write $\sigma(E)$ for the $\sigma$-measure of set $E$.
For a Borel map $T \: \R^d \to \R^d$ and $\mu \in \P(\R^d)$, we write $T_\# \mu$ for the push-forward of $\mu$ through $T$. We let $\id\: \Rd \to \Rd$ denote the identity map on $\R^d$ and define $(\id,T) \: \R^d \to \R^d \times \R^d$ by $(\id,T)(x) = (x,T(x))$ for all $x \in \Rd$.
For a sequence $(\mu_n)_n \subset \P(\Rd)$ and some $\mu\in\P(\Rd)$, we write $\mu_n \wsto \mu$ if $(\mu_n)_n$ converges to $\mu$ in the weak-$^*$ topology of probability measures, i.e., in the duality with bounded continuous functions.

\subsection{Convolution of measures}
A key aspect of our approach is the regularization of the energy (\ref{W2energy1}) via convolution with a mollifier. In this section, we collect some elementary results on the convolution of probability measures, including a mollifier exchange lemma, Lemma \ref{move mollifier prop}.

For any  $\mu \in\P(\Rd)$ and measurable function $\phi$, the convolution of $\phi$ with $\mu$ is given by
\bes
	\phi*\mu(x) = \int_{\Rd} \phi(x-y) \,d\mu(y) \quad \mbox{for all $x\in\Rd$} ,
\ees
whenever the integral converges.
We consider mollifiers $\varphi$ satisfying the following assumption.

\begin{as}[mollifier] \label{mollifierAssumption}
Let $\varphi = \zeta * \zeta$, where $\zeta \in C^2(\Rd;[0,\infty))$  is even, $\|\zeta\|_{L^1(\Rd)} =1$, and
\bes
	\zeta(x) \leq C_\zeta |x|^{-q} , |\grad \zeta (x) | \leq C'_\zeta |x|^{-q'} \quad \mbox{for some $C_\zeta, C_\zeta' >0$ and $q >d+1, \ q' >d$}.
\ees
\end{as}

This assumption is satisfied by both Gaussians and smooth functions with compact support. Assumption \ref{mollifierAssumption} also ensures that $\varphi$ has finite first moment. For any $\epsilon >0$, we write 
\bes
	\varphi_\e =  \e^{-d} \varphi (\cdot/\e) \quad \mbox{and} \quad \zeta_\e = \e^{-d} \zeta(\cdot/\e) .
\ees
Throughout, we use the fact that the definition of convolution allows us to move mollifiers from the measure to the integrand. In particular, for any $\phi$ bounded below and $\psi \in L^1(\Rd)$  even,
\bes
	\ird \phi \,d(\psi*\mu) = \ird \phi*\psi \,d\mu.
\ees

Likewise, the technical assumption that $\varphi = \zeta * \zeta$, and therefore that $\varphi_\e = \zeta_\e * \zeta_\e$, allows us to regularize integrands involving the mollifier $\varphi_\e$; indeed, the following lemma provides sufficient conditions for moving functions in and out convolutions with mollifiers within integrals. (See also \cite{LionsMasGallic} for a similar result.) This is an essential component in the proofs of both  main results, Theorems \ref{Gamma convergence theorem2} and \ref{Gamma GF theorem}, on the the $\Gamma$-convergence of the regularized energies and the convergence of the corresponding gradient flows. See  Appendix \ref{appendix preliminaries} for the proof of this lemma.
\begin{lem}[mollifier exchange lemma] \label{move mollifier prop}
Let $f\colon \R^d \to \R$ be Lipschitz continuous with  Lipschitz constant $L_f>0$, and let $\sigma$ and $\nu$ be finite, signed Borel measures on $\R^d$. There is $p = p(q,d)>0$ so that 
\[ 
	\left| \int \zeta_\e *(f\nu) \,d\sigma - \int (\zeta_\e *\nu)f \,d\sigma \right|  \leq \e^{p} L_f \left( \int (\zeta_\e*|\nu|)\,d|\sigma| + C_\zeta |\sigma|(\Rd) |\nu|(\R^d) \right)  \  \text{ for all } \e >0.
\]
\end{lem}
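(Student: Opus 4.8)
The plan is to reduce the claimed estimate to a pointwise inequality, integrate, and handle the tail of $\zeta$ via the growth bound in Assumption \ref{mollifierAssumption}. First I would write both sides explicitly as double integrals. Since $\zeta_\e$ is even, moving the mollifier gives
\[
	\int \zeta_\e*(f\nu)\,d\sigma = \iint \zeta_\e(x-y) f(y)\,d\nu(y)\,d\sigma(x), \qquad \int (\zeta_\e*\nu) f\,d\sigma = \iint \zeta_\e(x-y) f(x)\,d\nu(y)\,d\sigma(x),
\]
so the difference is $\iint \zeta_\e(x-y)\bigl(f(y)-f(x)\bigr)\,d\nu(y)\,d\sigma(x)$, which is bounded in absolute value by
\[
	L_f \iint \zeta_\e(x-y)\,|x-y|\,d|\nu|(y)\,d|\sigma|(x),
\]
using only Lipschitz continuity of $f$. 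Thus everything comes down to bounding $\int (g_\e*|\nu|)\,d|\sigma|$ where $g_\e(z) := \e^{-d}\zeta(z/\e)\,|z/\e|\cdot\e = \zeta_\e(z)|z|$; more precisely $g_\e(z) = |z|\zeta_\e(z)$.

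Next I would split the kernel $|z|\zeta_\e(z)$ according to whether $|z| \le \e$ or $|z| > \e$. On the region $|z|\le\e$ we have $|z|\zeta_\e(z) \le \e\,\zeta_\e(z)$, which contributes $\e \int(\zeta_\e*|\nu|)\,d|\sigma|$ after reinstating the convolution and integrating — this is exactly the first term on the right-hand side (with $p \le 1$). On the region $|z| > \e$ we use the decay hypothesis $\zeta(x) \le C_\zeta|x|^{-q}$ with $q > d+1$: writing $z = \e w$, $\zeta_\e(z) = \e^{-d}\zeta(w) \le C_\zeta \e^{-d}|w|^{-q} = C_\zeta \e^{q-d}|z|^{-q}$, so $|z|\zeta_\e(z) \le C_\zeta \e^{q-d}|z|^{1-q}$, and since $q-1 > d$ this kernel is uniformly bounded: for $|z| > \e$, $|z|^{1-q} < \e^{1-q}$, giving $|z|\zeta_\e(z) \le C_\zeta \e^{q-d}\e^{1-q} = C_\zeta \e^{1-d} \le C_\zeta$ (for $\e$ bounded; one absorbs the constant or takes $\e \le 1$, or keeps the factor $\e^{1-d}$, but the stated bound only asks for $C_\zeta$). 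Hence on $|z|>\e$, $\int\int_{|x-y|>\e} |x-y|\zeta_\e(x-y)\,d|\nu|(y)\,d|\sigma|(x) \le C_\zeta\,|\nu|(\R^d)\,|\sigma|(\R^d)$, which is the second term. Combining the two regions and setting $p = \min\{1, q-1-d, \ldots\}$ appropriately — in fact $p=1$ suffices for the first term and the second has no $\e$ needed, so one can take any $p \in (0,1]$, say $p$ depending only on $q$ and $d$ to keep the clean statement — yields the lemma.

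The main obstacle, such as it is, is bookkeeping around the tail estimate: one must be careful that the crude bound $|z|^{1-q} < \e^{1-q}$ on $\{|z|>\e\}$ is genuinely what controls the far-field term, and that the resulting $\e$-powers are nonnegative so the final bound can be stated with a single $\e^p$, $p>0$. There is a minor subtlety in that the far-field contribution is actually $O(1)$ rather than $o(1)$ in $\e$, which is why the statement isolates the $C_\zeta|\sigma|(\R^d)|\nu|(\R^d)$ term \emph{without} a power of $\e$ in front; I would make sure to present the split so this is transparent. A secondary technical point is justifying the use of Fubini/Tonelli to interchange the order of integration and to move the even mollifier across the pairing $\int \phi\,d(\psi*\mu) = \int \phi*\psi\,d\mu$ — this is licit here because the integrand $\zeta_\e(x-y)|f(y)-f(x)|$ is nonnegative and measurable and $|\nu|\otimes|\sigma|$ is a finite product measure, so no integrability difficulty arises. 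Everything else is elementary.
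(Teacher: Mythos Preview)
Your reduction to $L_f\iint \zeta_\e(x-y)\,|x-y|\,d|\nu|(y)\,d|\sigma|(x)$ is correct and matches the paper. The gap is in the far-field estimate and in your reading of the statement.

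First, the $\e^p$ in the lemma multiplies the \emph{entire} parenthesis, including the $C_\zeta\,|\sigma|(\Rd)\,|\nu|(\Rd)$ term. So you do need a positive power of $\e$ in front of the far-field contribution; it is not enough for it to be $O(1)$.

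Second, your far-field bound is not $O(1)$ anyway. Splitting at $|z|=\e$ and using $\zeta(w)\le C_\zeta|w|^{-q}$ gives $|z|\zeta_\e(z)\le C_\zeta\,\e^{q-d}|z|^{1-q}\le C_\zeta\,\e^{q-d}\e^{1-q}=C_\zeta\,\e^{1-d}$ on $|z|>\e$. For $d\ge 2$ this blows up as $\e\to 0$, so the argument fails. (The inequality $\e^{1-d}\le 1$ would require $\e\ge 1$, not $\e\le 1$.)

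The paper fixes this by splitting at radius $\e^{p}$ with $p=(q-d)/q\in(0,1)$ rather than at $\e$. On the near field $|x-y|<\e^{p}$ one gets $|x-y|\zeta_\e(x-y)\le \e^{p}\zeta_\e(x-y)$, producing $\e^{p}\int(\zeta_\e*|\nu|)\,d|\sigma|$. On the far field $|x-y|>\e^{p}$ one gets $|x-y|\zeta_\e(x-y)\le C_\zeta\,\e^{q-d}\,\e^{p(1-q)}=C_\zeta\,\e^{p}$, since $p(1-q)+q-d=p$ by the choice of $p$. Both pieces then carry the same factor $\e^{p}$, which is exactly the claimed bound. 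The point is that $p$ must be chosen to balance the two regimes; $p=1$ does not work in dimension $d\ge 2$.
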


We conclude this section with a lemma stating that if a sequence of measures converges in the weak-$^*$ topology of $\P(\Rd)$, then the mollified sequence converges to the same limit. We refer the reader to Appendix \ref{appendix preliminaries} for the proof.

\begin{lem} \label{weakst convergence mollified sequence}
	Let $\mu_\e$ be a sequence in $\P(\R^d)$ such that $\mu_\e \wsto \mu$ as $\e\to0$ for some $\mu\in\P(\R^d)$. Then $\varphi_\e *\mu_\e \wsto \mu$.
\end{lem}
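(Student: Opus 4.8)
The plan is to reduce the statement to testing against bounded Lipschitz functions and then transfer the mollifier from the measure onto the test function. Since all the $\mu_\e$ and $\mu$ are probability measures, the family $\{\mu_\e\}$ is tight, and weak-$*$ convergence in $\P(\R^d)$ is equivalent to convergence of the integrals against every bounded Lipschitz function. Thus it suffices to prove that $\ird g\,d(\varphi_\e*\mu_\e) \to \ird g\,d\mu$ as $\e \to 0$ for each bounded Lipschitz $g\colon\R^d \to \R$, with Lipschitz constant $L_g$.

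First I would apply the move-mollifier identity recorded above — valid here since $g$ is bounded, hence bounded below, and $\varphi_\e \in L^1(\R^d)$ is even — to write $\ird g\,d(\varphi_\e*\mu_\e) = \ird (\varphi_\e*g)\,d\mu_\e$. Then I would split
\[
	\ird (\varphi_\e*g)\,d\mu_\e - \ird g\,d\mu = \ird (\varphi_\e*g - g)\,d\mu_\e + \left(\ird g\,d\mu_\e - \ird g\,d\mu\right).
\]
The second term on the right tends to $0$ by the hypothesis $\mu_\e \wsto \mu$, as $g$ is bounded and continuous. For the first term, since $\mu_\e$ is a probability measure it is bounded in absolute value by $\norm{\varphi_\e*g - g}_{L^\infty(\R^d)}$; using that $\varphi_\e$ has unit mass, for every $x \in \R^d$ one has $\varphi_\e*g(x) - g(x) = \ird \bigl(g(x-y) - g(x)\bigr)\varphi_\e(y)\,dy$, so the Lipschitz estimate gives $\abs{\varphi_\e*g(x) - g(x)} \leq L_g\ird\abs{y}\varphi_\e(y)\,dy = L_g\,\e\ird\abs{z}\varphi(z)\,dz$ after the substitution $y = \e z$. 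By Assumption \ref{mollifierAssumption}, $\varphi$ has finite first moment, so this bound tends to $0$ uniformly in $x$ as $\e \to 0$. Combining the two estimates gives the claim.

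The argument has no substantial obstacle; the one point requiring care is that it cannot be run directly with an arbitrary bounded continuous test function, because $\varphi_\e*g \to g$ uniformly may fail without uniform continuity of $g$. The reduction to bounded Lipschitz test functions — harmless for characterizing weak-$*$ convergence of probability measures — together with the finiteness of the first moment of $\varphi$ guaranteed by Assumption \ref{mollifierAssumption}, is exactly what makes the first term collapse.
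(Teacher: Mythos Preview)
Your proof is correct and follows essentially the same route as the paper's: split the difference into a term controlled by weak-$*$ convergence of $\mu_\e$ and a term controlled by the Lipschitz estimate $\|\varphi_\e*g - g\|_{L^\infty} \leq L_g\,\e\int|z|\varphi(z)\,dz$. The only cosmetic difference is that the paper tests against $f\in C_\mathrm{c}^\infty(\R^d)$ (invoking \cite[Remark 5.1.6]{AGS} to reduce to distributional convergence) while you reduce to bounded Lipschitz test functions; both reductions are standard and the subsequent computation is identical.
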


\subsection{Optimal transport, Wasserstein metric, and gradient flows}

We now describe basic facts about optimal transport, including the Wasserstein metric and associated gradient flows. (See also \cite{AGS,Villani,Santambrogio,Villani2,AmGi,AmSa} for further background and more details on  the definitions and remarks found in this section.)

For $\mu,\nu\in\P(\R^d)$, we denote the set of transport plans from $\mu$ to $\nu$ by
\bes
	\Gamma(\mu,\nu) := \{\gamma \in\P(\R^d\times\R^d) \mid {\pi^1}_\# \gamma = \mu,\, {\pi^2}_\# \gamma = \nu\},
\ees
where $\pi^1,\pi^2\colon \R^d \times \R^d \to \R^d$ are the projections of $\R^d\times \R^d$ onto the first and second copy of $\R^d$, respectively.
The \emph{Wasserstein distance} $W_2(\mu,\nu)$ between two probability measures $\mu,\nu\in\P_2(\R^d)$ is given by
\be\label{eq:wass-p}
	W_2(\mu,\nu) = \min_{\gamma \in \Gamma(\mu,\nu)}  \left( \int_{\R^d\times \R^d} |x-y|^2 d \gamma(x,y) \right)^{1/2} ,
\ee
and a transport plan  $\gamma_\mathrm{o}$ is \emph{optimal} if it attains the minimum in \eqref{eq:wass-p}. We denote the set of optimal transport plans by $\Gamma_\mathrm{o}(\mu,\nu)$. If $\mu$ is absolutely continuous with respect to the Lebesgue measure, then there is a unique optimal transport plan $\gamma_\mathrm{o}$, and
\bes
	\gamma_\mathrm{o} = (\id,T_\mathrm{o})_\#\mu,
\ees
for a Borel measurable function $T_\mathrm{o}\:\Rd\to\Rd$. $T_\mathrm{o}$ is unique up to sets of $\mu$-measure zero and is known as the \emph{optimal transport map} from $\mu$ to $\nu$. 
Convergence with respect to the Wasserstein metric is stronger than weak-$^*$ convergence. In particular,  if $(\mu_n)_n \subset \P_2(\R^d)$ and $\mu \in \P_2(\R^d)$, then 
\bes
	\mbox{$W_2(\mu_n,\mu) \to 0$ as $n\to\infty$} \iff \left(\mbox{$\mu_n \wsto \mu$ and $M_2(\mu_n) \to M_2(\mu)$ as $n\to\infty$}\right).
\ees

In order to define Wasserstein gradient flows, we will require the following notion of regularity in time with respect to the Wasserstein metric.

\begin{defi}[absolutely continuous]\label{defi:ac-curve}
$ \mu \in AC^2_\loc((0,\infty);P_2(\Rd))$ if there is $f\in L^2_\loc((0,\infty))$ so that 
\bes
	W_2(\mu(t),\mu(s)) \leq \int_s^t f(r)\,d r \quad \mbox{for all $t,s\in (0,\infty)$ with $s\leq t$.}
\ees
\end{defi}
Along such curves, we have a notion of metric derivative.
\begin{defi}[metric derivative]\label{defi:metric-derivative}
Given $ \mu \in AC^2_\loc((0,\infty);P_2(\Rd))$, its metric derivative is
\bes
	|\mu'|(t) := \lim_{s\to t} \frac{W_2(\mu(t),\mu(s))}{|t-s|}
\ees
\end{defi}

An important class of curves in the Wasserstein metric are the (constant speed) \emph{geodesics}. Given $\mu_0,\mu_1 \in \P_2(\R^d)$, geodesics connecting $\mu_0$ to $\mu_1$ are of the form
\bes
	\mu_\alpha = ((1-\alpha) \pi^1 + \alpha \pi^2)_\# \gamma_\mathrm{o} \quad \mbox{for $\alpha\in[0,1]$, $\gamma_\mathrm{o}\in \Gamma_\mathrm{o}(\mu,\nu)$}.
\ees
If $\gamma_\mathrm{o}$ is induced by a map $T_\mathrm{o}$, then
\bes
	\mu_\alpha = ((1-\alpha)\text{id} + \alpha T_\mathrm{o})_\# \mu_0.
\ees
More generally, given $\mu_1,\mu_2,\mu_3\in \P_2(\R^d)$, a \emph{generalized} geodesic connecting $\mu_2$ to $\mu_3$ with base $\mu_1$ is given by
\begin{align} \label{eq:gen-geodesic}
	\mu_\alpha^{2\to3} = \left((1-\alpha)\pi^2+\alpha\pi^3\right)_\# \gamma \quad &\mbox{for }\alpha \in[0,1] \text{ and }\gamma \in \P(\R^d \times \R^d  \times  \R^d) \\ & \text{ such that } {\pi^{1,2}}_\#\gamma\in\Gamma_\mathrm{o}(\mu_1,\mu_2) \text{ and }{\pi^{1,3}}_\#\gamma\in\Gamma_\mathrm{o}(\mu_1,\mu_3). \nonumber
\end{align}
	with $\pi^{1,i}\colon \R^d \times \R^d \times \R^d \to \R^d\times \R^d$ the projection of onto the first and $i$th copies of $\R^d$. When the base $\mu_1$ coincides with one of the endpoints $\mu_2$ or $\mu_3$, generalized geodesics are geodesics.

A key property for the uniqueness and stability of Wasserstein gradient flows is that the energies are convex, or more generally semiconvex, along generalized geodesics.
\begin{defi}[semiconvexity along generalized geodesics]\label{defi:semiconvexity-geod}
	We say a functional $\G\colon\P_2(\R^d)\to (-\infty,\infty]$ is \emph{semiconvex along generalized geodesics} if there is $\lambda \in \R$ such that 
for all $\mu_1,\mu_2,\mu_3 \in \P_2(\R^d)$ there exists a generalized geodesic connecting $\mu_2$ to $\mu_3$ with base $\mu_1$ such that
\bes
	\G(\mu_\alpha^{2\to3}) \leq (1-\alpha)\G(\mu_2) + \alpha\G(\mu_3) - \frac{\lambda (1-\alpha)\alpha}{2} W_{2,\gamma}^2(\mu_2,\mu_3) \quad \mbox{for all $\alpha\in[0,1]$},
\ees
where 
\bes
	W_{2,\gamma}^2(\mu_2,\mu_3) = \int_{\Rd\times\Rd\times\Rd} |y-z|^2 \,d\gamma(x,y,z).
\ees
\end{defi}

For any subset $X\subset \P(\Rd)$ and functional $\G \colon X \to (-\infty,\infty]$, we denote the \emph{domain} of $\G$ by $D(\G) = \{ \mu \in X \mid \G(\mu) < +\infty\}$, and we say that $\G$ is \emph{proper} if $D(\G) \neq \emptyset$. 
As soon as a functional is proper and  lower semicontinuous with respect to the weak-* topology, we may define its \emph{subdifferential}; see \cite[Definition 10.3.1 and Equation 10.3.12]{AGS}. Following the approach in \cite{5person}, the notion of subdifferential we use in this paper is, in fact, the following reduced one.

\begin{defi}[subdifferential] \label{subdiffdef}
	Given $\G:\P_2(\Rd) \to (-\infty,\infty]$ proper  and lower semicontinuous, $\mu \in D(\G)$, and $\xi:\Rd \to \Rd$ with $\xi \in L^2(\mu;\Rd)$, then $\xi$ belongs to the \emph{subdifferential} of $\G$ at $\mu$, written $\xi \in \partial \G(\mu)$, if as $\nu \xrightarrow{W_2} \mu$,
\[ 
\G(\nu) - \G(\mu) \geq \inf_{\gamma \in \Gamma_0(\mu,\nu)} \int_{\Rd \times \Rd} \la \xi(x),y-x \ra d\gamma(x,y) + o(W_2(\mu,\nu)) . 
\]
\end{defi}

 The Wasserstein metric is formally Riemannian, and we may define the tangent space as follows.
\begin{defi}
	Let $\mu \in \P_2(\R^d)$. The \emph{tangent space} at $\mu$ is
\bes
	\Tan_\mu \P_2(\R^d) = \overline{\left\{ \grad\phi \mid \phi \in C_\mathrm{c}^\infty(\R^d) \right\}},
\ees
where the closure is taken in $L^2(\mu;\R^d)$.
\end{defi}

We now turn to the definition of a gradient flow in the Wasserstein metric (c.f. \cite[Proposition 8.3.1, Definition 11.1.1]{AGS}).
\begin{defi}[gradient flow] \label{gradientflowdef}
	Suppose $\G\colon \P_2(\Rd) \to \R \cup \{+\infty\}$ is proper and lower semicontinuous. A curve $\mu \in AC^2_\loc((0,+\infty); \P_2(\Rd))$ is a \emph{gradient flow of $\G$} if there exists a velocity vector field $v\colon (0,\infty)\times \R^d \to \R^d$ with $-v(t) \in \partial \G(\mu(t)) \cap \Tan_{\mu(t)}\P_2(\R^d) $ for almost every $t >0$ such that $\mu$ is a weak solution of the continuity equation
\[ 
	\partial_t \mu(t,x) + \grad \cdot (v(t,x) \mu(t,x)) = 0 ;
\]
i.e., $\mu$ is a solution to the continuity equation in duality with $C_\mathrm{c}^\infty(\Rd)$.
\end{defi}

We close this section with the following definition of the Wasserstein local slope.
\begin{defi}[local slope]
	Given $\G \colon \P_2(\Rd)  \to (-\infty,\infty]$, its \emph{local slope} is  
\bes
	|\partial \G|(\mu) = \limsup_{\mu \to \nu} \frac{(\G(\mu) - \G(\nu) )_+}{W_2(\mu,\nu)} \quad \mbox{for all $\mu\in D(\G)$},
\ees
where the subscript $+$ denotes the positive part.
\end{defi}

\begin{remark} \label{metric slope remark}
	When the functional $\G$ in Definition \ref{gradientflowdef} is in addition semiconvex along geodesics the local slope $|\partial \G|$ is a strong upper gradient for $\G$. In this case a gradient flow of $\G$ is characterized as being a $2$-curve of maximal slope with respect to $|\partial \G|$; see \cite[Theorem 11.1.3]{AGS}.
\end{remark}


\section{Regularized internal energies} \label{energies section}
The foundation of our blob method is the regularization of the internal energy $\F$ via convolution with a mollifier. This allows us to preserve the gradient flow structure and approximate our original partial differential equation (\ref{W2PDE1}) by a sequence of equations for which particles do remain particles.
In this section, we consider several fundamental properties of the regularized internal energies $\F_\e$, including convexity, lower semicontinuity, and differentiability. In what follows, we will suppose that our internal energies satisfy the following assumption. 
\begin{as}[internal energies]\label{as:F}
Suppose $F \in C^2(0,+\infty)$ satisfies $\lim_{s \to +\infty} F(s) = +\infty$ and  either $F$ is bounded below or $\liminf_{s\to0} F(s) / s^\beta >-\infty$ for some $\beta >-2/(d+2)$. Suppose further that $U(s) = sF(s)$ is convex, bounded below, and $\lim_{s \to 0} U(s) = 0$. 
\end{as}

Thanks to this assumption we can define the \emph{internal energy} corresponding to $F$ by
\[ \F(\rho) = \begin{cases} \int F(\rho)\, d\rho & \mbox{if $\rho \ll \mathcal{L}^d$}, \\ +\infty & \mbox{otherwise}. \end{cases}\]
If $F$ is bounded below, this is well-defined on all of $\P(\Rd)$. If $\liminf_{s\to0} F(s) / s^\beta >-\infty$ for some $\beta >-2/(d+2)$, this is well-defined on $\P_2(\Rd)$; see \cite[Example 9.3.6]{AGS}.

\begin{remark}[nondecreasing]\label{rem:Fprime-positive}
	Assumption \ref{as:F} implies that $F$ is nondecreasing. Indeed, by the convexity of $U(s)$ and the fact that $\lim_{s \to 0} sF(s) =0$,
	\bes
	sF(s) = \int_0^s U'(r) \,dr \leq sU'(s) = s^2F'(s) + sF(s) \quad \mbox{for all $s\in(0,\infty)$},
\ees
which leads to $F'(s)\geq0$ for all $s \in (0,\infty)$.
\end{remark}

Our assumption does not ensure that $\F$ is convex along Wasserstein geodesics, unless $F$ is convex.

\begin{remark}[McCann's convexity condition]
McCann's condition \cite{McCann} on the internal density $U$ for the convexity of the internal energy $\F$ can be stated on the function $F$ instead: the function $s \mapsto F(s^{-d})$ is nonincreasing and convex on $(0,\infty)$, i.e., 
\bes
	F'(s) \geq0 \quad \mbox{and} \quad (d+1) F'(s) + d s F''(s) \geq 0 \quad \mbox{for all $s \in (0,\infty)$},
\ees
which, by Remark \ref{rem:Fprime-positive}, holds when for example $F$ is convex and satisfies Assumption \ref{as:F}.
\end{remark}

We regularize the internal energies by convolution with a mollifier.

\begin{defi}[regularized internal energies] \label{energy def} Given $F\colon (0,\infty)\to\R$ satisfying Assumption \ref{as:F}, we define, for all $\mu\in \P(\R^d)$, the regularized internal energies by
\begin{align*}
	 \F_\e(\mu) =  \int F(\varphi_\e * \mu) \, d\mu \quad \mbox{for all $\e>0$}.
\end{align*}
\end{defi}
\noindent Note that, for all $\mu \in \P(\Rd)$ and $\epsilon >0$, $\F_\e(\mu) < F(\norm{\varphi_\e}_{L^\infty(\R^d)}) < \infty$.

An important class of internal energies satisfying Assumption \ref{as:F} are given by the (negative) entropy and R\'enyi entropies.
\begin{defi}
The entropy and R\'enyi entropies, and their regularizations, are given by
\[ \F^m(\rho) = \int F_m(\rho)\, d \rho , \quad \F^m_\e(\mu) = \int F_m(\varphi_\e*\mu) \, d \mu , \quad \text{ for } F_m(s) = \begin{cases} \log s &\text{ for } m=1,\\ s^{m-1}/(m-1) &\text{ for } m>1 .\end{cases} \] 
\end{defi}
Note that, as per our observation just below the definition of $\F$, the entropy  $\F^1$ is well-defined on $\P_2(\Rd)$ and the R\'enyi entropies ($\F^m, m>1$) are well-defined on all of $\P(\Rd)$. Also note that the regularized entropies ($\F^m_\e, m \geq 1, \e>0$) are well-defined on all of $\P(\Rd)$.

In order to approximate solutions of equation (\ref{W2PDE1}), we will consider combinations of the above regularized internal energies with potential and interaction energies.

\begin{defi}[regularized energies] \label{full energies}
	Let $V,W\: \Rd \to (-\infty,\infty]$  be proper and lower semicontinuous. Suppose further that $W$ is locally integrable. For all $\mu \in \P(\Rd)$ define
\begin{align*}
	\E_\e(\mu) = \int V \,d\mu  + \frac12 \int (W*\mu)\,d \mu +  \F_\e(\mu) \quad \mbox{for all $\e>0$}.
\end{align*}
When $F=F_m$ for some $m\geq1$, then we denote $\E$ by $\E^m$ and $\E_\e$ by $\E_\e^m$.
\end{defi}

The regularized internal energy in Definition \ref{energy def} incorporate a blend of interaction and internal phenomena, through the convolution with the mollifier, or potential, $\varphi_\e$ and the composition with the function $F$. To our knowledge, this is a novel form of functional on the space of probability measures. We now describe some of its basic properties: energy bounds and lower semicontinuity, when $F$ is the logarithm or a power, and differentiability, convexity and subdifferential characterization when $F$ is convex. For the existence and uniqueness of gradient flows associated to this regularized energy, see Section \ref{Gamma convergence GF section}.

\begin{remark}
	Although the regularized energy in Definition \ref{energy def} is of a novel form, it was noticed in \cite[Proposition 6.9]{PatacchiniThesis} that a previous particle method for diffusive gradient flows leads to a similar regularized internal energy after space discretization \cite{CPSW,CHPW}. The essential difference between these two methods stands in the choice of the mollifier, which, instead of satisfying \ref{mollifierAssumption}, is a very singular potential.
\end{remark}

We begin with inequalities relating the regularized internal energies to the unregularized energies. See Appendix \ref{appendix preliminaries} for the proof, which is a consequence of Jensen's inequality and a Carleman-type estimate on the lower bound of the entropy \cite[Lemma 4.1]{CPSW}.

\begin{prop} \label{relative sizes lemma}
Let $\e >0$. If $m = 1$, suppose $\mu \in \P_2(\Rd)$, and if $m>1$, suppose $\mu \in \P(\Rd)$. Then,
\begin{align} \label{relative sizes equation}
	\F^m(\mu) + C_\e \geq \F^m_\e(\mu) \geq \F^m(\zeta_\e*\mu)  &\quad \text{ for } \quad 1 \leq m \leq 2, \\
	\F^m_\e(\mu) \leq \F^m(\zeta_\e*\mu) &\quad \text{ for } \quad m \geq 2. \label{relative sizes equation 2}
\end{align}
where $C_\e= C_\e(m,\mu)  \to 0$ as $\e \to 0$.
Furthermore,  for all $\delta >0$, we have
\begin{align} \label{lower bounds}  
	\F^m_\e(\mu)  \geq  \begin{cases}- (2\pi/\delta)^{d/2} - 2\delta (M_2(\mu)+ \e^2 M_2(\zeta)) &\text{ if } m =1, \\ 0 &\text{ if } m>1 .\end{cases} 
\end{align}
\end{prop}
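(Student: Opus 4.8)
The plan is to handle the three claims in turn, in each case exploiting the factorization $\varphi_\e = \zeta_\e * \zeta_\e$ together with Jensen's inequality applied to the convex or concave functions $F_m$.

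\emph{The upper bound $\F_\e^m(\mu) \le \F^m(\zeta_\e*\mu)$ for $m \ge 2$ and the companion lower bound for $1 \le m \le 2$.} First I would write, using that $\varphi_\e*\mu = \zeta_\e*(\zeta_\e*\mu)$ and the mollifier-exchange identity $\int \phi\, d(\psi*\mu) = \int \phi*\psi\, d\mu$ from the Preliminaries,
\[
	\F_\e^m(\mu) = \int F_m(\varphi_\e*\mu)\, d\mu = \int F_m\big(\zeta_\e*(\zeta_\e*\mu)\big)\,d\mu.
\]
The key observation is that, since $\|\zeta_\e\|_{L^1} = 1$, the quantity $\zeta_\e*(\zeta_\e*\mu)(x) = \int \zeta_\e(x-y)(\zeta_\e*\mu)(y)\,dy$ is an average of the values of $\zeta_\e*\mu$ against the probability measure $\zeta_\e(x-\cdot)\,\mathcal L^d$. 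Hence Jensen's inequality applies: $F_m$ is concave on $(0,\infty)$ for $1 \le m \le 2$ (recall $F_1 = \log$ and $F_m(s) = s^{m-1}/(m-1)$ with $0 \le m-1 \le 1$) and convex for $m \ge 2$. In the concave case this gives $F_m(\zeta_\e*(\zeta_\e*\mu))(x) \ge \int \zeta_\e(x-y) F_m((\zeta_\e*\mu)(y))\,dy = \zeta_\e*\big(F_m(\zeta_\e*\mu)\big)(x)$, and integrating against $\mu$ and moving the remaining $\zeta_\e$ back onto $\mu$ yields $\F_\e^m(\mu) \ge \int F_m(\zeta_\e*\mu)\,d(\zeta_\e*\mu) = \F^m(\zeta_\e*\mu)$, which is \eqref{relative sizes equation} (second inequality). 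The convex case reverses the inequality and gives \eqref{relative sizes equation 2}. A small point to check is integrability/sign of $F_m(\zeta_\e*\mu)$ so that Fubini is legitimate; for $m>1$ this is clear since $F_m \ge 0$, and for $m=1$ one uses that $\zeta_\e*\mu \in L^\infty \cap L^1$ so $\log(\zeta_\e*\mu)$ has a controlled positive part and the negative part is handled by the same Carleman-type bound used below.

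\emph{The upper bound $\F_\e^m(\mu) \le \F^m(\mu) + C_\e$ for $1 \le m \le 2$.} Here the strategy is opposite: apply Jensen to the \emph{outer} convolution relative to $\mu$ rather than to $\zeta_\e*\mu$. Concretely, by definition $\F_\e^m(\mu) = \int F_m(\varphi_\e*\mu)(x)\,d\mu(x)$, and for $m=1$ one writes $\log(\varphi_\e*\mu)(x) = \log \int \varphi_\e(x-y)\,d\mu(y)$. For the power case one uses that $t \mapsto t^{m-1}$ is concave and subadditive-type estimates; in both cases I expect the cleanest route is to compare $\F_\e^m(\mu)$ with $\F^m(\mu)$ by regularizing $\mu \approx \mu * \varphi_\e$ inside the entropy and tracking the error. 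For $m>1$ this should come down to an elementary inequality of the form $\int (\varphi_\e*\mu)^{m-1} d\mu \le \int \rho^{m-1}\rho\, d\mathcal L^d + (\text{error})$ when $\mu = \rho\,\mathcal L^d$ (and the left side is anyway $\le F_m(\|\varphi_\e\|_\infty) < \infty$ when $\mu$ is not absolutely continuous, making the claim automatic since $\F^m(\mu) = +\infty$). For $m=1$, the error term $C_\e$ is produced by an explicit continuity estimate on the entropy under mollification; this is where \cite[Lemma 4.1]{CPSW} (the Carleman-type lower bound) enters, controlling the negative part uniformly so that dominated convergence forces $C_\e \to 0$. I would invoke that lemma as a black box. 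The dependence $C_\e = C_\e(m,\mu)$ is fine since no uniformity in $\mu$ is asserted.

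\emph{The lower bounds \eqref{lower bounds}.} For $m>1$ this is immediate: $F_m \ge 0$, so $\F_\e^m(\mu) = \int F_m(\varphi_\e*\mu)\,d\mu \ge 0$. For $m=1$ the statement is exactly a Carleman-type estimate applied to the probability density $\varphi_\e*\mu$: for any $\delta>0$ one splits $\int (\varphi_\e*\mu)\log(\varphi_\e*\mu)$ using $\log s \ge -\delta s^{-1} e^{-\delta|x|^2}\cdot(\dots)$ — more precisely one uses the standard inequality $\rho\log\rho \ge -\delta|x|^2 \rho - (2\pi/\delta)^{d/2} e^{-\delta|x|^2}\cdot(\dots)$ type bound (see \cite[Lemma 4.1]{CPSW}) to obtain $\F_\e^1(\mu) \ge -(2\pi/\delta)^{d/2} - 2\delta M_2(\varphi_\e*\mu)$, and then estimate the second moment of the mollified measure by $M_2(\varphi_\e*\mu) \le M_2(\mu) + \e^2 M_2(\zeta) + (\text{cross term})$, absorbing constants to reach the stated form. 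The cross term is controlled by Cauchy–Schwarz and may be absorbed into the $2\delta$ prefactor at the cost of adjusting $\delta$; I'd verify the constants but expect exactly the bound written.

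The main obstacle is the $m=1$, $1 \le m \le 2$ upper bound $\F_\e^1(\mu) \le \F^1(\mu) + C_\e$ with $C_\e \to 0$: making the error quantitative requires genuine control of the entropy's modulus of continuity under mollification, which is delicate because $s \mapsto s\log s$ is not Lipschitz near $0$ or $\infty$. This is precisely why the Carleman-type lemma \cite[Lemma 4.1]{CPSW} is needed — to tame the $s \to 0$ behavior — while the $s \to \infty$ behavior is controlled by the uniform bound $\varphi_\e*\mu \le \|\varphi_\e\|_{L^\infty}$ only when $\mu$ has bounded density, so the general case must be split according to whether $\F^1(\mu)$ is finite. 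Everything else is a routine application of Jensen's inequality and the mollifier-exchange identities already established in the Preliminaries.
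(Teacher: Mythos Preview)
Your treatment of the right inequality in \eqref{relative sizes equation} and of \eqref{relative sizes equation 2} via Jensen and the factorization $\varphi_\e = \zeta_\e*\zeta_\e$ is exactly what the paper does.

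The left inequality $\F^m_\e(\mu) \le \F^m(\mu) + C_\e$ is where your proposal is vague, and the paper's argument is much simpler than you anticipate. For $m=1$, the difference is literally a relative entropy:
\[
\F^1(\mu) - \F^1_\e(\mu) = \int \log\frac{d\mu}{d(\varphi_\e*\mu)}\, d\mu = \mathcal H(\mu \mid \varphi_\e*\mu) \ge 0,
\]
so $C_\e = 0$ with no continuity-of-entropy argument and no Carleman lemma needed here. For $1 < m \le 2$, the paper uses the above-the-tangent inequality for the concave $F_m$ at the point $\mu(x)$, giving
\[
\F^m(\mu) - \F^m_\e(\mu) \ge \int (\mu - \varphi_\e*\mu)\,\mu^{m-2}\,d\mu \ge -\|\mu - \varphi_\e*\mu\|_{L^m}\|\mu\|_{L^m}^{m-1},
\]
and the right side tends to zero because $\mu \in D(\F^m) \subset L^m$. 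So the ``main obstacle'' you flag dissolves once you recognize the relative-entropy identity and the tangent-line trick.

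For the lower bound \eqref{lower bounds} with $m=1$, you have a small confusion: $\F^1_\e(\mu) = \int \log(\varphi_\e*\mu)\,d\mu$ is \emph{not} the entropy $\int (\varphi_\e*\mu)\log(\varphi_\e*\mu)\,d\mathcal L^d$ of the density $\varphi_\e*\mu$, so Carleman does not apply directly to $\F^1_\e(\mu)$. The paper instead chains through the right inequality you already proved, $\F^1_\e(\mu) \ge \F^1(\zeta_\e*\mu)$, and applies the Carleman estimate to $\F^1(\zeta_\e*\mu)$; the second-moment bound is then $M_2(\zeta_\e*\mu) \le 2M_2(\mu) + 2\e^2 M_2(\zeta)$ (via $|x+y|^2 \le 2|x|^2 + 2|y|^2$, no cross term to absorb). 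Routing through $\zeta_\e*\mu$ rather than $\varphi_\e*\mu$ is what makes the constants come out as stated.
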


For all $\e >0$, the regularized entropies are lower semicontinuous with respect to weak-* convergence ($m>1$) and Wasserstein convergence ($m=1$). For $m>2$, we prove this using a theorem of Ambrosio, Gigli, and Savar\'e on the convergence of maps with respect to varying measures; see Proposition \ref{AGSthm}. For $1<m \leq 2$, this is a consequence of Jensen's inequality.  For $m=1$, we apply both Jensen's inequality and a version of Fatou's lemma for varying measures; see Lemma \ref{lem:fatou-varying}. In this case, we also require that the mollifier $\varphi$ is a Gaussian, so that we can get the bound from below required by Fatou's lemma. We refer the reader to Appendix \ref{appendix preliminaries} for the proof.

\begin{prop}[lower semicontinuity] \label{lower semicontinuity}
Let $\e >0$. Then
	\begin{enumerate}[(i)]
		\item \label{it:lsc-pme} $\F^m_\e$ is lower semicontinuous with respect to weak-$^*$ convergence in $\P(\Rd)$ for all $m>1$;
		\item \label{it:lsc-he} if $\varphi$ is a Gaussian, then $\F^1_\e$ is lower semicontinuous with respect to the quadratic Wasserstein convergence in $\P_2(\R^d)$.
	\end{enumerate}
\end{prop}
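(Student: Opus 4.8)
The plan is to prove each item by exploiting the structure $\F^m_\e(\mu) = \int F_m(\varphi_\e * \mu)\, d\mu$ together with the fact that $\varphi_\e = \zeta_\e * \zeta_\e$, so that $\varphi_\e * \mu = \zeta_\e * (\zeta_\e * \mu)$ and the integrand depends on $\mu$ in a "mollified" (hence weakly continuous) way, while the outer integration against $\mu$ is the delicate part. Throughout I would use Lemma \ref{weakst convergence mollified sequence}: if $\mu_n \wsto \mu$ then $\varphi_\e * \mu_n \to \varphi_\e * \mu$ and $\zeta_\e * \mu_n \to \zeta_\e * \mu$, and in fact this convergence is locally uniform since $\zeta_\e \in C^2$ with the decay of Assumption \ref{mollifierAssumption} (a standard equicontinuity/Arzelà–Ascoli argument: $\zeta_\e * \mu_n$ and their gradients are uniformly bounded and equicontinuous on compacts, and converge pointwise).

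For item \eqref{it:lsc-pme}, take $\mu_n \wsto \mu$ in $\P(\Rd)$; I want $\liminf_n \F^m_\e(\mu_n) \geq \F^m_\e(\mu)$. Write $g_n = \zeta_\e * \mu_n$, so $\varphi_\e*\mu_n = \zeta_\e * g_n$ and $\F^m_\e(\mu_n) = \int (\zeta_\e * g_n)^{m-1}/(m-1) \, d\mu_n = \frac{1}{m-1}\int \zeta_\e * \big( (\zeta_\e*g_n)^{m-1} \big)\, d\mu_n$ after moving one copy of $\zeta_\e$ onto the integrand (using that $\zeta_\e$ is even, as in the displayed mollifier-exchange identity in the Preliminaries) — wait, more carefully one keeps the form $\int h_n \, d\mu_n$ with $h_n(x) = F_m(\varphi_\e * \mu_n(x))$, a bounded continuous function. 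Since $\varphi_\e * \mu_n \to \varphi_\e*\mu$ locally uniformly and $F_m$ is continuous on $(0,\infty)$, with $0 < \varphi_\e*\mu_n \leq \|\varphi_\e\|_{L^\infty}$ uniformly and $F_m \geq 0$ for $m>1$, I get $h_n \to h := F_m(\varphi_\e*\mu)$ locally uniformly and $h_n \geq 0$. Then by the lower-semicontinuity half of the portmanteau theorem for nonnegative functions that converge locally uniformly against weakly-$*$ converging measures (equivalently, a version of Fatou for varying measures, Lemma \ref{lem:fatou-varying}, or a direct $\epsilon$-$\delta$ truncation argument on a large ball), $\liminf_n \int h_n \, d\mu_n \geq \int h \, d\mu = \F^m_\e(\mu)$. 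The nonnegativity of $F_m$ for $m>1$ (noted after Proposition \ref{relative sizes lemma}) is what makes this clean and avoids any moment assumption, consistent with $\F^m_\e$ being defined on all of $\P(\Rd)$.

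For item \eqref{it:lsc-he}, the issue is that $F_1 = \log$ is unbounded below near $0$ and $\log(\varphi_\e*\mu)$ need not be nonnegative, so Fatou-type arguments need a uniform lower bound on the integrand. Here I would use that $\varphi$ is Gaussian: then $\varphi_\e * \mu(x) \geq c_\e \exp(-|x|^2/(2\e^2 \sigma^2)) \cdot (\text{something})$ — more precisely, by Jensen applied to the Gaussian kernel, $\log(\varphi_\e*\mu)(x) \geq \int \log \varphi_\e(x-y)\,d\mu(y) = -\tfrac{d}{2}\log(2\pi\e^2\sigma^2) - \tfrac{1}{2\e^2\sigma^2}\int|x-y|^2 d\mu(y) \geq -C_\e(1 + |x|^2)$ for a constant depending on $\e$ and $M_2(\mu)$. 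Since along a Wasserstein-convergent sequence $\mu_n \xrightarrow{W_2}\mu$ we have $M_2(\mu_n)\to M_2(\mu)$, these lower bounds are uniform in $n$: there is $g \in L^1(\mu_n;\Rd)$, say $g(x) = -C(1+|x|^2)$ with $C$ uniform, bounding $\log(\varphi_\e*\mu_n)$ from below, and $\int g\,d\mu_n \to \int g\,d\mu$ by $W_2$-convergence. Combining: apply Lemma \ref{lem:fatou-varying} to the nonnegative functions $h_n := \log(\varphi_\e*\mu_n) - g$, which converge locally uniformly to $\log(\varphi_\e*\mu) - g$ (again by local uniform convergence $\varphi_\e*\mu_n \to \varphi_\e*\mu$ from Lemma \ref{weakst convergence mollified sequence} and the uniform positivity on compacts), to get $\liminf_n \int h_n\,d\mu_n \geq \int (\log(\varphi_\e*\mu)-g)\,d\mu$; adding back $\int g\,d\mu_n \to \int g\,d\mu$ yields $\liminf_n \F^1_\e(\mu_n) \geq \F^1_\e(\mu)$.

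The main obstacle is the $m=1$ case: producing a lower bound on $\log(\varphi_\e*\mu_n)$ that is (a) integrable against each $\mu_n$ and (b) has integrals converging to the right limit, uniformly in $n$ — this is exactly why the hypothesis that $\varphi$ is Gaussian and the passage from weak-$*$ to $W_2$ convergence are needed, and why the Carleman-type lower bound \eqref{lower bounds} appears in Proposition \ref{relative sizes lemma}. A secondary technical point, in both cases, is justifying that weak-$*$ convergence of $\mu_n$ upgrades to locally uniform convergence of $\varphi_\e*\mu_n$ (and control of $\log$ away from its singularity on compact sets), which follows from the regularity and decay of $\zeta$ in Assumption \ref{mollifierAssumption} via Arzelà–Ascoli.
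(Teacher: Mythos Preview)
Your proposal is correct and follows essentially the same approach as the paper: both parts rely on the pointwise convergence $\varphi_\e*\mu_n(x_n)\to\varphi_\e*\mu(x)$ together with Fatou's lemma for varying measures (Lemma~\ref{lem:fatou-varying}), using nonnegativity of $F_m$ for $m>1$ and, for $m=1$, subtracting off a quadratic lower bound on $\log(\varphi_\e*\mu_n)$ whose integral is controlled by $W_2$-convergence. Your derivation of that quadratic lower bound via Jensen's inequality for the concave logarithm is slightly more explicit than the paper's, which simply asserts the bound from the Gaussian form of $\varphi$, but the structure of the argument is the same.
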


When $F$ is convex, the regularized internal energies are differentiable along generalized geodesics. The proof relies on the fact that $F$ is differentiable and $\varphi_\e \in C^2(\Rd)$, with bounded Hessian; see Appendix \ref{appendix preliminaries}.
\begin{prop}[differentiability] \label{diff prop}
Let $F$ satisfy Assumption \ref{as:F} and be convex. Given $\mu_1,\mu_2,\mu_3 \in \P_2(\Rd)$ and $\gamma\in \P_2(\Rd\times\Rd\times\Rd)$ with $\pi^i_\#\gamma = \mu_i$, let $\mu_\alpha^{2\to3} = \left((1-\alpha)\pi^2+\alpha\pi^3\right)_\# \gamma$ for $\alpha \in[0,1]$. Then
\begin{equation} \label{diff lem eqn} 
	\begin{split}
		\frac{d}{d \alpha } &\left.\F_\e(\mu_\alpha^{2\to3}) \right|_{\alpha = 0}\\
		&=\iiint \iiint F'\left(\varphi_\e*\mu_2(y) \right) \grad \varphi_\e(y-v) \cdot (z-w-(y-v)) \,d \gamma(u,v,w)\, d \gamma(x,y,z). 
	\end{split}
\end{equation}
\end{prop}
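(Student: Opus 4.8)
The plan is to differentiate the explicit expression $\F_\e(\mu_\alpha^{2\to3}) = \int F\bigl(\varphi_\e * \mu_\alpha^{2\to3}\bigr)\,d\mu_\alpha^{2\to3}$ directly, after rewriting everything in terms of the base plan $\gamma$. First I would introduce the transport-type maps $r_\alpha, s_\alpha \colon \Rd \times \Rd \times \Rd \to \Rd$ defined by $r_\alpha(x,y,z) = (1-\alpha)y + \alpha z$ (so that $\mu_\alpha^{2\to3} = (r_\alpha)_\#\gamma$). Then, using that $\varphi_\e$ is even and the push-forward structure, one writes for $x' \in \Rd$
\[
\varphi_\e * \mu_\alpha^{2\to3}(x') = \iiint \varphi_\e\bigl(x' - r_\alpha(u,v,w)\bigr)\,d\gamma(u,v,w),
\]
and hence
\[
\F_\e(\mu_\alpha^{2\to3}) = \iiint F\!\left( \iiint \varphi_\e\bigl(r_\alpha(x,y,z) - r_\alpha(u,v,w)\bigr)\,d\gamma(u,v,w) \right) d\gamma(x,y,z).
\]
This is now an explicit function of $\alpha$, and the whole problem reduces to justifying differentiation under the (double) integral sign and applying the chain rule at $\alpha = 0$, where $r_0(x,y,z) = y$ and $\tfrac{d}{d\alpha} r_\alpha(x,y,z) = z - y$.

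The key steps, in order: (1) Establish uniform bounds allowing differentiation under the integral. The inner integrand $\varphi_\e \in C^2(\Rd)$ is bounded with bounded gradient and Hessian (by Assumption \ref{mollifierAssumption}), so $\alpha \mapsto \varphi_\e(r_\alpha(x,y,z) - r_\alpha(u,v,w))$ is $C^1$ with derivative $\grad\varphi_\e(r_\alpha - r_\alpha')\cdot\bigl((z-y)-(w-v)\bigr)$, which is dominated by $\|\grad\varphi_\e\|_\infty\bigl(|z-y| + |w-v|\bigr)$; since $\mu_1,\mu_2,\mu_3 \in \P_2(\Rd)$ and $\gamma$ has finite second moments, this is $\gamma\otimes\gamma$-integrable uniformly in $\alpha$, so the inner convolution $g_\alpha(x,y,z) := \varphi_\e * \mu_\alpha^{2\to3}(r_\alpha(x,y,z))$ is $C^1$ in $\alpha$ with a locally bounded derivative. (2) Since $F \in C^2(0,\infty)$ and $g_\alpha$ takes values in the compact range $[c, \|\varphi_\e\|_\infty] \subset (0,\infty)$ for $\alpha$ near $0$ (a positive lower bound follows because $\varphi_\e = \zeta_\e * \zeta_\e > 0$ everywhere, being a convolution of a nonnegative $C^2$ function with itself — strictly positive since $\|\zeta\|_{L^1}=1$), the composition $F(g_\alpha)$ is $C^1$ in $\alpha$ with derivative dominated by $\max_{[c,\|\varphi_\e\|_\infty]}|F'| \cdot |\partial_\alpha g_\alpha|$, again uniformly $\gamma$-integrable near $\alpha=0$. (3) Apply the dominated convergence theorem to pass $\tfrac{d}{d\alpha}$ inside both integrals, then evaluate at $\alpha = 0$ using $r_0 = \pi^2$ (the $y$-variable) and $\partial_\alpha r_\alpha = \pi^3 - \pi^2$; the chain rule gives exactly the claimed six-fold integral, with $F'(\varphi_\e*\mu_2(y))$ appearing because $g_0(x,y,z) = \varphi_\e*\mu_2(y)$ (as $\mu_0^{2\to3} = \pi^2_\#\gamma = \mu_2$).

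I expect the main obstacle to be step (1)–(2): making the differentiation-under-the-integral rigorous requires a genuinely uniform (in $\alpha$, on a neighborhood of $0$) $\gamma\otimes\gamma$-integrable dominating function for the $\alpha$-derivative of the double integrand, and one must be careful that the argument of $F$ stays bounded away from $0$ and $\infty$ uniformly — this is where the positivity and $L^\infty$-boundedness of $\varphi_\e$, together with the finite-second-moment hypotheses on $\mu_1, \mu_2, \mu_3$, are used in an essential way. The remaining manipulations (evenness of $\varphi_\e$ to move the mollifier onto $\mu_\alpha$, the chain rule, relabeling the integration variables as $(x,y,z)$ and $(u,v,w)$) are routine bookkeeping. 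One should also note that although $\gamma$ is only assumed to have $\pi^i_\#\gamma = \mu_i$ (not that its pairwise marginals are optimal), the formula \eqref{diff lem eqn} holds for any such $\gamma$; the optimality of the marginals is only needed later when this differentiability is combined with the generalized-geodesic structure to deduce convexity.
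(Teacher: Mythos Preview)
Your overall strategy matches the paper's: write $\F_\e(\mu_\alpha^{2\to3})$ as a double integral against $\gamma\otimes\gamma$, differentiate under the integral sign via dominated convergence, and evaluate at $\alpha=0$. The paper carries this out by expanding $\varphi_\e*\mu_\alpha^{2\to3}((1-\alpha)y+\alpha z)-\varphi_\e*\mu_2(y)$ to first order with a Taylor remainder of size $O(\alpha^2)$, which is equivalent to your chain-rule computation.

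There is, however, a real gap in your step (2). You claim $g_\alpha$ takes values in a compact interval $[c,\|\varphi_\e\|_\infty]$ with $c>0$, arguing that $\varphi_\e>0$ everywhere. Neither assertion is justified. First, Assumption~\ref{mollifierAssumption} allows $\zeta$ (hence $\varphi_\e$) to be compactly supported. Second, even when $\varphi_\e>0$ pointwise (e.g.\ a Gaussian), one still has $\varphi_\e*\mu_\alpha^{2\to3}(x')\to 0$ as $|x'|\to\infty$, and since $\mu_2,\mu_3$ need not be compactly supported, $r_\alpha(x,y,z)$ is unbounded on $\supp\gamma$ --- so no uniform lower bound $c>0$ exists in general. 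Your proposed route to bounding $|F'(g_\alpha)|$ via compactness of the range therefore fails.

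The fix is to use the convexity hypothesis you have not yet exploited: since $F$ is convex, $F'$ is nondecreasing, and since $F$ is nondecreasing (Remark~\ref{rem:Fprime-positive}), $F'\ge 0$. Hence $0\le F'(g_\alpha)\le F'(\|\varphi_\e\|_\infty)$ with no lower bound on $g_\alpha$ needed. This gives the integrable dominant $F'(\|\varphi_\e\|_\infty)\,\|\grad\varphi_\e\|_\infty\,|z-w-(y-v)|$, which is exactly what the paper uses. With this correction your argument goes through.
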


A key consequence of the preceding proposition is that the regularized energies are semiconvex along generalized geodesics, as we now show.

\begin{prop}[convexity]\label{prop:conv}
Suppose $F$ satisfies Assumption \ref{as:F} and is convex. Then $\F_\e$ is $\lambda_F$-convex along generalized geodesics, where
\be \label{eq:lambda-F}
	\lambda_F = - 2 \|D^2 \varphi_\e \|_{L^\infty(\Rd)} F'(\|\varphi_\e\|_{L^\infty(\Rd)}).
\ee
\end{prop}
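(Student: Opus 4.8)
The plan is to deduce the $\lambda_F$-convexity of $\F_\e$ along generalized geodesics directly from the first-variation formula in Proposition \ref{diff prop}, by differentiating a second time and extracting a lower bound. Fix $\mu_1,\mu_2,\mu_3 \in \P_2(\Rd)$ and $\gamma \in \P_2(\Rd\times\Rd\times\Rd)$ with ${\pi^{1,2}}_\#\gamma\in\Gamma_\mathrm{o}(\mu_1,\mu_2)$ and ${\pi^{1,3}}_\#\gamma\in\Gamma_\mathrm{o}(\mu_1,\mu_3)$, giving the generalized geodesic $\mu_\alpha^{2\to3} = ((1-\alpha)\pi^2+\alpha\pi^3)_\#\gamma$. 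Writing $T_\alpha(x,y,z) = (1-\alpha)y + \alpha z$ so that $\mu_\alpha^{2\to3} = (T_\alpha)_\#\gamma$, the composition $g(\alpha) := \F_\e(\mu_\alpha^{2\to3}) = \int F(\varphi_\e * (T_\alpha)_\#\gamma)(T_\alpha(x,y,z))\,d\gamma(x,y,z)$ is a smooth function of $\alpha$ on $[0,1]$: indeed $\varphi_\e \in C^2(\Rd)$ with bounded derivatives (Assumption \ref{mollifierAssumption}), $F \in C^2(0,\infty)$, and $\varphi_\e * (T_\alpha)_\#\gamma$ is bounded above by $\|\varphi_\e\|_{L^\infty(\Rd)}$ and (being a convolution of a probability measure with a mollifier) satisfies a uniform positive lower bound on the relevant compact set, so $F$ and $F'$ are evaluated on a compact subset of $(0,\infty)$. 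Hence it suffices to show $g''(\alpha) \geq \lambda_F \int |z-y|^2\,d\gamma = \lambda_F W_{2,\gamma}^2(\mu_2,\mu_3)$ for all $\alpha$, since this second-order inequality is equivalent to the defining inequality in Definition \ref{defi:semiconvexity-geod}.

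To compute $g''(0)$ (the computation at general $\alpha$ being identical after relabeling the base point), I would differentiate the right-hand side of \eqref{diff lem eqn}. The integrand of $g'(0)$ is
\[
	\iiint F'(\varphi_\e*\mu_2(y))\,\grad\varphi_\e(y-v)\cdot\big((z-w)-(y-v)\big)\,d\gamma(u,v,w),
\]
integrated against $d\gamma(x,y,z)$; call the inner variables $(u,v,w)$ and note $\mu_2 = \pi^2_\#\gamma = (\pi^2)_\#$ applied to the $(u,v,w)$-copy as well. Differentiating in $\alpha$ produces three types of terms: (a) the derivative hitting $F'(\varphi_\e*\mu_\alpha(y_\alpha))$ through its explicit $\alpha$-dependence in the argument $y_\alpha = (1-\alpha)y+\alpha z$, producing a factor $F''\cdot\grad\varphi_\e\cdot(z-y)$; (b) the derivative hitting $F'$ through the $\alpha$-dependence of the measure $\mu_\alpha$ inside $\varphi_\e*\mu_\alpha$, producing $F''(\cdots)\cdot\int\grad\varphi_\e(y_\alpha - v_\alpha)\cdot(w-v)\,d\gamma$; and (c) the derivative hitting $\grad\varphi_\e(y_\alpha-v_\alpha)$ and the displacement factor, producing $D^2\varphi_\e$ contracted with $((z-w)-(y-v))$ and $((z-y)-(w-v))$ respectively. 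The terms of type (a) and (b), both carrying the factor $F'' \geq 0$ (convexity of $F$), can be recognized as a perfect square: grouping them appropriately gives $\iint F''(\varphi_\e*\mu_\alpha(y_\alpha)) \, \big| \int \grad\varphi_\e(y_\alpha - v_\alpha)\cdot((z-w)-(y-v))\,d\gamma(u,v,w)\big|^2 \, \text{-type}$ nonnegative contributions — this is the standard mechanism by which convexity of the internal density survives the convolution — and hence these are $\geq 0$ and may be discarded. What remains is the type-(c) term, bounded below by $-2\|D^2\varphi_\e\|_{L^\infty(\Rd)}\,F'(\|\varphi_\e\|_{L^\infty(\Rd)})$ times $\int\int |(z-y)-(w-v)|\,|(z-w)-(y-v)|\,d\gamma\,d\gamma$ — using $F' \geq 0$ (Remark \ref{rem:Fprime-positive}) and $F'$ nondecreasing so that $F'(\varphi_\e*\mu_\alpha) \leq F'(\|\varphi_\e\|_{L^\infty})$ — and a Cauchy–Schwarz/AM–GM step bounds this double integral by $2\int|z-y|^2\,d\gamma = 2 W_{2,\gamma}^2(\mu_2,\mu_3)$, yielding exactly $g''(\alpha) \geq \lambda_F W_{2,\gamma}^2(\mu_2,\mu_3)$ with $\lambda_F$ as in \eqref{eq:lambda-F}.

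The main obstacle I anticipate is bookkeeping in the second differentiation: correctly tracking the dependence of $\varphi_\e * \mu_\alpha^{2\to3}$ on $\alpha$ both through the evaluation point and through the push-forward measure, and then correctly identifying the nonnegative "squared" combination so that only the Hessian term is left. Care is needed because the argument of $F'$ in \eqref{diff lem eqn} is $\varphi_\e*\mu_2(y)$ — i.e. evaluated at $\alpha=0$ — so when reading off $g''(0)$ one must first write $g'(\alpha)$ in the analogous form with $\mu_2,y,v$ replaced by $\mu_\alpha^{2\to3}$ and the interpolated points, then differentiate; doing this cleanly requires invoking Proposition \ref{diff prop} with base point shifted to $\mu_\alpha^{2\to3}$, which is legitimate since $\mu_\alpha^{2\to3}\to3'$ along the same geodesic is again a generalized geodesic through a suitable three-plan. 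A secondary technical point is justifying differentiation under the integral sign for $g'$ and $g''$, which follows from the uniform bounds on $F, F', F''$ over the compact range of $\varphi_\e*\mu_\alpha$ together with boundedness of $\grad\varphi_\e$ and $D^2\varphi_\e$ and the finite second moments of $\mu_1,\mu_2,\mu_3$ (dominated convergence). None of these steps is deep, but assembling them in the right order is where the proof's content lies.
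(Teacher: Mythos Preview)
Your second-derivative strategy is sound in outline and does lead to the constant $\lambda_F$, but it differs from the paper's argument, and there is a genuine technical gap. The paper never computes $g''$; instead it uses the above-the-tangent inequality for the convex function $F$ to bound $\F_\e(\mu_3)-\F_\e(\mu_2)$ from below by $\iint F'(\varphi_\e*\mu_2(y))(\varphi_\e(z-w)-\varphi_\e(y-v))\,d\gamma\,d\gamma$, then applies a second-order Taylor bound on $\varphi_\e$ alone and subtracts the expression for $g'(0)$ from Proposition~\ref{diff prop}. This route only ever evaluates $F'$, which is bounded on $(0,\|\varphi_\e\|_{L^\infty}]$ since $F'$ is nonnegative and nondecreasing; $F''$ never appears.

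Your approach, by contrast, requires $g\in C^2$ so that the bound $g''\geq \lambda_F W_{2,\gamma}^2$ makes sense, and you justify this by claiming that $\varphi_\e*\mu_\alpha^{2\to3}$ is uniformly bounded below by a positive constant ``on the relevant compact set.'' This claim is not correct in general: for $\mu_2,\mu_3\in\P_2(\Rd)$ without compact support (and even with compact support if $\varphi$ itself is compactly supported, which Assumption~\ref{mollifierAssumption} permits), there is no such uniform lower bound, and $F''(\varphi_\e*\mu_\alpha)$ may be unbounded---for instance $F_m''(s)=(m-2)s^{m-3}\to\infty$ as $s\to0$ when $2<m<3$. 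You are right that the $F''$ contribution is a nonnegative square and can be \emph{discarded} in a lower bound, but you still need $g''$ to exist before you can discard part of it; the integral defining the $F''$ term could be $+\infty$. One can repair this (e.g.\ truncate/approximate and pass to the limit, or argue directly that $g'(\alpha)-\lambda_F\alpha W_{2,\gamma}^2$ is nondecreasing), but the paper's first-order route sidesteps the issue entirely and is correspondingly cleaner.
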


\begin{proof}
	 Let $(\mu_\alpha^{2\to3})_{\alpha\in[0,1]}$ be a generalized geodesic connecting two probability measures $\mu_2,\mu_3\in\P_2(\R^d)$ with base $\mu_1\in\P_2(\R^d)$; see \eqref{eq:gen-geodesic}.
	 We have, using the above-the-tangent inequality for convex functions,
\begin{align*}
	\F_\e(\mu_3) - \F_\e(\mu_2) &= \iiint \left(F(\varphi_\e*\mu_3)(y) - F(\varphi_\e*\mu_2)(z) \right)  \,d\gamma(x,y,z)\\
	&\geq \iiint F'(\varphi_\e*\mu_2(y)) \left( \varphi_\e*\mu_3(z) - \varphi_\e*\mu_2(y) \right)\,d\gamma(x,y,z) \\
	&= \iiint \iiint F'(\varphi_\e*\mu_2(y)) \left( \varphi_\e(z-w) - \varphi_\e(y-v) \right)\,d\gamma(u,v,w)\,d\gamma(x,y,z).
\end{align*}
Therefore, by Proposition \ref{diff prop},
\begin{align*}
	\F_\e(\mu_3) &- \F_\e(\mu_2) - \frac{d}{d \alpha} \left.\F_\e(\mu_\alpha^{2\to3}) \right|_{\alpha = 0} \\
 	&\geq \iiint \iiint F'(\varphi_\e*\mu_2(y)) \\
 	&\phantom{{}={}}\times \left[ \varphi_\e(z-w) - \varphi_\e(y-v) - \grad \varphi_\e(y-v) \cdot (z-w-(y-v)) \right] d \gamma(u,v,w)\,d \gamma(x,y,z)\\
	&\geq -\frac{\|D^2 \varphi_\e\|_{L^\infty(\Rd)}}{2} \iiint \iiint F'(\varphi_\e*\mu_2(y)) |z-w - (y-v)|^2 \,d \gamma(u,v,w)\,d \gamma(x,y,z) \\
	&\geq -\frac{\|D^2 \varphi_\e\|_{L^\infty(\Rd)} F'(\|\varphi_\e\|_{L^\infty(\Rd)})}{2} \iiint \iiint |z-w - (y-v)|^2\,d \gamma(u,v,w)\,d \gamma(x,y,z) \\
	&\geq -2\|D^2 \varphi_\e\|_{L^\infty(\Rd)} F'(\|\varphi_\e\|_{L^\infty(\Rd)}) W_{2,\gamma}^2(\mu_2,\mu_3),
 \end{align*}
which gives the result.
\end{proof}

We now use the previous results to characterize the subdifferential of the regularized internal energy. The structure of argument is classical (c.f. \cite{JKO, 5person, AGS}), but due to the novel form of our regularized energies, we include the proof in Appendix \ref{appendix preliminaries}.

\begin{prop}[subdifferential characterization] \label{subdiffchar}
Suppose $F$ satisfies Assumption \ref{as:F} and is convex. Let $\e >0$ and $\mu \in D(\F_\e)$. Then
\bes
	v \in \partial \F_\e(\mu) \cap \Tan_\mu\P_2(\R^d) \iff v = \grad \frac{\delta \F_\e}{\delta \mu},
\ees
where 
\be \label{subdiffform} 
	\frac{\delta \F_\e}{\delta \mu} = \varphi_\e* \left(F'\circ(\varphi_\e*\mu) \mu \right) + F\circ(\varphi_\e*\mu), \quad \mbox{$\mu$-almost everywhere}.
\ee
In particular, we have $	|\partial \F_\e|(\mu) = \left\|\grad \frac{\delta \F_\e}{\delta \mu} \right\|_{L^2(\mu;\Rd)}$.
\end{prop}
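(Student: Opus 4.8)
The plan is to prove the subdifferential characterization by a standard two-sided argument: first show that $\grad\frac{\delta\F_\e}{\delta\mu}$ (with the stated formula) lies in $\partial\F_\e(\mu)\cap\Tan_\mu\P_2(\R^d)$, and then invoke the $\lambda_F$-convexity along generalized geodesics from Proposition \ref{prop:conv}, together with a uniqueness argument, to conclude that it is the \emph{only} element of this intersection. Throughout, I would write $\xi := \grad\frac{\delta\F_\e}{\delta\mu}$ and verify first that $\xi \in L^2(\mu;\Rd)$: since $\varphi_\e\in C^2(\Rd)$ with bounded first and second derivatives and $F'$ is continuous and bounded on the range $(0,\|\varphi_\e\|_{L^\infty}]$ of $\varphi_\e*\mu$, the gradient $\grad\varphi_\e*(F'\circ(\varphi_\e*\mu)\mu) + F'\circ(\varphi_\e*\mu)(\grad\varphi_\e*\mu)$ is bounded, hence in $L^2(\mu;\Rd)$; and it is a gradient, so by the density definition of $\Tan_\mu\P_2(\R^d)$ it is tangent (or more carefully, its $L^2(\mu)$-projection onto the tangent cone is the relevant object, but since it is already of gradient form it belongs to the closure).

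For the inclusion $\xi\in\partial\F_\e(\mu)$, the key computation is an expansion of $\F_\e(\nu)-\F_\e(\mu)$ along a generalized geodesic, or more directly along the plan $\gamma\in\Gamma_\mathrm{o}(\mu,\nu)$. Writing $\F_\e(\nu)-\F_\e(\mu) = \iint (F(\varphi_\e*\nu)(y) - F(\varphi_\e*\mu)(x))\,d\gamma(x,y)$ and using convexity of $F$ (the above-the-tangent inequality, exactly as in the proof of Proposition \ref{prop:conv}), one bounds this below by $\iint F'(\varphi_\e*\mu(x))(\varphi_\e*\nu(y) - \varphi_\e*\mu(x))\,d\gamma(x,y)$. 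Then one re-expresses $\varphi_\e*\nu(y) - \varphi_\e*\mu(x) = \iint(\varphi_\e(y-y') - \varphi_\e(x-x'))\,d\gamma(x',y')$, Taylor-expands $\varphi_\e(y-y')$ around $\varphi_\e(x-x')$ to first order, and recognizes the leading term as $\iint\langle\xi(x),y-x\rangle\,d\gamma(x,y)$ after using the symmetry/evenness of $\varphi_\e$ and Fubini to move one convolution onto $F'\circ(\varphi_\e*\mu)\,\mu$; the quadratic remainder is controlled by $\|D^2\varphi_\e\|_{L^\infty}F'(\|\varphi_\e\|_{L^\infty})$ times $\int|y-x|^2\,d\gamma = W_2(\mu,\nu)^2 = o(W_2(\mu,\nu))$, which is exactly the error term allowed in Definition \ref{subdiffdef}. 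This gives $\xi\in\partial\F_\e(\mu)$.

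For uniqueness — that no other element of $\partial\F_\e(\mu)\cap\Tan_\mu\P_2(\R^d)$ exists — I would use the fact that for a $\lambda$-convex functional along generalized geodesics (Proposition \ref{prop:conv}), the subdifferential at a point, intersected with the tangent space, is a singleton whenever it is nonempty; this is classical (see \cite[Chapter 10]{AGS} and the treatment in \cite{5person}). Concretely, if $v_1, v_2$ both belong to $\partial\F_\e(\mu)\cap\Tan_\mu\P_2(\R^d)$, one tests the subdifferential inequality for $v_1$ against perturbations of $\mu$ in the direction $v_2$ and vice versa (pushing forward $\mu$ by $(\id + tw)$ for $w\in C_c^\infty(\Rd;\Rd)$), uses the differentiability from Proposition \ref{diff prop} to compute the exact directional derivative, and deduces $\langle v_1 - v_2, w\rangle_{L^2(\mu)} = 0$ for all such $w$, hence $v_1 = v_2$ in $\Tan_\mu\P_2(\R^d)$. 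The final claim $|\partial\F_\e|(\mu) = \|\xi\|_{L^2(\mu;\Rd)}$ then follows from \cite[Theorem 10.3.11 and Corollary 10.3.12]{AGS}, which identify the local slope of a $\lambda$-convex functional with the norm of the minimal (tangential) subgradient, and we have just shown that minimal subgradient is $\xi$.

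The main obstacle I anticipate is the bookkeeping in the first-order Taylor expansion of the double convolution: one must carefully track which variable the gradient $\grad\varphi_\e$ acts in, exploit the evenness of $\varphi_\e$ (so that $\grad\varphi_\e(x-x') = -\grad\varphi_\e(x'-x)$) to reassemble the two pieces of $\xi$ — the term $\grad\varphi_\e*(F'\circ(\varphi_\e*\mu)\mu)$ coming from differentiating in the "$x'$" slot and the term $F'\circ(\varphi_\e*\mu)(\grad\varphi_\e*\mu)$ from the "$x$" slot — and confirm that the Fubini interchanges are justified by the boundedness of all integrands (which holds since $\varphi_\e$, $\grad\varphi_\e$, $D^2\varphi_\e$ are bounded and $F'$ is bounded on the compact range of $\varphi_\e*\mu$). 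Once this algebra is set up, the rest is a routine assembly of the pieces already available from Propositions \ref{diff prop} and \ref{prop:conv}, so I would relegate the full computation to Appendix \ref{appendix preliminaries} as the paper indicates.
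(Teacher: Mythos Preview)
Your proposal is correct and follows essentially the same route as the paper's proof: the inclusion $\xi := \grad\frac{\delta\F_\e}{\delta\mu} \in \partial\F_\e(\mu)$ via the above-the-tangent inequality for $F$ plus a Taylor expansion of $\varphi_\e$ (the paper packages this as a monotonicity argument for an auxiliary function $f(\alpha)$, but the content is identical), and uniqueness via testing the subdifferential inequality against pushforwards $(\id+\alpha\grad\psi)_\#\mu$ combined with the exact derivative from Proposition~\ref{diff prop}.

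One technical point to tighten: in your uniqueness step you test against $(\id+tw)_\#\mu$ for arbitrary $w\in C_c^\infty(\Rd;\Rd)$, but Definition~\ref{subdiffdef} requires the infimum over $\Gamma_\mathrm{o}(\mu,\nu)$, and for general $w$ the plan $(\id,\id+tw)_\#\mu$ need not be optimal. The paper restricts to $w=\grad\psi$ with $\psi\in C_c^\infty(\Rd)$, so that $\id+\alpha\grad\psi$ is the optimal map for small $\alpha$; this yields only $\int\langle v-\xi,\grad\psi\rangle\,d\mu=0$, i.e., $\nabla\cdot((v-\xi)\mu)=0$. The conclusion $v=\xi$ then requires the orthogonal-decomposition argument via \cite[Proposition~8.4.3]{AGS} together with the minimal-norm property of $\xi$ (which the paper establishes separately by bounding $\|\xi\|_{L^2(\mu)}\leq|\partial\F_\e|(\mu)$). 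Your sketch jumps straight to $v_1=v_2$ from orthogonality against all of $C_c^\infty(\Rd;\Rd)$, which would be immediate but is not what you actually obtain; the fix is exactly the tangent-space/minimal-norm step the paper supplies.
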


As a consequence of this characterization of the subdifferential, we obtain the analogous result for the full energy $\E_\e$, as in Definition \ref{full energies}. See Appendix \ref{appendix preliminaries} for the proof.
\begin{cor} \label{full subdiff char}
Suppose $F$ satisfies Assumption \ref{as:F} and is convex. Let $\e >0$ and $\mu \in D(\E_\e)$. Suppose $V, W \in C^1(\R^d)$ are semiconvex, with at most quadratic growth, and $W$ is even.
Then
\bes
	v \in \partial \E_\e (\mu) \cap \Tan_\mu\P_2(\R^d) \iff v = \grad \frac{\delta \E_\e}{\delta \mu},
\ees
where
\bes
	\frac{\delta \E_\e}{\delta \mu} = \varphi_\e* \left(F'\circ(\varphi_\e*\mu) \mu \right) + F\circ(\varphi_\e*\mu) + V + W*\mu, \quad \mbox{$\mu$-almost everywhere}.
\ees
In particular, we have $|\partial \E_\e|(\mu) = \left\|\grad \frac{\delta \E_\e}{\delta \mu} \right\|_{L^2(\mu;\Rd)}$.
\end{cor}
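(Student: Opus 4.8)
The plan is to reduce the statement for the full energy $\E_\e$ to the already-established subdifferential characterization of the regularized internal energy $\F_\e$ (Proposition \ref{subdiffchar}) together with the classical theory for potential and interaction energies. Writing $\E_\e = \V + \mathcal{W} + \F_\e$, where $\V(\mu) = \int V\,d\mu$ and $\mathcal{W}(\mu) = \tfrac12\int (W*\mu)\,d\mu$, the first step is to recall that for $V,W \in C^1(\R^d)$ semiconvex with at most quadratic growth and $W$ even, the functionals $\V$ and $\mathcal{W}$ are semiconvex along generalized geodesics, and their subdifferentials (intersected with the tangent space) are the single elements $\grad V$ and $\grad (W*\mu) = (\grad W)*\mu$ respectively (c.f. \cite[Theorem 10.4.13, Proposition 10.4.2]{AGS}, \cite{5person}); here the quadratic growth assumption guarantees $\grad V, (\grad W)*\mu \in L^2(\mu;\R^d)$, so these are genuine elements of the relevant space.

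The second step is the additivity of subdifferentials. In general, one only has $\partial \V(\mu) + \partial \mathcal{W}(\mu) + \partial \F_\e(\mu) \subseteq \partial \E_\e(\mu)$ from the definition, which gives the implication "$\Leftarrow$" immediately: if $v = \grad \tfrac{\delta \E_\e}{\delta \mu} = \grad V + (\grad W)*\mu + \grad \tfrac{\delta \F_\e}{\delta \mu}$, then $v \in \partial \E_\e(\mu)$ and, being a limit of gradients of smooth functions (each summand lies in $\Tan_\mu \P_2(\R^d)$ by Proposition \ref{subdiffchar} and the regularity of $V,W$), $v \in \Tan_\mu \P_2(\R^d)$. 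For the reverse implication "$\Rightarrow$", I would invoke the fact that for $\lambda$-convex functionals the subdifferential intersected with the tangent space is at most a singleton — it coincides with the element of minimal norm, which is unique. Since $\E_\e = \V + \mathcal{W} + \F_\e$ is $\lambda$-convex along generalized geodesics (sum of the $\lambda_V$-, $\lambda_W$-, and $\lambda_F$-convexity from Proposition \ref{prop:conv}), and since we have exhibited one element of $\partial \E_\e(\mu) \cap \Tan_\mu \P_2(\R^d)$ in the first paragraph, namely $\grad \tfrac{\delta \E_\e}{\delta \mu}$, uniqueness forces any $v$ in this intersection to equal it. Finally, $|\partial \E_\e|(\mu) = \|\grad \tfrac{\delta \E_\e}{\delta \mu}\|_{L^2(\mu;\R^d)}$ follows because for $\lambda$-convex functionals the local slope equals the norm of the minimal-norm element of the subdifferential (see \cite[Theorem 23.9]{5person} or \cite[Proposition 1.4.4, Theorem 2.4.9]{AGS}).

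The main obstacle is the additivity/uniqueness argument: one must ensure that membership in $\partial \E_\e(\mu) \cap \Tan_\mu\P_2(\R^d)$ is genuinely unique rather than merely containing the expected element. This is clean precisely because of $\lambda$-convexity along generalized geodesics, which is available here by Proposition \ref{prop:conv} for $\F_\e$ and by the standard semiconvexity of $\V$ and $\mathcal{W}$ under the stated hypotheses on $V$ and $W$; the domain condition $\mu \in D(\E_\e)$ together with the quadratic growth bounds ensures all three first variations are in $L^2(\mu;\R^d)$ so that the sum is well-defined $\mu$-almost everywhere. Since the structure of this argument is entirely classical and differs from Proposition \ref{subdiffchar} only in carrying along the extra (easy) terms $\V$ and $\mathcal{W}$, the proof is short and we defer the routine details to Appendix \ref{appendix preliminaries}.
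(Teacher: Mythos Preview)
Your approach is correct but takes a genuinely different route from the paper. The paper does not decompose $\E_\e = \V + \mathcal{W} + \F_\e$ and invoke additivity of subdifferentials; instead, it reruns the proof of Proposition~\ref{subdiffchar} verbatim with modified auxiliary functions. Specifically, it redefines the functions $G$, $f$, and $H$ from that proof to incorporate the potential and interaction terms directly, e.g.,
\[
G(\alpha) = F\!\left(\varphi_\e*\mu_\alpha([x,y]_\alpha)\right) + V([x,y]_\alpha) + \tfrac12 W*\mu_\alpha([x,y]_\alpha),
\]
with the semiconvexity constants $\lambda_V,\lambda_W$ absorbed into $f$, and then repeats the same monotonicity and directional-derivative argument step for step. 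Your modular approach --- using Proposition~\ref{subdiffchar} for $\F_\e$, the classical characterizations of $\partial\V$ and $\partial\mathcal{W}$, and then additivity plus uniqueness via $\lambda$-convexity --- is arguably cleaner, since it makes transparent that the corollary is a formal consequence of the proposition rather than an independent computation. The paper's approach, on the other hand, is more self-contained: it does not rely on a general statement that $\partial\G(\mu)\cap\Tan_\mu\P_2(\R^d)$ is at most a singleton for $\lambda$-convex $\G$. That fact is true, but you should pin it down more carefully than the references you cite --- the actual mechanism is exactly the converse-direction argument in the proof of Proposition~\ref{subdiffchar}: any two elements of the subdifferential agree when tested against gradients $\nabla\psi$ (since both recover the directional derivative of $\E_\e$ along $(\id+t\nabla\psi)_\#\mu$), hence their difference is divergence-free, and if both lie in $\Tan_\mu\P_2(\R^d)$ then \cite[Proposition~8.4.3]{AGS} forces the difference to vanish.
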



\section{$\Gamma$-convergence of regularized internal energies} \label{Gamma convergence section}
We now turn to the convergence of the regularized energies and, when in the presence of confining drift or interaction terms, the corresponding convergence of their minimizers. In this section, and for the remainder of the work, we consider regularized entropies and R\'enyi entropies of the form $\F_\e^m$ for $m\geq 1$. We begin by showing that $\F_\e^m$ $\Gamma$-converges to $\F$ as $\e \to 0$ with respect to the weak-$^*$ topology.

\begin{thm}[$\F_\e$ $\Gamma$-converges to $\F$] \label{Gamma convergence theorem2}
If $m =1$, consider $(\mu_\e)_\e \subset \P_2(\Rd)$ and $\mu \in \P_2(\Rd)$, and if $m >1$, consider $(\mu_\e)_\e \subset \P(\Rd)$ and $\mu \in \P(\Rd)$.
\begin{enumerate}[(i)]
	\item If $\mu_\e \wsto \mu$, we have $\liminf_{\e \to 0 } \F^m_\e(\mu_\e) \geq \F^m(\mu)$. \label{liminf condition 2}
	\item We have $\limsup_{\e \to 0} \F^m_\e(\mu) \leq \F^m(\mu)$. \label{limsup condition 2}
\end{enumerate}
\end{thm}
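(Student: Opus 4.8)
The plan is to prove the two $\Gamma$-convergence inequalities separately, exploiting the sandwich bounds from Proposition \ref{relative sizes lemma} together with the weak-$^*$ lower semicontinuity of the unregularized energy $\F^m$ and the mollifier convergence fact Lemma \ref{weakst convergence mollified sequence}.

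\medskip

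\textbf{The $\liminf$ inequality.} Suppose $\mu_\e \wsto \mu$. I would split into cases according to the value of $m$. For $1 \le m \le 2$, Proposition \ref{relative sizes lemma} gives $\F^m_\e(\mu_\e) \ge \F^m(\zeta_\e*\mu_\e)$. By Lemma \ref{weakst convergence mollified sequence} (applied with $\zeta$ in place of $\varphi$, which is legitimate since $\zeta$ is itself an admissible mollifier — even, nonnegative, mass one, with the requisite decay), we have $\zeta_\e*\mu_\e \wsto \mu$; and since $\F^m$ is weak-$^*$ lower semicontinuous on $\P(\Rd)$ for $m>1$ and weak-$^*$ lower semicontinuous on $\P_2(\Rd)$ for $m=1$ (the classical lower semicontinuity of internal energies, \cite[Example 9.3.6]{AGS}; in the $m=1$ case one must additionally observe that $\zeta_\e*\mu_\e$ has uniformly bounded second moments so that the relevant lower semicontinuity applies — this uses the finite second moment of $\zeta$), we conclude $\liminf_{\e\to0}\F^m_\e(\mu_\e) \ge \liminf_{\e\to0}\F^m(\zeta_\e*\mu_\e) \ge \F^m(\mu)$. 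For $m > 2$, the inequality \eqref{relative sizes equation 2} points the wrong way, so instead I would use the lower semicontinuity of the \emph{regularized} energies themselves: Proposition \ref{lower semicontinuity}\eqref{it:lsc-pme} is not quite enough since it is lower semicontinuity for fixed $\e$, not as $\e\to0$. The cleaner route for $m>2$ is to reduce again to the $m\le2$-type argument by a Jensen step: writing $\varphi_\e*\mu_\e = \zeta_\e*(\zeta_\e*\mu_\e)$ and using convexity of $s\mapsto s^{m-1}$ together with Jensen's inequality against the probability measure $\zeta_\e(y-\cdot)\,dy$, one obtains $\F^m_\e(\mu_\e) = \int(\varphi_\e*\mu_\e)^{m-1}/(m-1)\,d\mu_\e \ge \int \big(\zeta_\e*(\zeta_\e*\mu_\e)^{m-1}\big)/(m-1)\,d\mu_\e = \int (\zeta_\e*\mu_\e)^{m-1}/(m-1)\,d(\zeta_\e*\mu_\e) = \F^m(\zeta_\e*\mu_\e)$, where the last convolution-transfer step uses evenness of $\zeta$. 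Then Lemma \ref{weakst convergence mollified sequence} and weak-$^*$ lower semicontinuity of $\F^m$ finish the argument exactly as before. I expect this Jensen manipulation for $m>2$ to be the main obstacle: one must be careful that all the convolutions and the transfer of mollifiers are justified (finiteness, positivity, evenness), and that Fubini applies.

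\medskip

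\textbf{The $\limsup$ inequality.} Here the recovery sequence is the constant sequence $\mu_\e \equiv \mu$, so I must show $\limsup_{\e\to0}\F^m_\e(\mu) \le \F^m(\mu)$. If $\F^m(\mu) = +\infty$ there is nothing to prove, so assume $\mu \ll \mathcal{L}^d$ with $\F^m(\mu)<\infty$. For $1 \le m \le 2$, Proposition \ref{relative sizes lemma} gives directly $\F^m_\e(\mu) \le \F^m(\mu) + C_\e$ with $C_\e \to 0$, which is exactly the claim. For $m>2$, I would use \eqref{relative sizes equation 2}: $\F^m_\e(\mu) \le \F^m(\zeta_\e*\mu)$, and then invoke the standard fact that mollification does not increase an internal energy in the limit — more precisely, $\limsup_{\e\to0}\F^m(\zeta_\e*\mu) \le \F^m(\mu)$. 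This last fact follows because $s\mapsto s^{m-1}$ is convex (so $\F^m$ is convex along linear interpolation of densities and, by Jensen applied to the mollification, $\F^m(\zeta_\e*\mu) \le \int (\zeta_\e * \rho^m/(m-1)\big)\,\ldots$ — concretely, $\int(\zeta_\e*\rho)^m\,d\mathcal{L}^d \le \int \zeta_\e*(\rho^m)\,d\mathcal{L}^d = \int \rho^m\,d\mathcal{L}^d$ by Jensen and Fubini), giving $\F^m(\zeta_\e*\mu) \le \F^m(\mu)$ for every $\e$, which is more than enough. Thus the $\limsup$ bound holds for all $m \ge 1$. Combining the two inequalities yields the stated $\Gamma$-convergence with respect to the weak-$^*$ topology (and with respect to Wasserstein convergence when $m=1$, since Wasserstein convergence implies weak-$^*$ convergence and the $\limsup$ sequence is constant).
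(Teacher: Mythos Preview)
Your argument for $1 \le m \le 2$ (both inequalities) and for the $\limsup$ inequality when $m>2$ is correct and matches the paper's proof almost verbatim. The genuine gap is in your $\liminf$ argument for $m>2$.

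You correctly observe that \eqref{relative sizes equation 2} points the wrong way, but your proposed fix via Jensen is exactly the same inequality again, not its reverse. Writing $\varphi_\e*\mu_\e(x) = \int \zeta_\e(x-y)\,(\zeta_\e*\mu_\e)(y)\,dy$ and applying Jensen to the \emph{convex} function $s\mapsto s^{m-1}$ (for $m>2$) yields
\[
(\varphi_\e*\mu_\e(x))^{m-1} \;\le\; \int \zeta_\e(x-y)\,(\zeta_\e*\mu_\e)(y)^{m-1}\,dy \;=\; \zeta_\e*\bigl[(\zeta_\e*\mu_\e)^{m-1}\bigr](x),
\]
which after integration against $\mu_\e$ gives $\F^m_\e(\mu_\e) \le \F^m(\zeta_\e*\mu_\e)$ --- precisely \eqref{relative sizes equation 2}, not the lower bound $\F^m_\e(\mu_\e) \ge \F^m(\zeta_\e*\mu_\e)$ you need. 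There is no elementary Jensen step that produces a lower bound of the form $\F^m(\text{something weak-$^*$ converging to }\mu)$ here.

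The paper's route for $m>2$ is substantially more delicate. One assumes the $\liminf$ is finite, passes to a subsequence along which $\F^m_{\e_n}(\mu_{\e_n})$ is bounded, and uses Jensen (with exponent $m-1>1$) the other way to extract a uniform $L^2$ bound on $\zeta_{\e_n}*\mu_{\e_n}$. This gives weak $L^2$ convergence to $\mu$, upgraded via Banach--Saks to strong $L^2$ convergence of Ces\`aro means. Separately, the uniform bound $\|\varphi_{\e_n}*\mu_{\e_n}\|_{L^{m-1}(\mu_{\e_n})}^{m-1} = (m-1)\F^m_{\e_n}(\mu_{\e_n}) < C$ and Proposition \ref{AGSthm} (convergence of maps under varying measures) produce a weak limit $w \in L^1(\mu)$ of $\varphi_{\e_n}*\mu_{\e_n}$ with $\liminf \F^m_{\e_n}(\mu_{\e_n}) \ge \tfrac{1}{m-1}\int w^{m-1}\,d\mu$. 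The mollifier exchange Lemma \ref{move mollifier prop} then identifies $\int f\,w\,d\mu$ with $\lim \int f\,(\zeta_{\e_n}*\mu_{\e_n})^2\,d\mathcal{L}^d$, and comparison with the Ces\`aro means (using convexity of $s\mapsto s^2$) yields $w \ge \mu$ $\mu$-a.e., hence $\int w^{m-1}\,d\mu \ge \int \mu^{m-1}\,d\mu = (m-1)\F^m(\mu)$. None of this machinery is avoidable by a one-line Jensen argument.
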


\begin{proof}
We begin by showing the result for $1 \leq m \leq 2$, in which case the function $F$ is concave. We first show part (\ref{liminf condition 2}). By Proposition \ref{relative sizes lemma}, for all $\e>0$,
\[ 
	\F^m_\e(\mu_\e) \geq \F^m(\zeta_\e* \mu_\e) . 
\]
By Lemma \ref{weakst convergence mollified sequence},  $ \mu_\e \wsto \mu$ implies $\zeta_\e* \mu_\e \wsto \mu$. Therefore,  by the lower semicontinuity of $\F^m$ with respect to weak-$^*$ convergence \cite[Remark 9.3.8]{AGS},
\[ 
	\liminf_{\e \to 0} \F^m_\e(\mu_\e) \geq  \liminf_{\e \to 0}  \F^m(\zeta_\e* \mu_\e) \geq \F^m(\mu) , 
\]
which gives the result.
We now turn to part (\ref{limsup condition 2}). Again, by Proposition \ref{relative sizes lemma}, for all $\e>0$,
\[ 
	\F^m(\mu) + C_\e \geq \F^m_\e(\mu) , 
\]
where $C_\e \to 0$ as $\e \to 0$.
Therefore, $\limsup_{\e \to 0} \F^m_\e(\mu) \leq \F^m(\mu)$.

We now consider the case when $m>2$. Part (\ref{limsup condition 2}) follows quickly: by Proposition \ref{relative sizes lemma}, Young's convolution inequality, and the fact that $ \|\zeta_\e\|_{L^1(\Rd)} = 1$, for all $\e>0$ we have
\[ 
	\F^m_\e(\mu) \leq \F^m(\zeta_\e *\mu) = \tfrac{1}{m-1} \|\zeta_\e*\mu\|_{L^m(\Rd)}^m \leq \tfrac{1}{m-1} \|\zeta_\e\|_{L^1(\Rd)}^m \|\mu\|_{L^m(\Rd)}^m =\tfrac{1}{m-1}  \|\mu\|_{L^m(\Rd)}^m   = \F^m(\mu).
\]
Taking the supremum limit as $\e\to0$ then gives the result. Let us prove part (\ref{liminf condition 2}). Without loss of generality, we may suppose that $\liminf_{\e \to 0} \mathcal{F}^m_\e(\mu_\e)$ is finite. Furthermore, there exists a positive sequence $(\e_n)_n$ such that $\e_n \to 0$ and $\lim_{n \to + \infty} \mathcal{F}_{\e_n}^m(\mu_{\e_n}) = \liminf_{\e \to 0} \mathcal{F}^m_\e(\mu_\e)$. In particular, there exists $C>0$ for which $\F^m_{\e_n}(\mu_{\e_n})  < C$ for all $n \in \mathbb{N}$. By Jensen's inequality for the convex function $x \mapsto x^{m-1}$ and the fact that $\zeta_\e * \zeta_\e = \varphi_\e$ for all $\e>0$,
\[ 
	(m-1)\F^m_\e(\mu_\e)= \int (\varphi_\e * \mu_\e)^{m-1} \,d \mu_\e \geq \left( \int \varphi_\e * \mu_\e \,d \mu_\e \right)^{m-1} = \left( \ird |\zeta_\e*\mu_\e(x)|^2 \,dx \right)^{m-1}.
\]
Thus, since $\F^m_{\e_n}(\mu_{\e_n})  < C$ for all $n \in \N$, we have $\|\zeta_{\e_n}*\mu_{\e_n}\|_{L^2(\Rd)}< C':= (C(m-1))^{1/2(m-1)}$. We now use this bound on the $L^2$-norm of $\zeta_{\e_n}*\mu_{\e_n}$ to deduce a stronger notion of convergence of $\zeta_{\e_n}*\mu_{\e_n}$ to $\mu$. First, since $(\mu_{\e_n})_n$ converges weakly-$^*$ to $\mu$ as $n\to\infty$, Lemma \ref{weakst convergence mollified sequence} ensures that $(\zeta_{\e_n}*\mu_{\e_n} - \mu_{\e_n})_n$ converges weakly-$^*$ to $0$. Since the $L^2$-norm is lower semicontinuous with respect to weak-$^*$ convergence \cite[Lemma 3.4]{McCann}, we have
\[ 
	C' \geq \liminf_{n\to\infty} \|\zeta_{\e_n}*\mu_{\e_n}\|_{L^2(\Rd)} \geq \|\mu\|_{L^2(\Rd)},
\]
so that $\mu \in L^2(\Rd)$. Furthermore, up to another subsequence, we may assume that $(\zeta_{\e_n}*\mu_{\e_n})_n$ converges weakly in $L^2$. Also, since $\zeta_{\e_n}*\mu_{\e_n} \wsto \mu$, for all $f \in C_\mathrm{c}^\infty(\Rd)$ we have
\bes
	\lim_{n\to\infty} \int f \,d \zeta_{\e_n}*\mu_{\e_n} = \int f \,d\mu,
\ees
so $(\zeta_{\e_n}*\mu_{\e_n})_n$ converges weakly in $L^2$ to $\mu$. By the Banach--Saks theorem (c.f. \cite[Section 38]{Riesz}), up to taking a further subsequence of $(\zeta_{\e_n}*\mu_{\e_n})_n$, the Ces\`aro mean $(v_k)_k$ defined by 
\bes
	v_k := \frac{1}{k} \sum_{i=1}^k \zeta_{\e_i}*\mu_{\e_i} \quad \mbox{for all $k\in\N$},
\ees
converges to $\mu$ strongly in $L^2$. Finally, for any $f \in C_\mathrm{c}^\infty(\Rd)$, this ensures
\begin{align*}
	\left| \int f (v_k)^2\,d\mathcal{L}^d - \int f \mu^2\,d\mathcal{L}^d \right| &\leq \int |f| |v_k-\mu||v_k +\mu|\,d\mathcal{L}^d\\
	&\leq  \|f\|_{L^\infty(\Rd)}  \|v_k -\mu\|_{L^2(\Rd)} \|v_k + \mu\|_{L^2(\Rd)},
\end{align*}
so that
\be \label{dist conv of square} 
	\lim_{k \to \infty} \int f (v_k)^2\,d\mathcal{L}^d = \int f \mu^2 \, d\mathcal{L}^d.
\ee
We now use this stronger notion convergence to conclude our proof of part (\ref{liminf condition 2}). Since $m>2$ and 
\[  
	\| \varphi_{\e_n} * \mu_{\e_n} \|_{L^{m-1}(\mu_{\e_n};\Rd)}^{m-1} = (m-1) \F^m_{\e_n}(\mu_{\e_n})  < C \quad \text{for all $n \in \N$}, 
\]
by part (\ref{weakcpt}) of Proposition \ref{AGSthm}, up to another subsequence,  there exists $w \in L^1(\mu;\Rd)$ so that for all $f \in C_\mathrm{c}^\infty(\Rd)$,
\begin{align} 
 	\lim_{n\to\infty} \int f (\varphi_{\e_n} * \mu_{\e_n}) \,d \mu_{\e_n} = \int f w \,d \mu . \label{weak conv prob}
\end{align}
Furthermore, recalling the definition of the regularized energy and applying \cite[Theorem 5.4.4(ii)]{AGS},
\[ 
	\liminf_{\e \to 0} \mathcal{F}^m_\e(\mu_\e) = \lim_{n\to\infty} \mathcal{F}^m_{\e_n}(\mu_{\e_n}) = \lim_{n\to\infty} \frac{1}{m-1} \int (\varphi_{\e_n}*\mu_{\e_n})^{m-1} \,d \mu_n  \geq \frac{1}{m-1} \int w^{m-1} \,d\mu . 
\]
Therefore, to finish the proof, it suffices to show that $w(x) \geq \mu(x)$ for $\mu$-almost every $x \in \Rd$. By Lemma \ref{move mollifier prop} and the fact that $\zeta_{\e_n} *\zeta_{\e_n} = \varphi_{\e_n}$ for all $n\in\N$, there exists $p>0$ and $C_\zeta >0$ so that for all $f \in C_\mathrm{c}^\infty(\Rd)$,
\begin{align*} 
	&\left|\int f (\varphi_{\e_n} * \mu_{\e_n}) \,d \mu_{\e_n}  - \int f (\zeta_{\e_n} * \mu_{\e_n})^2 \,d\mathcal{L}^d\right|\\
	 &= \left|\int \zeta_{\e_n} *(f \mu_{\e_n}) \,d\zeta_{\e_n} *\mu_{\e_n} - \int (\zeta_{\e_n} * \mu_{\e_n})f \,d\zeta_{\e_n} * \mu_{\e_n} \right| \\
	&\leq \e_n^{p} \|\grad f \|_{L^\infty(\Rd)} \left( \|(\zeta_{\e_n}*\mu_{\e_n})\|_{L^2(\Rd)}^2 + C_\zeta \right)
\end{align*}
Combining this with equation \eqref{weak conv prob}, we obtain
\be \label{first zeta mu convergence}
	\lim_{n\to\infty} \int f (\zeta_{\e_n} *\mu_{\e_n})^2\, d\mathcal{L}^d = \int f w \,d \mu.
\ee
Finally, using equation \eqref{dist conv of square} and the definition of $(v_k)_k$ as a sequence of convex combinations of the family $\{\zeta_{\e_i} *\mu_{\e_i}\}_{i\in\{1,\dots,k\}}$, for all $f \in C_\mathrm{c}^\infty(\Rd)$ with $f \geq 0$ we have
\begin{align*}
	\int f \mu^2 \,d\mathcal{L}^d &= \lim_{k \to\ \infty} \int f(v_k)^2 \,d\mathcal{L}^d = \lim_{k \to \infty} \ird f \left( \frac{1}{k}\sum_{n=1}^k \zeta_{\e_n}*\mu_{\e_n}(x) \right)^2\,dx\\
	&\leq  \lim_{k \to \infty} \frac{1}{k} \sum_{n=1}^k  \int f \left( \zeta_{\e_n}*\mu_{\e_n} \right)^2\,d\mathcal{L}^d. 
\end{align*}
Since the limit in \eqref{first zeta mu convergence} exists, it coincides with its Ces\`aro mean on the right-hand side of the above equation. Thus, for all $f \in C_\mathrm{c}^\infty(\Rd)$ with $f \geq 0$,
\[ 
	\int f \mu^2\,d\mathcal{L}^d \leq \int f w \,d \mu. 
\]
This gives $w(x) \geq \mu(x)$ for $\mu$-almost every $x \in \Rd$, which completes the proof.
\end{proof}

Now, we add a confining drift or interaction potential to our internal energies, so that energy minimizers exist and we may apply the previous $\Gamma$-convergence result to conclude that minimizers converge to minimizers. For the remainder of the section we consider energies of the form $\E_\e^m$ given in Definition \ref{full energies}, with the following additional assumptions on $V$ and $W$ to ensure that the energy is confining.
\begin{as}[confining assumptions]  
	The potentials $V$ and $W$ are bounded below and one of the following additional assumptions holds:
\begin{align*}
\tag{CV} &V \text{ has compact sublevel sets}; \label{V} \\
\tag{CV$'$} &V(x) \geq C_0|x|^2 + C_1 \text{ for all $x \in \Rd$ for some $C_0 >0, C_1 \in \R$}; \label{V'}\\
\tag{CW} &V=0\mbox{ and }W \text{ is radial satisfying } \lim_{|x| \to \infty} W(x) = +\infty. \label{W}
\end{align*}
\end{as}

Under these assumptions, the regularized energies $\E_\e^m$ are lower semicontinuous with respect to weak-$^*$ convergence ($m>1$) and Wasserstein convergence ($m=1$), where for the latter we assume $\varphi$ is a Gaussian (c.f. Proposition \ref{lower semicontinuity}, and \cite[Lemma 5.1.7]{AGS}, \cite[Lemma 3.4]{McCann} and \cite[Lemma 2.2]{SimioneSlepcevTopaloglu}).

\begin{remark}[tightness of sublevels] \label{sublevel remark}
	Assumptions \eqref{V} and \eqref{V'} ensure that the set $\{ \mu \in \P(\Rd) \mid \int V \,d \mu \leq C \}$ is tight for all $C>0$; c.f. \cite[Remark 5.1.5]{AGS}. Likewise, Assumption \eqref{W} on $W$ ensures that the set $\{ \mu \in \P(\Rd) \mid \int W*\mu \,d \mu \leq C \}$ is tight up to translations for all $C >0$; c.f. \cite[Theorem 3.1]{SimioneSlepcevTopaloglu}.
\end{remark}

We now prove existence of minimizers of $\E_\e^m$, for all $\e>0$.

\begin{prop} \label{minimizersexist}
	Let $\e>0$. For $m>1$, if either Assumption \eqref{V} or \eqref{W} holds, then minimizers of $\E^m_\e$ over $\P(\Rd)$ exist. For $m=1$, if \eqref{V'} holds and $\varphi$ is a Gaussian, then minimizers of $\E^1_\e$ over $\P_2(\Rd)$ exist.
\end{prop}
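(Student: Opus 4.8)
The plan is to use the direct method of the calculus of variations, which is now standard for such energy functionals. First I would observe that the regularized internal energies $\F^m_\e$ are bounded below (by Proposition \ref{relative sizes lemma}, specifically the estimate \eqref{lower bounds}), and since $V$ and $W$ are bounded below by assumption, the full energy $\E^m_\e$ is bounded below. In the $m=1$ case, where the lower bound on $\F^1_\e$ involves a negative multiple of $M_2(\mu)$, I would use the coercivity coming from Assumption \eqref{V'}: choosing $\delta$ small enough relative to $C_0$, the term $C_0 M_2(\mu)$ dominates $2\delta M_2(\mu)$, so $\E^1_\e$ remains bounded below and moreover controls the second moment along any minimizing sequence. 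In the cases $m>1$, the lower bound $\F^m_\e \geq 0$ makes this step immediate.

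Next I would take a minimizing sequence $(\mu_n)_n$ and extract a weak-$^*$ convergent subsequence. Tightness is the key point: under \eqref{V} or \eqref{V'}, boundedness of $\int V\,d\mu_n$ (which follows from boundedness of the total energy together with the lower bounds on the other terms) gives tightness of $(\mu_n)_n$ via Remark \ref{sublevel remark}; under \eqref{W} one gets tightness only up to translations, so one must first translate each $\mu_n$ to recenter it — this is legitimate because $V=0$ and $W$ is radial, so $\E^m_\e$ is translation-invariant in that case (the convolution structure of $\F^m_\e$ is translation-invariant as well). By Prokhorov's theorem we then obtain $\mu_n \wsto \mu_\infty$ for some $\mu_\infty \in \P(\Rd)$ (and $\mu_\infty \in \P_2(\Rd)$ when $m=1$, using that the second moments stay bounded along the minimizing sequence and are weak-$^*$ lower semicontinuous).

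Finally I would pass to the limit using lower semicontinuity of each term. The potential term $\int V\,d\mu$ is weak-$^*$ lower semicontinuous since $V$ is lower semicontinuous and bounded below; the interaction term $\tfrac12\int W*\mu\,d\mu$ is weak-$^*$ lower semicontinuous since $W$ is lower semicontinuous and bounded below (so $W(x-y)$ is a lower semicontinuous bounded-below function on $\Rd\times\Rd$ and $\mu_n\otimes\mu_n \wsto \mu_\infty\otimes\mu_\infty$); and $\F^m_\e$ is lower semicontinuous by Proposition \ref{lower semicontinuity}, with respect to weak-$^*$ convergence for $m>1$ and with respect to Wasserstein convergence for $m=1$ (which is why we need $\varphi$ Gaussian in that case, and why we need to upgrade weak-$^*$ convergence plus convergence of second moments to Wasserstein convergence — the latter follows from the characterization of Wasserstein convergence recalled in the Preliminaries, once we know $M_2(\mu_n)\to M_2(\mu_\infty)$, which we can arrange by also using that $M_2$ is lower semicontinuous and that the energy bound forces $\limsup M_2(\mu_n) \leq M_2(\mu_\infty)$ is not automatic — more carefully, one uses that $\mu_\infty$ is a candidate minimizer and compares). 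Combining, $\E^m_\e(\mu_\infty) \leq \liminf_n \E^m_\e(\mu_n) = \inf \E^m_\e$, so $\mu_\infty$ is a minimizer.

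The main obstacle is the $m=1$ case: the entropy regularization $\F^1_\e$ is only lower semicontinuous along Wasserstein-convergent sequences, not merely weak-$^*$ convergent ones, so one cannot simply run the argument in the weak-$^*$ topology. One must therefore promote the minimizing sequence to a Wasserstein-convergent one, which requires establishing convergence of second moments $M_2(\mu_n) \to M_2(\mu_\infty)$ along the (sub)sequence. This is where the coercivity from \eqref{V'} does double duty — it both keeps the energy bounded below and, by a standard argument (if the second moments did not converge down to $M_2(\mu_\infty)$, one could strictly decrease the energy, contradicting minimality, or alternatively one exploits that all the other terms are weak-$^*$ lsc so the "defect" in $M_2$ can only help), forces the moments to converge. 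Handling this interplay carefully, together with the translation reduction in case \eqref{W}, is the delicate part; the $m>1$ cases are comparatively routine.
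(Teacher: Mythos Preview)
Your approach is essentially the same as the paper's: direct method, lower bound on $\F^m_\e$ from Proposition~\ref{relative sizes lemma}, tightness of minimizing sequences via Remark~\ref{sublevel remark} (with translation in case \eqref{W}), and lower semicontinuity to pass to the limit. For $m>1$ your argument is complete and matches the paper line for line.

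For $m=1$ you have correctly put your finger on a genuine subtlety that the paper itself glosses over: Proposition~\ref{lower semicontinuity}\eqref{it:lsc-he} only gives lower semicontinuity of $\F^1_\e$ along \emph{Wasserstein}-convergent sequences, while the minimizing sequence is a priori only weak-$^*$ convergent with uniformly bounded second moments. The paper simply writes ``by the lower semicontinuity of $\E^m_\e$'' at this point. Your two suggested fixes are not quite right as stated: the first (``strictly decrease the energy, contradicting minimality'') is circular, and the second (``the defect in $M_2$ can only help'') points in the right direction but needs care. The clean way through is to observe, from the Gaussian lower bound used in the proof of Proposition~\ref{lower semicontinuity}\eqref{it:lsc-he}, that along any sequence with $\sup_n M_2(\mu_n)<\infty$ one has $\log(\varphi_\e*\mu_n(x)) + a|x|^2$ uniformly bounded below for any $a>1/(8\e^2)$; hence $\mu\mapsto \F^1_\e(\mu)+aM_2(\mu)$ is weak-$^*$ lower semicontinuous on such sequences by Lemma~\ref{lem:fatou-varying}. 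Splitting $V=(V-a|x|^2)+a|x|^2$ then gives weak-$^*$ lower semicontinuity of $\E^1_\e$ provided $C_0>1/(8\e^2)$, and for smaller $C_0$ one needs an additional argument (or a restriction on $\e$). In short: your concern is legitimate, your outline is the paper's outline, and neither you nor the paper fully closes this point.
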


\begin{proof}
	First suppose $m >1$, so that $\F_\e \geq 0$ and $\E^m_\e$ is bounded below. By Remark \ref{sublevel remark}, if \eqref{V} holds, then any minimizing sequence of $\E^m_\e$ has a subsequence that converges in the weak-$^*$ topology. Likewise, if (\ref{W}) holds, then any minimizing sequence of $\E^m_\e$ has a subsequence that, up to translation, converges in the weak-$^*$ topology. By lower semicontinuity of $\E^m_\e$, the limits of minimizing sequences are  minimizers of $\E^m_\e$.

Now, suppose $m =1$. By Proposition \ref{relative sizes lemma}, for all $\delta >0$ and $\mu\in\P_2(\Rd)$,
\[ 
	\F^m_\e(\mu)  \geq  -(2\pi/\delta)^{d/2} - 2\delta (M_2(\mu)+ \e^2 M_2(\zeta)),
\]
Consequently, by the assumption in \eqref{V'} and the fact that $W$ is bounded below by, say, $\tilde C \in\R$, we can choose $\delta = C_0/2$ and obtain
\begin{align} \label{existence ineq}
	\tilde C + C_0M_2(\mu)+C_1 - \left(4\pi/C_0\right)^{d/2} - C_0 \e^2 M_2(\zeta) \leq \E^m_\e(\mu) \quad \mbox{for all $\mu \in \P_2(\Rd)$},
\end{align}
Hence  any minimizing sequence $(\mu_n)_n \subset \P_2(\Rd)$ has uniformly bounded second moment. Thus, $(\mu_n)_n$ has a subsequence that converges in the weak-$^*$ topology to a limit  with finite second moment. By the lower semicontinuity of $\E^m_\e$ the limit must be a minimizer of $\E^m_\e$. 
\end{proof}

Finally, we conclude that minimizers of the regularized energy converge to minimizers of the unregularized energy.

\begin{thm}[minimizers converge to minimizers] \label{minimizers converge theorem}
Suppose $m>1$. If  Assumption \eqref{V} holds, then for any sequence $(\mu_\e)_\e \subset \P(\Rd)$ such that $\mu_\e$ is a minimizer of $\E^m_\e$ for all $\e>0$, we have, up to a subsequence, $\mu_\e \wsto \mu$, where $\mu$ is minimizes $\E^m$. Alternatively, if  Assumption \eqref{W} holds, then we have $\mu_\e \wsto \mu$, up to a subsequence and translation, where again $\mu$ minimizes $\E^m$.

Now suppose $m =1$. If Assumption \eqref{V'} holds and $\varphi$ is a Gaussian, then for any sequence $(\mu_\e)_\e \subset \P_2(\Rd)$ such that $\mu_\e$ is a minimizer of $\E^1_\e$ for all $\e>0$, we have, up to a subsequence, $\mu_\e \wsto \mu$, where $\mu$ minimizes $\E^1$.
%
\end{thm}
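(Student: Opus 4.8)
The plan is to deduce this from the $\Gamma$-convergence statement of Theorem~\ref{Gamma convergence theorem2} together with an equicoercivity estimate for the sequences of minimizers, following the standard direct method. \emph{Step 1: $\Gamma$-convergence of the full energies.} First I would promote Theorem~\ref{Gamma convergence theorem2} to the statement that $\E^m_\e$ $\Gamma$-converges, as $\e \to 0$, to $\E^m$ with respect to weak-$^*$ convergence (in $\P_2(\Rd)$ if $m=1$, in $\P(\Rd)$ if $m>1$). The potential-plus-interaction functional $\mu \mapsto \int V\,d\mu + \tfrac12\int (W*\mu)\,d\mu$ is independent of $\e$ and, since $V,W$ are lower semicontinuous and bounded below, is weak-$^*$ lower semicontinuous; adding this to Theorem~\ref{Gamma convergence theorem2}(i) and using superadditivity of $\liminf$ (all summands are bounded below, the internal one by \eqref{lower bounds}) gives the $\liminf$ inequality for $\E^m_\e$. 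For the $\limsup$ inequality the constant recovery sequence $\mu_\e \equiv \mu$ suffices: the first two terms are then exactly constant in $\e$, and $\limsup_{\e\to0}\F^m_\e(\mu) \leq \F^m(\mu)$ by Theorem~\ref{Gamma convergence theorem2}(ii).

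\emph{Step 2: equicoercivity of $(\mu_\e)_\e$.} I would fix a reference measure $\nu$ with $\E^m(\nu)<\infty$ (a Gaussian if $m=1$, a smooth compactly supported density if $m>1$). Since $\mu_\e$ minimizes $\E^m_\e$, Step~1 yields $\limsup_\e \E^m_\e(\mu_\e) \leq \limsup_\e \E^m_\e(\nu) \leq \E^m(\nu) < \infty$, so $\E^m_\e(\mu_\e) \leq C$ for all small $\e$. When $m>1$ one has $\F^m_\e \geq 0$ by \eqref{lower bounds}, and since $V,W$ are bounded below this bound forces a uniform bound on $\int V\,d\mu_\e$ under \eqref{V}, respectively on $\tfrac12\int (W*\mu_\e)\,d\mu_\e$ under \eqref{W} (where $V=0$). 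By Remark~\ref{sublevel remark} the family $(\mu_\e)_\e$ is then tight --- tight up to translations in the case \eqref{W} --- and Prokhorov's theorem produces a weak-$^*$ convergent subsequence (after translating, under \eqref{W}). When $m=1$ I would instead invoke the lower bound \eqref{existence ineq} established in the proof of Proposition~\ref{minimizersexist}: combined with $\E^1_\e(\mu_\e)\leq C$ it gives $\sup_\e M_2(\mu_\e)<\infty$ for small $\e$, hence tightness, a weak-$^*$ convergent subsequence $\mu_\e\wsto\mu$, and $M_2(\mu)\leq\liminf_\e M_2(\mu_\e)<\infty$, so $\mu\in\P_2(\Rd)$.

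\emph{Step 3: identification of the limit.} Under \eqref{W}, I would replace $\mu_\e$ by the translate selected in Step~2; since $V=0$, $W$ is radial, and $\F^m_\e$ is translation invariant, this translate is again a minimizer of $\E^m_\e$. Then for any admissible competitor $\nu$ (in $\P_2(\Rd)$ if $m=1$, in $\P(\Rd)$ if $m>1$), the constant recovery sequence together with the minimality of $\mu_\e$ give
\[
	\E^m(\mu) \;\leq\; \liminf_{\e\to0}\E^m_\e(\mu_\e) \;\leq\; \limsup_{\e\to0}\E^m_\e(\mu_\e) \;\leq\; \limsup_{\e\to0}\E^m_\e(\nu) \;\leq\; \E^m(\nu),
\]
where the first inequality is the $\Gamma$-$\liminf$ inequality of Step~1 applied to $\mu_\e\wsto\mu$ and the last is the $\Gamma$-$\limsup$ inequality; hence $\mu$ minimizes $\E^m$, and taking $\nu=\mu$ also yields $\E^m_\e(\mu_\e)\to\E^m(\mu)$ along the subsequence. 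The main obstacle I anticipate is Step~2 in the confinement-by-interaction case \eqref{W}: one must extract translates rendering the minimizers tight (via Remark~\ref{sublevel remark}), verify that translated measures remain minimizers using the translation invariance of $\E^m_\e$, and ensure no mass escapes in the limit so that $\mu\in\P(\Rd)$. The $m=1$ case is slightly delicate as well, since the recovery sequences and the limit must remain in $\P_2(\Rd)$, which is precisely what the second-moment bound \eqref{existence ineq} secures.
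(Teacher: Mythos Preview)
Your proposal is correct and follows essentially the same approach as the paper's proof, which the authors themselves describe as ``classical'': use the minimality of $\mu_\e$ together with the $\Gamma$-$\limsup$ inequality at a fixed competitor $\nu$ to get a uniform energy bound, extract a tight (up to translation, under \eqref{W}) subsequence via Remark~\ref{sublevel remark} or the second-moment bound \eqref{existence ineq}, and conclude via the $\Gamma$-$\liminf$ inequality. The only difference is presentational---you package Step~1 as a stand-alone $\Gamma$-convergence statement for the full energy $\E^m_\e$, whereas the paper invokes Theorem~\ref{Gamma convergence theorem2} and the lower semicontinuity of the drift/interaction terms directly at the point of use.
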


\begin{proof}
	The proof is classical. We include it for completeness.
	
	We only prove the result under Assumptions \eqref{V}/\eqref{V'} since the argument for (\ref{W}) is analogous. For any $\e>0$, since $\mu_\e$ is a minimizer of $\E^m_\e$ we have that $\E^m_\e(\mu_\e) \leq \E^m_\e(\nu)$ for all $\nu \in\P(\Rd)$ if $m>1$, and for all $\nu \in \P_2(\Rd)$ if $m=1$. Taking the infimum limit of the left-hand side and the supremum limit of the right-hand side, Theorem \ref{Gamma convergence theorem2}(\ref{limsup condition 2}) ensures that 
\be \label{minimizers ineq}
	\liminf_{\e \to 0} \E^m_\e(\mu_\e) \leq \limsup_{\e \to 0} \E^m_\e(\nu) \leq \E^m(\nu) . 
\ee
Since $\E^m$ is proper there exists $\nu \in \P(\Rd)$ if $m>1$ and $\nu \in \P_2(\Rd)$ if $m=1$ so that the right-hand side is finite. Thus, up to a subsequence, we may assume that $ \{\E^m_\e(\mu_\e)\}_\e$ is uniformly bounded. When $m >1$, $\F_\e(\mu) \geq 0$ for all $\e \geq 0$, and this implies that $\{\int V \,d \mu_\e\}_\e$ is uniformly bounded, so $\{\mu_\e\}_\e$ is tight. When $m=1$, the inequality in \eqref{existence ineq} ensures that $\{M_2(\mu_\e)\}_\e$ is uniformly bounded, so again $\{\mu_\e\}_\e$ is tight. Thus, up to a subsequence, $(\mu_\e)_\e$ converges weakly-$^*$ to a limit $\mu \in \P(\Rd)$ if $m>1$ and $\mu \in \P_2(\Rd)$ if $m=1$. By Theorem \ref{Gamma convergence theorem2}(\ref{liminf condition 2}) and the inequality in \eqref{minimizers ineq}, we obtain
\[ 
	\E^m(\mu) \leq  \liminf_{\e \to 0} \E^m_\e(\mu_\e) \leq \E^m(\nu)
\]
for all $\nu \in \P(\Rd)$ if $m>1$ and for all $\nu \in \P_2(\Rd)$ if $m=1$. Therefore, $\mu$ is a minimizer of $\E^m$.
\end{proof}

\begin{remark}[convergence of minimizers]
	One the main difficulties for improving the topology in which the convergence of the minimizers happen is that we do not control $L^m$-norms of the regularized minimizing sequences due to the special form of our regularized energy. This is the main reason we only get weak-$^*$ convergence in the previous result and the main obstacle to improve results for the $\Gamma$-convergence of gradient flows, as we shall see in the next section.
\end{remark}


\section{$\Gamma$-convergence of gradient flows} \label{Gamma convergence GF section} 

We now consider gradient flows of the regularized energies $\E_\e^m$, as in Definition \ref{full energies}, for $m \geq 2$ and prove that, under sufficient regularity assumptions, gradient flows of the regularized energies converge to gradient flows of the unregularized energy as $\e \to 0$. For simplicity of notation, we often write $\E_\e^m$ and $\F_\e^m$ for $\e \geq 0$ when we refer jointly to the regularized and unregularized energies.

We begin by showing that the gradient flows of the regularized energies are well-posed, provided that $V$ and $W$ satisfy the following convexity and regularity assumptions.

\begin{as}[convexity and regularity of $V$ and $W$] \label{convexityregularity}
The potentials $V, W \in C^1(\Rd)$ are semiconvex, with at most quadratic growth, and $W$ is even. Furthermore,  there exist $C_0, C_1 >0$ so
\[
	|W(x)|, |\grad V(x)|, |\grad W(x)| \leq C_0 + C_1|x|^{m-1} \quad \mbox{for all $x \in \Rd$}.
 \]
\end{as}

\begin{remark}[$\omega$-convexity]
More generally, our results naturally extend to drift and interaction energies that are merely $\omega$-convex; see \cite{CraigNonconvex}. However, given that the main interest of the present work is approximation of diffusion, we prefer the simplicity of Assumption (\ref{convexityregularity}), as it allows us to focus our attention on the regularized internal energy.
\end{remark}
\begin{prop} \label{wellposedness}
	Let $\e\geq0$ and $m\geq2$. Suppose $\E^m_\e$ is as in Definition \ref{full energies} and $V$ and $W$ satisfy Assumption \ref{convexityregularity}. Then, for any $\mu_0\in \overline{D(\E_\e^m)}$, there exists a unique gradient flow of $\E_\e^m$ with initial datum $\mu_0$.
\end{prop}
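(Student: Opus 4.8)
The plan is to invoke the general existence and uniqueness theory for gradient flows of $\lambda$-convex functionals on $\P_2(\Rd)$, as developed in \cite{AGS}, and verify that $\E_\e^m$ satisfies its hypotheses. Concretely, by \cite[Theorems 11.1.4 and 11.2.1]{AGS} (in the formulation for functionals semiconvex along generalized geodesics), it suffices to establish that for every $\e \geq 0$: (a) $\E_\e^m$ is proper and lower semicontinuous with respect to an appropriate topology; (b) $\E_\e^m$ is $\lambda$-convex along generalized geodesics for some $\lambda \in \R$; (c) $\E_\e^m$ is coercive in the sense that its sublevel sets are bounded, or at least that the variational (minimizing movement) scheme is well-posed; and (d) the resulting gradient flow curve starting from any $\mu_0 \in \overline{D(\E_\e^m)}$ is unique, which follows automatically from $\lambda$-convexity along generalized geodesics via the evolution variational inequality.

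The verification of these hypotheses proceeds by assembling the results already proven in the excerpt. For $\e > 0$: lower semicontinuity is Proposition \ref{lower semicontinuity} for the internal part (weak-$*$ for $m > 1$, which for $m \geq 2$ is what we need), while the drift term $\int V\,d\mu$ and interaction term $\frac12\int (W*\mu)\,d\mu$ are weak-$*$ lower semicontinuous under Assumption \ref{convexityregularity} by standard arguments (e.g. \cite[Lemma 5.1.7]{AGS}); properness holds since $\F_\e^m(\mu) < \infty$ for all $\mu$ (noted after Definition \ref{energy def}) and $V, W$ have at most quadratic growth so the energy is finite on compactly supported measures. The $\lambda$-convexity along generalized geodesics of $\F_\e^m$ is Proposition \ref{prop:conv} with $\lambda_{F_m} = -2\|D^2\varphi_\e\|_{L^\infty}F_m'(\|\varphi_\e\|_{L^\infty})$; the drift and interaction energies are semiconvex along generalized geodesics by hypothesis (this is classical for $V, W$ semiconvex, c.f. \cite[Proposition 9.3.2]{AGS}); summing, $\E_\e^m$ is $\lambda$-convex along generalized geodesics for $\lambda = \lambda_{F_m} + \lambda_V + \lambda_W$. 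Coercivity follows from the growth bounds: since $F_m \geq 0$ for $m > 1$ and $V, W$ are bounded below along with the at-most-quadratic-growth control, one obtains a lower bound of the form $\E_\e^m(\mu) \geq -c_0 - c_1 M_2(\mu)$ (using $W*\mu$ bounded below), which is exactly the admissible coercivity condition in \cite[Assumption 11.2.1b]{AGS}. Given (a)--(c), \cite[Theorem 11.2.1]{AGS} yields existence of a gradient flow from any $\mu_0 \in \overline{D(\E_\e^m)}$ and \cite[Theorem 11.1.4]{AGS} (contraction estimate from $\lambda$-convexity along generalized geodesics) yields uniqueness; compatibility of the metric notion of gradient flow with Definition \ref{gradientflowdef} is \cite[Theorem 11.1.6, Proposition 8.3.1]{AGS}.

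The case $\e = 0$ is simply the classical well-posedness of gradient flows for $\E^m = \int V\,d\mu + \frac12\int W*\mu\,d\mu + \F^m$ with $m \geq 2$, which is covered directly by \cite[Theorem 11.2.8]{AGS} since $\F^m$ is a McCann-admissible internal energy (geodesically convex, in fact, for $m \geq 1 - 1/d$) and $V, W$ satisfy Assumption \ref{convexityregularity}; no regularization machinery is needed there. The main obstacle — really the only nonroutine point — is ensuring that the $\e > 0$ case genuinely fits the generalized-geodesic framework rather than the (narrower) geodesic-convexity framework: because $\F_\e^m$ is \emph{not} geodesically convex (it mixes internal and interaction features, as emphasized in the text), one must be careful to use the version of the existence/uniqueness theory phrased in terms of $\lambda$-convexity along generalized geodesics and the associated characterization of the subdifferential. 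Here Proposition \ref{subdiffchar} and Corollary \ref{full subdiff char} do the essential work, identifying $\partial \E_\e^m(\mu) \cap \Tan_\mu\P_2(\Rd)$ with the single element $\grad \frac{\delta \E_\e^m}{\delta\mu}$, so that the abstract minimizing-movement limit is a bona fide weak solution of the continuity equation with the expected velocity field, i.e.\ equation \eqref{W2PDE1e}. Once the bookkeeping of which hypotheses of \cite{AGS} are invoked is in place, the proof is essentially a citation.
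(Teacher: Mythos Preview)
Your proposal is correct and follows essentially the same approach as the paper: both verify the hypotheses of \cite[Theorem 11.2.1]{AGS} (properness, coercivity, lower semicontinuity, and semiconvexity along generalized geodesics) by citing Propositions \ref{lower semicontinuity} and \ref{prop:conv} for the regularized internal energy and standard results for the drift and interaction terms. One small slip: you write that ``$V, W$ are bounded below,'' which is not part of Assumption \ref{convexityregularity}; the correct justification (which the paper gives and which your conclusion $\E_\e^m(\mu) \geq -c_0 - c_1 M_2(\mu)$ in fact reflects) is that semiconvexity forces the negative parts of $V$ and $W$ to have at most quadratic growth.
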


\begin{proof}
	It suffices to verify that $\E_\e^m$ is proper, coercive, lower semicontinuous with respect to $2$-Wasserstein convergence, and semiconvex along generalized geodesics; c.f. \cite[Theorem 11.2.1]{AGS}. (See also \cite[Equation (2.1.2b)]{AGS} for the definition of coercive.)  If $\e >0$, then $\F^m_\e$ is finite on all of $\P_2(\Rd)$, and if $\e=0$, then $\F^m$ is proper. Thus, our assumptions on $V$ and $W$ ensure that $\E_\e^m$ is proper.
Clearly $\F^m_\e$ is bounded below. Hence, since the semiconvexity of $V$ and $W$ ensures that their negative parts have at most quadratic growth, $\E_\e^m$ is coercive.

For $\e >0$, Proposition \ref{lower semicontinuity} ensures that $\F_\e^m$ is lower semicontinuous with respect to weak-$^*$ convergence, hence also $2$-Wasserstein convergence. For $\e = 0$, the unregularized internal energy $\F^m$ is also lower semicontinuous with respect to weak-$^*$ and $2$-Wasserstein convergence \cite[Lemma 3.4]{McCann}. Since $V$ and $W$ are lower semicontinuous and their negative parts have at most quadratic growth, the associated potential and interaction energies are lower semicontinuous with respect to $2$-Wasserstein convergence \cite[Lemma 5.1.7, Example 9.3.4]{AGS}. Therefore, $\E_\e^m$ is lower semicontinuous for all $\e \geq 0$.

	For $\e>0$,  Proposition \ref{prop:conv} ensures that $\F_\e^m$ is semiconvex along generalized geodesics in $\P_2(\R^d)$. For $\e=0$, the unregularized internal energy $\F^m$ is convex \cite[Theorem 2.2]{McCann}. For $V$ and $W$ semiconvex, the corresponding drift $\int V \,d \mu$ and interaction $(1/2)\int (W*\mu) \,d \mu$ energies are semiconvex \cite[Proposition 9.3.2]{AGS}, \cite[Remark 2.9]{5person}. Therefore, the resulting regularized energy $\E^m_\e$ is semiconvex. 	
\end{proof}

In the case $\e =0$, gradient flows of the energies $\E^m$ are characterized as solutions of the partial differential equation (\ref{W2PDE1}); c.f. \cite[Theorems 10.4.13 and 11.2.1]{AGS}, \cite[Theorem 2.12]{5person}. Now, we show that gradient flows of the regularized energies $\E_\e^m$ can also be characterized as solutions of a partial differential equation.

\begin{prop} \label{characterizationGF}
	Let $\e>0$ and $m\geq2$. Suppose $\E_\e^m$ is as in Definition \ref{full energies} and $V$ and $W$ satisfy Assumption \ref{convexityregularity}. Then, $\mu_\e \in AC^2_\loc((0,+\infty); \P_2(\Rd))$ is the gradient flow of $\E^m_\e$ if and only if $\mu_\e$ is a weak solution of the continuity equation with velocity field 
\begin{align} \label{velocityfield}
	v = - \grad V - \grad W*\mu_\e -\grad \varphi_\e* \left( (\varphi_\e*\mu_\e)^{m-2} \mu_\e \right) - (\varphi_\e*\mu_\e)^{m-2} \grad \varphi_\e * \mu_\e \,.
	\end{align}
Moreover, $\int_0^T  \|v(t)\|^2_{L^2(\mu_\e;\Rd)} \,dt <\infty$ for all $T >0$.
\end{prop}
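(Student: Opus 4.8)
The plan is to read the result off from the abstract theory of Wasserstein gradient flows, using the explicit computation of the subdifferential provided by Corollary~\ref{full subdiff char}. By Definition~\ref{gradientflowdef}, a curve $\mu_\e \in AC^2_\loc((0,+\infty);\P_2(\Rd))$ is the gradient flow of $\E_\e^m$ if and only if it is a weak solution (in duality with $C_\mathrm{c}^\infty(\Rd)$) of the continuity equation $\partial_t\mu_\e + \grad\cdot(v(t)\mu_\e) = 0$ for a velocity field satisfying $-v(t) \in \partial\E_\e^m(\mu_\e(t)) \cap \Tan_{\mu_\e(t)}\P_2(\Rd)$ for a.e.\ $t>0$; existence and uniqueness of such a flow are guaranteed by Proposition~\ref{wellposedness}. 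So the whole statement reduces to identifying this unique element of the subdifferential lying in the tangent cone, and then reading off its time-integrability.

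For $m\ge2$ the density $F = F_m$ satisfies Assumption~\ref{as:F} and is convex (affine when $m=2$), so Corollary~\ref{full subdiff char} applies at every $\mu\in D(\E_\e^m)$; since $\e>0$, $\F_\e^m$ is finite on all of $\P_2(\Rd)$ and the quadratic growth of $V,W$ makes the drift and interaction terms finite there too, so in fact $D(\E_\e^m) = \P_2(\Rd)$ and the corollary applies at $\mu_\e(t)$ for every $t$. Using $F_m'(s) = s^{m-2}$, it gives $\partial\E_\e^m(\mu)\cap\Tan_\mu\P_2(\Rd) = \{\grad\,\delta\E_\e^m/\delta\mu\}$ with
\bes
	\frac{\delta\E_\e^m}{\delta\mu} = \varphi_\e*\big((\varphi_\e*\mu)^{m-2}\mu\big) + \frac{(\varphi_\e*\mu)^{m-1}}{m-1} + V + W*\mu ,
\ees
and differentiating, together with the elementary identities $\grad(\varphi_\e*\mu) = \grad\varphi_\e*\mu$ and $\grad(W*\mu) = \grad W*\mu$, produces exactly $-v$ with $v$ as in~\eqref{velocityfield}. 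This settles both implications simultaneously: if $\mu_\e$ is the gradient flow then its velocity field must equal $-\grad\,\delta\E_\e^m/\delta\mu_\e(t)$, i.e.~\eqref{velocityfield}; conversely, if $\mu_\e\in AC^2_\loc$ solves the continuity equation with the field~\eqref{velocityfield}, then $v(t) = -\grad\,\delta\E_\e^m/\delta\mu_\e(t)$ is a gradient, hence lies in $\Tan_{\mu_\e(t)}\P_2(\Rd)$, and $-v(t)\in\partial\E_\e^m(\mu_\e(t))$ by Corollary~\ref{full subdiff char}, so $\mu_\e$ is the gradient flow.

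For the time-integrability: the ``in particular'' clause of Corollary~\ref{full subdiff char} gives $\|v(t)\|_{L^2(\mu_\e(t);\Rd)} = |\partial\E_\e^m|(\mu_\e(t))$, and in particular $v(t)\in L^2(\mu_\e(t);\Rd)$ for a.e.\ $t$. Since $\E_\e^m$ is $\lambda$-semiconvex along geodesics (Proposition~\ref{prop:conv} and the semiconvexity of $V,W$), its local slope is a strong upper gradient and the gradient flow is a $2$-curve of maximal slope (Remark~\ref{metric slope remark}), hence satisfies the energy dissipation equality
\bes
	\E_\e^m(\mu_\e(0)) = \E_\e^m(\mu_\e(T)) + \frac12\int_0^T \big(|\mu_\e'|^2(t) + |\partial\E_\e^m|^2(\mu_\e(t))\big)\,dt , \qquad T>0 .
\ees
The left-hand side is finite because $\mu_\e(0)\in D(\E_\e^m) = \P_2(\Rd)$, and $\E_\e^m$ is bounded below along the flow --- $\F_\e^m\ge0$ for $m\ge2$ by Proposition~\ref{relative sizes lemma}, while the negative parts of the drift and interaction terms are dominated by $M_2(\mu_\e(t))$, which is bounded on $[0,T]$ by $AC^2_\loc$-regularity --- so the right-hand side is finite and $\int_0^T\|v(t)\|^2_{L^2(\mu_\e;\Rd)}\,dt < \infty$. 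The computation and the two implications are short once Corollary~\ref{full subdiff char} is in hand; the step needing genuine care is this last one, in particular confirming that $v(t)$ is square integrable against $\mu_\e(t)$ for a.e.\ $t$ and that no blow-up occurs as $t\downarrow0$ --- which is precisely where one must exploit that, for $\e>0$, the regularized energy is finite at the initial datum, in contrast to the unregularized energy.
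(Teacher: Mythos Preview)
Your proof is correct and takes essentially the same approach as the paper: both directions of the equivalence are read off from Definition~\ref{gradientflowdef} together with the subdifferential identification in Corollary~\ref{full subdiff char}. You are in fact more thorough than the paper on the ``Moreover'' clause: the paper's proof does not explicitly justify $\int_0^T\|v(t)\|_{L^2(\mu_\e;\Rd)}^2\,dt<\infty$, whereas you derive it cleanly from the energy dissipation equality for curves of maximal slope, using that $\E_\e^m$ is finite on all of $\P_2(\Rd)$ when $\e>0$ and is bounded below along the flow.
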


\begin{proof}
	Suppose $\mu_\e \in AC^2_\loc((0,+\infty); \P_2(\Rd))$ is the gradient flow of $\E^m_\e$. Then, by Definition \ref{gradientflowdef} and Corollary \ref{full subdiff char}, $\mu_\e$ is a weak solution to the continuity equation with velocity field (\ref{velocityfield}). Conversely, suppose $\mu_\e$ is a weak solution to the continuity equation with velocity field (\ref{velocityfield}). By Corollary \ref{full subdiff char}, $-v(t) \in \partial \E(\mu(t)) \cap \Tan_{\mu(t)}\P_2(\R^d)$ for almost every $t \in (0,\infty)$. Furthermore, since $\int_0^T  \|v(t)\|^2_{L^2(\mu_\e;\Rd)}\,dt <\infty$ for all $T >0$, $\mu_\e \in AC^2_\loc((0,+\infty); \P_2(\Rd))$ by \cite[Theorem 8.3.1]{AGS}.
\end{proof}

A consequence of the previous proposition is that, for the regularized energies $\E^m_\e$, particles remain particles, i.e. a solution of the gradient flow with initial datum given by a finite sum of Dirac masses remains a sum of Dirac masses, and the evolution of the trajectories of the particles is given by a system of ordinary differential equations.
\begin{cor} \label{particles well posed}
	Let $\e>0$ and $m\geq2$, and let $V$ and $W$ satisfy Assumption \ref{convexityregularity}. Fix $N \in \N$. For $i \in \{1, \dots, N\}:=I$, fix $X_i^0 \in \Rd$ and $m_i \geq 0$ satisfying $\sum_{i \in I} m_i = 1$.
	Then the ODE system
\begin{align} \label{ODEsystem}
	\begin{cases}
		 \dot{X}_i(t) =-\grad V(X_i(t))- \sum_{j \in I} \grad W(X_i(t)-X_j(t))m_j- \grad \frac{\delta \F_\e^m}{\delta \mu} (\Sigma_j \delta_{X_j(t)} m_j ), & t\in[0,T],\\
		 X_i(0) = X_i^0,
	\end{cases}
\end{align}
is well-posed for all $T>0$. Furthermore, $\mu_\e = \sum_{i \in I} \delta_{X_i(\cdot)} m_i$ belongs to $AC^2([0,T];\P_2(\R^d))$ and is the gradient flow of $\E_\e^m$ with initial conditions $\mu_\e(0) := \sum_{i \in I} \delta_{X_i^0}m_i$.
\end{cor}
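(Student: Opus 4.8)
The plan is to derive the ODE system \eqref{ODEsystem} explicitly by computing $\grad \tfrac{\delta \F_\e^m}{\delta \mu}$ at a measure of the form $\mu = \sum_{j \in I} \delta_{X_j} m_j$, show that the resulting vector field in the $X_i$ variables is locally Lipschitz, invoke Picard--Lindelöf for local existence and uniqueness, rule out blow-up in finite time to get global well-posedness, and finally verify that the curve of empirical measures $\mu_\e(t) = \sum_{i\in I}\delta_{X_i(t)}m_i$ is absolutely continuous in $\P_2$ and solves the continuity equation with the velocity field \eqref{velocityfield}, so that Proposition~\ref{characterizationGF} identifies it as the gradient flow.

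First I would substitute $\mu = \sum_j \delta_{X_j} m_j$ into the formula \eqref{subdiffform} from Proposition~\ref{subdiffchar}. Since $\varphi_\e$ is a bona fide $C^2$ function, $\varphi_\e * \mu(x) = \sum_j \varphi_\e(x - X_j) m_j$ and $\grad\varphi_\e * \mu(x) = \sum_j \grad\varphi_\e(x - X_j) m_j$ are smooth functions of $x$ and of the particle positions; likewise $F'\circ(\varphi_\e*\mu)$ composed with this finite sum is smooth away from the set where $\varphi_\e*\mu$ vanishes, but in fact for $m \geq 2$ we have $F_m'(s) = s^{m-2}$ which is continuous (indeed $C^1$ since $m\geq 2$... for $m=2$ it is constant, for $m>2$ it is $C^1$ on $[0,\infty)$) up to the boundary, so no singularity arises. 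Evaluating the outer convolution $\varphi_\e * \big(F'\circ(\varphi_\e*\mu)\,\mu\big)$ against the discrete $\mu$ again collapses to a finite sum, so $\grad\tfrac{\delta\F_\e^m}{\delta\mu}(x)$ is an explicit finite sum of translates of $\grad\varphi_\e$ and $\grad$ of $F'\circ(\varphi_\e*\mu)$ evaluated at the $X_j$, and in particular it makes sense pointwise at each $X_i$ (this is exactly the subtlety the authors flag: the first variation density, while only defined $\mu$-a.e.\ in general, has a canonical continuous representative here). One must double-check that the formula for the density is being evaluated along its continuous representative — this is legitimate because $\varphi_\e\in C^2$.

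Next I would observe that the map $(X_1,\dots,X_N) \mapsto \big(\text{RHS of }\eqref{ODEsystem}\big)_{i\in I}$ is locally Lipschitz on $(\Rd)^N$: $\grad V$ and $\grad W$ are locally Lipschitz (being $C^1$ — actually one needs a little care, $C^1$ gives only local Lipschitz if one knows $\grad V, \grad W$ are locally Lipschitz; semiconvexity plus the at-most-quadratic growth together with $C^1$ regularity suffices, or one simply notes the standard theory of gradient flows already presumes enough; alternatively the authors' Assumption~\ref{convexityregularity} with the $C^1$ and semiconvexity hypotheses gives $\grad V,\grad W$ locally Lipschitz), and $\varphi_\e, \grad\varphi_\e, D^2\varphi_\e$ are bounded and Lipschitz on $\Rd$ with $F_m'(s)=s^{m-2}$ smooth for $m\geq 2$, so all compositions and finite sums are locally Lipschitz in the $X_j$. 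By Picard--Lindelöf there is a unique maximal solution on some $[0,T_{\max})$. To upgrade to global existence, I would establish an a priori bound preventing finite-time blow-up: either bound $\tfrac{d}{dt}\sum_i |X_i(t)|^2$ using the at-most-linear-in-$|x|^{m-1}$ growth of $\grad V, \grad W$ together with the fact that $\grad\varphi_\e, F_m'\circ(\varphi_\e*\mu)$ are globally bounded (so the internal-energy contribution to the velocity is bounded uniformly), yielding a Grönwall-type estimate; or, more slickly, use that along the gradient flow the energy $\E_\e^m$ decreases, which combined with coercivity (established in the proof of Proposition~\ref{wellposedness}) bounds $M_2(\mu_\e(t))$, hence $\sum_i |X_i(t)|^2$. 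Either way $T_{\max} = \infty$.

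Finally I would check that $\mu_\e = \sum_i \delta_{X_i(\cdot)} m_i$ is a weak solution of the continuity equation with velocity field \eqref{velocityfield}: for $\phi\in C_\mathrm{c}^\infty(\Rd)$, $\tfrac{d}{dt}\int \phi\,d\mu_\e(t) = \sum_i m_i \grad\phi(X_i(t))\cdot\dot X_i(t) = \sum_i m_i \grad\phi(X_i(t))\cdot v(t,X_i(t)) = \int \grad\phi\cdot v(t)\,d\mu_\e(t)$, using that the ODE right-hand side at $X_i$ equals $v(t,X_i(t))$ — which is precisely the content of the density computation in the first step, comparing \eqref{ODEsystem} with \eqref{velocityfield} and \eqref{subdiffform}. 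Moreover $t\mapsto\mu_\e(t)$ lies in $AC^2([0,T];\P_2(\Rd))$ since $W_2(\mu_\e(t),\mu_\e(s))^2 \leq \sum_i m_i |X_i(t)-X_i(s)|^2$ and the $X_i$ are $C^1$ (even locally Lipschitz suffices) with $\int_0^T \|v(t)\|_{L^2(\mu_\e;\Rd)}^2\,dt = \int_0^T \sum_i m_i |\dot X_i(t)|^2\,dt < \infty$ by the global bounds. Then Proposition~\ref{characterizationGF} applies verbatim to conclude $\mu_\e$ is the gradient flow of $\E_\e^m$ with the stated initial datum.

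I expect the main obstacle to be the first step: making rigorous that the formula \eqref{subdiffform} for $\delta\F_\e^m/\delta\mu$, which a priori holds only $\mu$-almost everywhere, may be evaluated pointwise at the atoms $X_i$ — i.e.\ justifying the choice of the continuous representative and the resulting collapse of all convolutions to finite sums — and correctly matching the bookkeeping of the two convolution terms $\grad\varphi_\e*\big((\varphi_\e*\mu_\e)^{m-2}\mu_\e\big)$ and $(\varphi_\e*\mu_\e)^{m-2}\,\grad\varphi_\e*\mu_\e$ appearing in \eqref{velocityfield} against the single term $\grad\tfrac{\delta\F_\e^m}{\delta\mu}$ in \eqref{ODEsystem}. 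Once that is done, the Lipschitz/ODE theory and the continuity-equation verification are routine.
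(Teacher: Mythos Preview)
Your overall strategy matches the paper's: compute the explicit form of $\grad\tfrac{\delta\F_\e^m}{\delta\mu}$ at an empirical measure, establish well-posedness of the resulting ODE system, verify that the empirical measure solves the continuity equation with velocity \eqref{velocityfield}, and invoke Proposition~\ref{characterizationGF}. The continuity-equation verification and the $L^2$-in-time bound on the velocity are handled in the paper exactly as you outline.

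There is, however, a genuine gap in your ODE well-posedness step. You claim that Assumption~\ref{convexityregularity} (namely $V,W\in C^1$ and semiconvex) gives $\grad V,\grad W$ locally Lipschitz, and then invoke Picard--Lindel\"of. This is false: $C^1$ plus semiconvexity does \emph{not} imply the gradient is locally Lipschitz (e.g.\ $V(x)=|x|^{3/2}$ in one dimension is convex and $C^1$, but $V'$ is not Lipschitz near the origin). What semiconvexity \emph{does} give is that $-\grad V$ and $-\grad W$ are one-sided Lipschitz: if $V+\tfrac{\lambda}{2}|\cdot|^2$ is convex then $\langle -\grad V(x)+\grad V(y),x-y\rangle \le \lambda|x-y|^2$. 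This is precisely what the paper uses: the internal-energy contribution is genuinely Lipschitz (your argument for that part is fine), while the drift and interaction contributions are continuous and one-sided Lipschitz, and standard theory (Peano for existence, a Gr\"onwall argument on $|X(t)-Y(t)|^2$ for uniqueness) then gives forward-in-time well-posedness. The one-sided Lipschitz bound also yields the a~priori estimate $\tfrac{d}{dt}|X|^2 \le C(1+|X|^2)$ directly, so global existence comes for free---whereas your proposed Gr\"onwall on the second moment using only the growth bound $|\grad V(x)|\le C_0+C_1|x|^{m-1}$ would fail for $m>2$ since that growth is superlinear.

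In short: replace ``locally Lipschitz $\Rightarrow$ Picard--Lindel\"of'' by ``continuous $+$ one-sided Lipschitz $\Rightarrow$ forward well-posedness'' and your argument goes through and coincides with the paper's.
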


\begin{proof}
To see that (\ref{ODEsystem}) is well-posed, first note that the function
\begin{align*}
	(y_1,\dots,y_N) \mapsto &\grad \frac{\delta \F_\e^m}{\delta \mu} (\Sigma_j \delta_{y_j} m_j )\\
		&= \sum_{j \in I}   \left( \left( \sum_{k \in I}\varphi_\e(y_j - y_k) m_k \right)^{m-2} + \left( \sum_{k \in I}\varphi_\e(y_i - y_k) m_k\right)^{m-2}  \right) \grad \varphi_\e(y_i - y_j) m_j \end{align*}
is Lipschitz. Likewise, Assumption \ref{convexityregularity} ensures $y_i \mapsto \grad V(y_i)$ and $y_i \mapsto \sum_{j\in I} \grad W(y_i-y_j)$ are continuous and one-sided Lipschitz. Therefore, the ODE system \eqref{ODEsystem} is well-posed forward in time.
	
	Now, suppose $(X_i)_{i=1}^N$ solves (\ref{ODEsystem}) with initial data $(X_i^0)_{i =1}^N$ on an interval $[0,T]$, for some fixed $T$. We abbreviate by $v_i = v_i(X_1, X_2, \dots, X_N)$ the velocity field for $X_i$ in (\ref{ODEsystem}). For any test function $\varphi \in C_\mathrm{c}^\infty(\Rd \times (0,T))$, the fundamental theorem of calculus ensures that, for all $i \in I$,
\begin{align*}
	\int_0^T \left( \grad \varphi(X_i(t),t) \dot{X}_i(t) + \partial_t \varphi( X_i(t),t) \right) \,dt = -\varphi(X_i(0),0).
\end{align*}
Combining this with \eqref{ODEsystem}, we obtain
\begin{align*}
	\int_0^{T} \partial_t \varphi(X_i(t),t) \,dt  + \varphi(X_i^0,0) - \int_0^{T} \grad \varphi(X_i(t),t) v_i(t) \,dt = 0
\end{align*}
Multiplying both sides by $m_i$, summing over $i$, and taking $\mu_\e = \sum_{i\in I} \delta_{X_i(\cdot)} m_i$ for $t\in[0,T]$ gives
\begin{align*}
	\int_0^{T} \ird\partial_t  \varphi(t,x)\,d \mu_\e(t,x) dt + \int_\Rd \varphi(0,x)\,d \mu_\e(0,x) +\int_0^{T} \int_{\Rd} \grad \varphi(t,x) v(t,x) \,d \mu_\e(t,x) \,dt  =0,
\end{align*}
for $v$ as in (\ref{velocityfield}). Therefore, $\mu_\e$ is a weak solution of the continuity equation with velocity field $v$. Furthermore, for all $T >0$
\begin{align*} 
	\int_0^T  \|v(t)\|^2_{L^2(\mu_\e;\Rd)}\,dt \leq 2\max_{(i,j,k) \in I^3} &\Bigg[ \int_0^T \left(|\grad V(X_i(t))|^2 + |\grad W(X_i(t)-X_j(t))|^2\right) \,dt \\
	&\quad + \int_0^T \Big( \left|(\varphi_\e(X_j(t)-X_k(t))^{m-2}+(\varphi_\e(X_i(t)-X_k(t))^{m-2} \right|^2\\
	&\phantom{{}=={}} \times \left|\grad \varphi_\e(X_i(t)-X_j(t)) \right|^2 \Big) \,dt \Bigg] < \infty,
\end{align*}
by the continuity of $\grad V$, $\grad W$, and $\varphi_\e$. Therefore, by Proposition \ref{characterizationGF}, we conclude that $\mu_\e \in AC^2([0,T];\P_2(\R^d))$ and $\mu_\e$ is the gradient flow of $\E^m_\e$.
\end{proof}

We now turn to the $\Gamma$-convergence of the gradient flows of the regularized energies, using the scheme introduced by Sandier--Serfaty \cite{SaSe} and then generalized by Serfaty \cite{Serfaty}, which provides three sufficient conditions for concluding convergence. We will use the following variant of Serfaty's result, which allows for slightly weaker assumptions on the gradient flows of the regularized energies, but follows from the same argument as Serfaty's original result. (See also Remark \ref{metric slope remark} on the correspondence between Wasserstein gradient flows and curves of maximal slope.)

\begin{thm}[c.f. {\cite[Theorem 2]{Serfaty}}] \label{Serfaty theorem}
	Let $m\geq2$. Suppose that, for all $\e>0$, $\mu_\e$ belongs to $AC^2([0,T];\P_2(\R^d))$ and is a gradient flow of $\E_\e^m$ with well-prepared initial data, i.e., 
\begin{align} \tag{S0} \label{S0}
	\mu_\e(0) \wsto \mu(0), \quad \lim_{\e \to 0} \E_\e^m(\mu_\e(0)) = \E^m(\mu(0)), \quad \mu(0) \in D(\E^m).
\end{align}
Suppose further that there exists a curve $\mu$  in $\P_2(\Rd)$ such that, for almost every $t \in[0,T]$, $ \mu_\e(t) \wsto \mu(t)$ and
\begin{enumerate}[(S1)]
	\item \label{cond:md} $\displaystyle \liminf_{\e\to 0} \int_0^t |\mu_\e'|(s)^2 \,d s \geq \int_0^t|\mu'|(s)^2\,d s$,\\
	\item \label{cond:liminf} $\displaystyle \liminf_{\e\to0} \E^m_\e(\mu_\e(t)) \geq \displaystyle \E^m(\mu(t))$,\\
	\item \label{cond:slopes} $\displaystyle \liminf_{\e\to 0} \int_0^t |\partial \E^m_\e|^2(\mu_\e(s)) \,ds \geq \int_0^t |\partial \E^m|^2(\mu(s))\,ds$.
\end{enumerate}
Then $\mu \in AC^2([0,T];\P_2(\R^d))$, and $\mu$ is a gradient flow of $\E^m$.
\end{thm}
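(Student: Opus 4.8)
The plan is to apply the Sandier--Serfaty scheme in its sharp (energy-identity) form, essentially as in \cite[Theorem~2]{Serfaty}; since the underlying structure is standard, I will emphasize the passage to the limit and the role of \eqref{S0}, (S1)--(S3). The starting point is the sharp dissipation identity for the regularized flows. By Proposition~\ref{wellposedness}, for each $\e>0$ the energy $\E^m_\e$ is proper, coercive, lower semicontinuous with respect to $2$-Wasserstein convergence, and semiconvex along generalized geodesics; hence $|\partial\E^m_\e|$ is a strong upper gradient for $\E^m_\e$ and the gradient flow $\mu_\e$ is a $2$-curve of maximal slope (Remark~\ref{metric slope remark}). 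Combining the maximal-slope differential inequality with the upper-gradient inequality and Young's inequality (c.f.\ \cite[Theorem~11.1.3]{AGS}) yields
\[
	\E^m_\e(\mu_\e(0)) = \E^m_\e(\mu_\e(t)) + \frac12\int_0^t |\mu_\e'|(s)^2\,ds + \frac12\int_0^t |\partial\E^m_\e|^2(\mu_\e(s))\,ds \qquad \text{for all } 0\le t\le T .
\]

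Next I would pass to the limit $\e\to0$ in this identity at a fixed time $t$ with $\mu_\e(t)\wsto\mu(t)$. Along a subsequence realizing $\liminf_{\e\to0}$ of the right-hand side, \eqref{S0} makes the left-hand side converge to $\E^m(\mu(0))$; the two integral terms on the right are nonnegative and, by (S2), the energy term is eventually bounded below by $\E^m(\mu(t))$, so $\liminf$ is superadditive on the sum. Applying (S1), (S2), (S3) to the three terms then gives, for a.e.\ $t\in[0,T]$,
\[
	\E^m(\mu(0)) \ge \E^m(\mu(t)) + \frac12\int_0^t |\mu'|(s)^2\,ds + \frac12\int_0^t |\partial\E^m|^2(\mu(s))\,ds .
\]
Moreover the identity above controls $\int_0^t|\mu_\e'|^2$ uniformly in $\e$ by $2\bigl(\E^m_\e(\mu_\e(0))-\E^m_\e(\mu_\e(t))\bigr)$, which is bounded using \eqref{S0}, (S2) and the monotonicity of $s\mapsto\E^m_\e(\mu_\e(s))$; so (S1) forces $\int_0^t|\mu'|^2<\infty$, and, with the lower semicontinuity of $W_2$ under weak-$^*$ convergence and a Helly-type compactness argument (c.f.\ \cite[Proposition~3.3.1]{AGS}), this upgrades $\mu$ to a curve in $AC^2([0,T];\P_2(\Rd))$, so that the metric derivative and the displayed integrals are meaningful.

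It then remains to close the loop with the converse inequality. Since $\F^m$ is convex along geodesics \cite[Theorem~2.2]{McCann} and $V,W$ are semiconvex, $\E^m$ is lower semicontinuous and semiconvex along geodesics, hence $|\partial\E^m|$ is a strong upper gradient for it (Remark~\ref{metric slope remark}); as $\int_0^t|\mu'|^2$ and $\int_0^t|\partial\E^m|^2(\mu)$ are finite, Cauchy--Schwarz makes $s\mapsto|\partial\E^m|(\mu(s))\,|\mu'|(s)$ integrable and the upper-gradient inequality with Young's inequality gives
\[
	\E^m(\mu(0)) - \E^m(\mu(t)) \le \int_0^t |\partial\E^m|(\mu(s))\,|\mu'|(s)\,ds \le \frac12\int_0^t\bigl(|\partial\E^m|^2(\mu(s)) + |\mu'|(s)^2\bigr)\,ds .
\]
Comparing with the previous display, equality holds for a.e.\ $t$, so $t\mapsto\E^m(\mu(t))$ agrees a.e.\ with the absolutely continuous, nonincreasing map $t\mapsto\E^m(\mu(0))-\frac12\int_0^t\bigl(|\mu'|^2+|\partial\E^m|^2(\mu)\bigr)$, whose a.e.\ derivative equals $-\frac12|\mu'|^2-\frac12|\partial\E^m|^2(\mu)$. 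This is precisely the definition of a $2$-curve of maximal slope for $\E^m$, which by Remark~\ref{metric slope remark} coincides with $\mu$ being a gradient flow of $\E^m$.

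The argument is structurally Serfaty's, so the main obstacle is bookkeeping rather than ideas. The delicate points are: (i) genuinely placing the limit curve in $AC^2([0,T];\P_2(\Rd))$ and splitting the $\liminf$ of the sum into the sum of $\liminf$s without an $\infty-\infty$ ambiguity, both of which hinge on the a priori bounds forced by \eqref{S0} and (S2); and (ii) the sharpness of the dissipation identity for the regularized flows, which uses the semiconvexity of $\E^m_\e$ from Proposition~\ref{prop:conv}. The genuinely substantial analysis — verifying that (S1)--(S3) actually hold in the situations of interest — is carried out in the subsequent results and is not part of this theorem.
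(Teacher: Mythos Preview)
The paper does not actually give a proof of this theorem: it states the result as a minor variant of \cite[Theorem~2]{Serfaty} and simply remarks that it ``follows from the same argument as Serfaty's original result.'' Your proposal is a correct outline of precisely that argument --- the energy-dissipation identity for the regularized flows, passage to the limit via \eqref{S0} and (S1)--(S3), and closure via the strong upper gradient property of $|\partial\E^m|$ --- so it is consistent with, and more explicit than, what the paper provides.
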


For simplicity of notation, in what follows we shall at times omit dependence on time when referring to curves in the space of probability measures.

In order to apply Serfaty's scheme in the present setting to obtain $\Gamma$-convergence of the gradient flows, a key assumption  is that  the following quantity is bounded uniformly in $\e>0$ along the gradient flows $\mu_\e$ of the regularized energies $\E^m_\e$:
\bes 
	\| \mu_\e\|_{BV_\e^m} :=  \ird\ird  \zeta_\e(x-y) \left| (\nabla \zeta_\e * p_\e)(x)  + (\nabla \zeta_\e * \mu_\e) (x) (\varphi_\e*\mu_\e)(y)^{m-2} \right| \,d\mu_\e(y) \,dx \,,
\ees
where we use the abbreviation $p_\e :=(\varphi_\e*\mu_\e)^{m-2}\mu_\e$. This quantity differs from $\norm{\grad \delta \F_\e^m/\delta \mu_\e}_{L^1(\mu_\e;\Rd)}$ merely by the placement of the absolute value sign:
\begin{align}
	\| \mu_\e\|_{BV_\e^m} &\geq  \ird \left| \ird  \zeta_\e(x-y)  (\nabla \zeta_\e * p_\e)(x)  + (\nabla \zeta_\e * \mu_\e) (x) (\varphi_\e*\mu_\e)(y)^{m-2}dx \right| d \mu_\e(y) \nonumber\\
	&= \int  \left| (\nabla \varphi_\e * p_\e)+ (\nabla \varphi_\e * \mu_\e) (\varphi_\e*\mu_\e)^{m-2} \right|  d \mu_\e = \left\|\grad \frac{\delta \F_\e^m}{\delta \mu_\e} \right\|_{L^1(\mu_\e;\Rd)}.\label{BV heuristic 1} 
\end{align}

Serfaty's scheme allows one to assume, without loss of generality, that $|\F^m_\e|(\mu_\e)$ is bounded uniformly in $\e>0$ for almost every $t\in [0,T]$, and H\"older's inequality ensures that $|\F^m_\e|(\mu_\e) = \norm{\grad \delta \F_\e^m/\delta \mu_\e}_{L^2(\mu_\e;\Rd)} \geq \norm{\grad \delta \F_\e^m/\delta \mu_\e}_{L^1(\mu_\e;\Rd)}$; see Proposition \ref{subdiffchar}. Consequently, we miss the bound we require on $\| \mu_\e\|_{BV_\e^m}$ merely by placement of the absolute value sign in inequality (\ref{BV heuristic 1}).

Still, $\| \mu_\e\|_{BV_\e^m}$ has a useful heuristic interpretation. Through the proof of Theorem \ref{Gamma GF theorem}, we obtain
\begin{align} 
	\liminf_{\e \to 0} \int_0^T \left\|\grad \frac{\delta \F_\e^m}{\delta \mu_\e}\right\|_{L^1(\mu_\e;\Rd)}\,dt &\geq \frac{m}{m-1} \int_0^T \left\|\grad \mu(t)^{m-1} \right\|_{L^1(\mu(t);\Rd)} \,dt \nonumber\\
	&= \int_0^T \ird \left| \grad \mu(t,x)^{m} \right|\, dx \,dt;\label{BV heuristic 2} 
\end{align}
see the inequality \eqref{weak convergence of velocities} and Proposition \ref{AGSthm}. Consequently, one may think of $\| \mu_\e\|_{BV_\e^m} $ as a nonlocal approximation of the $L^1$-norm of the gradient of $\mu^m$.

We begin with a technical lemma we shall use to prove the convergence of the gradient flows.

\begin{lem} \label{technical move convolution}
	Let $\e>0$ and $m \geq 2$, and let $T >0$ and $\mu_\e \in AC^2([0,T];\P_2(\R^d))$. Then for any Lipschitz function $f: [0,T]\times \Rd \to \R$ with constant $L_f>0$, there exists $r>0$ so that
\begin{align*}
&\norm{\left[(\zeta_\e *(f \mu_\e)) - f( \zeta_\e * \mu_\e) \right](\grad\zeta_\e*p_\e) + \left[(\zeta_\e*(f p_\e)) - f(\zeta_\e* p_\e)\right](\grad \zeta_\e*\mu_\e)}_{L^1([0,T] \times \Rd)} \\
 & \leq  \e^r L_f  \left( \int_0^T\| \mu_\e(t)\|_{BV_\e^m}\,dt + 2 C_\zeta\|\grad \zeta\|_{L^1(\Rd)} T^{1/(m-1)} \left(  \int_0^T \F^m_\e(\mu_\e(t))\,dt \right)^{\frac{m-2}{m-1}} \right),
  \end{align*}
where $C_\zeta>0$ is as in Assumption \ref{mollifierAssumption}.
\end{lem}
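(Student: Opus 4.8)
The plan is to apply the mollifier exchange lemma (Lemma~\ref{move mollifier prop}) twice, once with the pair $(\nu,\sigma) = (\mu_\e(t), \mu_\e(t))$ and once with the pair $(\nu,\sigma) = (p_\e(t), \mu_\e(t))$, treating $f = f(t,\cdot)$ as a Lipschitz function on $\Rd$ with constant $L_f$ for each fixed $t$, and then integrate the resulting bounds in $t$ over $[0,T]$. Concretely, for fixed $t$, bound the $L^1_x$-norm of the first summand by the triangle inequality: multiply the integrand $\left[(\zeta_\e*(f\mu_\e)) - f(\zeta_\e*\mu_\e)\right]$ against $(\grad\zeta_\e*p_\e)$, and observe that Lemma~\ref{move mollifier prop} applied with test measure $d\sigma = |\grad\zeta_\e*p_\e|(x)\,dx$ — or, more cleanly, by first writing out the convolutions explicitly and reorganizing — gives a pointwise-in-$t$ estimate of the form $\e^p L_f\big(\int\int \zeta_\e(x-y)|\grad\zeta_\e*p_\e|(x)\,d\mu_\e(y)\,dx + C_\zeta\,|\mu_\e|(\Rd)\,|p_\e(t)|(\Rd)\big)$, and symmetrically for the second summand with the roles of $\mu_\e$ and $p_\e$ interchanged in the convolutions. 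The sum of the two ``main'' terms reassembles, after pulling out the common kernel $\zeta_\e(x-y)$, into exactly $\int\int \zeta_\e(x-y)\big|(\grad\zeta_\e*p_\e)(x) + (\grad\zeta_\e*\mu_\e)(x)(\varphi_\e*\mu_\e)(y)^{m-2}\big|\,d\mu_\e(y)\,dx = \|\mu_\e(t)\|_{BV_\e^m}$ — here one uses that $d p_\e(y) = (\varphi_\e*\mu_\e)(y)^{m-2}\,d\mu_\e(y)$ so that the two kernel-weighted pieces share the same base measure $\mu_\e$ and can be combined under one absolute value by the triangle inequality in reverse (actually an equality after grouping).

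For the remainder terms, I would track the constants carefully. The first remainder, $C_\zeta |\mu_\e(t)|(\Rd)|p_\e(t)|(\Rd)$-type term, after the convolution with $\grad\zeta_\e$ is accounted for, produces a factor $\|\grad\zeta_\e\|_{L^1(\Rd)} = \|\grad\zeta\|_{L^1(\Rd)}$ (by scaling, $\|\grad\zeta_\e\|_{L^1} = \e^{-1}\|\grad\zeta\|_{L^1}\cdot\e = \|\grad\zeta\|_{L^1}$ up to the precise scaling convention — this should be absorbed into the power of $\e$ or kept as stated), and a factor $|p_\e(t)|(\Rd) = \int (\varphi_\e*\mu_\e)^{m-2}\,d\mu_\e = (m-1)\F^m_\e(\mu_\e(t))$. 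Since $m \geq 2$, $t \mapsto \F^m_\e(\mu_\e(t))$ is nonnegative (Proposition~\ref{relative sizes lemma}, inequality~\eqref{lower bounds}), so integrating $\int_0^T (m-1)\F^m_\e(\mu_\e(t))\,dt$ is fine; but the statement wants the exponent $(m-2)/(m-1)$ on $\int_0^T \F^m_\e(\mu_\e)\,dt$ together with a factor $T^{1/(m-1)}$, which suggests that the relevant remainder is not $|p_\e|(\Rd) = \int(\varphi_\e*\mu_\e)^{m-2}d\mu_\e$ directly but rather a quantity controlled by Jensen/Hölder: one writes $\int(\varphi_\e*\mu_\e)^{m-2}\,d\mu_\e \le \big(\int(\varphi_\e*\mu_\e)^{m-1}\,d\mu_\e\big)^{(m-2)/(m-1)} = \big((m-1)\F^m_\e(\mu_\e)\big)^{(m-2)/(m-1)}$ by Hölder with exponents $\frac{m-1}{m-2}$ and $m-1$ against the probability measure $\mu_\e$, and then integrate in $t$, applying Hölder once more in $t$ with the same pair of exponents to get $\int_0^T (\cdots)^{(m-2)/(m-1)}\,dt \le T^{1/(m-1)}\big(\int_0^T (m-1)\F^m_\e(\mu_\e)\,dt\big)^{(m-2)/(m-1)}$. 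Collecting the constants $C_\zeta$, $\|\grad\zeta\|_{L^1(\Rd)}$, the combinatorial factor $2$ (from the two summands), and absorbing powers of $(m-1)$, and choosing $r$ to be the exponent $p = p(q,d)$ from Lemma~\ref{move mollifier prop} (possibly shifted by the scaling exponent picked up from $\grad\zeta_\e$), yields precisely the claimed inequality.

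The step I expect to be the main obstacle is the bookkeeping that shows the two ``main'' pieces combine exactly into $\|\mu_\e(t)\|_{BV_\e^m}$ rather than into something merely comparable: one must be careful that the absolute value in the definition of $\|\cdot\|_{BV_\e^m}$ is taken \emph{after} combining the $p_\e$-term and the $\mu_\e$-term (both weighted by $\zeta_\e(x-y)$ and integrated $d\mu_\e(y)$), and that each application of Lemma~\ref{move mollifier prop} is correctly set up with $\sigma$ being the signed measure whose ``density'' in $x$ is $(\grad\zeta_\e*p_\e)(x)$ (resp.\ $(\grad\zeta_\e*\mu_\e)(x)(\varphi_\e*\mu_\e)(y)^{m-2}$, which has the awkward feature of depending on $y$ too). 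The cleanest route is probably \emph{not} to invoke Lemma~\ref{move mollifier prop} as a black box but to re-run its short proof in this slightly more elaborate setting — i.e.\ expand $\zeta_\e*(f\mu_\e) - f(\zeta_\e*\mu_\e) = \int \zeta_\e(x-z)(f(z)-f(x))\,d\mu_\e(z)$, use $|f(z)-f(x)| \le L_f|x-z|$, and exploit the factorization $\varphi_\e = \zeta_\e*\zeta_\e$ together with the decay of $\zeta$ and $\grad\zeta$ from Assumption~\ref{mollifierAssumption} to bound the $x$-integral — keeping $y$ as a parameter throughout and only integrating $d\mu_\e(y)$ at the very end. The decay exponents $q > d+1$, $q' > d$ are exactly what make the $x$-integrals of $|x-z|\zeta_\e(\cdot)$ against $|\grad\zeta_\e|$ finite and produce the gain of a positive power of $\e$; isolating the correct value of $r$ in terms of $q,q',d$ is a routine but fiddly scaling computation that I would relegate to the appendix in the same spirit as the proof of Lemma~\ref{move mollifier prop}.
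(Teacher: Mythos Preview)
Your proposal is correct and lands on exactly the paper's approach: the paper does \emph{not} invoke Lemma~\ref{move mollifier prop} as a black box but, as in your ``cleanest route'' paragraph, expands both convolution differences explicitly, combines them under a single $d\mu_\e(y)$-integral via $dp_\e(y) = (\varphi_\e*\mu_\e)(y)^{m-2}\,d\mu_\e(y)$ \emph{before} taking absolute values (this is what makes $\|\mu_\e\|_{BV_\e^m}$ appear with the absolute value inside), then splits the $y$-integral into $B_{\e^{\bar r}}(x)$ and its complement and uses your Jensen/H\"older steps verbatim for the remainder. The one detail you left imprecise is the scaling $\|\grad\zeta_\e\|_{L^1(\Rd)} = \e^{-1}\|\grad\zeta\|_{L^1(\Rd)}$ (not $\|\grad\zeta\|_{L^1(\Rd)}$): this lost power of $\e$ is why the paper needs the far-region decay exponent $\tilde r = \bar r(1-q)+q-d$ to exceed $1$, and then sets $r = \min(\bar r,\tilde r - 1)$.
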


\begin{proof}
We argue similarly as in Lemma \ref{move mollifier prop}. Let $f: [0,T]\times \Rd \to \R$ be Lipschitz with constant $L_f>0$. Then,
\begin{align*}
	& \int \big| \left[(\zeta_\e *(f \mu_\e)) - f( \zeta_\e * \mu_\e) \right](\grad\zeta_\e*p_\e) + \left[(\zeta_\e*(f p_\e)) - f(\zeta_\e* p_\e)\right](\grad \zeta_\e*\mu_\e) \big| \,d\mathcal{L}^d\\
	&= \ird \left| \ird \zeta_\e(x-y)[f(y)-f(x)]  \left[ (\grad \zeta_\e*p_\e)(x) + (\grad \zeta_\e*\mu_\e)(x) (\varphi_\e*\mu_\e)(y)^{m-2} \right] \,d \mu_\e(y)  \right| \,dx \\
	&\leq L_f\ird\ird \zeta_\e(x-y) |x-y|\left| (\grad \zeta_\e*p_\e)(x) + (\grad \zeta_\e*\mu_\e)(x)(\varphi_\e*\mu_\e)(y)^{m-2} \right| \,d \mu_\e(y) \,dx\,.
\end{align*}
By Assumption \ref{mollifierAssumption}, $C_\zeta$ is so that $\zeta(x) \leq C_\zeta |x|^{-q}$ for $q >d+1$ for all $x \in \Rd$. Choose $\bar r$ so that
\begin{align} \label{pdef}
0< \bar r < \frac{q-(d+1)}{q-1} . 
\end{align}
Now, we break the integral with respect to $d \mu_\e(y)$ above into integrals over the domain $B_{\e^{\bar r}}(x)$ and $\Rd \setminus B_{\e^{\bar r}}(x)$, bounding the above quantity by
\begin{align*}
	&\, L_f \int_\Rd \int_{B_{\e^{\bar r}}(x)} \zeta_\e(x-y) |x-y|\left| (\grad \zeta_\e*p_\e)(x) + (\grad \zeta_\e*\mu_\e)(x)(\varphi_\e*\mu_\e)(y)^{m-2} \right| \,d \mu_\e(y) \,dx\\
	&\phantom{{}={}} +L_f   \ird \int_{\Rd \setminus B_{\e^{\bar r}}(x)} \zeta_\e(x-y) |x-y|\left| (\grad \zeta_\e*p_\e)(x) + (\grad \zeta_\e*\mu_\e)(x)(\varphi_\e*\mu_\e)(y)^{m-2} \right| \,d \mu_\e(y) \,dx , \\
 	&=: I_1 + I_2 
\end{align*}
First, we consider $I_1$. Since, in the integral, $|x-y| < \e^{\bar r}$, we obtain
\[ I_1 < \e^{\bar r} L_f \| \mu_\e\|_{BV_\e^m} . \]
Now, we consider $I_2$. We apply the inequality in \eqref{zeta bound} to obtain $\zeta_\e(x-y)|x-y| \leq C_\zeta \e^{\tilde{r}}$ with $\tilde{r}:={\bar r}(1-q)+q-d$ in the integral---the inequality in \eqref{pdef} ensures $\tilde{r}>1$.
Consequently,
\begin{align*}
	I_2 &\leq \e^{\tilde{r}} L_f C_\zeta \left( \int |\grad \zeta_\e *p_\e| \,d\mathcal{L}^d \int d\mu_\e + \int| \grad \zeta_\e *\mu_\e |\,d\mathcal{L}^d \int p_\e\,d\mathcal{L}^d \right)\\
	&\leq 2 \e^{\tilde{r}} L_f C_\zeta \|\grad \zeta_\e\|_{L^1(\Rd)} \int p_\e\,\mathcal{L}^d \leq 2 \e^{\tilde{r}-1} L_f C_\zeta\|\grad \zeta\|_{L^1(\Rd)}  \F^m_\e(\mu)^{(m-2)/(m-1)} ,
\end{align*}
where, in the last inequality, we use that $\|\grad \zeta_\e\|_{L^1(\Rd)} = \|\grad \zeta\|_{L^1(\Rd)}/\e$ and, by Jensen's inequality for the concave function $s^{(m-2)/(m-1)}$,
\begin{align} \label{pepsbd}
	\int p_\e\,d\mathcal{L}^d = \int (\varphi_\e*\mu_\e)^{m-2} d \mu_\e \leq \left( \int (\varphi_\e*\mu_\e)^{m-1} d \mu_\e\right)^{(m-2)/(m-1)} = \F^m_\e(\mu_\e)^{(m-2)/(m-1)} .
\end{align}
Since $0\leq (m-2)/(m-1) <1$, Jensen's inequality gives
\begin{align} \label{Jensen in t}
	\int_0^T \F^m_\e(\mu_\e(t))^{(m-2)/(m-1)} \,dt \leq T \left( \frac{1}{T} \int_0^T \F^m_\e(\mu_\e(t)) \,dt\right)^{(m-2)/(m-1)} .
\end{align}
This gives the result by taking $r:=\min ({\bar r},\tilde r -1)$.
\end{proof}

With this technical lemma in hand, we now turn to the $\Gamma$-convergence of the gradient flows.

\begin{thm} \label{Gamma GF theorem}
	Let $m \geq 2$, and let $V$ and $W$ be as in Assumption \ref{convexityregularity}. Fix $T>0$ and suppose that $\mu_\e \in AC^2([0,T];\P_2(\R^d))$ is a gradient flow of $\E^m_\e$ for all $\e>0$ satisfying
\begin{align} \tag{A0} \label{A0}
	\sup_{\e >0} M_2(\mu_\e(0)) < +\infty, \quad \mu_\e(0) \wsto \mu(0), \quad \lim_{\e \to 0} \E^m_\e(\mu_\e(0)) = \E^m(\mu(0)),
\end{align}
for some $\mu(0) \in D(\E^m)$. Furthermore, suppose that the following hold:
\begin{enumerate}[(\text{A}1)]	
	\label{extra assumptions b}
	\item $\sup_{\e >0} \int_0^T \| \mu_\e(t) \|_{BV_\e^m}dt  < \infty $; \label{extra assumptions c}
	\item there exists $\mu\: [0,T] \to \P_2(\R^d)$ such that $\zeta_\e*\mu_\e(t) \to \mu(t)$ in $L^1([0,T];L^m_\loc(\Rd))$ as $\e\to0$, and $\sup_{\e >0} \int_0^T \|\zeta_\e* \mu_\e(t)\|^m_{L^m(\R^d)} \,dt <  \infty $. \label{extra assumptions a}
\end{enumerate}
Then $\mu_\e(t) \wsto \mu(t)$ for almost every $t \in [0,T]$, $\mu \in AC^2([0,T];\P_2(\R^d))$, and $\mu$ is the gradient flow of $\E^m$ with initial data $\mu(0)$.
\end{thm}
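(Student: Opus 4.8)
The plan is to verify the hypotheses of Serfaty's scheme, Theorem \ref{Serfaty theorem}; throughout set $p_\e := (\varphi_\e*\mu_\e)^{m-2}\mu_\e$. \emph{A priori bounds, compactness, and \textup{(S1)}.} From the energy dissipation equality along the gradient flow of $\E^m_\e$ (c.f.\ \cite[Theorem 11.2.1]{AGS} and Remark \ref{metric slope remark}),
\[
	\E^m_\e(\mu_\e(0)) = \E^m_\e(\mu_\e(t)) + \tfrac12\int_0^t |\mu_\e'|(s)^2\,ds + \tfrac12\int_0^t |\partial\E^m_\e|^2(\mu_\e(s))\,ds,
\]
the lower bound $\F^m_\e\geq0$ from Proposition \ref{relative sizes lemma}, and the at most quadratic growth of the negative parts of $V,W$, I would extract uniform-in-$\e$ bounds on $\int_0^T |\mu_\e'|(t)^2\,dt$, on $\int_0^T |\partial\E^m_\e|^2(\mu_\e(t))\,dt$, and on $\sup_{t\in[0,T]}\big(|\E^m_\e(\mu_\e(t))| + M_2(\mu_\e(t))\big)$ --- the moment bound following from $M_2(\mu_\e(t))^{1/2}\leq M_2(\mu_\e(0))^{1/2}+\int_0^t|\mu_\e'|(s)\,ds$ and \textup{(A0)}. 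As in Serfaty's argument, along a subsequence one may additionally assume that $|\partial\E^m_\e|^2(\mu_\e(t))$ and $\F^m_\e(\mu_\e(t))$ are bounded uniformly in $\e$ for a.e.\ $t$, and, using \textup{(A2)}, that $\zeta_\e*\mu_\e(t)\to\mu(t)$ in $L^m_\loc(\Rd)$ for a.e.\ $t$. Writing $\int\phi\,d\mu_\e(t) = \int\phi\,d(\zeta_\e*\mu_\e(t)) + \int(\phi-\zeta_\e*\phi)\,d\mu_\e(t)$ for $\phi\in C_\mathrm{c}^\infty(\Rd)$ then gives $\mu_\e(t)\wsto\mu(t)$, while the uniform moment bound forces $\mu(t)\in\P_2(\Rd)$ for a.e.\ $t$. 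Lower semicontinuity of the action $\int_0^t |\mu'|(s)^2\,ds$ under narrow convergence (c.f.\ \cite{AGS}) finally yields \textup{(S1)} together with $\mu\in AC^2([0,T];\P_2(\Rd))$.

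\emph{Conditions \textup{(S0)} and \textup{(S2)}.} Condition \textup{(S0)} is precisely \textup{(A0)}. For \textup{(S2)}, I would write $\E^m_\e(\mu_\e(t)) = \int V\,d\mu_\e(t) + \tfrac12\int(W*\mu_\e(t))\,d\mu_\e(t) + \F^m_\e(\mu_\e(t))$: the drift and interaction terms are lower semicontinuous along $\mu_\e(t)\wsto\mu(t)$ by the lower semicontinuity and growth hypotheses on $V,W$ in Assumption \ref{convexityregularity} and the uniform moment bound (c.f.\ \cite[Lemma 5.1.7, Example 9.3.4]{AGS}), and $\liminf_{\e\to0}\F^m_\e(\mu_\e(t))\geq\F^m(\mu(t))$ is Theorem \ref{Gamma convergence theorem2}(\ref{liminf condition 2}).

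\emph{Condition \textup{(S3)}, and conclusion.} By Corollary \ref{full subdiff char} and Proposition \ref{characterizationGF}, $|\partial\E^m_\e|^2(\mu_\e(t)) = \|v_\e(t)\|^2_{L^2(\mu_\e(t);\Rd)}$ for the velocity field $v_\e$ of \eqref{velocityfield}, and $\partial_t\mu_\e+\grad\cdot(v_\e\mu_\e)=0$. Since $\int_0^T\|v_\e(t)\|_{L^1(\mu_\e(t))}\,dt\leq T^{1/2}\big(\int_0^T|\partial\E^m_\e|^2(\mu_\e(t))\,dt\big)^{1/2}$ is bounded and $\mu_\e\,dt$ is tight on $[0,T]\times\Rd$, the lower semicontinuity theorem \cite[Theorem 5.4.4]{AGS} produces, along a subsequence, a field $v\in L^2(\mu\,dt;\Rd)$ with $v_\e\mu_\e\,dt\wsto v\mu\,dt$ and $\liminf_{\e\to0}\int_0^t\|v_\e(s)\|^2_{L^2(\mu_\e(s))}\,ds\geq\int_0^t\|v(s)\|^2_{L^2(\mu(s))}\,ds$. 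The heart of the matter is to identify $v$. The drift part $-\grad V-\grad W*\mu_\e$ converges, tested against $\mu_\e$, to $-\grad V-\grad W*\mu$ by the growth bounds of Assumption \ref{convexityregularity} and the moment bound. For the diffusive part $\grad\varphi_\e*p_\e + (\varphi_\e*\mu_\e)^{m-2}\,\grad\varphi_\e*\mu_\e$ I would invoke the mollifier exchange Lemma \ref{technical move convolution} --- whose error is controlled by \textup{(A1)} and the uniform bound on $\F^m_\e(\mu_\e)$ --- to move the outer factors $\zeta_\e$ onto smooth test functions, reducing matters to expressions in $\zeta_\e*\mu_\e$ and $\zeta_\e*p_\e$; then \textup{(A2)}, which gives $\zeta_\e*\mu_\e\to\mu$ in $L^1([0,T];L^m_\loc)$ and hence $\varphi_\e*\mu_\e\to\mu$, combined with the Banach--Saks and varying-measure arguments of the proof of Theorem \ref{Gamma convergence theorem2} for $m>2$ (now with strong rather than merely weak convergence of $\zeta_\e*\mu_\e$), identifies the weak limit of $\zeta_\e*p_\e$ as $\mu^{m-1}$, whence $v = -\grad V-\grad W*\mu-\tfrac{m}{m-1}\grad\mu^{m-1} = -\grad\frac{\delta\E^m}{\delta\mu}$ for a.e.\ $t$, $\mu$-a.e. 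By the subdifferential characterization of $\E^m$ (the $\e=0$ analogue of Corollary \ref{full subdiff char}; c.f.\ \cite{AGS,5person}), $\|v(s)\|^2_{L^2(\mu(s))} = |\partial\E^m|^2(\mu(s))$, which is \textup{(S3)}. Serfaty's Theorem \ref{Serfaty theorem} then gives $\mu\in AC^2([0,T];\P_2(\Rd))$ and that $\mu$ is a gradient flow of $\E^m$ with initial datum $\mu(0)$; since such gradient flows are unique (Proposition \ref{wellposedness}) and $\mu$ is prescribed by \textup{(A2)}, the conclusion holds along the full family. I expect the identification of $v$ to be the main obstacle --- in particular passing to the limit in the product $(\varphi_\e*\mu_\e)^{m-2}\cdot\mu_\e$, a product of a weakly convergent and a mollified strongly convergent factor, and controlling the time-integrated errors in Lemma \ref{technical move convolution} --- which is exactly where hypotheses \textup{(A1)} and \textup{(A2)} are used.
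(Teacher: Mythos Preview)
Your overall architecture is the same as the paper's: verify Serfaty's conditions (S0)--(S3), with (S0) from (A0), (S2) from Theorem~\ref{Gamma convergence theorem2} plus lower semicontinuity of the drift and interaction terms, (S1) from lower semicontinuity of the action, and the substance of the argument in (S3) via weak compactness of $v_\e\mu_\e$, identification of the limit, and lower semicontinuity of the $L^2(\mu_\e)$-norm. Your extraction of uniform moment and action bounds from the energy dissipation identity is a reasonable way to set up (S1); the paper instead cites \cite[Theorem 5.6]{CraigTopaloglu} directly.

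Two points in your treatment of (S3) differ from the paper and deserve care. First, the paper does \emph{not} try to identify a limit of $\zeta_\e*p_\e$ as an $L^p$ object. After applying Lemma~\ref{technical move convolution} and integrating by parts to reach $-\int\grad f\,(\zeta_\e*\mu_\e)(\zeta_\e*p_\e)$, it performs a \emph{second} mollifier exchange (Lemma~\ref{move mollifier prop}) to push the remaining $\zeta_\e$ back onto $\mu_\e$, arriving at $-\int\grad f\,(\varphi_\e*\mu_\e)^{m-1}\,d\mu_\e$. The limit is then taken by showing that $w_\e:=\varphi_\e*\mu_\e$ converges \emph{strongly in $L^{m-1}$ with respect to the varying base measures $\mu_\e$} (Definition~\ref{weakvaryingdef}): weak convergence comes from one more use of Lemma~\ref{move mollifier prop} and the $L^2_\loc$ convergence implied by (A\ref{extra assumptions a}), and the matching $\limsup$ bound on $\int|\kappa_R w_\e|^{m-1}d\mu_\e$ comes from Jensen and the $L^m$ bound in (A\ref{extra assumptions a}). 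One then invokes Proposition~\ref{AGSthm}\eqref{strongcty}. Your formulation in terms of the weak limit of $\zeta_\e*p_\e$ is harder to make precise (there is no obvious uniform $L^q$ bound on $\zeta_\e*p_\e$ itself beyond $L^1$), and the Banach--Saks step from Theorem~\ref{Gamma convergence theorem2} is not needed once you have the strong convergence supplied by (A\ref{extra assumptions a}).

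Second, the step ``by the subdifferential characterization of $\E^m$, $\|v(s)\|^2_{L^2(\mu(s))}=|\partial\E^m|^2(\mu(s))$'' presupposes the regularity $\mu(s)^m\in W^{1,1}(\Rd)$ with $\grad\mu(s)^m=\eta(s)\mu(s)$ for some $\eta(s)\in L^2(\mu(s))$. The paper establishes this explicitly: from the identity $\lim_\e\int_0^T\!\int f\,\grad\tfrac{\delta\F^m_\e}{\delta\mu_\e}\,d\mu_\e=-\int_0^T\!\int\grad f\,\mu^m$ and the uniform bound on $\int_0^T|\partial\E^m_\e|^2(\mu_\e)$, one first bounds $\int f\,d\grad(\mu^m)$ by $C\|f\|_{L^\infty}$ to see $\grad(\mu^m)$ is a finite measure, and then by $C\|f\|_{L^2(\mu)}$, whence Riesz representation yields $\eta\in L^2(\mu)$ with $\eta\mu=\grad(\mu^m)$. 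In your framework this is implicitly contained in ``$v\in L^2(\mu\,dt)$'' together with the distributional identity for the diffusive part, but it must be stated and used before you are entitled to write $v=-\tfrac{m}{m-1}\grad\mu^{m-1}-\grad V-\grad W*\mu$ pointwise and to invoke \cite[Theorem 10.4.13]{AGS}.
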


\begin{proof}
First, we note that $\mu_\e(t) \wsto \mu(t)$ for almost every $t \in [0,T]$. This follows from (A\ref{extra assumptions a}), which ensures $\zeta_\e*\mu_\e(t) \to \mu(t)$ in $L^1([0,T];L^m_\loc(\Rd))$, hence $\zeta_\e*\mu_\e(t) \to \mu(t)$ in distribution for almost every $t \in [0, T]$. Then, since  $\zeta_\e*\mu_\e(t) - \mu_\e(t) \to 0$ in distribution for all $t \in [0,T]$, we obtain $\mu_\e(t) \to \mu(t)$ in distribution. Finally, since weak-* convergence and convergence in distribution are equivalent when $\mu_\e$ and $\mu$ are both probability measures \cite[Remark 5.1.6]{AGS}, we obtain $\mu_\e(t) \wsto \mu(t)$ for almost every $t \in [0,T]$.

It remains to verify conditions (\ref{S0}), (S\ref{cond:md}), (S\ref{cond:liminf}), and (S\ref{cond:slopes}) from Theorem \ref{Serfaty theorem}. Item (\ref{S0}) holds by assumption (\ref{A0}).
Item (S\ref{cond:md}) follows by the same argument as in \cite[Theorem 5.6]{CraigTopaloglu}. Item (S\ref{cond:liminf}) is an immediate consequence of the fact that $\mu_\e(t) \wsto \mu(t)$ for almost every $t \in [0,T]$, our main $\Gamma$-convergence Theorem \ref{Gamma convergence theorem2}, and the lower semicontinuity of the potential and interaction energies with respect to weak-$^*$ convergence \cite[Lemma 5.1.7]{AGS}. 

We devote the remainder of the proof to showing Condition (S\ref{cond:slopes}). 
We shall use the following fact throughout: combining Assumption (A\ref{extra assumptions a}) with Proposition \ref{relative sizes lemma} implies that 
\begin{equation}\label{auxxx} 
	\sup_{\e > 0} \int_0^T \F^m_\e(\mu_\e(t))\,dt  \leq  \sup_{\e > 0}  \frac{1}{m-1} \int_0^T \|\zeta_\e*\mu_\e(t)\|_{L^m(\R^d)}^m\,dt <\infty . 
\end{equation}
 To prove (S\ref{cond:slopes}) we may assume, without loss of generality, that $\liminf_{\e \to 0} \int_0^T|\partial \E^m_\e|(\mu_\e(t))^2\,dt$ is finite, so by Fatou's lemma
\begin{align} \label{subdiff bound pointwise}
	 \infty > \liminf_{\e \to 0} \int_0^T|\partial \E^m_\e|(\mu_\e(t))^2 \,dt\geq \int_0^T \liminf_{\e \to 0}  |\partial \E^m_\e|(\mu_\e(t))^2 \,dt, 
 \end{align}
so $\liminf_{\e\to0} |\partial \E^m_\e|(\mu_\e(t)) < \infty$ for almost every $t \in [0,T]$. In particular, up to taking subsequences, we may assume that, for almost every $t \in [0,T]$, $\{|\partial \E^m_\e|(\mu_\e(t))\}_\e$ is bounded uniformly in $\e>0$. By Corollary \ref{full subdiff char},
\bes
	|\partial \E_\e^m|(\mu_\e) = \left\|\grad V + \grad W*\mu_\e + \grad \frac{ \delta \F^m_\e}{\delta \mu_\e}(\mu_\e) \right\|_{L^2(\mu_\e;\Rd)} .
\ees
Furthermore, note that if 
\begin{align} \label{regularity of mu}
	\mu^{m} \in W^{1,1}(\Rd) \text{ and }\grad \mu^{m} +  \grad V \mu + (\grad W*\mu) \mu= \xi \mu \text{ for some }\xi \in L^2(\mu;\Rd) ,
\end{align}
then $|\partial \E^m|(\mu) = \| \xi \|_{L^2(\mu;\Rd)}$; c.f. \cite[Theorem 10.4.13]{AGS}. Thus, to prove (S\ref{cond:slopes}) it suffices to show that
\be\label{eq:ineq-slopes-Serfaty}
	\liminf_{\e \to 0} \int_0^T \int \left| \grad V + \grad W*(\mu_\e(t)) + \grad \frac{ \delta \F^m_\e}{\delta \mu_\e}(\mu_\e(t)) \right|^2 d \mu_\e(t) \,dt \geq   \int_0^T \int | \xi(t)  |^2 \,d \mu(t)\,dt  , 
\ee
when (\ref{regularity of mu}) holds for almost every $t \in[0,T]$. Furthermore, the inequality in \eqref{eq:ineq-slopes-Serfaty} is, by Proposition \ref{AGSthm}(\ref{weaklsc}), a consequence of
\begin{align}  \label{weak convergence of velocities}
\lim_{\e \to 0} \int_0^T \int f(t) \left( \grad V + \grad W*\mu_\e(s) + \grad \frac{ \partial \F^m_\e}{\partial \mu_\e}(s) \right) d \mu_\e (s)ds = \int_0^T \int f(t) \xi(t) \,d \mu(t) \,ds,
\end{align}
for all $f \in C_\mathrm{c}^\infty([0,T] \times\Rd)$. Observe that Proposition \ref{AGSthm} is stated for probability measures---we can easily rescale $d\mu_\e \otimes d\mathcal{L}^d$ to be a probability measure by diving the above equations by $T>0$.

First, we address the terms with the drift and interaction potentials $V$ and $W$. Combining Assumption \ref{convexityregularity} on $V$ and $W$ with Assumption (A\ref{extra assumptions b}) on $\mu_\e$ ensures that $|\grad V|$ is uniformly integrable in $d \mu_\e \otimes d\mathcal{L}^d$ and $(x,y) \mapsto |\grad W(x-y)|$ is uniformly integrable $d \mu_\e \otimes d \mu_\e \otimes d\mathcal{L}^d$.Therefore, by \cite[Lemma 5.1.7]{AGS}, $(\mu_\e)_\e$ converging weakly-$^*$ to $\mu$ ensures that 
 \[ 
 	\lim_{\e\to 0} \int_0^T \int f(t) \left( \grad V + \grad W*(\mu_\e(t))  \right) \,d \mu_\e(t)\,dt = \frac{m}{m-1} \int_0^T \ird f(t) \big( \grad V + \grad W*(\mu(t))\big)\,d\mu(t)\,dt. 
\]
Now we deal with proving the diffusion part of \eqref{regularity of mu} (that is, for almost every $t \in [0,T]$, we have $\mu(t)^m \in W^{1,1}(\Rd)$ and $\grad \mu(t)^m = \eta(t) \mu(t)$ for $\eta \in L^2(\mu;\Rd)$), and with proving that
\begin{align} \label{GFgoal1}
	\lim_{\e \to 0} \int_0^T \int f(t) \grad \frac{ \delta \F^m_\e}{\delta \mu_\e}(\mu_\e(t)) \,d \mu_\e(t)\,dt =  \int_0^T \int f(t) \eta(t) \,d \mu(t)\,dt ,
\end{align}
Recalling the abbreviation $p_\e := (\varphi_\e *\mu_\e)^{m-2} \mu_\e$, we rewrite the inner integral on the left-hand side of (\ref{GFgoal1}) as
\begin{align*}
 	\int f \grad \frac{\partial \F^m_\e}{\partial \mu_\e} \,d \mu_\e &= \int f \left( (\grad \varphi_\e* p_\e) + (\varphi_\e* \mu_\e)^{m-2} (\grad \varphi_\e * \mu_\e) \right) \,d \mu_\e \\
 &= \int  (\zeta_\e *(f \mu_\e)) (\grad\zeta_\e*p_\e) + (\zeta_\e*(f p_\e)) (\grad \zeta_\e*\mu_\e) \,d\mathcal{L}^d.
\end{align*}
Applying Lemma \ref{technical move convolution} together with \eqref{auxxx} and (A3), and integrating by parts, we obtain
\begin{align*}
	\lim_{\e \to 0} \int_0^T \int f(t)  \grad \frac{\delta \F^m_\e}{\delta \mu_\e}(\mu_\e(t)) \, d \mu_\e(t)\,dt &= \lim_{\e \to 0} \int_0^T \int f(t)( \zeta_\e * (\mu_\e(t))) (\grad\zeta_\e*(p_\e(t)))\,d\mathcal{L}^d\,dt\\
	&\phantom{{}={}} + \int_0^T \int f(t)(\zeta_\e* (p_\e(t))) (\grad \zeta_\e*(\mu_\e(t))) \,d\mathcal{L}^d\,dt \\
	& = - \lim_{\e \to 0}  \int_0^T \int \grad f(t) ( \zeta_\e * (\mu_\e(t))) (\zeta_\e*(p_\e(t))) \,d\mathcal{L}^d\,dt \\
	&=- \lim_{\e \to 0}  \int_0^T \int \zeta_\e*(\grad f(t) ( \zeta_\e * (\mu_\e(t)))) p_\e(t)\,d\mathcal{L}^d\,dt  .
\end{align*} 
Now we move $\grad f$ out of the convolution. By Lemma \ref{move mollifier prop}, there exists $p>0$ so
\begin{align*}
	\left| \int \zeta_\e*(\grad f ( \zeta_\e * \mu_\e)) p_\e\,d\mathcal{L}^d \right.-&\left. \int \grad f (\zeta_\e*(  \zeta_\e * \mu_\e)) p_\e \,d\mathcal{L}^d\right| \\
& \leq \e^{p}  \|\grad f \|_{L^\infty([0,T]\times\Rd)} \left( \int (\varphi_\e*\mu_\e)^{m-1} d \mu_\e + C_\zeta \int p_\e\,d\mathcal{L}^d \right)  \\
&  \leq \e^{p} \|\grad f \|_{L^\infty([0,T]\times\Rd)} \left( \F^m_\e(\mu_\e) + C_\zeta \F^m_\e(\mu_\e)^{(m-2)/(m-1)} \right),
\end{align*}
where we again use (\ref{pepsbd}). 
Using the inequality in \eqref{Jensen in t} and that $\{\int_0^T \F^m_\e(\mu_\e(t))\,dt\}_\e$ is uniformly bounded in $\epsilon$,
\begin{align} \label{almostdone}
	-\lim_{\e \to 0} \int_0^T \int f(t) \grad \frac{\delta \F^m_\e}{\delta \mu_\e}(\mu_\e(t)) \,d \mu_\e(t)\,dt &=  \lim_{\e \to 0} \int_0^T \int \grad f(t) (\varphi_\e*\mu_\e) p_\e\,d\mathcal{L}^d \,dt\\
	&= \lim_{\e \to 0} \int_0^T \ird \grad f(t) (\varphi_\e*\mu_\e(t))^{m-1} d \mu_\e(t)\,dt.
\end{align}

To conclude the proof, we aim to apply Proposition \ref{AGSthm}\eqref{strongcty}, and we begin by verifying the hypotheses of this proposition. First, note that since $\zeta_\e*\mu_\e \to \mu$ in $L^1([0,T];L^m_\loc(\Rd))$ for $m \geq 2$ as $\e\to0$, we also have $\zeta_\e*\mu_\e \to \mu$ in $L^1([0,T];L^2_\loc(\Rd))$. Let $w_\e = \varphi_\e*\mu_\e$. By definition, $\int w_\e d \mu_\e = \int (\zeta_\e*\mu_\e)^2\,d\mathcal{L}^d$. Thus, Assumption (A\ref{extra assumptions a}) and the fact that $\zeta_\e*\mu_\e(\Rd)= 1$ imply
\[ 
	\sup_{\e >0} \int_0^T \int |\zeta_\e*\mu_\e(t)|^2\,d\mathcal{L}^d\,dt < \infty,
\]
so that $w_\e \in L^1([0,T],L^1(\mu_\e;\Rd))$. Furthermore, for any $h \in L^\infty([0,T];W^{1,\infty}(\Rd))$, the mollifier exchange lemma \ref{move mollifier prop} and the convergence of $\zeta_\e*\mu_\e$ to $\mu$ in $L^1([0,T];L^2_\loc(\Rd))$ give
\begin{align} \label{weak convergence of w eps}
	\int_0^T \int h(t) w_\e(t) \,d \mu_\e(t) &= \int_0^T \int \zeta_\e*(h \mu_\e(t)) \,d\zeta_\e *(\mu_\e(t))\,dt \nonumber\\
	& = \int_0^T \int h(t) (\zeta_\e*\mu_\e(t))^2\,d\mathcal{L}^d\,dt\\
	&\phantom{{}={}}+ \e^p \|\grad h\|_{ L^\infty([0,T];W^{1,\infty}(\Rd))} \left( \int_0^T \int\|\zeta_\e*(\mu_\e(t))\|_{L^2(\Rd)}^2\,d\mathcal{L}^d\,dt + C_\zeta \right)\nonumber\\
	 &\longrightarrow \int_0^T \int h(t) \mu(t)^2 \,d\mathcal{L}^d\,dt, \nonumber
\end{align}
as $\e\to0$. Thus, $w_\e \in L^1([0,T];L^1(\mu_\e;\Rd))$ converges weakly to $\mu \in L^1([0,T];L^1(d \mu))$ in the sense of Definition \ref{weakvaryingdef} as $\e\to0$. As before, while this definition is stated for probability measures, we can easily rescale $d\mu_\e \otimes d\mathcal{L}^d$ to be a probability measure by diving the above equations by $T>0$.

We now seek to show that, for all $g \in C_\mathrm{c}^\infty([0,T]\times\Rd)$,
\[ 
	\lim_{\e \to 0} \int_0^T \int g(t) |w_\e(t)|^{m-1} \,d \mu_\e(t)\,dt = \int_0^T \int g(t) |\mu(t)|^{m-1} \,d \mu(t) . 
\]
When $m=2$, this follows from equation (\ref{weak convergence of w eps}). Suppose $m>2$. Let $\kappa\: \Rd\to\R$ be a smooth cutoff function with $0 \leq \kappa \leq 1$, $\|\grad \kappa\|_{L^\infty(\Rd)} \leq 1$, $\|D^2 \kappa\|_{L^\infty(\Rd)} \leq 4$, $\kappa(x) = 1$ for all $|x| < 1/2$ and $\kappa(x) = 0$ for all $|x| > 2$. Given $R>0$, define $\kappa_R := \kappa(\cdot/R)$, so that $\|\grad \kappa_R\|_{L^\infty(\Rd)} \leq 1/R$. 
Then, by Jensen's inequality for the convex function $s \mapsto s^{m-1}$, Lemma \ref{move mollifier prop}, and Assumption (A\ref{extra assumptions a}), 
\begin{align*}
	\limsup_{\e \to 0} \int_0^T \int  | \kappa_R w_\e(t)|^{m-1} \,d \mu_\e(t)\,d\mathcal{L}^d\,dt &\leq  \limsup_{\e \to 0} \int_0^T \int (\zeta_\e*(\mu_\e(t)))^{m-1} \zeta_\e*(\kappa_R^{m-1} \mu_\e(t))\,d\mathcal{L}^d\,dt \\
	&\leq \limsup_{\e \to 0} \int_0^T \int \kappa_R^{m-1} (\zeta_\e*(\mu_\e(t)))^m \,d\mathcal{L}^d\,dt\\
	&= \int_0^T \int (\kappa_R \mu(t))^{m-1} \,d \mu(t)\,dt .
\end{align*}
Combining this with (\ref{weak convergence of w eps}), where we may choose $h = \kappa_R g$ for any $g \in C_\mathrm{c}^\infty(\Rd)$, we have that $(\kappa_R w_\e)_\e$ converges strongly in $L^{m-1}(\mu_\e;\Rd)$ to $\kappa_R \mu \in L^{m-1}(\mu;\Rd)$ as $\e\to0$, in the sense of Definition \ref{weakvaryingdef}. Finally, we may apply Proposition \ref{AGSthm}\eqref{strongcty} to conclude that for all $g \in C_\mathrm{c}^\infty([0,T]\times\Rd)$,
\[ \lim_{\e \to 0} \int_0^t \ird g  | \kappa_R w_\e|^{m-1} d \mu_\e = \int_0^t \ird g  |\kappa_R \mu|^{m-1} d \mu . \]
Taking $g = \grad f$, choosing $R>1$ so that $\kappa_R \equiv 1$ on the support of $\grad f$, and combining the above equation with equation (\ref{almostdone}), we obtain
\begin{align} \label{finalequalities1}
	\lim_{\e \to 0} \int_0^T \int f(t) \grad \frac{\delta \F^m_\e}{\delta \mu_\e}(\mu_\e(t)) \,d \mu_\e(t)\,dt = - \int_0^T \int \grad f(t) \mu(t)^m \,d\mathcal{L}^d\,dt.
\end{align}

We now prove that $\mu$ has the necessary regularity. In particular, we show that for almost every $t \in [0,T]$, we have $\mu^m \in W^{1,1}(\Rd)$ and $\grad \mu^m = \eta \mu$ for $\eta \in L^2(\mu;\Rd)$. Inequality (\ref{subdiff bound pointwise}) ensures that, up to subsequences $\{\int_0^t |\partial \F^m_\e|^2(\mu_\e(t))\,dt\}_\e$ is bounded uniformly in $\e>0$. Thus, by H\"older's inequality, there exists $C>0$ so that 
\begin{align*}
	C> \int_0^T \left\|\grad \frac{\delta \F^m_\e}{\delta \mu_\e}(\mu_\e(t)) \right\|^2_{L^2(\mu_\e;\Rd)} \,dt&\geq \int_0^T \left\|\grad \frac{\delta \F^m_\e}{\delta \mu_\e}(\mu_\e(t)) \right\|^2_{L^1(\mu_\e;\Rd)} \,dt\\
	&\geq T\left( \frac{1}{T} \int_0^T  \left\|\grad \frac{\delta \F^m_\e}{\delta \mu_\e}(\mu_\e(t)) \right\|_{L^1(\mu_\e;\Rd)} \,dt \right)^{2}, 
\end{align*}
for all $\e>0$. Combining this with \eqref{finalequalities1} gives
\[
	C T \|f \|_{L^\infty(\Rd)}  \geq  \limsup_{\e \to 0} \|f \|_{L^\infty([0,T]\times\Rd)} \int_0^T \norm{\grad \frac{\delta \F^m_\e}{\delta \mu_\e}(\mu_\e(t))}_{L^1(\mu_\e;\Rd)}  \geq \int_0^T \int f(t) \grad (\mu(t)^m) \,d\mathcal{L}^d\,dt.\]
Hence $\grad (\mu^m)$ has finite measure on $[0,T]\times \Rd$, so we may rewrite (\ref{finalequalities1}) as
\begin{align} \label{finalequalities2}
	\lim_{\e \to 0} \int_0^t \int f \grad \frac{\delta \F^m_\e}{\delta \mu_\e}(\mu_\e(t)) \,d \mu_\e(t)\,dt = - \int_0^t \int  f(t) \,d  \grad (\mu(t)^m) \,dt.
\end{align}
By another application of H\"older's inequality, this guarantees
\begin{align*}
	\sqrt{C}  \left( \int_0^t \|f(t) \|^2_{L^2(\mu;\Rd)} \,dt\right)^{1/2} &\geq  \limsup_{\e \to 0}\int_0^t \|f(t) \|_{L^2(\mu_\e;\Rd)}  \left\| \grad \frac{\delta \F^m_\e}{\delta \mu_\e}(\mu_\e(t)) \right\|_{L^2(\mu_\e;\Rd)}\\
	& \geq \int_0^t \int f(t)  d\grad (\mu(t)^m) \,dt.
\end{align*}
Riesz representation theorem then ensures that there exists $\eta \in L^2([0,t];L^2(\mu;\Rd))$ so that $\eta \mu = \grad (\mu^m)$. In particular, this implies $\grad( \mu(t)^m) \in L^1(\Rd)$ for almost every $t \in [0,T]$, so $\mu^m \in W^{1,1}(\Rd)$ for almost every $t \in [0,T]$ and we may rewrite (\ref{finalequalities2}) as
\bes
	\lim_{\e \to 0} \int_0^T \ird f(t) \grad \frac{\delta \F^m_\e}{\delta \mu_\e}(\mu_\e(t))\,d \mu_\e(t)\,dt = - \int_0^T \int  f(t) \eta \,d \mu(t)\,dt,
\ees
which completes the proof. 
\end{proof}

We conclude this section by showing that, in the case when $m=2$ and for $V, W \in C^2(\Rd)$ with bounded Hessians, whenever the initial data of the gradient flows have finite second moments and internal energies, we automatically obtain Assumptions (A\ref{extra assumptions b})--(A\ref{extra assumptions a}). Consequently, in this special case, we are able to conclude the convergence of the gradient flows without these additional assumptions.
\begin{cor} \label{m2 theorem}
	Let $\e>0$ and $m=2$. In addition to satisfying Assumption \ref{convexityregularity}, assume that $V,W \in C^2(\Rd)$ have bounded Hessians $D^2 V$ and $D^2 W$. Fix $T>0$, and suppose $\mu_\e \in AC^2([0,T];\P_2(\R^d))$ is a gradient flow of $\E_\e^m$ satisfying 
\begin{align} 
	&\mu_\e(0) \wsto \mu(0) , \quad  \lim_{\e \to 0} \E_\e^m(\mu_\e(0)) = \E^m(\mu(0)), \quad \mu(0) \in D(\E^m), \label{initial data convergence} \\
& \sup_{\e >0 } M_2(\mu_\e(0)) < \infty, \quad \sup_{\e >0 } \textstyle \int \mu_\e(0) \log(\mu_\e(0))\,d\mathcal{L}^d < +\infty. \label{bdd entropy}
\end{align}
Then, there exists $\mu \in AC^2([0,T];\P_2(\R^d))$ such that  
\[ 
	\mu_\e(t) \wsto \mu(t) \text{ and }\zeta_\e *\mu_\e(t) \xrightarrow{L^2(\Rd)} \mu(t) \text{ for all } t \in [0,T] ,
\] 
and $\mu$ is the gradient flow of $\E^m$ with initial data $\mu(0)$.
\end{cor}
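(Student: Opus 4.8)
The plan is to show that, for $m=2$ and under the present hypotheses, the gradient flows $\mu_\e$ automatically satisfy assumptions (A0)--(A2) of Theorem~\ref{Gamma GF theorem}; that theorem, the uniqueness of gradient flows (Proposition~\ref{wellposedness}), and a routine subsequence argument then give the conclusion. Assumption (A0) is exactly \eqref{initial data convergence}--\eqref{bdd entropy}. The content is in (A1) and (A2), which when $m=2$ reduce to $L^2$-estimates on $\zeta_\e*\mu_\e$: since $p_\e=\mu_\e$ and $(\varphi_\e*\mu_\e)^{m-2}\equiv 1$, a direct computation gives
\[
\|\mu_\e\|_{BV_\e^2}=2\ird(\zeta_\e*\mu_\e)\,|\grad(\zeta_\e*\mu_\e)|\,d\mathcal{L}^d\leq 2\,\|\zeta_\e*\mu_\e\|_{L^2(\R^d)}\,\|\grad(\zeta_\e*\mu_\e)\|_{L^2(\R^d)},
\]
while $\|\zeta_\e*\mu_\e\|_{L^2(\R^d)}^2=\F_\e^2(\mu_\e)$ is the quantity in the second half of (A2). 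So it suffices to prove three uniform bounds: (i) $\sup_{\e>0}\sup_{t\in[0,T]}M_2(\mu_\e(t))<\infty$; (ii) $\sup_{\e>0}\sup_{t\in[0,T]}\F_\e^2(\mu_\e(t))<\infty$; (iii) $\sup_{\e>0}\int_0^T\|\grad(\zeta_\e*\mu_\e(t))\|_{L^2(\R^d)}^2\,dt<\infty$.

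For (i), from $M_2(\mu_\e(t))^{1/2}\le M_2(\mu_\e(0))^{1/2}+\int_0^t|\partial\E_\e^2|(\mu_\e(s))\,ds$, the energy-dissipation equality $\tfrac12\int_0^T|\partial\E_\e^2|^2\le\E_\e^2(\mu_\e(0))-\E_\e^2(\mu_\e(T))$, $\F_\e^2\ge0$, and the at most quadratic growth of $V,W$, one obtains a self-improving inequality $\sup_{t\le T}M_2(\mu_\e(t))^{1/2}\le B+(TD)^{1/2}+(2CT)^{1/2}\sup_{t\le T}M_2(\mu_\e(t))^{1/2}$, where $B,C,D$ depend only on $\sup_\e\E_\e^2(\mu_\e(0))$, $\sup_\e M_2(\mu_\e(0))$ and the Hessian bounds on $V,W$ --- crucially never on $\e$. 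This closes on any interval of length $<1/(2C)$ and bootstraps in finitely many steps to $[0,T]$. Bound (ii) then follows from monotonicity of $\E_\e^2$ along its gradient flow: $\F_\e^2(\mu_\e(t))\le\E_\e^2(\mu_\e(0))-\int V\,d\mu_\e(t)-\tfrac12\int W*\mu_\e(t)\,d\mu_\e(t)\le C(1+M_2(\mu_\e(t)))$. Bound (iii) is the crux: writing $\mu_\e$ via Proposition~\ref{characterizationGF} as a weak solution of \eqref{W2PDE1e} with $m=2$ --- a continuity equation whose velocity field is, for each fixed $\e$, of class $C^1$ in space with bounded divergence (this is where $V,W\in C^2$ with bounded Hessians and $\varphi_\e\in C^2$ are used) --- the flow preserves absolute continuity and finiteness of the entropy, and
\[
\frac{d}{dt}\ird\mu_\e\log\mu_\e\,d\mathcal{L}^d=\ird\mu_\e\,\Delta V\,d\mathcal{L}^d+\irdrd\Delta W(x-y)\,d\mu_\e(x)\,d\mu_\e(y)-2\,\|\grad(\zeta_\e*\mu_\e)\|_{L^2(\R^d)}^2,
\]
the last term being produced by $\int\mu_\e(\Delta\varphi_\e*\mu_\e)=-\|\grad(\zeta_\e*\mu_\e)\|_{L^2(\R^d)}^2$, which uses exactly $\varphi_\e=\zeta_\e*\zeta_\e$. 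Integrating in time, the Carleman-type lower bound on the entropy (as in Proposition~\ref{relative sizes lemma}) combined with (i) bounds the entropy at time $T$ from below, \eqref{bdd entropy} bounds it at time $0$ from above, and $|\Delta V|,|\Delta W|\le d\,\|D^2(\cdot)\|_{L^\infty}$ bounds the drift terms, giving (iii). Together, (ii) and (iii) with Cauchy--Schwarz in time give (A1), and (ii) gives the second half of (A2).

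For the first half of (A2) I would run an Aubin--Lions--Simon argument: bounds (ii)--(iii) make $\{\zeta_\e*\mu_\e\}_\e$ bounded in $L^\infty([0,T];L^2(\R^d))\cap L^2([0,T];H^1(\R^d))$, while $\partial_t(\zeta_\e*\mu_\e)=-\grad\cdot(\zeta_\e*(v_\e\mu_\e))$ is bounded in $L^2([0,T];W^{-1,1}(\R^d))$ because $\|v_\e\mu_\e\|_{\mathcal{M}(\R^d)}\le\|v_\e\|_{L^2(\mu_\e)}=|\partial\E_\e^2|(\mu_\e)$ has uniformly bounded time-integral. So, along a subsequence, $\zeta_\e*\mu_\e\to\mu$ strongly in $L^1([0,T];L^2_\loc(\R^d))$ for some $\mu$, which --- using Lemma~\ref{weakst convergence mollified sequence} to see $\zeta_\e*\mu_\e-\mu_\e\to0$ in distribution, together with (i) --- is a $\P_2(\R^d)$-valued curve; (A0)--(A2) now hold for this subsequence, so Theorem~\ref{Gamma GF theorem} identifies $\mu$ as the gradient flow of $\E^2$ with datum $\mu(0)$. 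Since this gradient flow is unique (Proposition~\ref{wellposedness}), the full family $\zeta_\e*\mu_\e$ converges, and the $W_2$-equicontinuity of $t\mapsto\mu_\e(t)$ (from $\sup_\e\int_0^T|\mu_\e'|^2<\infty$) together with the tightness in (i) lets Arzel\`a--Ascoli promote $\mu_\e(t)\wsto\mu(t)$ to hold for every $t\in[0,T]$. Finally, $\zeta_\e*\mu_\e(t)\to\mu(t)$ in $L^2(\R^d)$ for all $t$ by upgrading weak $L^2$-convergence (from the uniform $L^2$-bound (ii)) to strong convergence via $\|\zeta_\e*\mu_\e(t)\|_{L^2}^2=\F_\e^2(\mu_\e(t))\to\F^2(\mu(t))=\|\mu(t)\|_{L^2}^2$; this last convergence follows from convergence of the energies $\E_\e^2(\mu_\e(t))\to\E^2(\mu(t))$ along the flow --- obtained, as in the Sandier--Serfaty scheme, from the energy-dissipation equalities for $\mu_\e$ and $\mu$, conditions (S1) and (S3) at time $t$ (valid for every $t$, by monotonicity for (S1) and by the proof of Theorem~\ref{Gamma GF theorem} applied on $[0,t]$ for (S3)), Theorem~\ref{Gamma convergence theorem2}(\ref{liminf condition 2}), and lower semicontinuity of the drift and interaction energies.

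I expect the main obstacle to be bound (iii): making the entropy-dissipation computation rigorous (that for fixed $\e$ the regularized flow keeps the measure absolutely continuous with finite, absolutely continuous-in-time entropy) and isolating the $\e$-uniform good term $-2\|\grad(\zeta_\e*\mu_\e)\|_{L^2}^2$, which is precisely what keeps the $BV$-type quantity $\|\mu_\e\|_{BV_\e^2}$ bounded and is special to $m=2$. The secondary technical point is the uniform-in-$\e$ second-moment bound (i), which must be propagated by a time-bootstrap so that the constants stay independent of $\e$.
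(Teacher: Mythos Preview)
Your overall strategy coincides with the paper's: verify (A0)--(A2) of Theorem~\ref{Gamma GF theorem} via the three uniform bounds (i) second moment, (ii) $\F_\e^2(\mu_\e(t))=\|\zeta_\e*\mu_\e(t)\|_{L^2}^2$, and (iii) $\int_0^T\|\grad(\zeta_\e*\mu_\e)\|_{L^2}^2\,dt$, with (iii) obtained by dissipating the Boltzmann entropy along the regularized flow and exploiting $\varphi_\e=\zeta_\e*\zeta_\e$ to produce the good term $-2\|\grad(\zeta_\e*\mu_\e)\|_{L^2}^2$. This is exactly the paper's mechanism, and your reduction of $\|\mu_\e\|_{BV_\e^2}$ to $2\|\zeta_\e*\mu_\e\|_{L^2}\|\grad(\zeta_\e*\mu_\e)\|_{L^2}$ matches the paper's inequality~\eqref{BV bound}.

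The technical implementations differ in three places, and each of your choices is legitimate. For (i), the paper tests the continuity equation against $\kappa_R(x)|x|^2$, invokes Lemma~\ref{technical move convolution}, and closes via Gronwall; your argument via $M_2^{1/2}(\mu_\e(t))\le M_2^{1/2}(\mu_\e(0))+\int_0^t|\mu_\e'|$ together with the energy--dissipation equality is shorter and avoids the mollifier-exchange machinery. For (iii), the paper makes the entropy computation rigorous through the flow--interchange method of Matthes--McCann--Savar\'e at the level of the discrete-time scheme; you instead use that, for each fixed $\e$, the velocity \eqref{velocityfield} with $m=2$ is $C^1$ in space with bounded divergence (this is where $D^2V,D^2W,D^2\varphi_\e\in L^\infty$ enter), so the classical flow preserves absolute continuity and the entropy is differentiable in time---this is a valid alternative since \eqref{bdd entropy} forces $\mu_\e(0)\ll\mathcal{L}^d$. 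For the compactness in (A2), the paper uses Rellich--Kondrachov pointwise in $t$ after extracting a weak $L^2$ limit, while you run Aubin--Lions--Simon in $L^2([0,T];H^1)\cap\{\partial_t\in L^2([0,T];W^{-1,1})\}$; both yield $L^1([0,T];L^2_\loc)$ convergence. Finally, you go further than the paper's written proof in upgrading the conclusion to \emph{every} $t\in[0,T]$ and to strong $L^2(\Rd)$ (not just $L^2_\loc$) convergence, via Arzel\`a--Ascoli for the weak-$*$ part and, for the norm convergence, the standard Sandier--Serfaty byproduct $\E_\e^2(\mu_\e(t))\to\E^2(\mu(t))$ combined with the $\liminf$ inequalities for each of the three energy pieces; the paper's proof, as written, stops at almost-every-$t$ and $L^1([0,T];L^2_\loc)$.
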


\begin{remark}[Previous work, $m=2$]
The above theorem generalizes a result by Lions and Mas-Gallic \cite{LionsMasGallic} on a numerical scheme for the porous medium equation $\partial_t \mu = \Delta \mu^2$ on a bounded domain with periodic boundary conditions to equations of the form (\ref{W2PDE1}) on Euclidean space.
\end{remark}

\begin{proof}[Proof of Corollary \ref{m2 theorem}]
First, we show that $\sup_{\e >0} \| \zeta_\e*(\mu_\e(0))\|_{L^2(\Rd)} < \infty$. The fact that $D^2V$ and $D^2 W$ are bounded ensures $|V|$ and $|W|$ grow at most quadratically. Combining this with equations \eqref{initial data convergence}--\eqref{bdd entropy}, which ensure $\{\E^m_\e(\mu(0))\}_\e$ and $\{M_2(\mu_\e(0))\}_\e$ are bounded uniformly in $\e>0$, we obtain
\begin{align*}
	\sup_{\e >0} \|\zeta_\e*(\mu_\e(0))\|_{L^2(\Rd)}^2&= \sup_{\e >0} \F^2_\e(\mu_\e(0))\\
	& = \sup_{\e >0} \left(\E_\e^2(\mu_\e(0)) -\textstyle \int V d \mu_\e(0) - \frac12 \int W*(\mu_\e(0)) d \mu_\e(0)\right) < +\infty .
\end{align*}
Furthermore, since the energy $\F^2_\e$ decreases along solutions to the gradient flow, we have
\begin{align} \label{m2 energy bound} \sup_{\e >0} \|\zeta_\e*(\mu_\e(t))\|_{L^2(\Rd)}^2 \leq \sup_{\e >0} \|\zeta_\e*(\mu_\e(0))\|_{L^2(\Rd)}^2 < \infty \quad \text{ for all } t\in[0,T] . 
\end{align}

Next, we show that our assumption that the initial data has bounded entropy (\ref{bdd entropy}) ensures $\int_0^t \|\grad \zeta_\e*(\mu_\e(s))\|_{L^2(\Rd)}^2\,ds < C(1+T)+M_2(\mu_\e(t))$ for all $t \in [0,T]$, for some $C>0$ depending on $d$, $V$, $W$ and $\sup_{\e >0} \int \log \mu_\e(0)\,d\mu_\e(0)$. Formally differentiating the entropy $\F^1(\mu) = \int \log(\mu)\,d\mu$ along the gradient flows $\mu_\e$, we expect that, for all $t \in [0,T]$,
\[ 
	\frac{d}{dt} \left[\F^1(\mu_\e(t))\right] = -2 \int |\grad \zeta_\e*(\mu_\e(t))|^2\,d\mathcal{L}^d + \int \Delta V \,d\mu_\e(t) + \int  \Delta W* (\mu_\e(t)) \,d\mu_\e(t) . 
\]
Hence, for any $t \in[0,T]$,
\begin{align*}
	\F^1(\mu_\e(t)) - \F^1(\mu_\e(0)) &= -2 \int_0^t \int |\grad \zeta_\e*(\mu_\e(s))|^2\,d\mathcal{L}^d\,ds\\
	&\phantom{{}={}} + \int_0^t \int \Delta V \,d\mu_\e(s)\,ds + \int_0^t \int \Delta W* (\mu_\e(s)) \,d\mu_\e(s)\,ds \\
&\leq - 2 \int_0^t \int |\grad \zeta_\e*(\mu_\e(s))|^2\,d\mathcal{L}^d\,ds +t\left( \|D^2 V\|_{L^\infty(\Rd)} + \|D^2 W\|_{L^\infty(\Rd)}\right)
\end{align*}
This computation can be made rigorous by first proving the analogous inequality along \emph{discrete time gradient flows} using the flow interchange method of Matthes, McCann, and Savar\'e \cite[Theorem 3.2]{MMS} and then sending the timestep to zero to recover the above inequality in continuous time. Thus, there exists $K_0>0$ depending on $V, W$ and $\sup_{\e >0} \F^1(\mu_\e(0)) $ so that, for all $t\in[0,T]$,
\[ 
	\int_0^t \| \grad \zeta_\e*(\mu_\e(s)) \|_{L^2(\Rd)}^2\,ds \leq -\F^1(\mu_\e(t)) + K_0(1+t) .\]
Finally, by a Carleman-type estimate \cite[Lemma 4.1]{CPSW}, we have $\F^1(\nu) \geq -(2 \pi)^{d/2} - M_2(\nu)$ for any $\nu \in \P_2(\Rd)$. Therefore,
\begin{align} \label{H1 to second moment}
	\int_0^t \| \grad \zeta_\e*(\mu_\e(s)) \|_{L^2(\Rd)}^2\,ds \leq M_2(\mu_\e(t)) + C(1+t) .
\end{align}
Now, we  use this estimate to show that $\{M_2(\mu_\e(t))\}_\e$ is uniformly bounded in $\e$ for all $t \in[0,T]$. Let $\kappa$ be a smooth cutoff function with $0 \leq \kappa \leq 1$, $\|\grad \kappa\|_{L^\infty(\Rd)} \leq 1$, $\|D^2 \kappa\|_{L^\infty(\Rd)} \leq 4$, $\kappa(x) = 1$ for all $|x| < 1/2$ and $\kappa(x) = 0$ for all $|x| > 2$. Given $R>0$, define $\kappa_R(x) = \kappa(x/R)$, so that $\|\grad \kappa_R\|_{L^\infty(\Rd)} \leq 1/R$ and $\|D^2 \kappa_R\|_{L^\infty(\Rd)} \leq 4/R^2$. Then there exists $C_\kappa>0$ so that for all $R>1$, $|\grad (\kappa_R(x) x^2)| \leq C_\kappa|x|$ and $|D^2 (\kappa_R(x) x^2)| \leq C_\kappa$ for all $x\in\Rd$. By Proposition \ref{characterizationGF}, $\mu_\e$ is a weak solution of the continuity equation. Therefore choosing $\kappa_R(x) |x|^2$ as our test function, we obtain, for all $t \in[0,T]$,
\begin{align*}
 &\ird \kappa_R(x)|x|^2 \,d \mu_\e(t,x)-  \ird \kappa_R(x) |x|^2 \,d \mu_\e(0,x)\\
 &= -2  \int_0^t \ird \grad (\kappa_R(x) x^2)\left(\grad \varphi_\e*(\mu_\e(s)) + \grad V(x) + \grad W*(\mu_\e(s))(x) \right) \,d \mu_\e(s,x) .
\end{align*}
Since $D^2 V$ and $D^2 W$ are bounded, $|\grad V|$ and $|\grad W|$ grow at most linearly. Consequently, there exists $C'>0$, depending on $V$, $W$, and $C_\kappa$ so that
\[
	-2 \int_0^t \ird \grad (\kappa_R(x) x^2) (\grad V(x) + \grad W*(\mu_\e(s))(x) \,d \mu_\e(s,x) \leq C' \left( 1 + \int_0^t M_2(\mu_\e(s))\,ds \right) . 
\]
Likewise, by Lemma \ref{technical move convolution}, there exists $r>0$ so that, for all $t\in[0,T]$,
\begin{align*} 
	&-2\int_0^t \ird \grad (\kappa_R(x) x^2) \grad \varphi_\e*(\mu_\e(s))(x) \,d \mu_\e(s,x)\\
	& = -2 \int_0^t\ird \zeta_\e*(\grad (\kappa_R(x) x^2) \mu_\e(s)) \grad \zeta_\e*(\mu_\e(s))(x)\,dx\,ds \\
	& \leq -2\int_0^t \ird \grad (\kappa_R(x) x^2)\zeta_\e*(\mu_\e(s))(x)\grad \zeta_\e*(\mu_\e(s))(x)\,dx\,ds\\
	&\phantom{{}={}}  + \e^r C_\kappa\left(\int_0^t\| \mu_\e(s)\|_{BV_\e^m}\,ds + 2 t \|\grad \zeta\|_{L^1(\Rd)} \right) \\
	& = \int_0^t \ird \Delta (\kappa_R(x) x^2)(\zeta_\e*(\mu_\e(s)))^2\,dx\,ds + \e^r C_\kappa\left(\int_0^t\| \mu_\e(s)\|_{BV_\e^m}\,ds + 2 t \|\grad \zeta\|_{L^1(\Rd)} \right) \\
	& \leq C_\kappa \int_0^t \F_\e^2(\mu_\e(s))\,ds + 2\e^r C_\kappa\left(\int_0^t\| \zeta_\e* (\mu_\e(s))\|_{L^2(\Rd)} \|\grad \zeta_\e*(\mu_\e(s))\|_{L^2(\Rd)}\,ds + 2 t\|\grad \zeta\|_{L^1(\Rd)} \right) \\
& \leq C_\kappa t \F_\e^2(\mu_\e(0)) + 2\e^r C_\kappa\left(\sqrt{t \F_\e^2(\mu_\e(0))}\sqrt{ M_2(\mu_\e(t)) + C(1+t) }+ 2 t \|\grad \zeta\|_{L^1(\Rd)} \right) \\
&\leq C'' \left(1+ t +\e^r M_2(\mu_\e(t))\right)
\end{align*}
for $C''$ depending on $C_\kappa$, $\sup_{\e >0} \F_\e^2(\mu_\e(0))$, and $\|\grad \zeta\|_{L^1(\Rd)}$. In the second inequality, we use that 
\begin{align} \label{BV bound}
\| \mu_\e\|_{BV_\e^m} \leq 2 \|(\grad \zeta_\e*\mu_\e)(\zeta_\e*\mu_\e)\|_{L^1(\Rd)}\leq \| \zeta_\e* \mu_\e\|_{L^2(\Rd)} \|\grad \zeta_\e*\mu_\e\|_{L^2(\Rd)}
\end{align}
Therefore, there exists $C''>0$ so that, for all $t \in[0,T]$,
\begin{align*}
	\ird \kappa_R(x) |x|^2 \,d \mu_\e(t,x)  \leq M_2(\mu_\e(0)) + C' \left( t + \int_0^t M_2(\mu_\e(s))\,ds \right)+ C'' \left(1+ t +\e^r M_2(\mu_\e(t))\right).
\end{align*}
As the right-hand side is independent of $R>1$, by sending $R \to +\infty$ by the dominated convergence theorem we obtain that for $\e^r<1/(2C'')$, 
\[ 
	M_2(\mu_\e(t)) \leq 2C' \left(t + \int_0^t M_2(\mu_\e(s))\,ds \right) + 2C''(t+1).
\]
Therefore, by Gronwall's inequality, there exists $\tilde{C}$ depending on $C'$, $C''$ and $T$ (and independent of $\e$) so that 
\begin{align} \label{m2 second moment} 
	M_2(\mu_\e(t))< \tilde{C}  \quad \text{ for all }t \in [0,T].
\end{align}
We may combine this with the inequality in \eqref{H1 to second moment} to obtain, for all $t\in[0,T]$,
\begin{align} \label{m2 H1}
	\int_0^t \| \grad \zeta_\e*(\mu_\e(s))\|_{L^2(\Rd)}^2\,ds \leq \tilde{C} + C(1+t) \quad \text{ for } t \in [0,T].
\end{align}

We now use these results to verify the assumptions of Theorem \ref{Gamma GF theorem} hold, so that we may apply this result to conclude convergence of the gradient flows. Assumption (A\ref{extra assumptions b}) is a consequence of the inequality in \eqref{m2 second moment}. Assumption (A\ref{extra assumptions c}) is a consequence of the inequalities in \eqref{m2 energy bound}, \eqref{BV bound} and \eqref{m2 H1}. 

It remains to show Assumption (A\ref{extra assumptions a}). First, note that since $\sup_{\e >0} \|\zeta_\e*\mu_\e\|_{L^\infty([0,T]\times \Rd)} < \infty$, every subsequence of $(\zeta_\e*\mu_\e)_\e$ has a further subsequence, which we also denote by $(\zeta_\e*\mu_\e)_\e$, that converges weakly in $L^2([0,T]\times \Rd)$ to some $\nu$ as $\e\to0$, and for which $\zeta_\e*\mu_\e(t) \rightharpoonup \nu(t)$ weakly in $L^2(\Rd)$ for all $t \in [0,T]$. By uniqueness of limits and \eqref{initial data convergence}, we have $\nu(0) = \mu(0)$ almost everywhere.

Next, note that \eqref{m2 energy bound} and \eqref{m2 H1} ensure that $\sup_{\e>0}\|\zeta_\e*\mu\|_{L^2([0,T];H^1(\Rd))}<\infty$. In particular we have $\sup_{\e>0}\|\kappa_R \zeta_\e*\mu\|_{L^2([0,T];H^1(\Rd))}<\infty$ for the smooth cutoff function $\kappa_R$, $R>1$. Therefore, by the Rellich--Kondrachov Theorem (c.f. \cite[Section 5.7]{Evans}), for almost every $t \in [0,T]$, up to another subsequence,  $(\kappa_R \zeta_\e*\mu_\e(t))_\e$ converges strongly in $L^2(\Rd)$ to some $\nu_R(t)$. In particular, for any $f \in C_\mathrm{c}^\infty(B_{R/2}(0))$,
\[
	\int f \,d\nu(t) =  \lim_{ \e \to 0} \int f \,d\zeta_\e *\mu_\e(t) = \int f\,d \nu_R(t) \quad \text{ for all } t \in[0,T],
\]
so $\nu = \nu_R$ almost everywhere in $B_{R/2}(0)$. Since $R>1$ is arbitrary, this shows that for all $t \in [0,T]$, $\zeta_\e*\mu_\e(t) \to \nu(t)$ strongly in $L^2_\loc(\Rd)$. Finally, using again that $\{\|\zeta_\e*\mu_\e(t)\|_{L^2(\Rd)}\}_t$ is bounded uniformly in $t \in[0,T]$, the dominated convergence theorem ensures that $\zeta_\e*\mu_\e(t) \to \nu(t)$ in $L^1([0,T];L^2_\loc(\Rd)$ as $\e\to0$. This completes the proof of assumption (A\ref{extra assumptions a}).

As we have now verified the conditions of Theorem \ref{Gamma GF theorem}, we now conclude that $\mu_\e(t) \wsto \nu(t)$ for almost every $t \in [0,T]$, for some $\nu \in AC^2([0,T];\P_2(\R^d))$ which is the gradient flow of $\E^2$ with initial data $\mu(0)$. By Proposition \ref{wellposedness}, the gradient flow of $\E^2$ with initial data $\mu(0)$ is unique. Thus, since any subsequence of $(\mu_\e)_\e$ has a further subsequence which converges to $\nu$, the full sequence must converge to $\mu$, which gives the result.
\end{proof}


\section{Numerical results} \label{numerics section}

\subsection{Numerical method and convergence} \label{numericalmethod}
We now apply the theory of regularized gradient flows developed in the previous sections to develop a blob method for diffusion, allowing us to numerically simulate solutions to partial differential equations of Wasserstein gradient flow type (\ref{W2PDE1}). We begin by describing the details of our numerical scheme and applying Theorem \ref{Gamma GF theorem} to prove its convergence, under suitable regularity assumptions. 

\begin{thm} \label{numerics convergence}
Assume $m\geq2$ and $V$ and $W$ satisfy Assumption \ref{convexityregularity}.
Suppose $\mu(0) \in D(\E^m)$ is compactly supported in $B_R(0)$, the ball of radius $R$ centered at the origin. For fixed grid spacing $h>0$, define the grid indices $Q_R^h := \{ i \in \mathbb{Z}^d  : |ih| \leq R \}$ and approximate $\mu(0)$ by the following sequence of measures:
\begin{align} \label{initial data discretization}
 \mu_\e(0) := \sum_{i \in Q_R^h} \delta_{ih} m_i , \quad m_i = \int_{Q_i} \,d\mu(0),\;i \in Q_R^h,
\end{align}
where $Q_i$ is the cube centered at $ih$ of side length $h$. Next, for $\e>0$, define the evolution of these measures by
\begin{align} \label{particle gradient flow solution}
\mu_\e(t) = \sum_{i \in Q_R^h} \delta_{X_i(t)} m_i, \quad t\in[0,T],
\end{align}
where $\{X_i(t)\}_{i \in Q_R^h}$ are solutions to the ODE system \eqref{ODEsystem} on a time interval $[0,T]$ with initial data $X_i(0) = ih$.
If $h = o(\e)$ as $\e \to 0$ and Assumptions (A\ref{extra assumptions b})--(A\ref{extra assumptions a}) from Theorem \ref{Gamma GF theorem} hold, then $(\mu_\e(t))_\e$ converges in the weak-$^*$ topology to $\mu(t)$ as $\e\to0$ for almost every $t \in [0,T]$, where $\mu(t)$ is the unique solution of \eqref{W2PDE1} with initial datum $\mu(0)$.
\end{thm}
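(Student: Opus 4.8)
The plan is to realize the particle evolution $(\mu_\e)_\e$ as a family of Wasserstein gradient flows of the regularized energies $\E_\e^m$ and then invoke the $\Gamma$-convergence of gradient flows, Theorem~\ref{Gamma GF theorem}. For each fixed $\e>0$, Corollary~\ref{particles well posed} shows that the curve $\mu_\e=\sum_{i\in Q_R^h}\delta_{X_i(\cdot)}m_i$ of \eqref{particle gradient flow solution} belongs to $AC^2([0,T];\P_2(\R^d))$ and is the gradient flow of $\E_\e^m$ with initial datum $\mu_\e(0)$ from \eqref{initial data discretization}. To apply that corollary one needs $\sum_{i\in Q_R^h}m_i=1$; this holds for all sufficiently small $h$ because $\mu(0)$ is compactly supported strictly inside the open ball $B_R(0)$, so that the cubes $\{Q_i\}_{i\in Q_R^h}$ cover $\supp\mu(0)$ up to a $\mu(0)$-null set. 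In that regime $\mu_\e(0)=(T^h)_\#\mu(0)$, where $T^h$ sends each point of $\R^d$ to the centre of the cube containing it, whence $\|T^h-\id\|_{L^\infty(\R^d)}\le h\sqrt d/2$. Since Assumptions (A1)--(A2) of Theorem~\ref{Gamma GF theorem} are in force by hypothesis, it remains only to verify the well-prepared-data condition (A0); once this is done, Theorem~\ref{Gamma GF theorem} yields $\mu_\e(t)\wsto\mu(t)$ for almost every $t\in[0,T]$ with $\mu\in AC^2([0,T];\P_2(\R^d))$ the gradient flow of $\E^m$ with initial datum $\mu(0)$, and this gradient flow is identified with the unique solution of \eqref{W2PDE1} by the classical characterization of Wasserstein gradient flows of $\E^m$ (c.f.\ \cite[Theorems~10.4.13 and 11.2.1]{AGS}, \cite[Theorem~2.12]{5person}), uniqueness being supplied by Proposition~\ref{wellposedness} since $\mu(0)\in D(\E^m)$.

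To verify (A0): the bound $\|T^h-\id\|_{L^\infty}\le h\sqrt d/2$ gives $W_2(\mu_\e(0),\mu(0))\le h\sqrt d/2\to0$ (using $h\to0$, which follows from $h=o(\e)$), and since $W_2$-convergence implies both weak-$^*$ convergence and convergence of second moments we obtain $\mu_\e(0)\wsto\mu(0)$ and $\sup_{\e>0}M_2(\mu_\e(0))<\infty$. Decomposing $\E_\e^m(\mu_\e(0))=\int V\,d\mu_\e(0)+\tfrac12\int W*\mu_\e(0)\,d\mu_\e(0)+\F_\e^m(\mu_\e(0))$, the drift and interaction terms converge to $\int V\,d\mu(0)$ and $\tfrac12\int W*\mu(0)\,d\mu(0)$ by continuity of these functionals along $W_2$-convergent sequences with uniformly bounded second moments (using the at most quadratic growth of $V$ and $W$). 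The crux is therefore the internal-energy limit $\F_\e^m(\mu_\e(0))\to\F^m(\mu(0))$. Its lower bound, $\liminf_{\e\to0}\F_\e^m(\mu_\e(0))\ge\F^m(\mu(0))$, is immediate from $\mu_\e(0)\wsto\mu(0)$ and Theorem~\ref{Gamma convergence theorem2}(i). For the upper bound, since $m\ge2$, Proposition~\ref{relative sizes lemma} gives $\F_\e^m(\mu_\e(0))\le\tfrac{1}{m-1}\|\zeta_\e*\mu_\e(0)\|_{L^m(\R^d)}^m$; writing
\[
	\|\zeta_\e*\mu_\e(0)\|_{L^m(\R^d)}\le\|\zeta_\e*\mu(0)\|_{L^m(\R^d)}+\|\zeta_\e*(\mu_\e(0)-\mu(0))\|_{L^m(\R^d)}
\]
and using that $\mu(0)\in D(\E^m)$ has a density in $L^m(\R^d)$, for which $\|\zeta_\e*\mu(0)\|_{L^m}\to\|\mu(0)\|_{L^m}$ by standard mollification, the desired bound $\limsup_{\e\to0}\F_\e^m(\mu_\e(0))\le\F^m(\mu(0))$ reduces to showing
\[
	\|\zeta_\e*(\mu_\e(0)-\mu(0))\|_{L^m(\R^d)}\longrightarrow0\qquad\text{as }\e\to0.
\]

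This last estimate is where I expect the main difficulty to lie, and where the scaling $h=o(\e)$ is used. Because $\mu_\e(0)=(T^h)_\#\mu(0)$, an optimal plan between $\mu_\e(0)$ and $\mu(0)$ is concentrated on pairs at distance at most $h\sqrt d/2$; feeding this into the integral representation of $\zeta_\e*(\mu_\e(0)-\mu(0))$ and using that all the measures involved sit in a fixed compact set, one bounds this difference, in the spirit of the proof of Lemma~\ref{move mollifier prop}, by $h$ times appropriate norms of $\grad\zeta_\e$, which blow up as $\e\to0$; the point is to match the decay of $h$ against this blow-up. When $m>2$ one must in addition control $\int(\varphi_\e*\mu_\e(0))^{m-2}\,d\mu_\e(0)$ uniformly in $\e$, which follows from Jensen's inequality and Proposition~\ref{relative sizes lemma} exactly as in \eqref{pepsbd}. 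Granting this convergence, summing the three contributions yields (A0), and the statement follows from Theorem~\ref{Gamma GF theorem} together with the identification recalled in the first paragraph.
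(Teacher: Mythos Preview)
Your proposal is correct and follows essentially the same route as the paper: invoke Corollary~\ref{particles well posed}, then verify (A0) so that Theorem~\ref{Gamma GF theorem} applies, with the only nontrivial point being $\|\zeta_\e*(\mu_\e(0)-\mu(0))\|_{L^m}\to0$.

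One small correction to your sketch of that last estimate: no separate control of $\int(\varphi_\e*\mu_\e(0))^{m-2}\,d\mu_\e(0)$ is needed for $m>2$. The paper bounds $|\zeta_\e*\mu_\e(0)(x)-\zeta_\e*\mu(0)(x)|$ pointwise via the transport map $T^h$ and the mean value inequality, then splits $|\grad\zeta_\e(x-(1-\alpha)T^h(y)-\alpha y)|$ into near ($|x-y|\le 2h$) and far ($|x-y|>2h$) contributions using the decay hypothesis on $\grad\zeta$; applying Minkowski's inequality to the resulting convolution against $\mu(0)$ yields directly
\[
\|\zeta_\e*\mu_\e(0)-\zeta_\e*\mu(0)\|_{L^m(\R^d)}\le c\,\|\mu(0)\|_{L^m(\R^d)}\left(\Big(\tfrac{h}{\e}\Big)^{d+1}+\tfrac{h}{\e}\right),
\]
which vanishes under $h=o(\e)$ for every $m\ge2$ simultaneously.
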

\begin{proof}
By Corollary \ref{particles well posed}, $\mu_\e \in AC^2([0,T];\P_2(\R^d))$ is the gradient flow of $\E^m_\e$ with initial condition $\mu_\e(0)$ for all $\e >0$. To apply Theorem \ref{Gamma GF theorem} and obtain the result, it remains to show that Assumption \eqref{A0} holds. In particular, we must show that, assuming $h = o(\e)$,
\begin{align*}
	&\lim_{\e \to 0} \left(\int V \,d\mu_\e(0)  + \frac{1}{2} \int (W*(\mu_\e(0)))\,d \mu_\e(0) +  \F^m_\e(\mu_\e(0)) \right)\\
	&= \int V \,d\mu(0)  + \frac{1}{2}  \int (W*(\mu(0)))\,d \mu(0) +  \F^m(\mu(0)) . 
\end{align*}
Define $T: \Rd \to \Rd$ by $T(y) = ih$ for $y \in Q_i$ and $i \in Q_R^h$. Then $T$ is a transport map from $\mu(0)$ to $\mu_\e(0)$ and $|T(y)-y| \leq h$ for all $y \in \Rd$.
By construction, 
\[ 
W_2(\mu_\e(0),\mu(0)) \leq \left\{ \left| T(y)  - y \right| \mid y \in \supp \mu(0) \right\} \leq h ,
\]
so $\mu_\e(0) \wsto \mu(0)$ as $\e \to 0$ (and so, as $h\to0$). Likewise, for all $\e,h>0$, $\supp \mu_\e(0) \subseteq B_R(0)$. Consequently, since $V$ and $W$ are continuous,
\[ 
\lim_{\e \to 0} \int V d\mu_\e(0)  + \frac{1}{2} \int (W*(\mu_\e(0)))\,d \mu_\e(0) = \int V d\mu(0)  + \frac{1}{2}  \int (W*(\mu(0)))\,d \mu(0) . 
\]

Thus, it remains to show that 
\[ 
\lim_{\e\to 0}  \F^m_\e(\mu_\e(0)) =  \F^m(\mu(0)). 
\] 
By Theorem \ref{Gamma convergence theorem2}, we have that $\liminf_{\e\to 0} \F^m_\e(\mu_\e(0)) \geq \F^m(\mu_\e(0)).$ By Proposition \ref{relative sizes lemma}, for all $\e>0$ we have
\[ 
\F^m_\e(\mu_\e(0)) \leq \F^m(\mu_\e(0)) = \tfrac{1}{m-1} \|\zeta_\e*\mu_\e\|_{L^m(\R^d)}^m . 
\]
Consequently, to show that $\limsup_{\e \to 0} \F^m_\e(\mu_\e(0)) \leq \F^m(\mu(0)) = \|\mu(0)\|_{L^m(\R^d)}^m/(m-1)$, it suffices to show that $\zeta_\e*\mu_\e(0) \to \mu(0)$ in $L^m$ as $\e\to0$. 

For simplicity of notation, we suppress the dependence on time and show $\zeta_\e*\mu_\e \to \mu$ in $L^m$ as $\e\to0$. By the assumptions that $\mu \in D(\E^m)$ with compact support and $V$ and $W$ are continuous, we have $\mu \in L^m(\Rd)$. Consequently $\zeta_\e*\mu \to \mu$ in $L^m$ as $\e\to0$, and it is enough to show that $\zeta_\e*\mu_\e - \zeta_\e*\mu \to 0$ in $L^m$. Using that $T$ is a transport map from $\mu_\e$ to $\mu$,
\begin{align*}
|\zeta_\e*\mu_\e(x) - \zeta_\e*\mu(x)| &= \left| \ird \zeta_\e(x-T(y)) -  \zeta_\e(x-y)  \,d\mu(y) \right| \\ 
&\leq \int_0^1 \ird \left| \grad \zeta_\e(x- (1-\alpha) T(y) - \alpha y ) \right||T(y)-y| \,d\mu(y) d \alpha \\
&\leq h \int_0^1 \ird \left| \grad \zeta_\e(x- (1-\alpha) T(y) - \alpha y ) \right| \,d\mu(y) d \alpha.
\end{align*}

Combining the decay of $\grad \zeta$ from Assumption \ref{mollifierAssumption} with the fact that $\grad \zeta$ is continuous, there exists $C>0$ so that $|\grad \zeta(x)| \leq C (1_{B}(x) +  |x|^{-q'}1_{\Rd \setminus B}(x))$, where $B = B_1(0)$ is the unit ball centered at the origin.
Note that if $|x-y| \geq 2h$, then for all $\alpha \in [0,1]$, $|x- (1-\alpha) T(y) - \alpha y| \geq |x-y| -h \geq |x-y|/2$ and $|x- (1-\alpha) T(y) - \alpha y| \leq 3|x-y|/2$. Thus, by the assumptions on our mollifier, we have
\begin{align*}
 &\left| \grad \zeta_\e(x- (1-\alpha) T(y) - \alpha y ) \right| \\
  &\quad \leq  \frac{C}{ \e^{d+1}} \Bigg[ 1_B\left(\frac{x- (1-\alpha) T(y) - \alpha y}{\e} \right)\\
  &\qquad  +  \e^{q'} \left|x- (1-\alpha) T(y) - \alpha y  \right|^{-q'} 1_{\Rd \setminus B} \left(\frac{x- (1-\alpha) T(y) - \alpha y}{\e} \right) \Bigg] \\
 &\quad \leq  \frac{C}{ \e^{d+1}} \left( 1_B\left(\frac{|x-y|}{2\e} \right)  + \left(  \frac{2\e}{3} \right)^{q'} \left|x -y  \right|^{-q'} 1_{B \setminus \Rd} \left(\frac{3|x-y|}{2\e} \right)  \right) .
\end{align*}

Thus, taking the $L^m$-norm with respect to $x$, doing a change of variables, and applying Minkowski's inequality, we obtain
\begin{align*}
&\|\zeta_\e*\mu_\e-\zeta_\e*\mu\|_{L^m(\R^d)}\\
& \leq h \|\grad \zeta_\e\|_\infty \left\|  \int_{B_{2h}(x)} \,\mu(y) \right\|_{m} \\
&\qquad + \frac{C h}{\e^{d+1}} \left\| \int_{B_{2h}(x)^c}\left( 1_B\left(\frac{|x-y|}{2\e} \right)  + \left(  \frac{2\e}{3} \right)^{q'} \left|x -y  \right|^{-q'} 1_{B \setminus \Rd} \left(\frac{3|x-y|}{2\e} \right) \right) \,d\mu(y) \right\|_{m} \\
&=  h \|\grad \zeta_\e\|_\infty \left\|  \int_{B_{2h}(0)} \,\mu(x-w)dw \right\|_{m} \\
&\qquad + \frac{C h}{\e^{d+1}} \left\| \int_{B_{2h}(0)^c}\left( 1_B\left(\frac{|w|}{2\e} \right)  +  \left(  \frac{2\e}{3} \right)^{q'} \left|w  \right|^{-q'} 1_{B \setminus \Rd} \left(\frac{3|w|}{2\e} \right) \right) \,\mu(x-w) dw \right\|_{m} \\
&\leq c   \| \mu\|_{m} \left( \frac{h^{d+1}}{\e^{d+1}} +\frac{h}{\e} \right) ,
\end{align*}
where $c>0$ depends on $C, \|\grad \zeta\|_\infty$, and the space dimension. 
Therefore, provided that $h = o(\e)$ as $\e \to 0$, we obtain that $\zeta_\e*\mu_\e - \zeta_\e*\mu \to 0$ in $L^m$.
\end{proof}

\begin{remark}[compact support of initial data]
In Theorem \ref{numerics convergence}, we assume that the initial datum of the exact solution $\mu(0) \in D(\E^m)$ is compactly supported. More generally, under the same assumptions on $V$, $W$, and $m$, given any $\nu_0 \in D(\E^m) \cap \P_2(\Rd)$ without compact support, 
there exists $\tilde{\nu}_0 \in D(\E^m)$ with compact support such that $\nu_0$ and $\tilde{\nu}_0$ are arbitrarily close in the Wasserstein distance. Furthermore, by the contraction inequality for gradient flows of $\E^m$, the solution $\nu$ with initial data $\nu_0$ and the solution $\tilde{ \nu}$ with initial data $\tilde{\nu}_0$ satisfy
\[ W_2(\nu(t),\tilde{\nu}(t)) \leq C W_2(\nu_0,\tilde{\nu}_0) \text{ for all }t \in [0,T], \]
where $C>0$ depends on $T$ and the semiconvexity of $V$ and $W$ \cite[Theorem 11.2.1]{AGS}. In this way, any solution of (\ref{W2PDE1}) with initial datum in $ D(\E^m) \cap \P_2(\Rd)$ can be approximated by a solution with compactly supported initial datum, so that our assumption of compact support in Theorem \ref{numerics convergence} is not restrictive.
\end{remark}

\begin{remark}[Assumptions (A\ref{extra assumptions b})--(A\ref{extra assumptions a})]  \label{justifyassumptions}
In Theorem \ref{numericalmethod}, we proved that, as long as Assumptions (A\ref{extra assumptions b})--(A\ref{extra assumptions a}) from Theorem \ref{Gamma GF theorem} hold along the particle solutions $\{\mu_\e\}_\e$, then any limit of these particle solutions must be the corresponding gradient flow of the unregularized energy.
Verifying these conditions analytically can be challenging; see Theorem \ref{m2 theorem}. However, numerical results can provide confidence that these conditions hold along a given particle approximation.

A sufficient condition for Assumption (A\ref{extra assumptions b}) is that the $(m-1)$th moment of the particle solution 
\[ \sum_{i \in Q_R^h} |X_i(t)|^{m-1} m_i \]
is bounded uniformly in $t, \e$, and $h$. In particular, this is satisfied if the particles remain compactly supported in a ball.

A sufficient condition for Assumption (A\ref{extra assumptions c}) is that
\begin{align} \label{simple nonlocal sobolev}
	\int |\grad \zeta_\e*p_\e| \,d\zeta_\e * \mu_\e + \int |\grad \zeta_\e *\mu_\e| \,d\zeta_\e*p_\e,
\end{align}
with $p_\e=(\varphi_\e*\mu_\e)^{m-2}\mu_\e$, remains bounded uniformly in $t$, $\e$, and $h$. In fact, for purely diffusive problems, we observe that this quantity is not only bounded uniformly in $\e$ and $h$, but decreases in time along our numerical solutions; see Figure \ref{Nonlocal Sobolev 1D} below. For the nonlinear Fokker--Planck equation, we observe that this quantity is bounded uniformly in $\e$ and $h$ and converges to the corresponding norm of the steady state as $t \to \infty$; see Figure \ref{FPfig} below.

A sufficient condition for Assumption (A\ref{extra assumptions a}) is that the blob solution converges to a limit in $L^1$ and $L^\infty$, uniformly on bounded time intervals. Again, we observe this numerically, in both one and two dimensions, and both for purely diffusive equations and the nonlinear Fokker--Planck equation; see Figures \ref{1Dloglog}--\ref{FPfig} below. 
In this way, Assumptions (A\ref{extra assumptions b})--(A\ref{extra assumptions a}) may be verified numerically in order to give confidence that the limit of any blob method solution is, in fact, the correct exact solution.
\end{remark}

\subsection{Numerical implementation}

We now describe the details of our numerical implementation. In all of the numerical examples which follow, our mollifiers $\zeta_\e$ and $\varphi_\e$ are given by Gaussians,
\bes
	\zeta_\e(x) = \frac{1}{(4\pi \epsilon^2)^{d/2}} e^{-|x|^2/4 \epsilon^2} \ , \quad \varphi_\e(x) = \zeta_\e*\zeta_\e(x) = \frac{1}{(8\pi \epsilon^2)^{d/2}} e^{-|x|^2/8 \epsilon^2}, \qquad x \in \Rd, \e>0.
\ees
In addition to Gaussian mollifiers, we also performed numerical experiments with a range of compactly supported and oscillatory mollifiers and observed similar results. In practice, Gaussian mollifiers provided the best balance between speed of computation and speed of convergence.

We construct our numerical particle solutions $\mu_\e(t)$ as described in Theorem \ref{numerics convergence}. As a mild simplification, we consider the mass of each particle to be given by $m_i = \mu(0,ih)h^d$, where $\mu(0,ih)$ is the value of the initial datum $\mu(0)$ at the grid point $ih$. For the numerical examples we consider, in which $\mu(0)$ is a continuous function, the rate of convergence is indistinguishable from defining $m_i$ as in \eqref{initial data discretization}.

The system of ordinary differential equations that prescribes the evolution of the particle locations (c.f. \eqref{ODEsystem} and \eqref{particle gradient flow solution}) can be solved numerically in a variety of ways, and we observe nearly identical results independent of our choice of ODE solver. In analogy with previous work on blob methods in the fluids case \cite{AndersonGreengard}, we find that the numerical error due to the choice of time discretization is of lower order than the error due to the regularization and spatial discretization. We implement the blob method in Python, using the Numpy, SciPy, and Matplotlib libraries \cite{NumPy,SciPy,matplotlib}. In particular, we compute the evolution of the particle trajectories via the SciPy implementation of the Fortran VODE solver \cite{VODE}, which uses either a backward differentiation formula (BDF) method or an implicit Adams method, depending on the stiffness of the problem. 

Our convergence result, Theorem \ref{numerics convergence}, requires that $h = o(\e)$ as $\e \to 0$. Numerically, we observe the fastest rate of convergence with $\epsilon = h^{1- p}$, for $0 < p \ll 1$, as $h \to 0$. Since computational speed decreases as $p$ approaches 0, we take $\epsilon = h^{0.99}$ in the following simulations. In these examples, we discretize the initial data on a line ($d=1$) or square of sidelength $5.0$ ($d=2$), centered at $0$.

Finally, to visualize our particle solution (\ref{particle gradient flow solution}) and compare it to the exact solutions in $L^p$-norms, we construct a blob solution obtained by convolving the particle solution with a mollifier,
\begin{align} \label{blob gradient flow solution}
\tilde{\mu}_\epsilon(t,\cdot) := \varphi_\e* \mu_\e(t,\cdot)  = \sum_{i \in Q_R^h}\varphi_\e(\cdot - x_i) m_i, \qquad t\in[0,T]
\end{align}
By Lemma \ref{weakst convergence mollified sequence}, if $\mu_\e \wsto \mu$ as $\e\to0$, where $\mu$ is the exact solution, then we also have $\tilde{\mu}_\e \wsto \mu$. Consequently our convergence result, Theorem \ref{numerics convergence}, also applies to this blob solution.

We measure the accuracy of our numerical method with respect to the $L^1$-, $L^\infty$-, and Wasserstein metrics. To compute the $L^1$- and $L^\infty$-errors, we take the difference between the exact solution and the blob solution (\ref{blob gradient flow solution}) and evaluate discrete $L^1$- and $L^\infty$-norms using the following formulas:
\[ \|f\|_{L^1(Q_R^h)} = \sum_{i \in Q_R^h} | f(ih) | h^d , \quad  \|f\|_{L^\infty(Q_R^h)} = \max_{i \in Q_R^h} | f(ih)|,  \quad \mbox{for a given function $f\:\Rd\to\R$} . \] 

We compute the Wasserstein distance between our particle solution $\mu_\e$ in \eqref{particle gradient flow solution} and the exact solution $\mu$ in one dimension using the formula
\begin{align} \label{compute W2 1D}
W_2(\mu_\e, \mu) = \left( \int_0^1 |F_{\mu_\e}^{-1}(s) - F_{\mu}^{-1}(s)|^2 \,ds \right)^{1/2} ,
\end{align}
where $F_{\mu_\e}^{-1}$ and $F_\mu^{-1}$ are the generalized inverses of the cumulative distribution functions of $\mu$ and $\mu_\e$, respectively; c.f. \cite[Theorem 6.0.2]{AGS}. We evaluate the integral in \eqref{compute W2 1D} numerically using the SciPy implementation of the Fortran library QUADPACK \cite{QUADPACK}. In two dimensions, we compute the Wasserstein error by discretizing the exact and blob solutions as piecewise constant functions on a fine grid and then using the Python Optimal Transport library to compute the discrete Wasserstein distance between them.  In particular, we use the Earth Mover's Distance function in this library, which is based on the network simplex algorithm introduced by Bonneel, van de Panne, Paris, and Heidrich \cite{Bonneel}.

\subsection{Simulations} \label{simulations}
Using the method described in the previous section, we now give several examples of numerical simulations. We consider initial data given by linear combinations of Gaussian and Barenblatt profiles, which we denote as follows:
\begin{center}
\begin{figure}[!ht] 
\centering
\textbf{Heat and Porous Medium Equations: Fundamental Solution} 

\smallskip
Exact vs. Numerical Solution, $h = 0.02$, varying $m$ \\ \smallskip
\includegraphics[scale=.81]{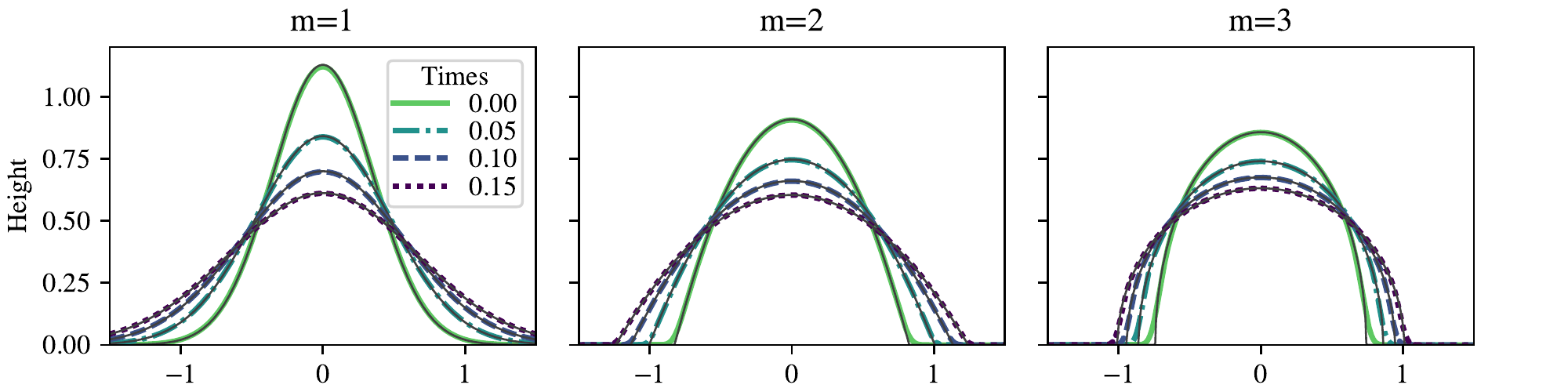}

{\footnotesize Position} \hspace{3.6in} \

Exact vs. Numerical Solution, varying $h$, $m = 3$ \\ \smallskip
\includegraphics[scale=.81]{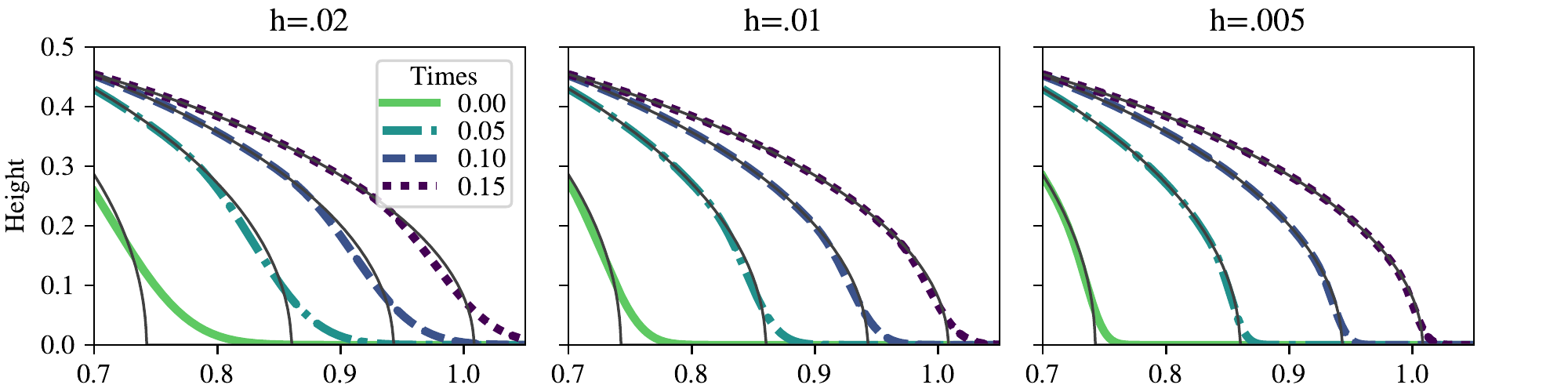}
{\footnotesize Position} \hspace{3.6in} \
	\caption{Comparison of exact and numerical solutions to the heat ($m=1)$ and porous medium ($m=2,3$) equations. Numerical solutions are plotted with thick lines, and exact solutions are plotted with thin lines.}
	\label{exactnumericaldensity}
\end{figure}
\end{center}
\bes
	\psi_{m}(\tau,x) = \begin{cases} \frac{1}{(4\pi \tau)^{d/2}} e^{-|x|^2/4 \tau} &\text{ for } m=1, \\
 \tau^{-d\beta}(K - \kappa \tau^{-2\beta} |x|^2)_+^{1/(m-1)} & \text{ for } m>1, \end{cases} \quad x \in \Rd,
\ees
with
\bes
	\beta = \frac{1}{2+d(m-1)} \quad \text{ and } \quad \kappa = \frac{\beta}{2} \left( \frac{m-1}{m} \right),
\ees
and $ K = K(m,d)$ chosen so that $\int \psi_{m}(\tau,x) dx = 1$.
 
In Figure \ref{exactnumericaldensity}, we compare exact and numerical solutions to the heat and porous medium equations ($V=W=0$, $m=1,2,3$), with initial data given by a Gaussian ($m=1$) or Barenblatt ($m=2, 3$) function with scaling $\tau = 0.0625$. The top row shows the evolution of the density on a large spatial scale, at which the exact and numerical solutions are visually indistinguishable for $m=1$ and $m=2$. However, for $m=3$ the fat tails of the numerical simulation peel away from the exact solution at small times. The second row depicts the numerical simulations for $m=3$ on a smaller spatial scale, illustrating how the tails of the numerical simulation converge to the exact solution as the spacing of the computational grid is refined.

\begin{center}
\begin{figure}[!ht] 
\centering
\textbf{Heat and Porous Medium Equations: Double Bump Initial Data}  \\ \ \\
\includegraphics[trim={.25cm 0cm .6cm .5cm},clip,scale=.85]{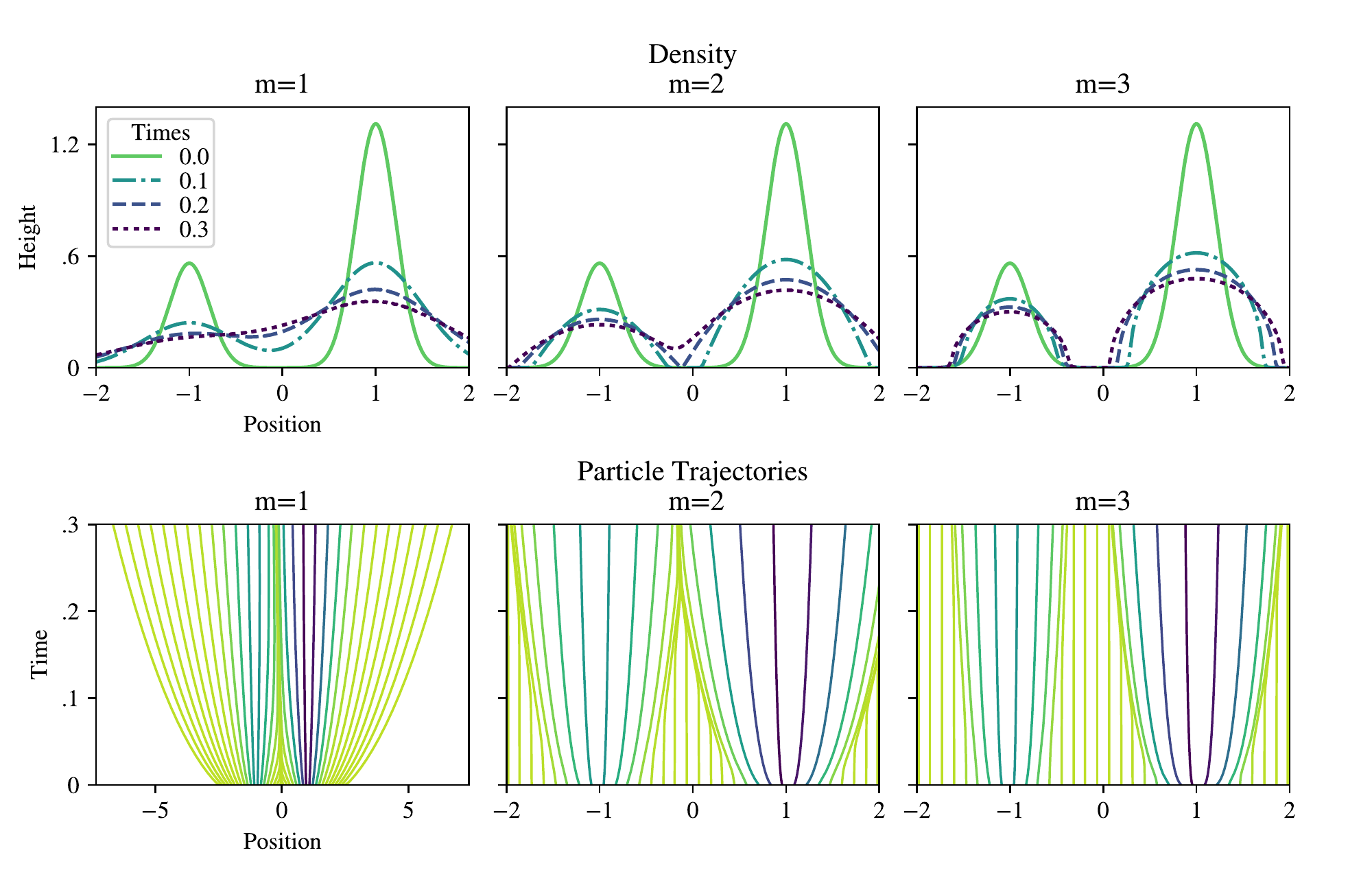}
\includegraphics[trim={.25cm .4cm .6cm 0cm},clip,scale=.85]{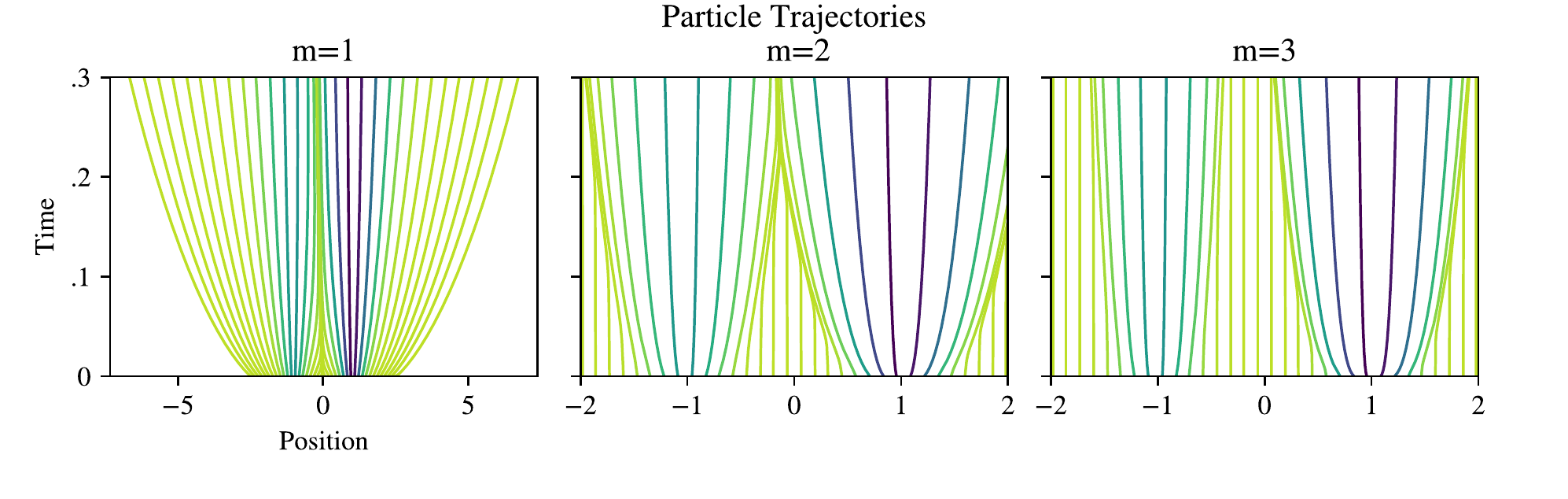}
	\caption{Numerical simulation of the one-dimensional heat and porous medium equations. \textbf{Top: }Evolution of the blob density $\rho^h_\epsilon$. \textbf{Bottom:} Evolution of the particle trajectories $x_i$, with colors indicating relative mass of each particle.  \ \\ \
}
\label{doublebumpdiffusion}
\end{figure}
\end{center}

 In Figure \ref{doublebumpdiffusion}, we compute solutions of the one-dimensional heat and porous medium equations ($V=W=0$, $m=1,2,3$), illustrating the role of the diffusion exponent $m$. The initial data is given by a linear combination of Gaussians, $\rho_0(\cdot) = 0.3 \psi_1(\cdot+1,0.0225)+0.7 \psi_1(\cdot-1, 0.0225)$, and the grid spacing is $h = 0.01$. For $m=1$, the infinite speed of propagation of support of solutions to the heat equation is reflected both at the level of the density, for which the gap between the two bumps fills quickly, and also in the particle trajectories, which quickly spread to fill in areas of low mass. In contrast, for $m=2$ and $m=3$, we observe finite speed of propagation of support, as well as the emergence of Barenblatt profiles as time advances.

\begin{center}
\begin{figure}[!ht] 
\centering
\textbf{Heat and Porous Medium Equations: Evolution of Nonlocal Sobolev Norm} \\ \ \\
\begin{flushleft} \hspace{1.1in} Fundamental Solutions \hspace{1.32in} Double Bump Initial Data \hspace{.1in} \end{flushleft}

\includegraphics[trim={0cm .4cm 0cm .4cm},clip,scale=.85]{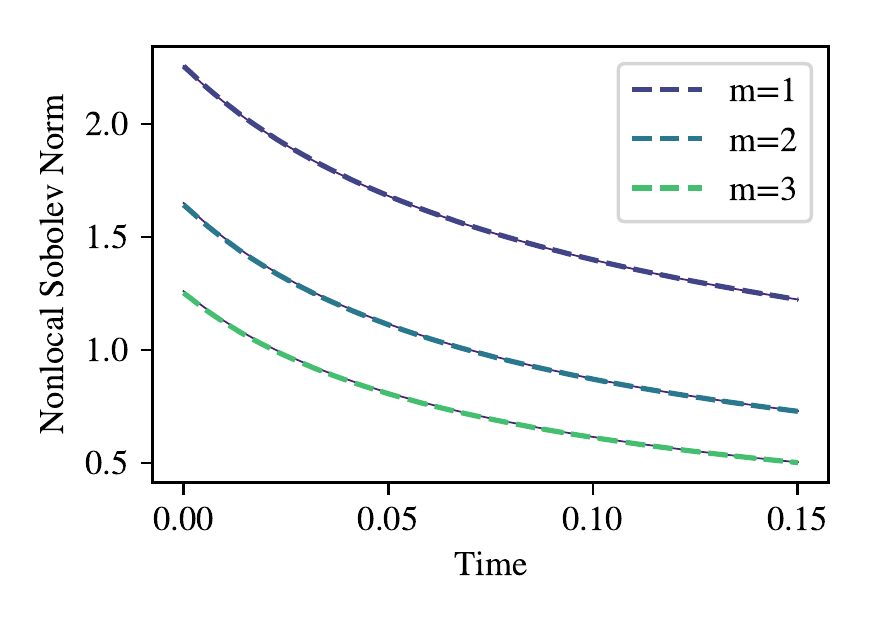}
\includegraphics[trim={.7cm .4cm .3cm .4cm},clip,scale=.85]{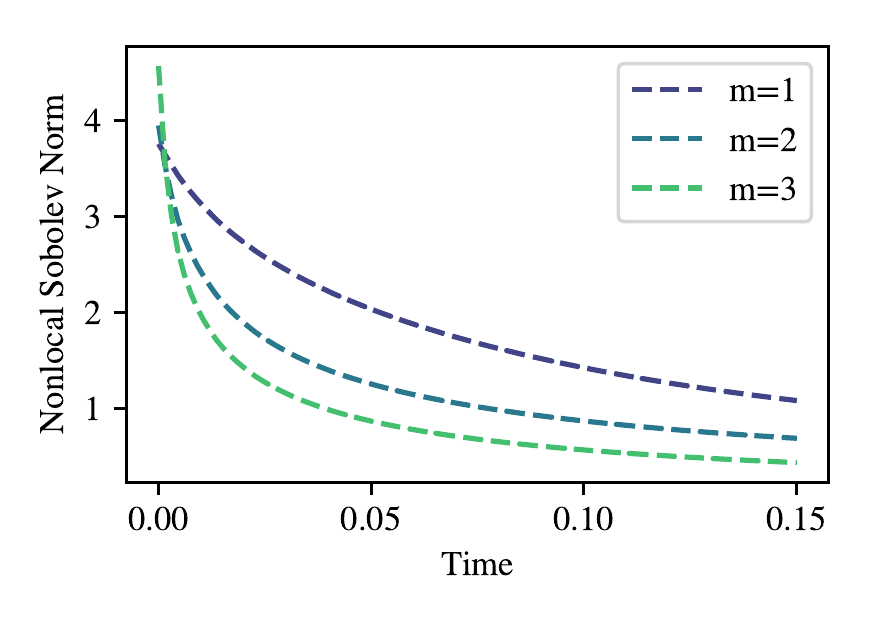}
\caption{ \textbf{Left:} Comparison of nonlocal Sobolev norm \eqref{simple nonlocal sobolev} along numerical solutions (dashed line) with the value of $\|\grad \mu^m \|_{L^1(\Rd)}$ along exact solutions $\mu$ (solid line).  \textbf{Right:} Evolution of nonlocal Sobolev norm along the numerical solutions. } \label{Nonlocal Sobolev 1D}\end{figure}
\end{center}

In Figure \ref{Nonlocal Sobolev 1D}, we compute the evolution of the nonlocal Sobolev norm (\ref{simple nonlocal sobolev}) along the numerical solutions from Figures \ref{exactnumericaldensity} and \ref{doublebumpdiffusion}. In both cases, we observe that the quantity converges as $h \to 0$ and decreases in time. This gives further credence to the heuristic that the nonlocal Sobolev norm is an approximation of the $L^1$-norm of the gradient of the $m$th power of the exact solution, which does decrease in time along the exact solution; see \eqref{BV heuristic 1} and \eqref{BV heuristic 2}. In particular, this provides numerical evidence that Assumption (A\ref{extra assumptions c}) from our main convergence theorem, Theorem \ref{Gamma GF theorem}, is satisfied.

\begin{center}
\begin{figure}[!ht]
\centering
\textbf{Convergence Analysis: One-Dimensional Diffusion}
\includegraphics[trim={0cm .3cm 0cm .8cm},clip,scale=.93]{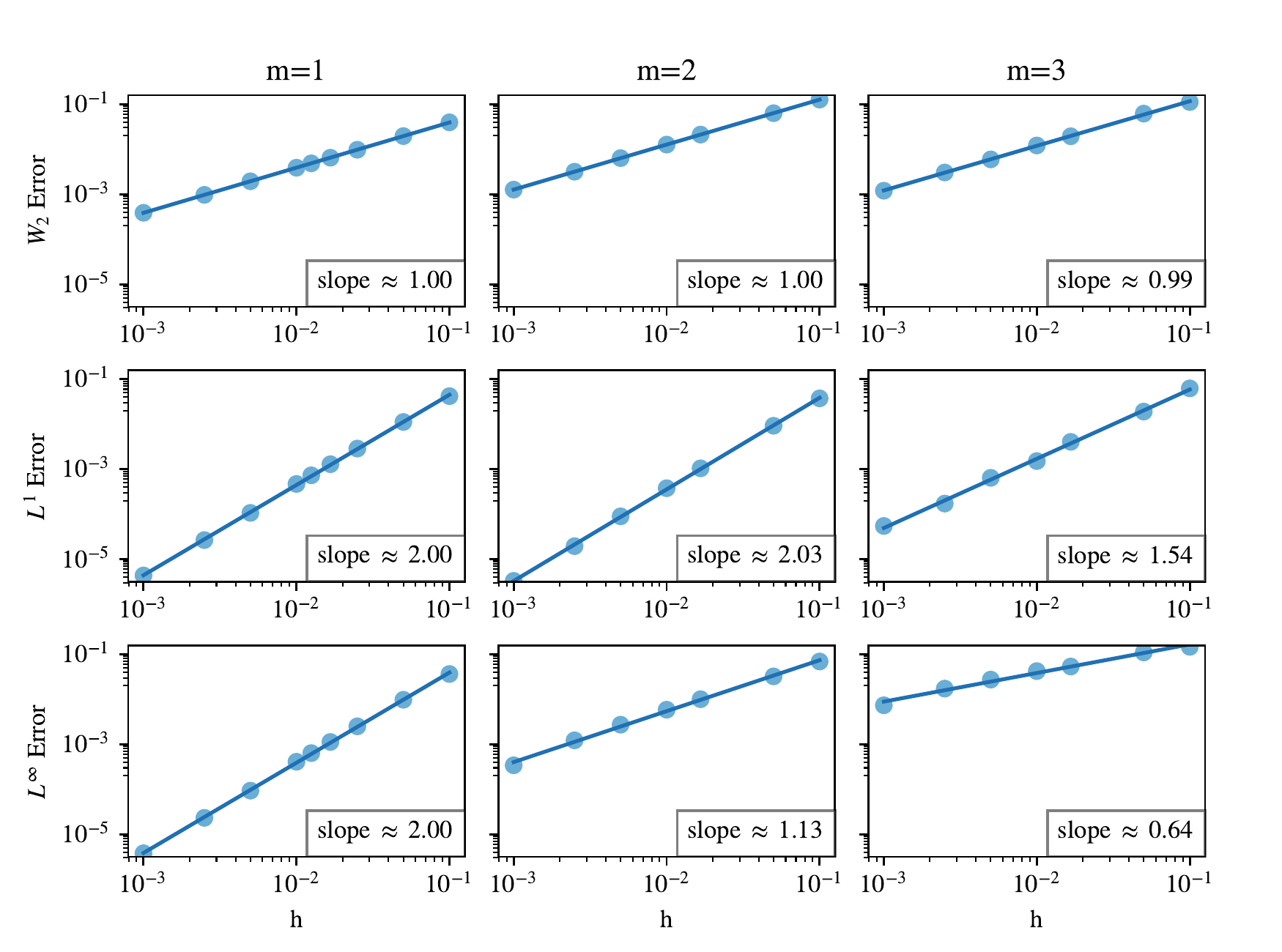}
	\caption{Rate of convergence of blob method for one-dimensional heat and porous medium equations. \label{1Dloglog}}
\end{figure}
\end{center}

\begin{center}
\begin{figure}[!ht]
\centering
\textbf{Convergence Analysis: Two-Dimensional Diffusion}\vspace{0.2cm}
\includegraphics[trim={0cm .3cm 0cm .2cm},clip,scale=.93]{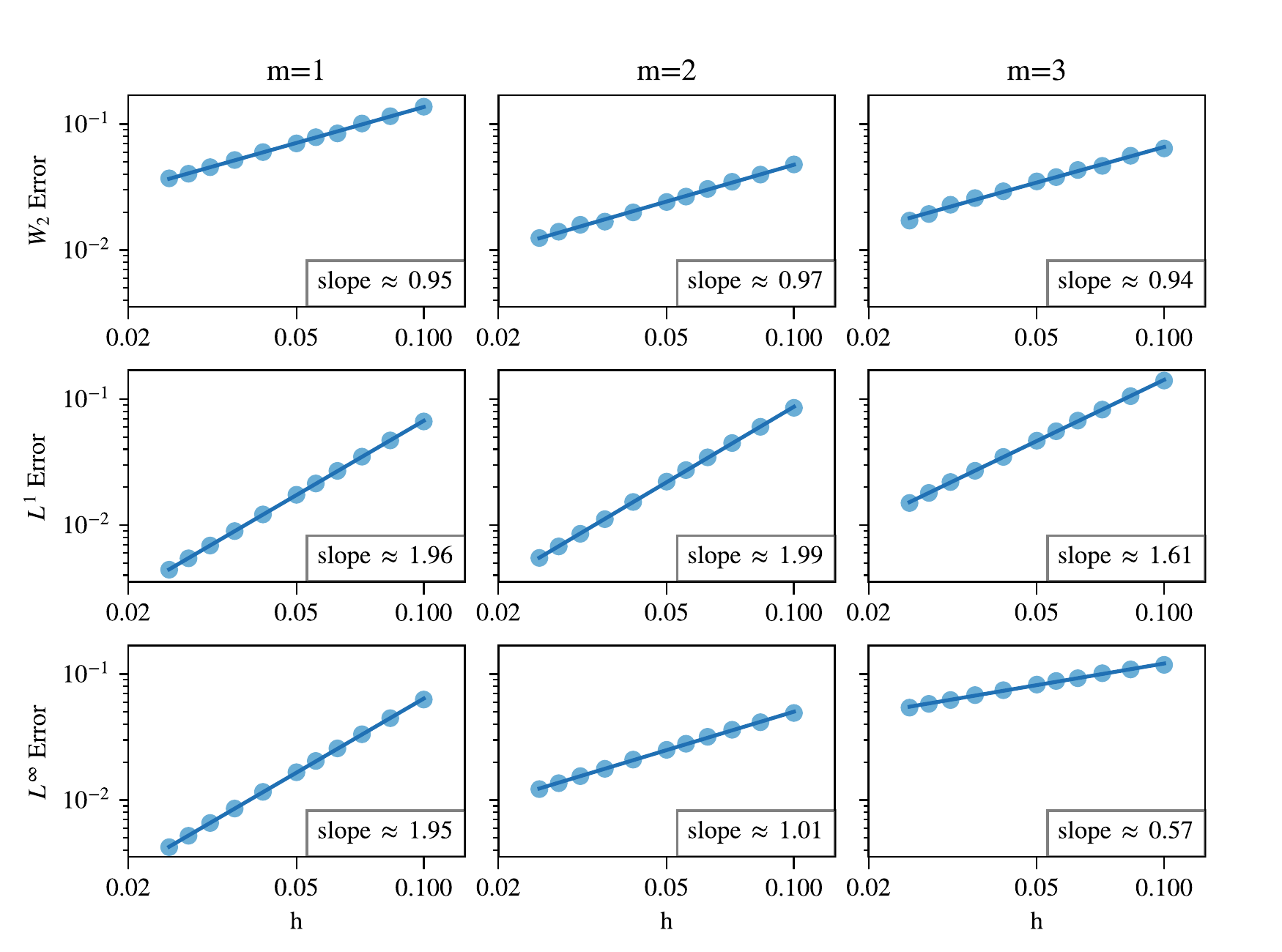}
	\caption{Rate of convergence of blob method for two-dimensional heat and porous medium equations. } \label{2Dloglog}
\end{figure}
\end{center}

In Figure \ref{1Dloglog}, we analyze the rate of convergence of our numerical scheme in one dimension. We compute the error between numerical and exact solutions of the heat and porous medium equations ($m =1, 2,3$) in Figure \ref{exactnumericaldensity} at time $t = 0.05$, with respect to the $2$-Wasserstein distance, $L^1$-norm, and $L^\infty$-norm and examine the scaling of the error with the grid spacing $h$. (Recall that $\e = h^{0.99}$ throughout.) Plotting the errors on a logarithmic scale, we observe that the Wasserstein error depends linearly on the grid spacing for all values of $m$. The $L^1$-norm scales quadratically for $m=1$ and $2$ and superlinearly for $m=3$. Finally, the $L^\infty$-error scales quadratically for $m=1$, superlinearly for $m=2$, and sublinearly for $m=3$. This deterioration of the rate of $L^\infty$-convergence for $m=3$ is due to the sharp transition at the boundary of the exact solution; see the second row of Figure \ref{exactnumericaldensity}. 
In Figure \ref{2Dloglog}, we perform the same analysis on the rate of convergence of our method in two dimensions and observe similar rates of convergence as in the one-dimensional case.

\begin{center}
\begin{figure}[!ht]
\centering
\textbf{Fokker--Planck: Two Dimensions}\\  
Rate of Convergence to Steady State

\includegraphics[scale=.8]{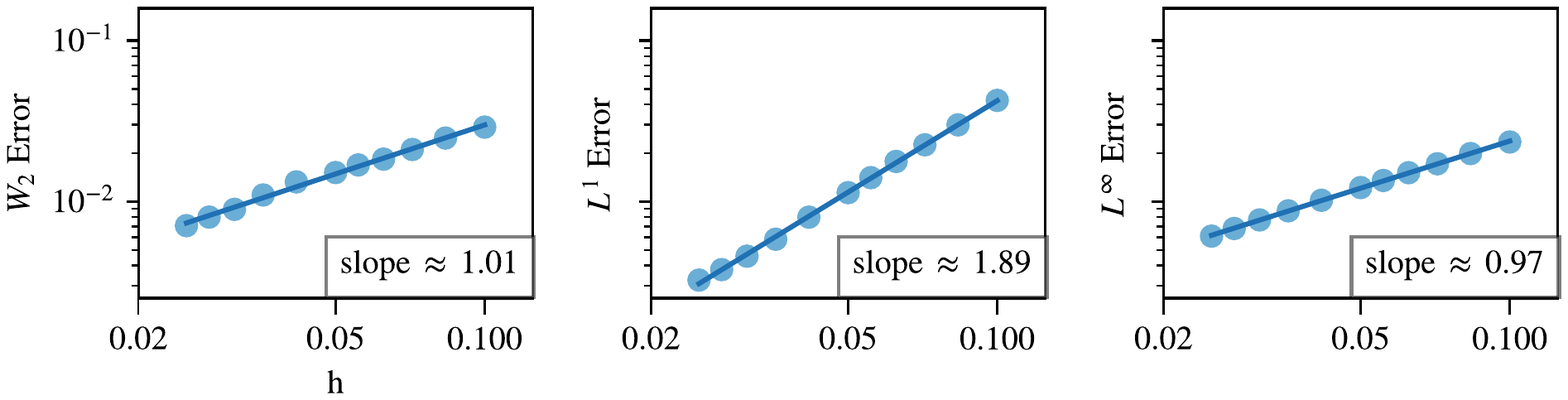}
\includegraphics[trim={.1cm -.1cm .1cm .01cm},clip,scale=.8]{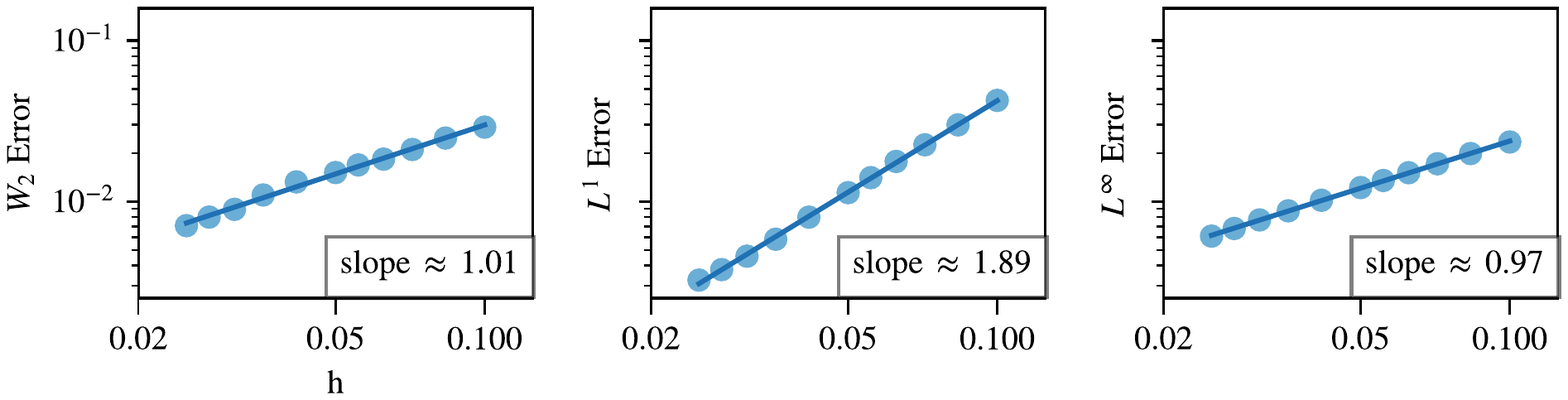}
\includegraphics[scale=.8]{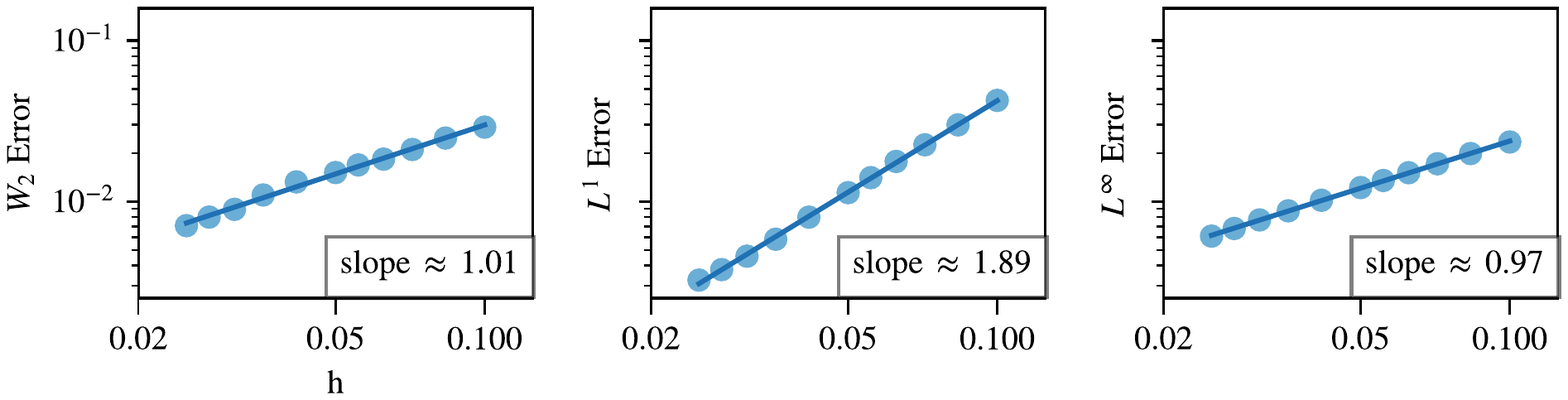}

Evolution of Density: Barenblatt and double bump initial data

{\begin{flushleft} \hspace{1.35in} \footnotesize t = 0.0 \hspace{1.35in} t = 0.6 \hspace{1.35in} t = 1.2 \hspace{-0.3in} \end{flushleft}}

\includegraphics[trim={0cm .3cm 0cm .0cm},clip,scale=.85]{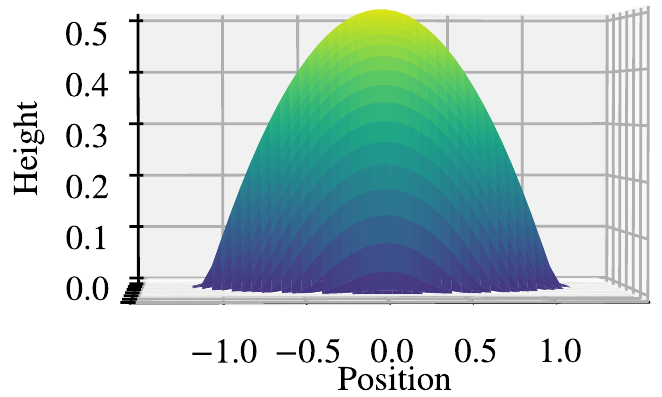}
\includegraphics[trim={0cm .3cm 0cm .0cm},clip,scale=.85]{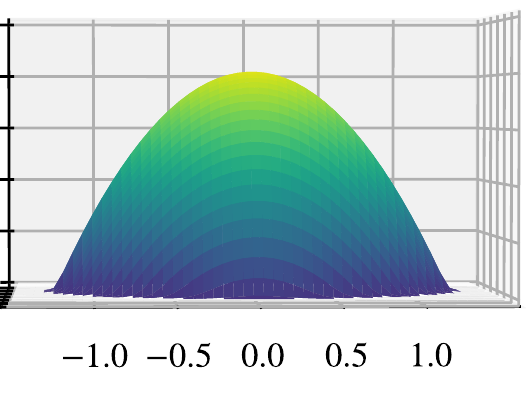}
\includegraphics[trim={0cm .3cm 0cm .0cm},clip,scale=.85]{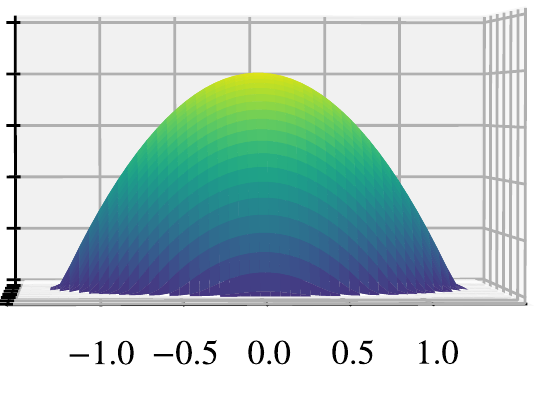}
\vspace{-.1cm}
{ \begin{flushleft}  \hspace{1.35in} \footnotesize t = 0.0 \hspace{1.35in} t = 0.6 \hspace{1.35in} t = 1.2 \hspace{-0.3in} \end{flushleft}}
\includegraphics[scale=.85]{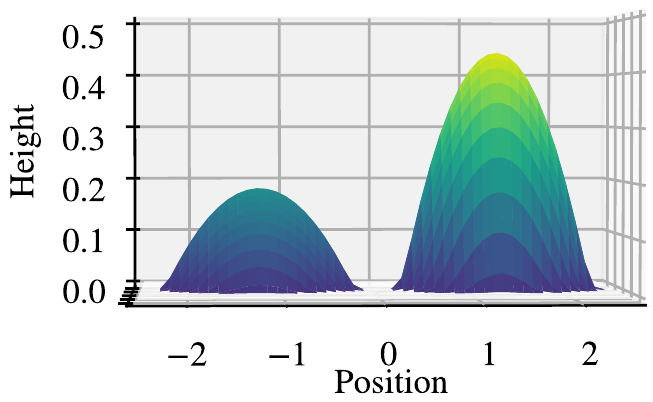}
\includegraphics[scale=.85]{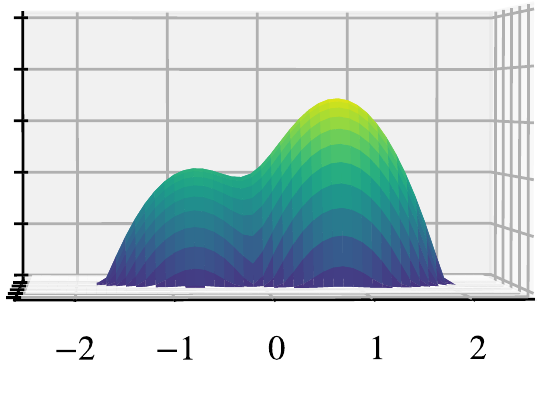}
\includegraphics[scale=.85]{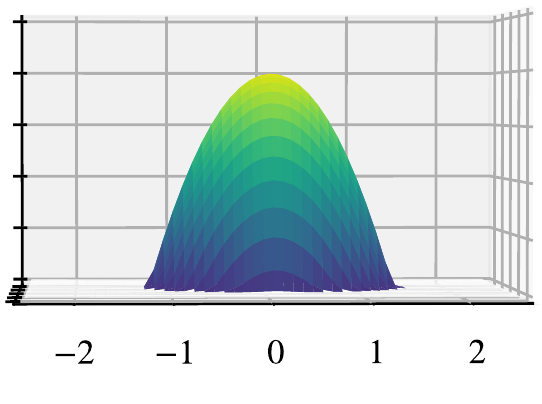}

Evolution of Nonlocal Sobolev Norm: Barenblatt and double bump initial data
\begin{flushleft} \footnotesize \hspace{1.4in} Barenblatt initial data \hspace{1.4in} Double bump initial data \hspace{.1in} \end{flushleft}
\includegraphics[trim={0cm .5cm 0cm .4cm},clip,scale=.85]{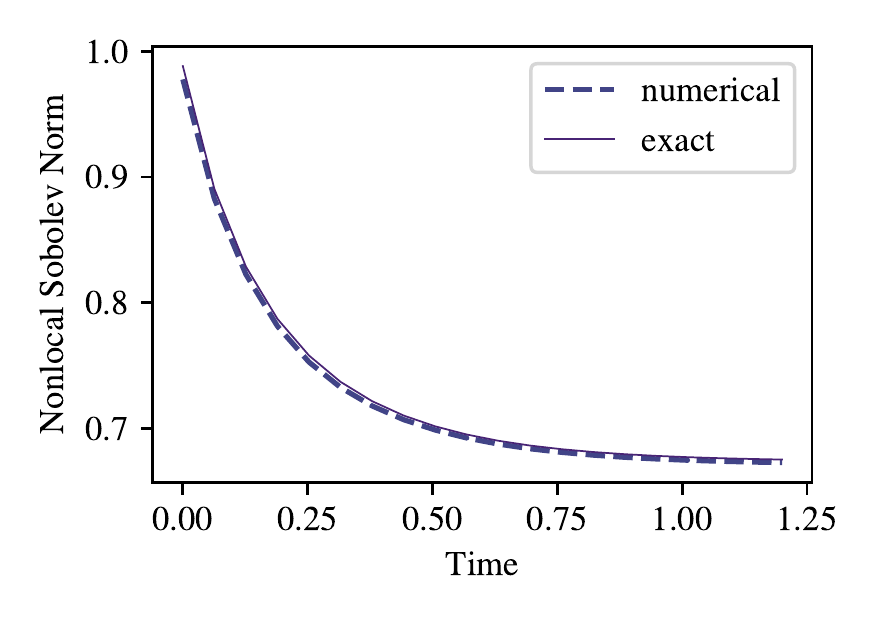}
\includegraphics[trim={.7cm .5cm .3cm .4cm},clip,scale=.85]{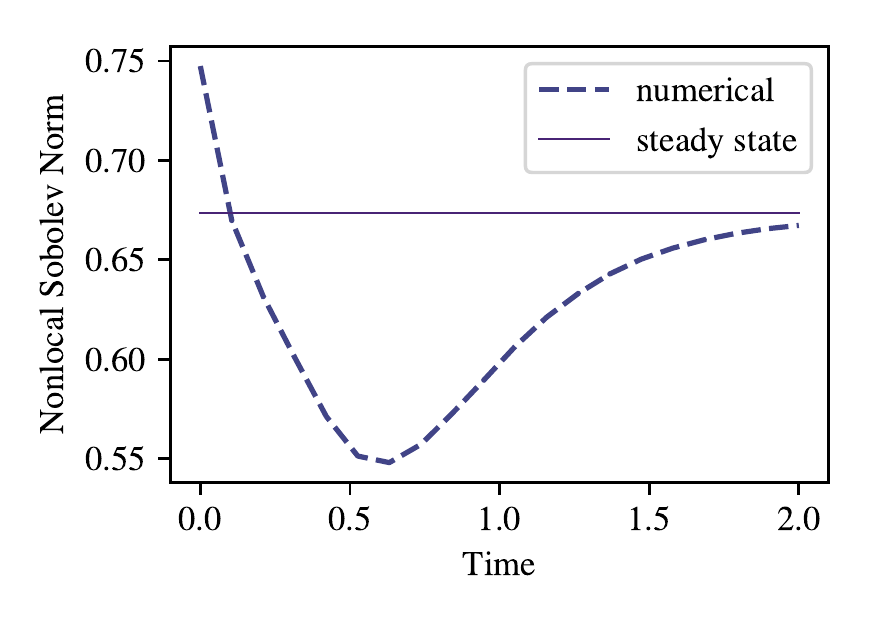}
\vspace{-.3cm}
	\caption{\textbf{Top row:} Error between numerical solutions and steady state.  \textbf{Middle rows:} Snapshots of the evolution towards steady state. \textbf{Bottom left:} Comparison of nonlocal Sobolev norm \eqref{simple nonlocal sobolev} along numerical solution from second row (dashed line) with $\|\grad \mu^m \|_{L^1(\Rd)}$ along exact solution $\mu$ (solid line). \textbf{Bottom right:} Comparison of nonlocal Sobolev norm along numerical solution from third row (dashed line) with $\|\grad \mu^m \|_{L^1(\Rd)}$ evaluated at steady state $\mu$ (solid line).}
	\label{FPfig}
\end{figure}
\end{center}

In Figure \ref{FPfig}, we simulate solutions to the nonlinear Fokker--Planck equation ($V(\cdot) = \abs{\cdot}^2/2$, $W=0$, $m=2$) and consider the rate of convergence to the steady state of the equation, $\psi_2(0.25,x)$.  In the top row, we compute the error between the numerical solution at time $t=1.2$ and the steady state with respect to the Wasserstein, $L^1$-, and $L^\infty$-norms for various choices of grid spacing $h$. We consider solutions with Barenblatt initial data ($m=2$, $\tau = 0.15$). We plot the error's dependence on $h$ with a logarithmic scale and compute the slope of the line of best fit to determine the scaling relationship between the error and $h$. We observe similar rates of convergence as in the case of the heat and porous medium equations; see Figure \ref{2Dloglog}. In the middle rows,  we give snapshots of the evolution of the blob method solution, as it converges to the steady state. We consider Barenblatt initial data ($m=2$, $\tau = 0.15$) and double bump initial data given by a linear combination of Barenblatts, $\rho_0(x) = 0.7\psi_2(x-(1.25,0),0.1)+0.3\psi_2(x+(1.25,0),0.1)$. The grid spacing is $h = 0.02$. In the bottom row, we compute the evolution of the nonlocal Sobolev norm (\ref{simple nonlocal sobolev}) along the numerical solutions from the middle rows. In both cases, we observe that this quantity converges for $h$ small. For Barenblatt initial data, it decreases in time along the numerical solution and agrees well with the value of $\|\grad \mu^2\|_{L^1(\Rd)}$ along the exact solution $\mu$. For the double bump initial data, it remains bounded in time, converging asymptotically to  $\| \grad (\psi_2(0.25, \cdot))^2 \|_{L^1}$, where $\psi_2(0.25, \cdot)$ is the steady state. Again, this supports the interpretation of the nonlocal Sobolev norm as an approximation of the $L^1$-norm of the gradient of the $m$th power of the exact solution and provides numerical evidence for Assumption (A\ref{extra assumptions c}) from Theorem \ref{Gamma GF theorem}.

In the remaining numerical examples, we apply our method to simulate solutions of Keller--Segel type equations, with the interaction potential $W$ given by $2 \chi \log\abs{\cdot}$ for $\chi>0$. In one dimension, the derivative of this potential is not integrable, and we remove its singularity it setting it equal to $2 \chi/\e$ for all $x\in\Rd$ such that $|x| < \e$. In two dimensions, the gradient of this potential is integrable, and we regularize it by convolving it with a mollifier $\varphi_\e$, as done in previous work by the second author on a blob method for the aggregation equation \cite{CB}.

\begin{center}
\begin{figure}[!ht] 
\centering
\textbf{One-Dimensional Keller--Segel Equation: Blow-up} \\

\begin{flushleft} \hspace{2.2cm} Evolution of Second Moment \hspace{2.4cm} Evolution of Particle Trajectories\end{flushleft}
\includegraphics[trim={.4cm .5cm .4cm .4cm},clip,scale=.85]{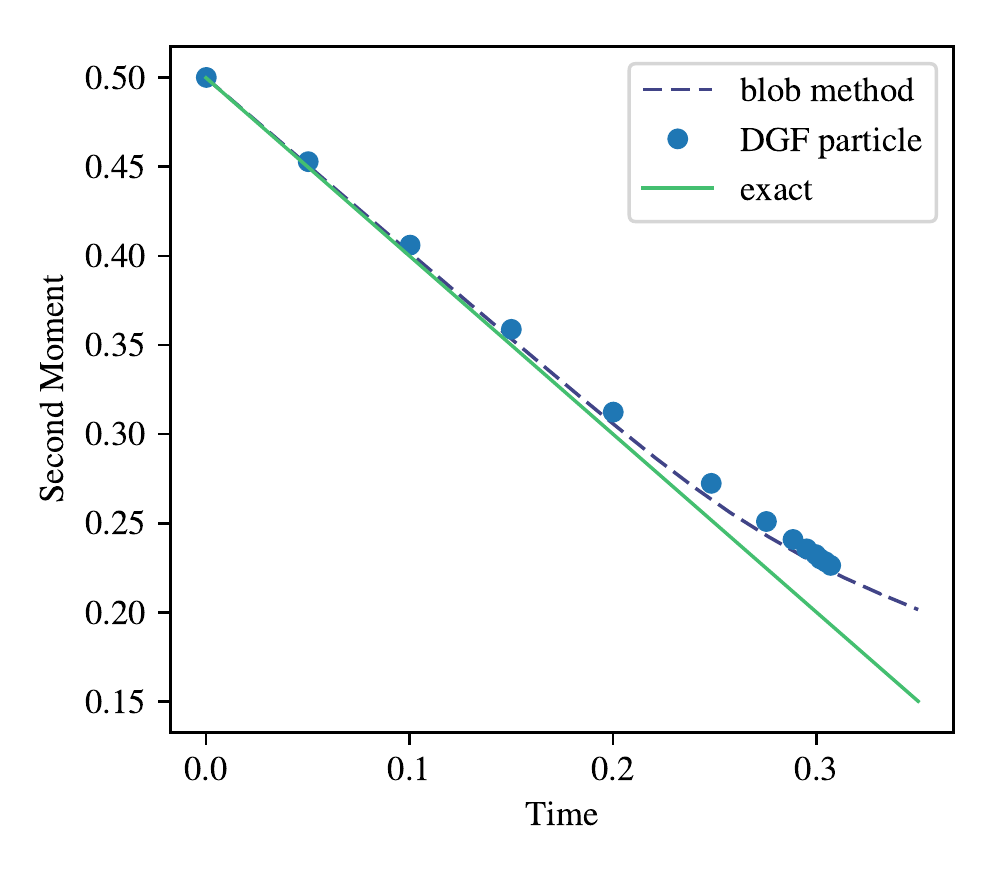}
\includegraphics[trim={0cm .2cm .5cm .9cm},clip,scale=.85]{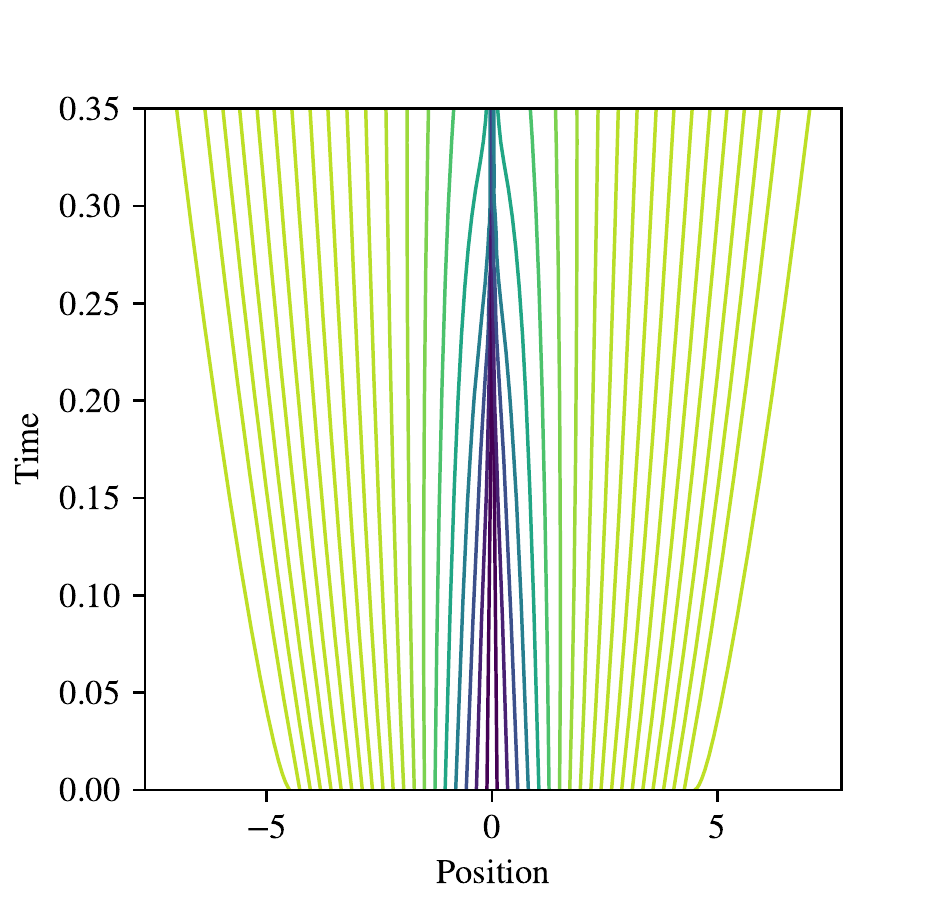}
\vspace{-.25cm}
\caption{\textbf{Left:} Comparison of the evolution of the second moment along exact solutions (solid line) with blob method solutiosn (dashed line) and previous numerical results by the DGF particle method \cite{CHPW}. \textbf{Right:} Evolution of particle trajectories, with colors indicating the relative mass of each particle.} \label{1DKellerSegel}
\end{figure}
\end{center}

\begin{center}
\begin{figure}[t] 
\centering
\textbf{One-Dimensional Nonlinear Keller--Segel Equation: Convergence to Steady State} \\

\begin{flushleft} \hspace{2.3cm} Evolution of Second Moment \hspace{2cm} Evolution of Particle Trajectories \end{flushleft}
\vspace{-.1cm}
\includegraphics[trim={.2cm .2cm .4cm .4cm},clip,scale=.85]{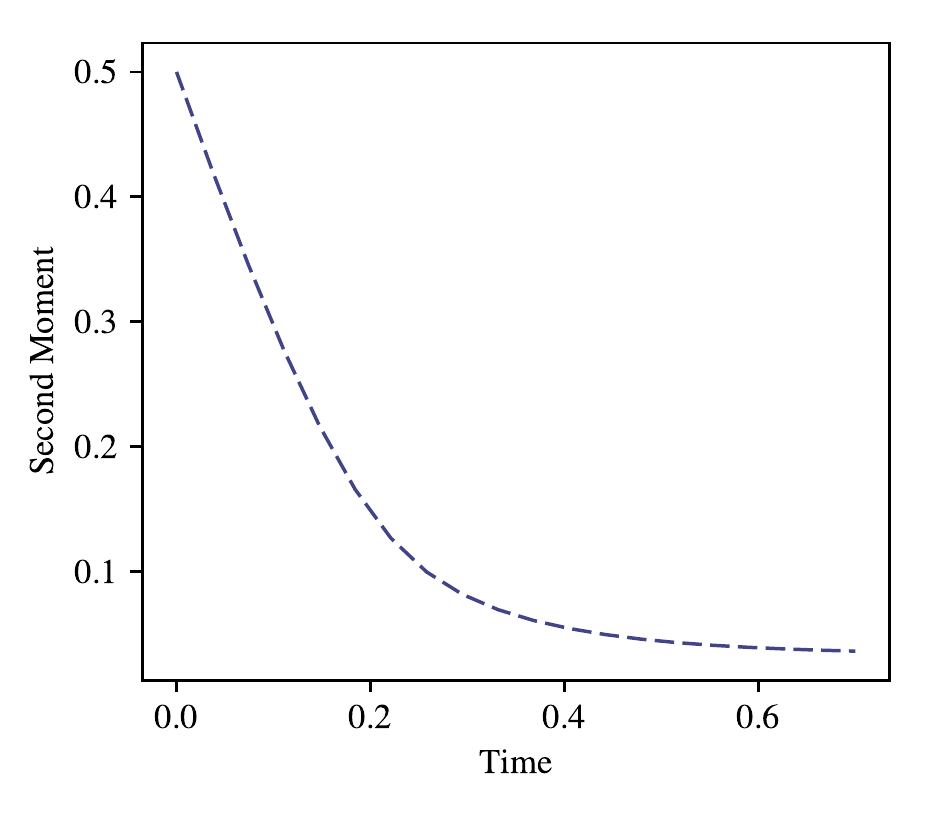}
\includegraphics[trim={0cm 0cm .5cm .8cm},clip,scale=.85]{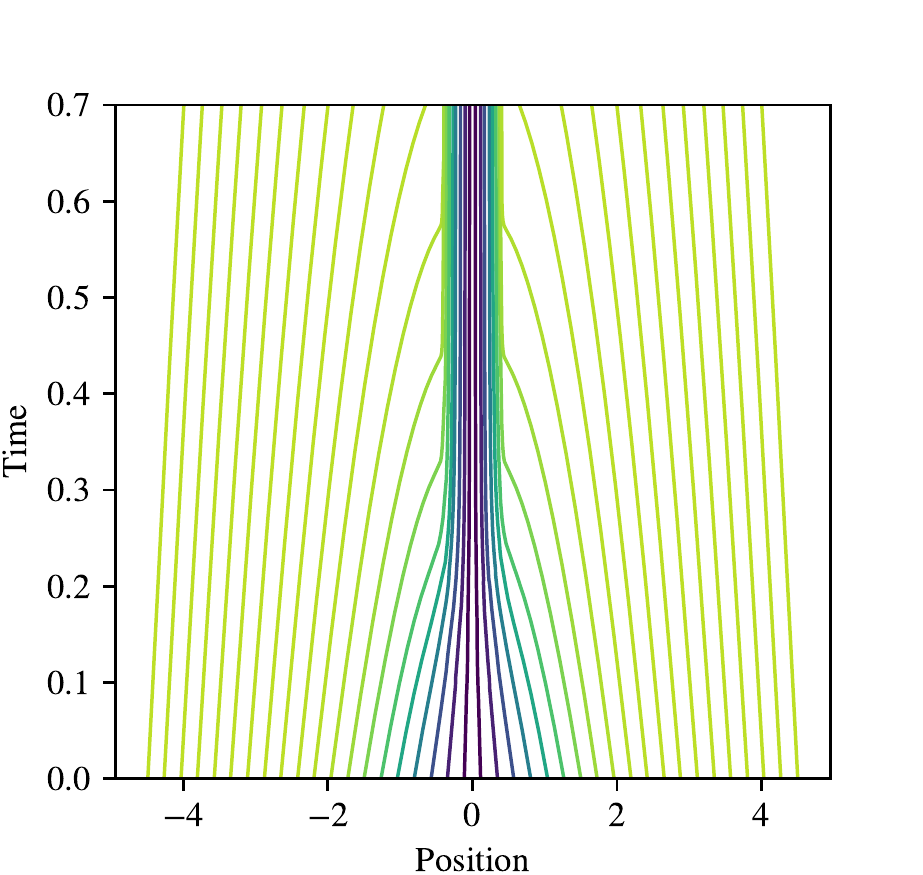} 
\vspace{-.25cm}
\caption{\textbf{Left:} Evolution of the second moment. \textbf{Right:} Evolution of particle trajectories, with colors indicating the relative mass of each particle.} 
\label{1DKellerSegelm2}
\end{figure}
\end{center}

In Figure \ref{1DKellerSegel}, we consider the one-dimensional variant of the Keller--Segel equation ($V=0$, $W(\cdot) = 2 \chi \log\abs{\cdot}$, $m=1$) studied in \cite{CPS}. Its interest is that it has a defined critical value $\chi$ for unit mass leading to the dichotomy of blow-up versus global existence. For $\chi = 1.5$ and initial data of mass one, solutions blow up in finite time. We consider initial data given by a Gaussian $\psi_1(\tau,\cdot)$,  $\tau = 0.25$, discretized on the interval $[-4.5,4.5]$ with grid spacing $h = 0.009$. We compare the evolution of the second moment of our blob method solutions with the second moment of the exact solution. We also compare our results with those obtained in previous work via a one-dimensional Discrete Gradient Flow (DGF) particle method \cite{CPSW,CHPW}. By refining our spatial grid with respect to the DGF particle method, we observe modest improvements. (Alternative simulations, with similar spatial and time discretizations as used in the DGF method, yielded similar results as obtained by DGF.) The blow-up of solution is not only evident in the second moment, which converges to zero linearly in time, but also in the evolution of the particle trajectories. In particular, we observe particle trajectories merging on several occasions as time advances.

In Figure \ref{1DKellerSegelm2}, we consider a nonlinear variant of the Keller--Segel equation ($V=0$, $W(\cdot) = 2 \chi \log\abs{\cdot}$, $m=2$) in one dimension, with initial data and discretization as in Figure \ref{1DKellerSegel}. We observe the convergence to a steady state both at the level of the second moment and the particle trajectories.

\begin{center}
\begin{figure}[t]
\centering
\textbf{Two-Dimensional Keller--Segel Equation: Evolution of Density}
\vspace{0cm}
\begin{flushleft} \hspace{1.15in} t = 0.0 \hspace{1.15in} t = 0.075 \hspace{1.15in} t = 0.15 \hspace{.1in} \end{flushleft}
\vspace{-.1cm}

{\footnotesize Subcritical Mass $=7\pi$}

\hspace*{-1.2cm}
\includegraphics[trim={.83cm 1cm 1.6cm 1cm},clip,scale=.85]{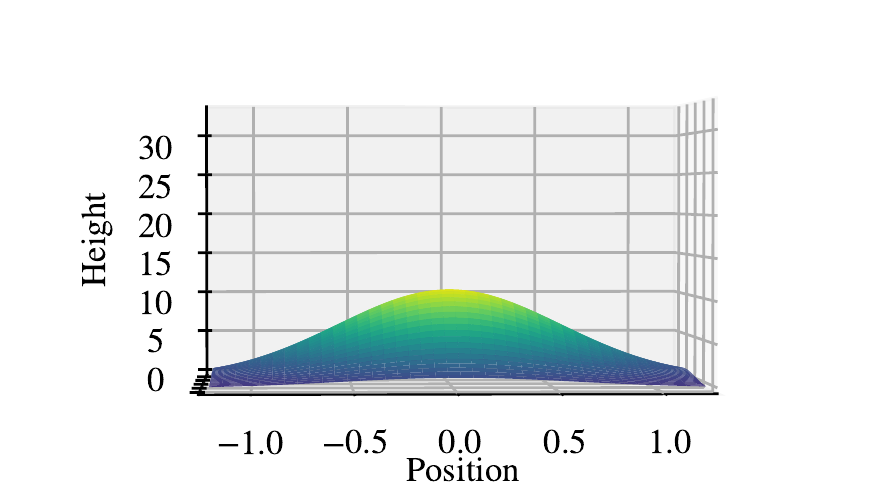}
\includegraphics[trim={2cm 1cm 1.6cm 1cm},clip,scale=.85]{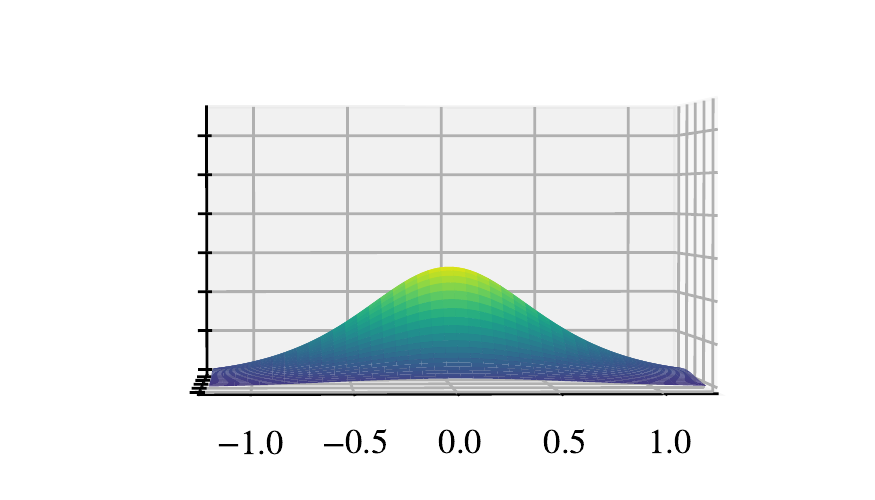}
\includegraphics[trim={2cm 1cm 1.6cm 1cm},clip,scale=.85]{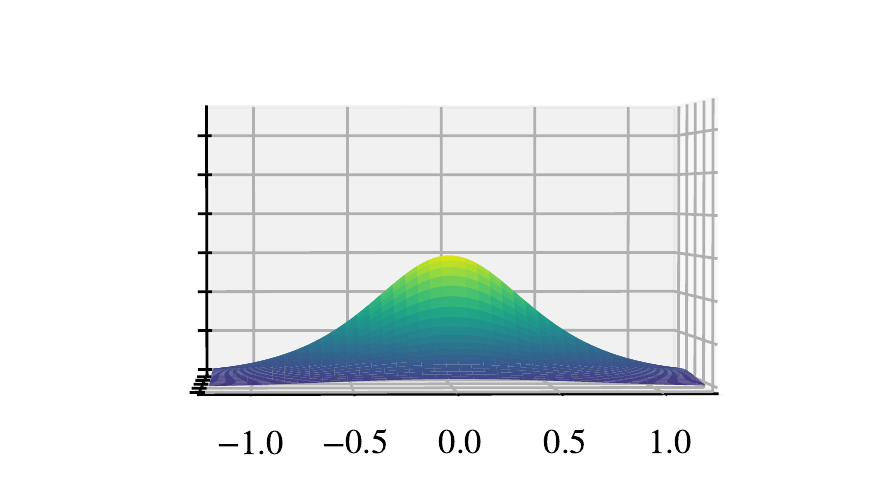}

\vspace{-.2cm}
{\footnotesize Critical Mass $=8\pi$}

\includegraphics[trim={2.03cm 1cm 1.6cm 1cm},clip,scale=.85]{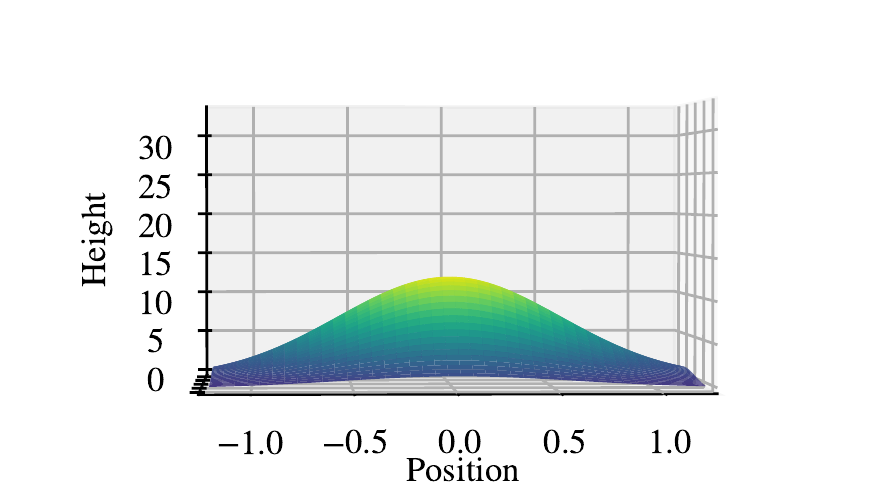}
\includegraphics[trim={2cm 1cm 1.6cm 1cm},clip,scale=.85]{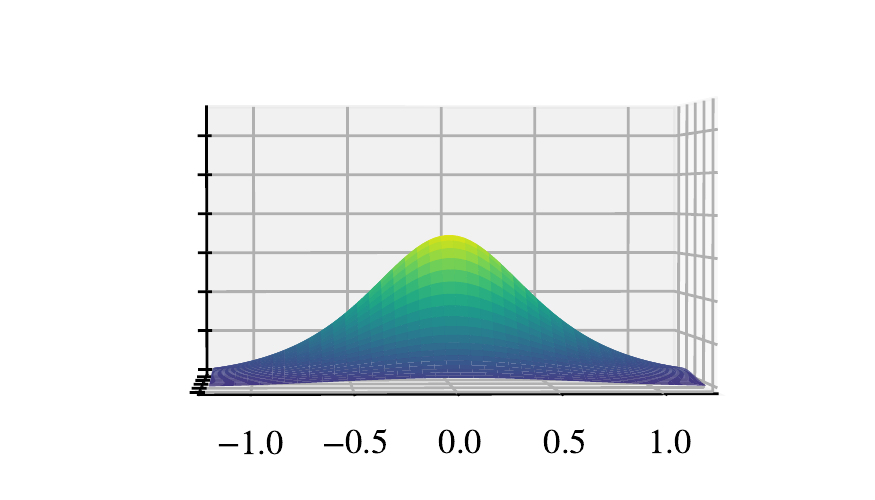}
\includegraphics[trim={2cm 1cm 1.6cm 1cm},clip,scale=.85]{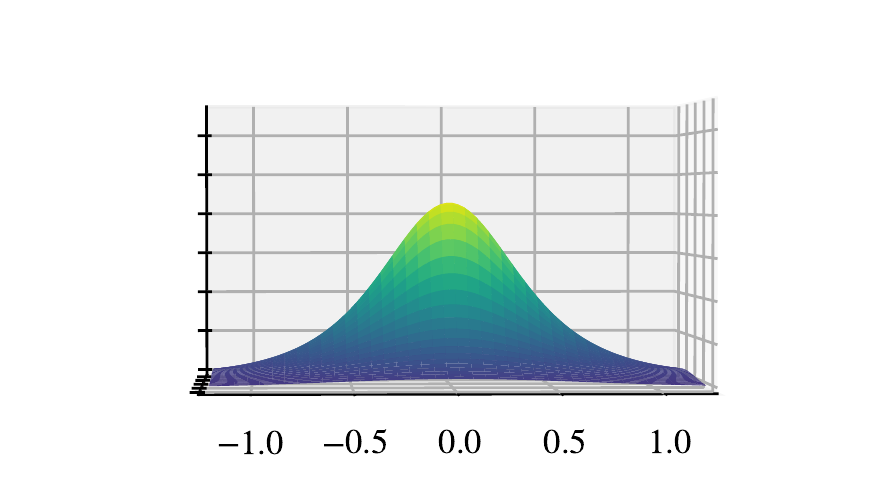}

\vspace{-.2cm}
{\footnotesize Supercritical Mass $=9\pi$}

\includegraphics[trim={2.03cm .1cm 1.6cm 1cm},clip,scale=.85]{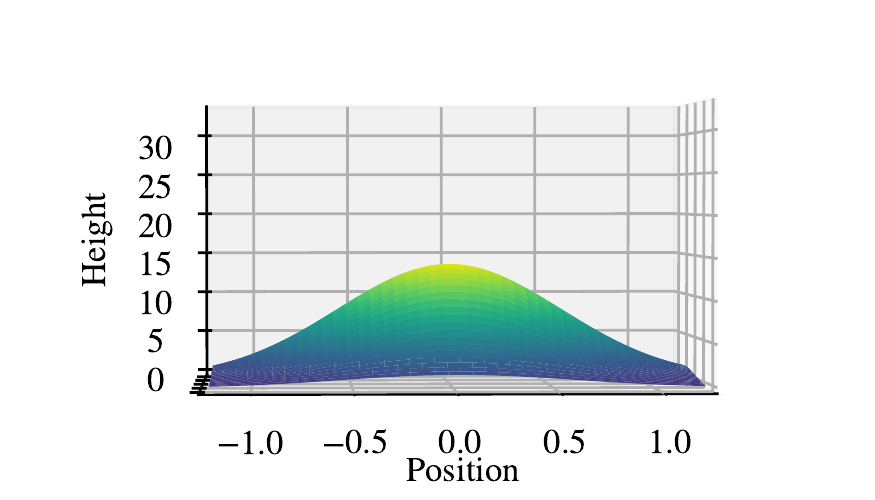}
\includegraphics[trim={2cm .1cm 1.6cm 1cm},clip,scale=.85]{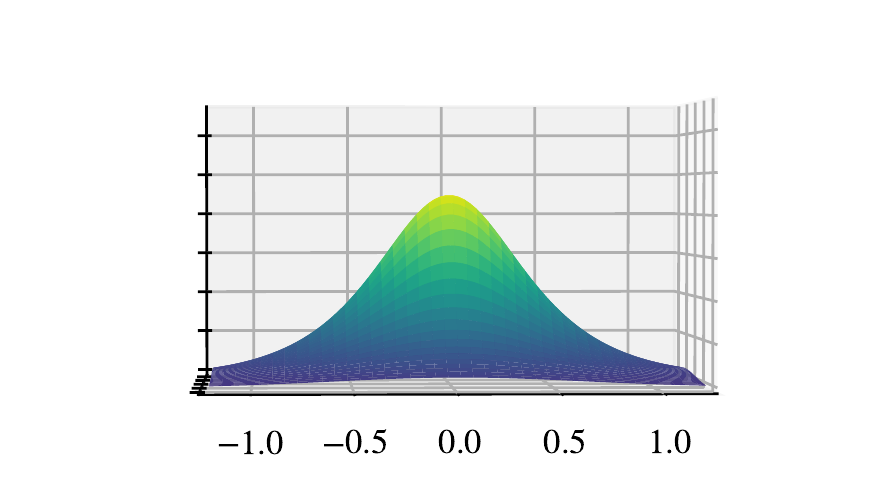}
\includegraphics[trim={2cm .1cm 1.6cm 1cm},clip,scale=.85]{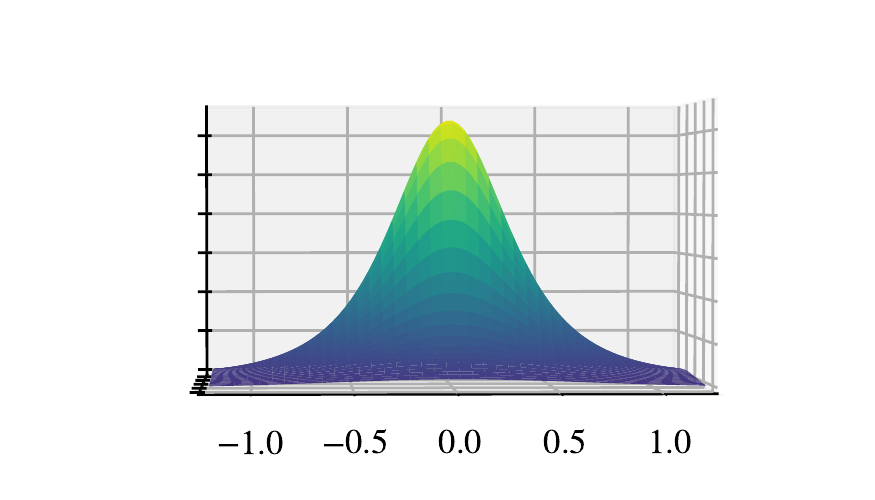}
\vspace{-.4cm}
\caption{Evolution of numerical solutions for the two-dimensional Keller--Segel equation with subcritical, critical, and supercritical initial data. } \label{2DKSden}
\end{figure}
\end{center}

\begin{center}
\begin{figure}[t]
\centering
\textbf{Two-Dimensional Keller--Segel Equation: Analysis of Blowup Behavior} 

\begin{flushleft} \hspace{1cm} Evolution of Second Moment, $h = 0.0\bar{3}$ \hspace{1.25cm} Convergence of Second Moment \end{flushleft} 

\vspace{-.1cm}
\hspace*{-1.9cm}
\begin{minipage}{.43\textwidth}
\begin{flushright}
\includegraphics[trim={.4cm 1.25cm .4cm .4cm},clip,scale=.85]{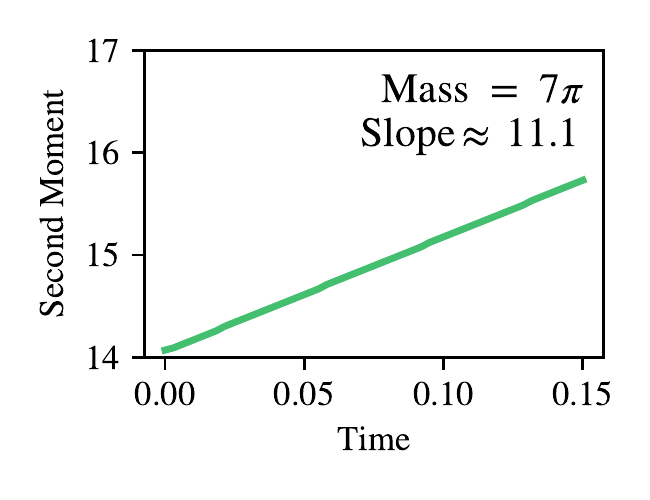}
\includegraphics[trim={.4cm 1.25cm .4cm .1cm},clip,scale=.85]{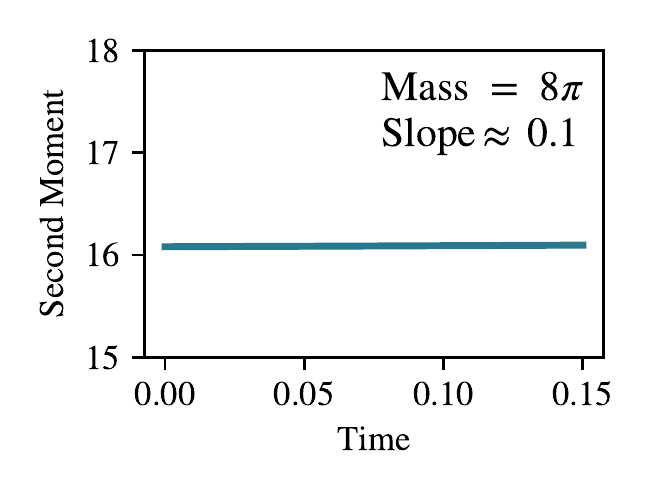}
\includegraphics[trim={.4cm .4cm .4cm .1cm},clip,scale=.85]{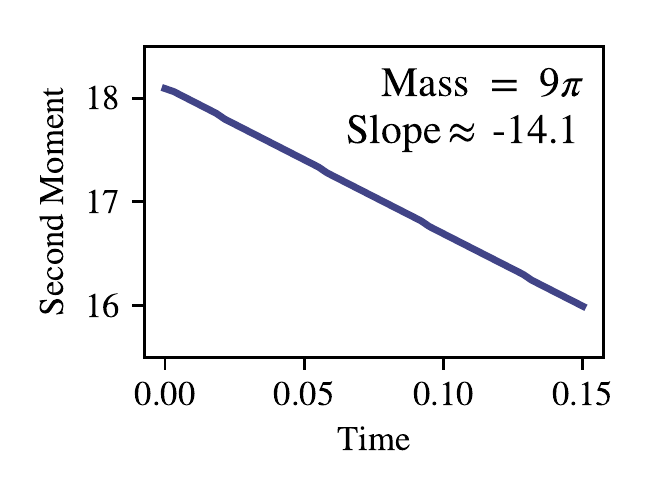}
\end{flushright}
\end{minipage} \qquad 
\begin{minipage}{.43\textwidth}
\vspace{.2cm}
\centering

\begin{flushleft}
\includegraphics[trim={0cm .4cm .3cm .4cm},clip,scale=.85]{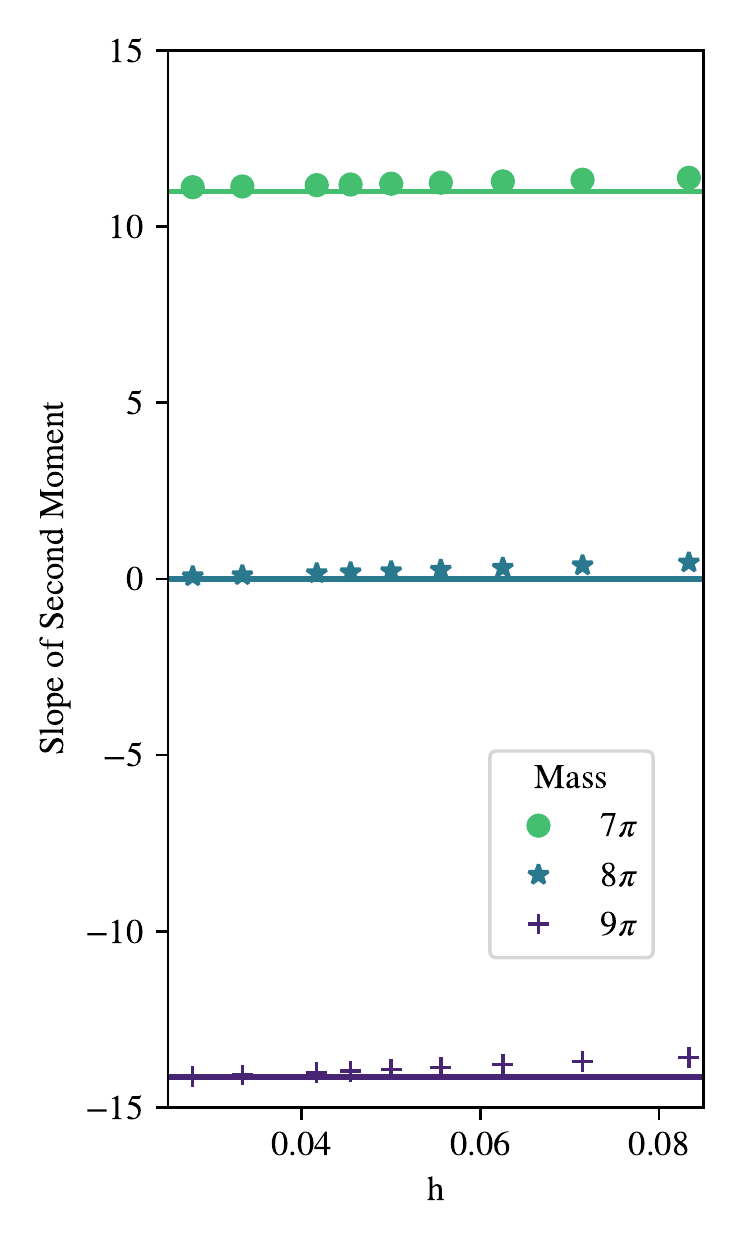}
\end{flushleft}
\end{minipage}
\vspace{-.5cm}
\caption{\textbf{Left:} Evolution of second moment of numerical solutions. \textbf{Right:} Convergence of slope of second moment to theoretically predicted slope (solid line).}  \label{2DKSmom}
\end{figure}
\end{center}

\begin{center}
\begin{figure}[t] 
\centering
\textbf{Two-Dimensional Keller--Segel Equation: Blowup with Supercritical Mass $9 \pi$} 

\begin{flushleft} \hspace{2cm} Evolution of Second Moment \hspace{2.5cm} Evolution of Particle Trajectories \end{flushleft}
\vspace{-.1cm}
\includegraphics[trim={1.5cm 1.95cm .8cm .9cm},clip,scale=.85]{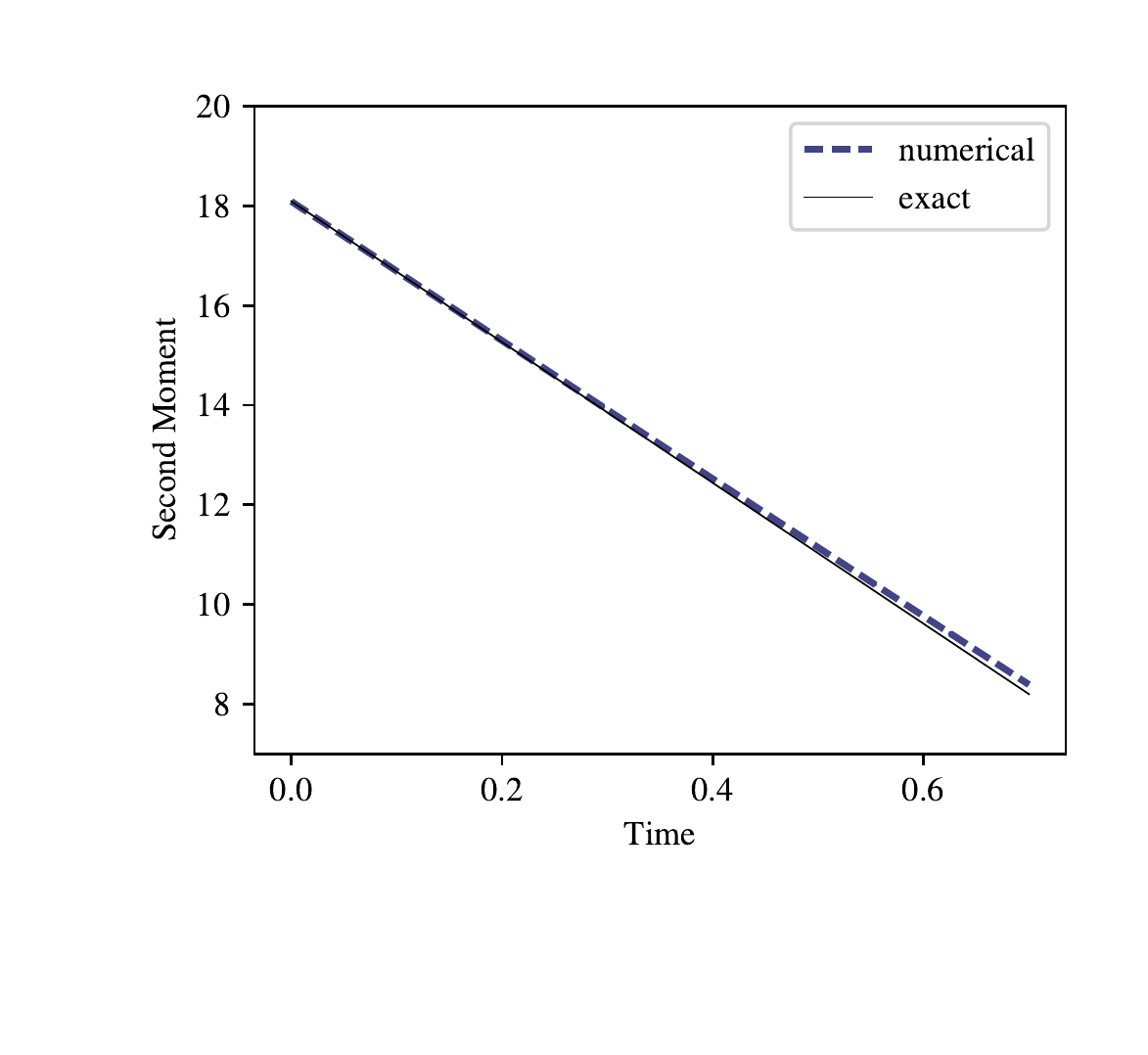}
\includegraphics[trim={2.8cm 1.6cm 3.9cm 3.1cm},clip,scale=.85]{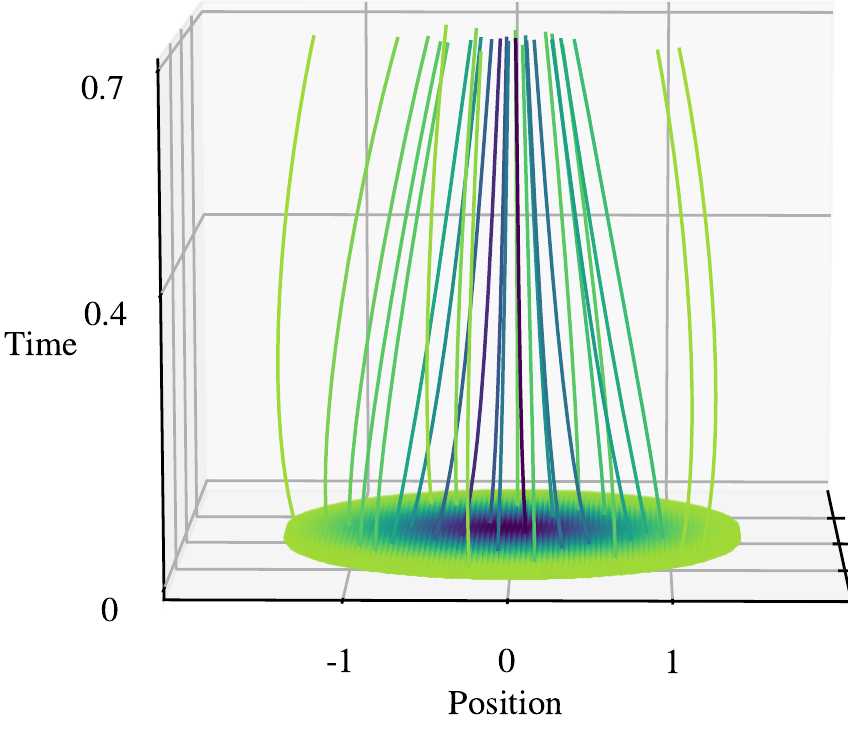}
\vspace{-.5cm}
\caption{ \textbf{Left:} Comparison of second moment of numerical solution (dashed line) to exact solution (solid line). \textbf{Right:} Evolution of particle trajectories, colored according to the relative mass of each trajectory.} \label{2DKSSup}
\end{figure}
\end{center}

\begin{center}
\begin{figure}[!ht]
\centering
\textbf{Two-Dimensional Keller--Segel Equation: Evolution of Density}
\bigskip
\begin{flushleft} \hspace{1.15in} t = 0.0 \hspace{1.15in} t = 0.075 \hspace{1.15in} t = 0.15 \hspace{.1in} \end{flushleft}

Subcritical Mass = $7\pi$

\hspace*{-1.2cm}
\includegraphics[trim={.83cm 1cm 1.6cm 1cm},clip,scale=.85]{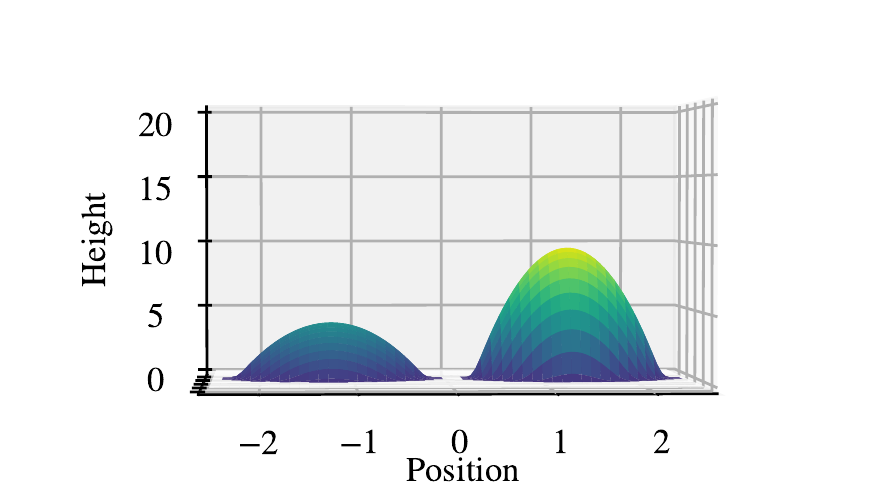}
\includegraphics[trim={2cm 1cm 1.6cm 1cm},clip,scale=.85]{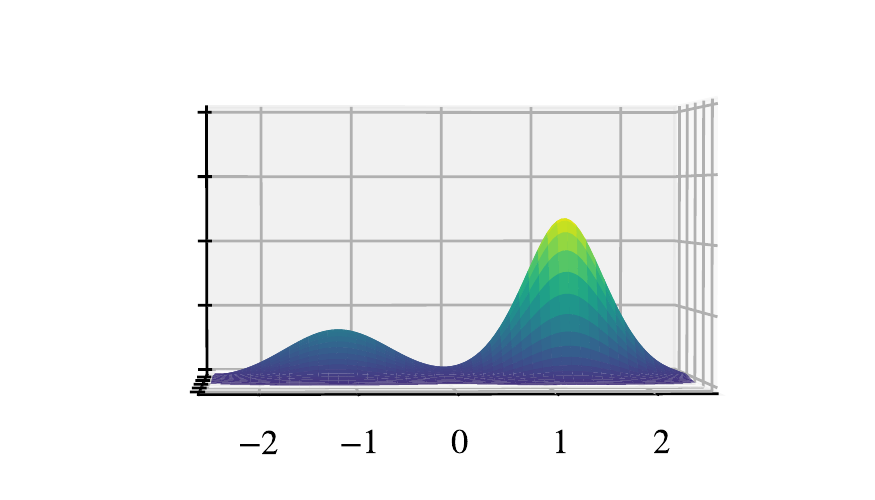}
\includegraphics[trim={2cm 1cm 1.6cm 1cm},clip,scale=.85]{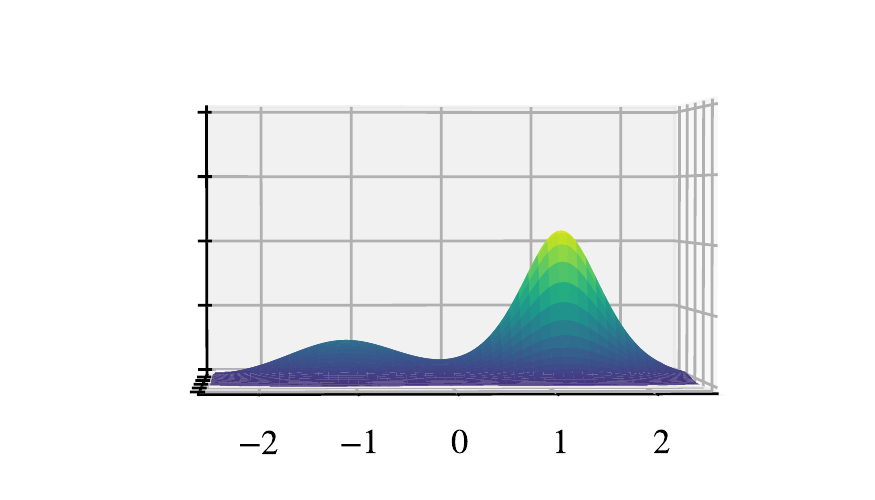}

Critical Mass = $8 \pi$

\includegraphics[trim={2.03cm 1cm 1.6cm 1cm},clip,scale=.85]{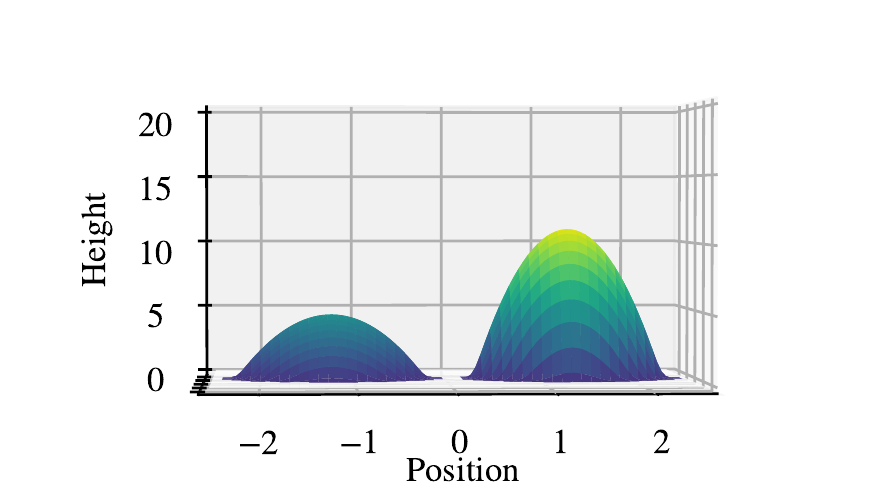}
\includegraphics[trim={2cm 1cm 1.6cm 1cm},clip,scale=.85]{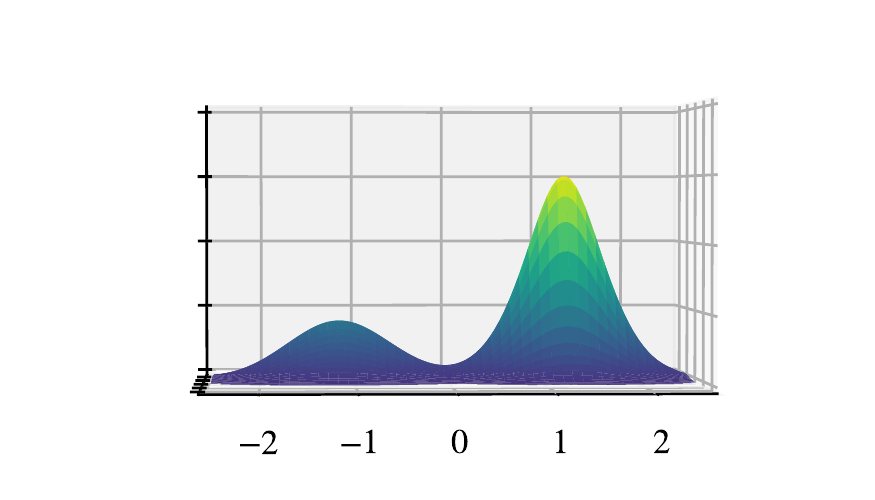}
\includegraphics[trim={2cm 1cm 1.6cm 1cm},clip,scale=.85]{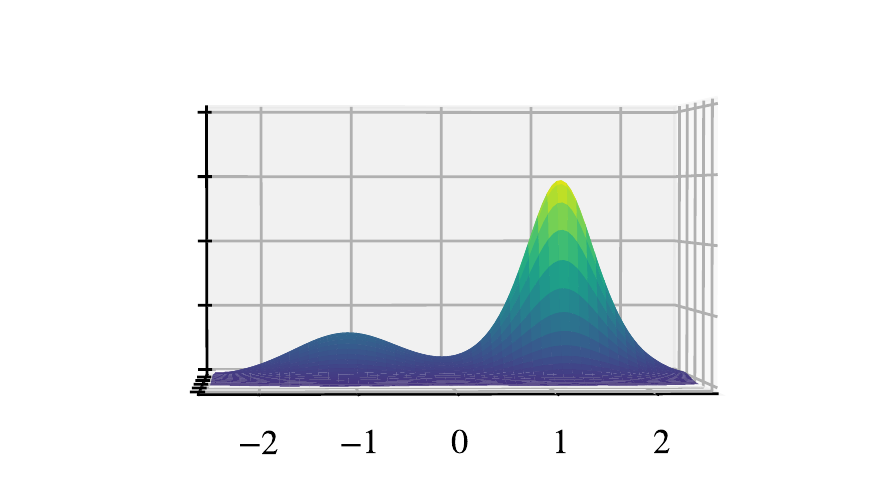}

Supercritical Mass = $9\pi$

\includegraphics[trim={2.03cm .1cm 1.6cm 1cm},clip,scale=.85]{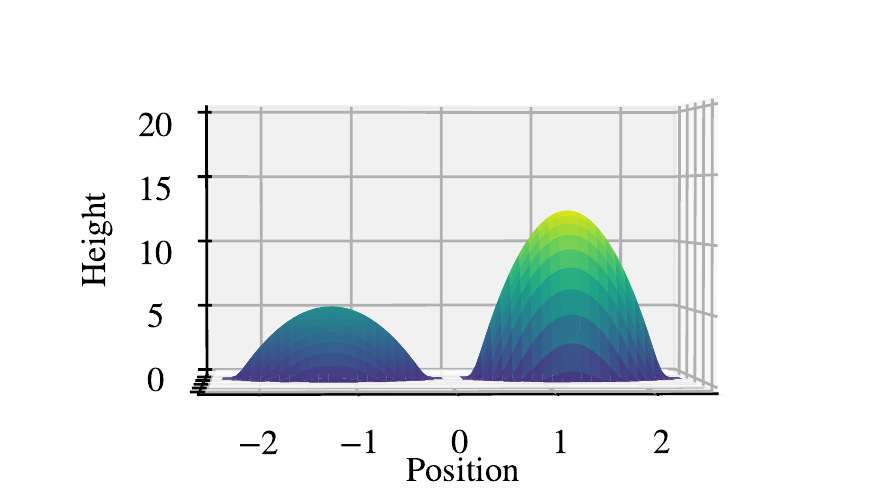}
\includegraphics[trim={2cm .1cm 1.6cm 1cm},clip,scale=.85]{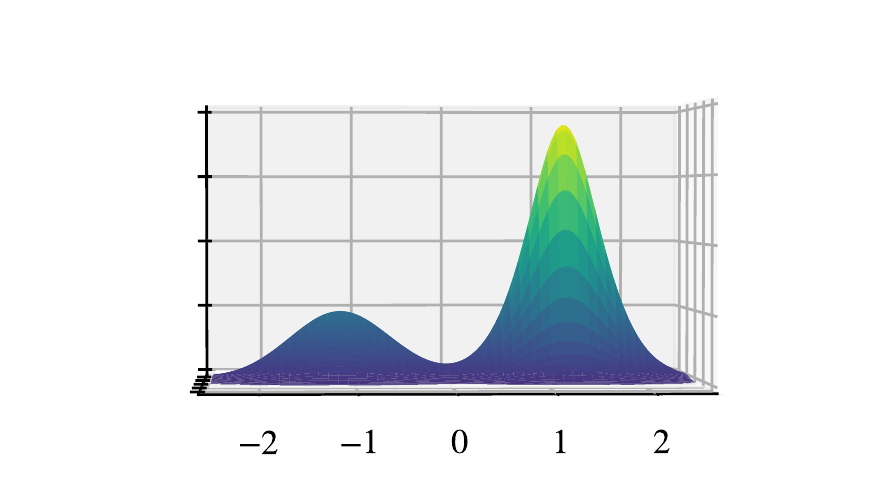}
\includegraphics[trim={2cm .1cm 1.6cm 1cm},clip,scale=.85]{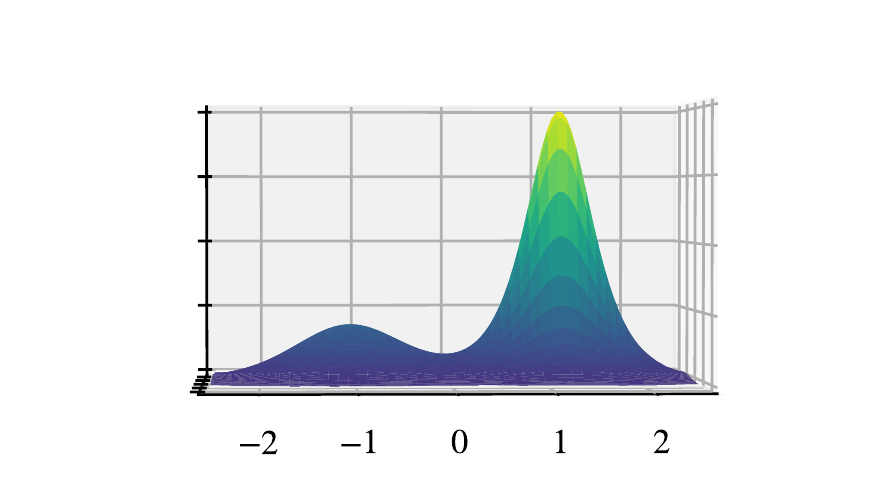}

Evolution of Second Moment

\hspace*{-0.8cm}
\includegraphics[trim={.4cm .2cm .4cm .1cm},clip,scale=.85]{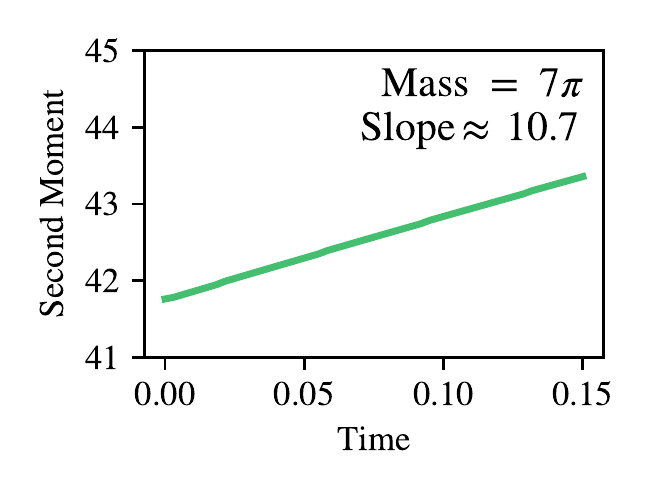}
\includegraphics[trim={.8cm .2cm .4cm .1cm},clip,scale=.85]{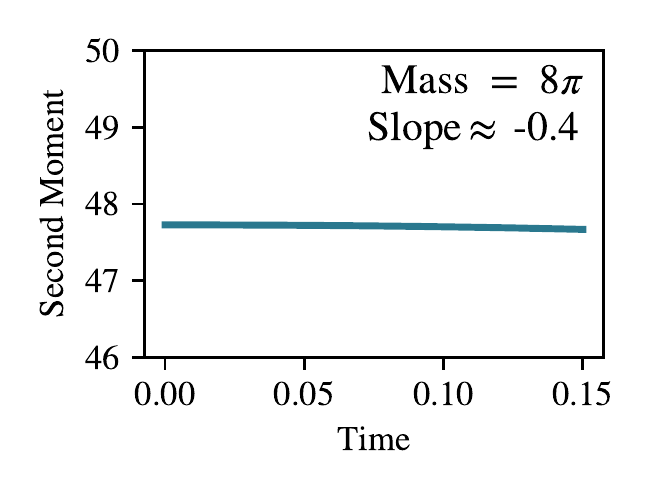}
\includegraphics[trim={.8cm .2cm .4cm .1cm},clip,scale=.85]{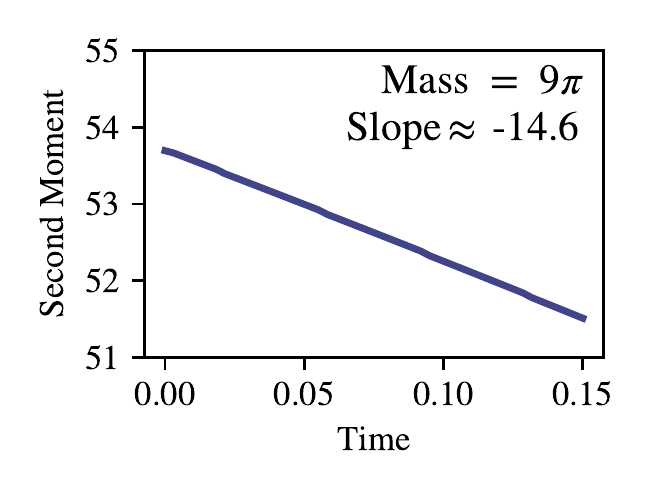}
\caption{\textbf{Top:} We plot the evolution of blob solutions to the two-dimensional Keller--Segel equation, with initial data given by constant multiples of the linear combination of Barenblatts from Figure \ref{FPfig}. In particular, we consider constant multiples $M = 7 \pi, 8 \pi,$ and $9 \pi$ and again observe that larger values of $M$ correspond to faster aggregation at the origin. \textbf{Bottom:} We consider the evolution of the second moment along particle solutions, for each choice of $M$. We estimate the slope of the line using the line of best fit. }  \label{2DKSCrazyDen}
\end{figure}
\end{center}

In Figures \ref{2DKSden}--\ref{2DKSCrazyDen} we consider the classical Keller--Segel equation ($V=0$, $W(\cdot) = 1/(2 \pi)\log\abs{\cdot}$, $m=1$) in two dimensions. In Figures \ref{2DKSden}, \ref{2DKSmom}, and \ref{2DKSSup}, the initial data is given by a Gaussian $\psi_1(\tau,\cdot)$, $\tau = 0.16$, scaled to have mass that is either supercritical ($> 8 \pi$), critical ($=8 \pi$), or subcritical ($< 8 \pi$) with respect to blowup behavior. In particular, for supercritical initial data, solutions blow up in finite time \cite{DP,BDP}. In Figure \ref{2DKSden}, we analyze the blow-up behavior. We compute the evolution of the second moment of solutions for fixed grid spacing $h = 0.0\bar{3}$ and varying mass $7 \pi, 8 \pi,$ and $9 \pi$, illustrating how initial data with larger mass aggregates more quickly at the origin.

In Figure \ref{2DKSmom}, we consider the evolution of the second moment for the solutions from Figure \ref{2DKSden}. For fixed grid spacing $h = 0.0\bar{3}$, we observe that the second moment depends linearly on time, and we compute its slope using the line of best fit. We then analyze how the slope of this line converges to the theoretically predicted slope as the grid spacing $h \to 0$.

In Figure \ref{2DKSSup}, we consider the evolution of the second moment for the supercritical mass solution from Figure \ref{2DKSden} on a longer time interval. As in the one-dimensional case (see Figure \ref{1DKellerSegel}), we are able to get approximately halfway to the time when the second moment becomes zero before the second moment of our numerical solution begins to peel away from the second moment of the exact solution. Indeed, one of the benefits of our blob method approach is that the numerical method naturally extends to two and more dimensions, and we observe similar numerical performance independent of the dimension. We also plot the evolution of particle trajectories, observing the tendency of trajectories in regions of larger mass to be driven largely by pairwise attraction, while trajectories in regions of lower mass feel more strongly the effects of diffusion.

Finally, in Figure \ref{2DKSCrazyDen}, we consider the evolution of the density and second moment for double bump initial data, with initial mass $7 \pi, 8 \pi,$ and $9 \pi$. The slopes of the second moment agree well with the theoretically predicted slopes given in Figure \ref{2DKSmom}.

\clearpage
\appendix

\section{Proofs of preliminary results} \label{appendix preliminaries}
We now turn to the proofs of some of the elementary lemmas and propositions from Sections \ref{preliminaries section} and \ref{energies section}.
We begin with the proof of the mollifier exchange lemma.
\begin{proof}[Proof of Lemma \ref{move mollifier prop}]
By the Lipschitz continuity of $f$,
\begin{align*}
	\left| \int \zeta_\e *(f\nu)\,d\sigma - \int (\zeta_\e *\nu)f\,d\sigma \right| &\leq \ird\ird \zeta_\e(x-y)  |f(x)-f(y)| \,d|\nu|(y) \,d|\sigma|(x) \\
	&\leq L_f  \ird\ird \zeta_\e(x-y) |x-y| \,d|\nu|(y) \,d|\sigma|(x)
\end{align*}
Set $p:= (q-d)/q >0$. Decomposing the domain of the integration of $|\nu|$ into $B_{\e^p}(x)$ and $\Rd \setminus B_{\e^p}(x)$, we may bound the above quantity by
\bes
	L_f  \ird \left( \int_{B_{\e^{p}}(x)} \zeta_\e(x-y) |x-y| \,d|\nu|(y) \,dx+\int_{\R^d \setminus B_{\e^{p}}(x)} \zeta_\e(x-y) |x-y| \,d|\nu|(y) \right) \,d|\sigma|(x) .
\ees
By the decay assumption on $\zeta$ (see Assumption \ref{mollifierAssumption}), for all $x,y \in \Rd$ with $|x-y|>\e^p$ we have
\be \label{zeta bound}
	\zeta_\e(x-y)|x-y| = \zeta\left( \frac{x-y}{\e} \right) \frac{|x-y|}{\e^d} \leq C_\zeta |x-y|^{1-q}\e^{q-d} \leq C_\zeta \e^{p}.
\ee
Thus, we conclude our result by estimating the above quantity by
\bes
	\e^{p} L_f  \ird (\zeta_\e*|\nu|)\, d|\sigma|(x)+ \e^{p} L_f C_\zeta |\sigma|(\Rd)|\nu|(\R^d). \qedhere
\ees
\end{proof}

We now give the proof that if $\mu_\e \wsto \mu$, then $\varphi_\e*\mu_\e \wsto \mu$.

\begin{proof}[Proof of Lemma \ref{weakst convergence mollified sequence}]
	By \cite[Remark 5.1.6]{AGS}, it suffices to show that $\varphi_\e *\mu_\e$ converges to $\mu$ in distribution, that is, in the duality with smooth, compactly supported functions. For all $f \in C_\mathrm{c}^\infty(\Rd)$,
\begin{align*}
	\left| \ird f \,d (\varphi_\e *\mu_\e) - \ird f \,d\mu \right| &\leq \left| \ird f \,d (\varphi_\e *\mu_\e) - \ird f \,d\mu_\e \right| + \left| \ird f \,d\mu_\e - \ird f \,d\mu \right|
\end{align*}
Since $\mu_\e \wsto \mu$, the second term goes to zero. We bound the first term as follows:
\begin{align*}
& \left| \ird f \,d (\varphi_\e *\mu_\e) - \ird f \,d\mu_\e \right| = \left| \ird\ird (f(y) - f(x)) \varphi_\e(x-y)\,dy \,d \mu_\e(x) \right| \\
 &\quad \leq \|\grad f\|_{L^\infty(\Rd)} \ird\ird |x-y| \varphi_\e(x-y) \,dy \,d\mu_\e(x) = \|\grad f\|_{L^\infty(\Rd)} \ird\ird \left| \frac{z}{\e^d} \right| \varphi \left(\frac{z}{\e} \right) \,dz \,d\mu_\e(x) \\
 &\quad = \e \|\grad f\|_{L^\infty(\Rd)} \ird |z | \varphi(z) \,dz ,
\end{align*}
which goes to zero as $\e \to 0$.
\end{proof}

Next, we prove the inequalities relating the regularized internal energies to the unregularized internal energies.
\begin{proof}[Proof of Proposition \ref{relative sizes lemma}]
We begin with (\ref{relative sizes equation}). To prove the left inequality, we may assume without loss of generality that $\mu \in D(\F)$.
First, we show the result for the entropy ($m=1$). Note that
\bes
	\F^1(\mu)-\F^1_\e(\mu) =  \mathcal{H}(\mu | \varphi_\e *\mu) ,
\ees
where $\mathcal{H}$ is the relative entropy; that is, for all $\nu \in \P(\Rd)$, 
\begin{align*}
	\mathcal{H}(\mu | \nu) : = \begin{cases} \int \log \left( \frac{d \mu}{d \nu} \right) d \mu &\text{if } \mu \ll \nu, \\ + \infty &\text{otherwise.}\end{cases}
\end{align*}
By Jensen's inequality for the convex function $s \mapsto s \log s$,  the relative entropy is nonnegative, which gives the result. Now, we show the left inequality in \eqref{relative sizes equation} for $1<m\leq 2$. By the above-the-tangent property of the concave function $F_m$ and H\"older's inequality, we get
\begin{align*}
\F^m(\mu) - \F^m_\e(\mu) &= \frac{1}{m-1} \int \left( \mu^{m-1} - (\varphi_\e*\mu)^{m-1} \right)\, d \mu \geq \int \left(\mu - \varphi_\e*\mu \right) \mu^{m-2}\, d \mu \\ 
&\geq - \|\mu - \varphi_\e* \mu\|_{L^m(\Rd)} \|\mu^{m-1}\|_{L^{m/(m-1)}(\Rd)} = - \|\mu - \varphi_\e* \mu\|_{L^m(\Rd)} \|\mu\|_{L^m(\Rd)}^{m-1} .
\end{align*}
Since $\mu \in D(\F^m)$ implies $\mu \in L^m(\Rd)$, the first term goes to zero as $\e \to 0$ and the second term remains bounded. This gives the result.

We now turn to the right inequality in \eqref{relative sizes equation} in the case $1\leq m\leq2$. By the fact that $\varphi_\e = \zeta_\e * \zeta_\e$ and Jensen's inequality for the concave function $F_m$, for all $x \in \Rd$ we have
\begin{align*}
	F_m(\varphi_\e * \mu(x)) &= F_m \left( \ird \zeta_\e(y) \zeta_\e*\mu(x-y) \, dy \right)\\
	&\geq  \ird \zeta_\e(y) F_m \left(\zeta_\e*\mu(x-y) \right) \, dy   =   \zeta_\e* \left(F_m \circ \left(\zeta_\e*\mu \right)\right)(x).
\end{align*}
Consequently, we deduce
\begin{align*}
	\F_\e^m(\mu) = \ird F_m(\varphi_\e * \mu(x))\, d \mu(x) &\geq \ird \zeta_\e* \left( F_m \circ (\zeta_\e*\mu)\right)(x) \, d \mu(x)\\
	&= \ird   F_m(\zeta_\e*\mu(x)) \, d (\zeta_\e*\mu)(x) = \F^m \left( \zeta_\e*\mu  \right).
\end{align*}

Now, we show (\ref{relative sizes equation 2}). Since $F_m$ is convex for $m\geq2$, this is simply a consequence of reversing the inequalities in the last two inequalities.

Finally, we consider the lower bounds (\ref{lower bounds}). When $m=1$, these follow from the right inequality in \eqref{relative sizes equation}, a Carleman-type estimate \cite[Lemma 4.1]{CPSW} ensuring that $\F_\e^m(\zeta_\e*\mu) \geq -(2\pi/\delta)^{d/2} - \delta M_2(\zeta_\e*\mu)$ for all $\delta>0$, and the fact that
\[ 
	\ird \zeta_\e(y) |x+y|^2 \,dy \leq 2|x|^2 + 2M_2(\zeta_\e) \implies M_2(\zeta_\e*\mu) \leq 2M_2(\mu) + 2M_2(\zeta_\e) = 2M_2(\mu) + 2\e^2 M_2(\zeta).
\]
When $m>1$, we simply use that $F_m\geq 0$.
\end{proof}

We now give the proof that, for all $\e>0$, the regularized energies are lower semicontinuous with respect to weak-* convergence ($m>1$) and Wasserstein convergence ($m=1$), where in the latter case, we require $\varphi$ to be a Gaussian.

\begin{proof}[Proof of Proposition \ref{lower semicontinuity}]
First, we note that for any sequence $(\mu_n)_n \subset \P(\Rd)$ and $\mu \in\P(\Rd)$ such that $\mu_n \wsto \mu$ and any sequence $x_n \to x$, we have
\begin{align}
&\left| \varphi_\e * \mu_n(x_n) - \varphi_\e*\mu(x) \right| \label{gammaliminfpoint} \\
&\quad = \left| \int \varphi_\e (x_n -y) d \mu_n(y) - \int \varphi_\e(x-y) d \mu(x) \right|  \nonumber \\
&\quad \leq  \left| \int \left( \varphi_\e (x_n -y) -  \varphi_\e (x -y) \right) d \mu_n(y)  \right| + \left| \int \varphi_\e(x-y) d \mu_n(y) - \int \varphi_\e(x-y) d \mu(x) \right| \nonumber \\
&\quad \leq  |x_n - x| \|\grad \varphi_\e\|_\infty  + \left| \int \varphi_\e(x-y) d \mu_n(y) - \int \varphi_\e(x-y) d \mu(x) \right| \xrightarrow{n \to +\infty} 0, \nonumber
\end{align}
since $\varphi_\e(x - \cdot) \in C_b(\Rd)$.

We now show \eqref{it:lsc-pme}. Suppose $\mu_n \wsto \mu$.  By Lemma \ref{lem:fatou-varying}, we have
\[ \liminf_{n\to\infty} \F^m_\e(\mu_n) = \liminf_{n \to +\infty} \frac{1}{m-1} \int_\Rd (\varphi_\e *\mu_n)^{m-1} d \mu_n \geq  \frac{1}{m-1} \int_\Rd  \liminf_{n \to +\infty, x' \to x} (\varphi_\e *\mu_n(x'))^{m-1} d \mu(x) . \]
By inequality (\ref{gammaliminfpoint}),
\[  \liminf_{n \to +\infty, x' \to x} (\varphi_\e *\mu_n(x'))^{m-1} = (\varphi_\e *\mu(x))^{m-1} . \]
Combining the two previous inequalities, we obtain $\liminf_{n\to\infty} \F^m_\e(\mu_n) \geq \F^m_\e(\mu)$, giving the result.

Next, we show \eqref{it:lsc-he}. Suppose $\mu_n \to \mu$ in the Wasserstein metric. Since $\varphi$ is a Gaussian, there exist $x_0\in\R^d$ and $C_0,C_1\in\R$ so that, for $n$ sufficiently large,
\be\label{eq:quadratic-ineq}
	\log(\varphi_{\e}*\mu_n(x)) \geq C_0|x-x_0|^2 + C_1,
\ee
Define  $f_n:= \log(\varphi_{\e}* \mu_n)$ and $q(\cdot):= C_0|\cdot-x_0|^2 + C_1$. Then, by Lemma \ref{lem:fatou-varying}, we have
\begin{align} \label{firstB3}
\liminf_{n \to +\infty} \int_\Rd (f_n(x) - q(x)) d \mu_n(x) \geq \int_\Rd  \liminf_{n \to +\infty, x' \to x}  (f_n(x') - q(x')) d \mu(x) .
\end{align}
Since $\mu_n \to \mu$ in the Wasserstein metric,
\begin{align} \label{secondB3}
\lim_{n \to +\infty}  \int_\Rd (-q(x)) d\mu_n(x) = \int_\Rd (-q(x)) d \mu(x) = \int_\Rd \liminf_{n \to +\infty, x' \to x} ( - q(x')) d \mu(x) .
\end{align}
Furthermore, by (\ref{gammaliminfpoint}) and the fact that $\log(\cdot)$ is continuous on $(0, +\infty)$,
\begin{align} \label{thirdB3}
\liminf_{n \to +\infty, x' \to x}  f_n(x') = \liminf_{n \to +\infty, x' \to x}  \log(\varphi_\e*\mu_n(x')) = \log(\varphi_\e*\mu(x)) .
\end{align}
Thus, combining (\ref{firstB3}), (\ref{secondB3}), and (\ref{thirdB3}), we obtain,
\[ \F^1_\e(\mu_n) = \liminf_{n \to +\infty} \int_\Rd f_n(x)   d \mu_n(x)  \geq \int_\Rd \log(\varphi_\e*\mu(x))  d \mu(x) = \F^1_\e(\mu) , \]
which gives the result. \qedhere

\end{proof}

Now we turn to the proof that the regularized energies are differentiable along generalized geodesics.

\begin{proof}[Proof of Proposition \ref{diff prop}]
	By definition, for all $\alpha \in[0,1]$,
\bes
	\F_\e(\mu_\alpha^{2\to3}) = \iint F\left(\varphi_\e * \mu_\alpha((1-\alpha)x + \alpha y)\right) \,d\gamma(x,y).
\ees
Therefore, we deduce
\begin{align*}
	\F_\e(\mu_\alpha^{2\to3}) - \F_\e(\mu_2) &= \!\iiint\!\left(  F\left(\varphi_\e * \mu_\alpha^{2\to3}((1-\alpha)y + \alpha z))\right) - F\left(\varphi_\e * \mu_1(y)\right)  \right) \,d \gamma(x,y,z)\\
	&= \!\int_0^1 \!\!\iiint \! F'(c_{s,\alpha}(y,z)) \left( \varphi_\e * \mu_\alpha^{2\to3}((1-\alpha)y + \alpha z)) - \varphi_\e * \mu_1(y) \right) \,d\gamma(x,y,z) \,ds,
\end{align*}
where $c_{s,\alpha}(y,z) = (1-s)\varphi_\e*\mu_1(y) + s\varphi_\e*\mu_\alpha^{2\to3}((1-\alpha)y + \alpha x)$. Using Taylor's theorem compute
\begin{align*}
	\varphi_\e * \mu_\alpha^{2\to3}((1-\alpha)y + \alpha z)) &- \varphi_\e * \mu_1(y)\\
	&= \iiint \left( \varphi_\e((1-\alpha) (y-v) + \alpha (z-w)) - \varphi_\e*(y-v) \right) \,d\gamma(u,v,w)\\
	&= \iiint \left( \alpha \grad \varphi_\e(y-v) \cdot (z-w - (y-v)) + D_\alpha(y,z,v,w)\right) \,d\gamma(u,v,w),
\end{align*}
where $D_\alpha(y,z,v,w)$ is a term depending on the Hessian of $\varphi_\e$ satisfying
\begin{align*}
	\left|\iiint  D_\alpha(y,z,v,w) \,d\gamma(u,v,w)\right| &\leq \frac{\alpha^2}{2} \norm{D^2\varphi_\e}_{L^\infty(\Rd)} \iint |z-w-(y-v)|^2 \,d\gamma(u,v,w)\\
	&\leq 2\alpha^2 \norm{D^2\varphi_\e}_{L^\infty(\Rd)} \left( |z|^2 + |y|^2 +\! \int |w|^2\,d\mu_3(w) +\!\int|v|^2 \,d\mu_2(v) \right)
\end{align*}
Hence, since $F'$ is nondecreasing,
\begin{align*}
	&\F_\e(\mu_\alpha^{2\to3}) - \F_\e(\mu_2)\\
	 &= \alpha \int_0^1 \iiint \iiint F'(c_{s,\alpha}(y,z)) \grad \varphi_\e(y-v) \cdot (z-w-(y-v)) \,d\gamma(u,v,w) \,d\gamma(x,y,z)\,ds + C_\alpha, 
\end{align*}
where $|C_\alpha| \leq 4\alpha^2 \|D^2\varphi_\e\|_{L^\infty(\Rd)} F'( \|\varphi_\e\|_{L^\infty(\Rd)}) (\int |x|^2 \,d\mu_2(x) + \int |x|^2 \,d\mu_3(x))$. Note that $c_{s,\alpha}(y,z)$ converges pointwise to $\varphi_\e*\mu_2(y)$ as $\alpha\to0$ since
\begin{align*} 
	\left|\varphi_\e * \mu_\alpha^{2\to3}((1-\alpha)y +\alpha z) \right.&\left.- \varphi_\e * \mu_2(y) \right|\\
	&= \left|\iiint \left(\varphi_\e((1-\alpha)(y-v) + \alpha (z - w)) - \varphi_\e(y-v) \right) \,d \gamma(u,v,w) \right|\\
	& \leq \alpha \|\grad \varphi_\e\|_{L^\infty(\Rd)} \left( |z|+|y| + \int |w| \,d\mu_3(w) + \int |v| \,d\mu_2(v) \right).
\end{align*}
Thus, to complete the result, it suffices to show that there exists $g \in L^1(\gamma \otimes \gamma)$ so that 
\[
	F'(c_{s,\alpha}(y,z)) \left| \grad \varphi_\e(y-v) \cdot (z-w-(y-v))  \right| \leq g(y,z,v,w),
\]
since the result then follows by the dominated convergence theorem. Since $F'$ is nondecreasing we may take
\bes
	g(y,z,v,w) = F'\left(\norm{\varphi_\e}_{L^\infty(\Rd)}\right) \norm{\grad \varphi_\e}_{L^\infty(\Rd)} |z-w-(y-v)|, 
\ees
which ends the proof.
\end{proof}

Next, we apply the result of the previous proof to characterize the subdifferential of the regularized energies.

\begin{proof}[Proof of Proposition \ref{subdiffchar}]
Suppose $v$ is given by equation (\ref{subdiffform}). This part of the proof is closely inspired by that of \cite[Proposition 2.2]{5person}. For all $x,y \in \R^d$ define $G(\alpha) = F(\varphi_\e*\mu_\alpha((1-\alpha)x + \alpha y))$ for all $\alpha \in [0,1]$, where $\mu_\alpha = ((1-\alpha)\pi^1 + \alpha \pi^2)_\#\gamma$, with some $\gamma \in \Gamma_\mathrm{o}(\mu,\mu_1)$, connects $\mu_0=\mu$ and $\mu_1$. Now define
\bes
	f(\alpha) = \frac{G(\alpha) - G(0)}{\alpha} - \frac{\lambda \alpha}{2}\left(|x-y|^2 + W_2^2(\mu_0,\mu_1)\right) \quad \mbox{for all $\alpha \in [0,1]$,}
\ees
where $\lambda = -2F'(\|\varphi_\e\|_{L^\infty(\Rd)})\|D^2\varphi_\e\|_{L^\infty(\Rd)} = \lambda_F$; see \eqref{eq:lambda-F}. We write $[a,b]_\alpha := (1-\alpha)a +\alpha b$ for any $a,b\in\R^d$. Let us compute the first two derivatives of $G$ for all $\alpha \in [0,1]$:
\be\label{eq:G'}
	G'(\alpha) = F'(\varphi_\e*\mu_\alpha([x,y]_\alpha)) \irdrd (y-x  + u-v) \cdot \nabla \varphi_\e([x-u,y-v]_\alpha)\,d\gamma(u,v),
\ee
and
\begin{align*}
	G''(\alpha) &= F''(\varphi_\e*\mu_\alpha([x,y]_\alpha)) \left( \irdrd (y-x  + u-v) \cdot \nabla \varphi_\e([x-u,y-v]_\alpha)\,d\gamma(u,v) \right)^2\\
	&\phantom{{}={}}+\! F'(\varphi_\e*\mu_\alpha([x,y]_\alpha)) \irdrd \! (y-x  + u-v) D^2\varphi_\e([x-u,y-v]_\alpha)(y-x  + u-v) \,d\gamma(u,v).
\end{align*}
Since $F'' \geq0$, $F'\geq0$ and $\norm{D^2\varphi_\e}_{L^\infty(\Rd)}$ is finite, we have
\begin{equation} \label{eq:Gsecond-ineq} \begin{split}
	G''(\alpha) &\geq - F'(\norm{\varphi_\e}_{L^\infty(\Rd)}) \norm{D^2\varphi_\e}_{L^\infty(\Rd)} \irdrd |y-x+u-v|^2 \,d\gamma(u,v) \\
	& \geq -2 F'(\norm{\varphi_\e}_{L^\infty(\Rd)}) \norm{D^2\varphi_\e}_{L^\infty(\Rd)} \irdrd \left(|y-x|^2+|u-v|^2\right) \,d\gamma(u,v)\\
	&= \lambda \left(|y-x|^2 + W_2^2(\mu_0,\mu_1)\right).
\end{split}\end{equation}
Now, by Taylor's theorem, 
\bes
	f(\alpha) = G'(0) + \int_0^\alpha \frac{\alpha-s}{\alpha} G''(s) \,d s - \frac{\lambda \alpha}{2}\left( |x-y|^2 + W_2^2(\mu_0,\mu_1) \right),
\ees
and therefore, using \eqref{eq:Gsecond-ineq} leads to
\bes
	f'(\alpha) = \frac{1}{\alpha^2} \int_0^\alpha sG''(s) \,ds - \frac{\lambda}{2}\left(|x-y|^2 + W_2^2(\mu_0,\mu_1) \right) \geq 0,
\ees
which shows that $f$ is nondecreasing, and so $f(1) \geq \lim_{\alpha \to 0} f(\alpha)$, which implies (after integrating against $d\gamma(x,y)$)
\begin{align*}
	\F_\e(\mu_1) - \F_\e(\mu_0) &\geq \irdrd \lim_{\alpha \to 0} \left(\frac{G(\alpha) - G(0)}{\alpha} \right) \,d\gamma(x,y) + \lambda W_2^2(\mu_0,\mu_1)\\
	&= \irdrd G'(0) \,d\gamma(x,y) + \lambda W_2^2(\mu_0,\mu_1).
\end{align*}
Then, by \eqref{eq:G'} and antisymmetry of $\nabla \varphi_\e$, compute
\begin{align*}
	\irdrd G'(0) \,d\gamma(x,y) &= \irdrd \irdrd  F'(\varphi_\e*\mu_0(x)) (y-x  + u-v) \cdot \nabla \varphi_\e(x-u)\,d\gamma(u,v) \,d\gamma(x,y)\\
	&= \irdrd F'(\varphi_\e*\mu_0(x)) \nabla\varphi_\e*\mu_0(x)\cdot(y-x)\,d\gamma(x,y)\\
	&\phantom{{}={}} + \irdrd \nabla\varphi_\e* (F'\circ (\varphi_\e*\mu_0)\mu_0)(u) \cdot (v-u) \,d\gamma(u,v)\\
	&= \irdrd \nabla \frac{\delta \F_\e}{\delta \mu_0}(x) \cdot (y-x) \,d\gamma(x,y).
\end{align*}
Hence 
\bes
	\F_\e(\mu_1) - \F_\e(\mu_0) \geq \irdrd \nabla \frac{\delta \F_\e}{\delta \mu_0}(x) \cdot (y-x) \,d\gamma(x,y) + \lambda W_2^2(\mu_0,\mu_1),
\ees
which shows that $\delta \F_\e/\delta \mu_0 \in \partial  \F_\e(\mu_0)$. We now prove that $v \in \Tan_\mu\P_2(\R^d)$. Consider a vector-valued function $\xi  \in C_\mathrm{c}^\infty(\R^d)^d$, and for any $x,y\in\R^d$ define $H(\alpha) = F(\ird \varphi_\e(x-y + \alpha(\xi (x) - \xi (y))\,d\mu(y))$ for all $\alpha\in[0,1]$. Then
\bes
	H'(0) = F'(\varphi_\e*\mu(x)) \ird (\xi (x) - \xi (y)) \cdot \nabla \varphi_\e(x-y) \,d\mu(y).
\ees
Now compute, using the antisymmetry of $\nabla \varphi_\e$,
\begin{align*}
	\lim_{\alpha \to0} \frac{\F_\e((\id + \alpha \xi )_\#\mu) - \F_\e(\mu)}{\alpha} &= \lim_{\alpha\to0} \ird \frac{H(\alpha)-H(0)}{\alpha} \,d\mu(x) = \ird H'(0) \,d\mu(x)\\
	&= \ird F'(\varphi_\e*\mu(x)) \nabla \varphi_\e*\mu(x) \cdot \xi (x) \,d\mu(x)\\
	&\phantom{{}={}}+ \ird \nabla\varphi_\e * (F'\circ(\varphi_\e*\mu)\mu)(x) \cdot \xi (x) \,d\mu(x)\\
	&= \ird \nabla \frac{\delta \F_\e}{\delta \mu}(x) \cdot \xi (x) \, d\mu(x),
\end{align*}
where passing the limit $\alpha \to0$ inside the integral in the first line is justified by the fact that $H'$ is bounded. Then, by the definition of the local slope of $\F_\e$,
\bes
	\liminf_{\alpha \to 0} \frac{\F_\e((\id + \alpha \xi )_\#\mu) - \F_\e(\mu)}{W_2((\id + \alpha \xi )_\#\mu,\mu)} \geq - |\partial \F_\e|(\mu).
\ees
Therefore, by the previous computation,
\bes
	\ird \nabla \frac{\delta \F_\e}{\delta \mu}(x) \cdot \xi (x) \, d\mu(x) \geq -|\partial \F_\e|(\mu) \liminf_{\alpha\to0} \frac{W_2((\id + \alpha \xi )_\#\mu,\mu)}{\alpha} \geq -|\partial \F_\e|(\mu) \|\xi \|_{L^2(\mu;\R^d)},
\ees
since, by definition of the 2-Wasserstein distance,
\bes
	\limsup_{\alpha\to0}  \frac{W_2((\id + \alpha \xi )_\#\mu,\mu)}{\alpha} \leq \|\xi \|_{L^2(\mu;\R^d)}.
\ees
Then, by replacing $\xi $ with $-\xi $, by arbitrariness of $\xi $ and by density of $C_\mt{c}^\infty$ in $L^2(\mu;\Rd)$, we get
\bes
	\left\|v\right\|_{L^2(\mu;\Rd)} = \left\| \nabla \frac{\delta \F_\e}{\delta\mu}\right\|_{L^2(\mu;\R^d)} \leq |\partial \F_\e|(\mu),
\ees
which shows the desired result. Since $|\partial \F_\e|(\mu)$ is the unique minimal norm element of $\partial \F_\e$, this also shows that we actually have equality in the right-hand side above.

Suppose now that $v \in \partial \F_\e (\mu) \cap \Tan_\mu\P_2(\R^d)$. Fix $\psi \in C_\mathrm{c}^\infty(\Rd)$ and define $\mu_\alpha = (\id + \alpha \grad \psi)_\# \mu$ and $\hat \mu_\alpha = (\id - \alpha \grad \psi)_\# \mu$ for all $\alpha \in [0,1]$. For $\alpha$ sufficiently small, $x^2/2+ \alpha \psi(x)$ is convex and $\id + \alpha \grad \psi$ is the optimal transport map from $\mu$ to $\mu_\alpha$, so $\Gamma_\mathrm{o}(\mu,\mu_\alpha) = \{\id \times (\id + \alpha \grad \psi) \}$. Similarly, $\Gamma_\mathrm{o}(\hat\mu_\alpha,\mu) = \{\id \times (\id - \alpha \grad \psi) \}$. Since $v \in \partial \F^m_\e (\mu)$, taking $\nu = \mu_\alpha$ in Definition \ref{subdiffdef} of the subdifferential, for $\alpha$ sufficiently small, gives
\[ 
	\F_\e(\mu_\alpha) - \F_\e(\mu) \geq  \int \la v,\alpha \grad \psi \ra d \mu + o(\alpha \|\grad \psi\|_{L^2(\mu)}), 
\]
and
\[ 
	\F_\e(\hat \mu_\alpha) - \F_\e(\mu) \leq  \int \la v,\alpha \grad \psi \ra d \mu + o(\alpha \|\grad \psi\|_{L^2(\mu)}), 
\]
Combining this with Proposition \ref{diff prop}, we obtain
\begin{align*}
	\int \la v, \grad \psi \ra d \mu \leq \lim_{\alpha \to 0} \frac{\F_\e(\mu_\alpha) - \F_\e(\mu)}{\alpha} &= \left. \frac{d}{d\alpha} \F_\e(\mu_\alpha) \right|_{\alpha = 0} = \left. \frac{d}{d\alpha} \F_\e(\hat \mu_\alpha) \right|_{\alpha = 0}\\
	&= \lim_{\alpha \to 0^-} \frac{\F_\e(\hat\mu_\alpha) - \F_\e(\mu)}{\alpha} \leq \int \la v, \grad \psi \ra d \mu.
\end{align*}
Rewriting the expression from equation (\ref{diff lem eqn}) gives
\[ 
	\int \la v, \grad \psi \ra d \mu = \left. \frac{d}{d\alpha} \F_\e(\mu_\alpha) \right|_{\alpha = 0} = \int \left\langle  \grad \varphi_\e* \left(F' \circ (\varphi_\e * \mu) \mu \right) + F'(\varphi_\e * \mu)\grad \varphi_\e * \mu, \grad \psi  \right \rangle d \mu .
\]
Thus, for $w = v - \grad \varphi_\e* \left(F' \circ (\varphi_\e * \mu) \mu \right) + F'(\varphi_\e * \mu)\grad \varphi_\e * \mu$, we have $\int \la w , \grad \psi \ra d \mu = 0$, i.e. $\grad \cdot (w \mu) = 0$ in the sense of distribution. By \cite[Proposition 8.4.3]{AGS}, since $v \in \Tan_\mu\P_2(\R^d)$ we get $\norm{v-w}_{L^2(\mu;\Rd)} \geq \norm{v}_{L^2(\mu;\Rd)}$. Since we have already shown that the vector in \eqref{subdiffform} is the element of minimal norm of $\partial \F_\e$, we get that $\norm{v-w}_{L^2(\mu;\Rd)} \leq \norm{v}_{L^2(\mu;\Rd)}$, and so $\norm{v-w}_{L^2(\mu;\Rd)} = \norm{v}_{L^2(\mu;\Rd)}$. Again using \cite[Proposition 8.4.3]{AGS}, we obtain $w=0$, which ends the proof.
\end{proof}

Finally, we prove the characterization of the subdifferential of the full regularized energies $\E^m_\e$.

\begin{proof}[Proof of Corollary \ref{full subdiff char}]
	Write $\lambda_V\in\R$ and $\lambda_W\in \R$ the semiconvexity constants of $V$ and $W$, respectively. The proof follows the same steps as that of Proposision \ref{subdiffchar} with the only difference being the definitions of the functions $G$, $f$ and $H$. Given $x,y \in \R^d$, we define, for all $\alpha \in [0,1]$,
$$
			G(\alpha) = F\left(\varphi_\e*\mu_\alpha((1-\alpha)x + \alpha y)\right) + V((1-\alpha)x + \alpha y) + \tfrac12 W*\mu_\alpha((1-\alpha)x + \alpha y),
$$
$$
			f(\alpha) = \frac{G(\alpha)-G(0)}{\alpha} - \frac{(\lambda+\lambda_W)\alpha}{2} \left(|x-y|^2 + W_2(\mu_0,\mu_1)\right) - \frac{\lambda_V \alpha}{2}|x-y|^2,
$$
and
$$
			H(\alpha) \!=\! F\!\left(\!\ird \varphi_\e(x-y + \alpha(\xi (x) - \xi (y))\,d\mu(y)\right)\! + V(x + \alpha \xi (y)) +\!\! \ird\!\! W(x-y + \alpha(\xi (x) - \xi (y)))\,d\mu(y),
$$
where $\mu_0$, $\mu_1$, $\lambda$ and $\xi $ are as in the proof of Proposition \ref{subdiffchar}.
\end{proof}

\section{Weak convergence of measures}
In this appendix, we recall several fundamental results on the weak convergence of measures. We begin with a result due to Ambrosio, Gigli, and Savar\'e on convergence of maps with respect to varying probability measures. This plays a key role in our proofs of both the $\Gamma$-convergence of the energies and the $\Gamma$ convergence of the gradient flows.

\begin{defi}[weak convergence with varying measures; c.f. {\cite[Definition 5.4.3]{AGS}}] \label{weakvaryingdef}
	Given a sequence $(\mu_n)_n \subset \P(\Rd)$ converging in the weak-$^*$ topology to some $\mu \in \P(\Rd)$, we say that a sequence $(v_n)_n$ with $v_n \in L^1(\mu_n;\Rd)$ for all $n \in\N$ \emph{converges weakly} to some $v \in L^1(\mu;\Rd)$ if 
\bes
	\lim_{n\to\infty} \ird f(x) v_n(x) \,d\mu_n(x) = \ird f(x) v(x) \,d\mu(x) \quad \mbox{for all $f \in C_\mt{c}^\infty(\R^d)$}.
\ees
Furthermore, we say that $(v_n)_n$ converges \emph{strongly} to $v$ in $L^p$, $p>1$, if 
\bes
	\limsup_{n\to\infty} \norm{v}_{L^p(\mu_n;\Rd)} \leq \norm{v}_{L^p(\mu;\Rd)}.
\ees
\end{defi}

\begin{prop}[properties of convergence with varying measures; {c.f. \cite[Theorem 5.4.4]{AGS}}] \label{AGSthm}
	Let $(\mu_n)_n \subset \P(\Rd)$, $\mu \in \P(\Rd)$ and $(v_n)_n$ be such that $v_n \in L^1(\mu_n;\Rd)$ for all $n\in\N$. Suppose $\mu_n \wsto \mu$ and $\sup_{n \in \mathbb{N}} \| v_n\|_{L^p(\mu_n;\Rd)} < \infty$ for some $p>1$. The following items hold.
\begin{enumerate}[(i)]
	\item There exists a subsequence of $(v_n)_n$ converging weakly to some $w \in L^1(\mu;\Rd)$. \label{weakcpt}
	\item\label{weaklsc} If $(v_n)_n$ weakly converges to some $v \in L^1(\mu;\Rd)$, then 
		\bes
			\liminf_{n \to \infty} \|v_n\|_{L^p(\mu_n;\Rd)} \geq \|v\|_{L^p(\mu;\Rd)} \quad \mbox{for all $p \geq 1$}.
		\ees
	\item \label{strongcty} If $(v_n)_n$ strongly converges in $L^p$ to some $v \in L^p(\mu;\Rd)$ and $\sup_{n \in \mathbb{N}} M_p(\mu_n) < \infty$, then
		\[ 
			\lim_{n \to \infty} \int f |v_n|^p d \mu_n = \int f |v|^p d \mu \quad \mbox{for all $f \in C_\mathrm{c}^\infty(\Rd)$}.
		\] 
\end{enumerate}
\end{prop}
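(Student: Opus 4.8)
The plan is to derive Proposition~\ref{AGSthm} from \cite[Theorem 5.4.4]{AGS}; the only genuine preliminary is to verify that the notions of weak and strong convergence with varying measures in Definition~\ref{weakvaryingdef} coincide with those used there. In \cite{AGS} convergence of $(v_n)_n$ is encoded through the plans $(\id,v_n)_\#\mu_n$ on $\Rd\times\Rd$, but once one knows $\mu_n\wsto\mu$ this is equivalent to testing the $\Rd$-valued measures $v_n\mu_n$ against $f\in C_\mathrm{c}^\infty(\Rd)$, so the translation between the two formulations is routine. After this identification, each of the three items has a short, essentially standard proof.

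For \eqref{weakcpt}: since every $\mu_n$ is a probability measure, H\"older's inequality gives $\norm{v_n}_{L^1(\mu_n;\Rd)}\le\norm{v_n}_{L^p(\mu_n;\Rd)}$, so the $\Rd$-valued measures $\sigma_n:=v_n\mu_n$ have uniformly bounded total variation. Hence, along a subsequence, $\sigma_n\wsto\sigma$ for some finite $\Rd$-valued measure $\sigma$; testing against $f\in C_\mathrm{c}^\infty(\Rd)$ and using $\mu_n\wsto\mu$ shows $|\sigma|\ll\mu$. The assumption $p>1$ is what promotes this to an $L^1$ (indeed $L^p$) density: by the de la Vall\'ee--Poussin criterion the densities $v_n$ are uniformly integrable against $\mu_n$, which forces $\sigma=w\mu$ with $w\in L^1(\mu;\Rd)$.

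For \eqref{weaklsc} and \eqref{strongcty}: item \eqref{weaklsc} is the lower semicontinuity of $(v,\mu)\mapsto\int|v|^p\,d\mu$ under joint weak convergence; writing $|z|^p=\sup_k(a_k\cdot z+b_k)$ as a supremum of countably many affine functions, one has $\int(a_k\cdot v_n+b_k)\,d\mu_n\to\int(a_k\cdot v+b_k)\,d\mu$ for each fixed $k$ by the weak convergence of $v_n\mu_n$ together with $\mu_n\wsto\mu$, and a measurable partition of $\Rd$ subordinate to the $a_k$ then yields the claim (the same argument works for every exponent $\geq1$). For \eqref{strongcty}, strong $L^p$-convergence gives $\limsup_n\norm{v_n}_{L^p(\mu_n;\Rd)}\le\norm{v}_{L^p(\mu;\Rd)}$; since strong convergence implies weak convergence, \eqref{weaklsc} supplies the matching $\liminf$, so the norms actually converge. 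Combining norm convergence with the extra hypothesis $\sup_n M_p(\mu_n)<\infty$ (which, together with $\mu_n\wsto\mu$, gives convergence of the $p$-th moments), a Radon--Riesz/uniform-convexity argument relative to the varying base measures---exactly \cite[Theorem 5.4.4(iii)]{AGS}---upgrades this to $\int f|v_n|^p\,d\mu_n\to\int f|v|^p\,d\mu$ for every $f\in C_\mathrm{c}^\infty(\Rd)$.

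The main obstacle is therefore not analytic but organizational: making the identification of Definition~\ref{weakvaryingdef} with the setting of \cite[Section 5.4]{AGS} precise, and, in \eqref{weaklsc}, checking that the countable family of affine minorants of $z\mapsto|z|^p$ may be fixed independently of $n$. Beyond this bookkeeping, no estimates are required that are not already contained in \cite{AGS}.
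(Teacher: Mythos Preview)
The paper does not prove this proposition at all: it is stated in Appendix~B as a citation of \cite[Theorem 5.4.4]{AGS}, with the ``c.f.'' indicating that it is a restatement (with minor cosmetic changes) of a result proved in full there. Your approach of deriving the three items from \cite[Theorem 5.4.4]{AGS} after identifying Definition~\ref{weakvaryingdef} with the plan-based formulation in \cite[Section 5.4]{AGS} is therefore exactly what the paper intends, and your sketches for each item are sound and in line with the arguments in \cite{AGS}.
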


We close by recalling a generalization of Fatou's lemma, for varying measures.

\begin{lem}[{Fatou's lemma for varying measures; see, e.g., \cite[Theorem 1.1]{feinberg2014fatou}, \cite[Lemma 3.3]{ambrosio2015bakry}}]\label{lem:fatou-varying}
Consisder a sequence $(\mu_n)_n \subset \P(\Rd)$ and $\mu \in \P(\Rd)$ so that $\mu_n \wsto \mu$. Then for any sequence $(f_n)_n$ of nonnegative functions on $\R^d$, we have
	\bes
		\ird \liminf_{n \to +\infty, x' \to x} f_n(x') \,d\mu(x) \leq \liminf_{n\to\infty} \ird f_n(x) \,d\mu_n(x).
	\ees
\end{lem}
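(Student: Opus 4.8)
The plan is to prove the inequality by reducing it, through an inner approximation of the lower semicontinuous integrand, to a portmanteau-type estimate for bounded lower semicontinuous functions, which in turn follows from a covering argument.

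First I would record that the integrand is lower semicontinuous and reduce to continuous test functions. Write $g(x) := \liminf_{n \to +\infty, x' \to x} f_n(x') = \sup_{\delta>0,\, N\in\N} a(x,\delta,N)$, where $a(x,\delta,N) := \inf\{f_n(x') : n\ge N,\ x'\in B_\delta(x)\}$. If $x_j\to x$, then for every fixed $\delta>0$ and $N\in\N$ one has $B_{\delta/2}(x_j)\subset B_\delta(x)$ for $j$ large, hence $g(x_j)\ge a(x_j,\delta/2,N)\ge a(x,\delta,N)$; taking $\liminf_j$ and then the supremum over $\delta,N$ shows $g$ is lower semicontinuous. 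Since $g\ge 0$ is lower semicontinuous, the inf-convolution (Moreau--Yosida) approximations $\varphi_k(x) := \min\bigl(k,\ \inf_y (g(y)+k|x-y|)\bigr)$ are $k$-Lipschitz, bounded, nonnegative, satisfy $0\le\varphi_k\le g$, and increase pointwise to $g$; by monotone convergence $\int g\,d\mu = \sup_k \int\varphi_k\,d\mu$. Thus it suffices to prove $\int\varphi\,d\mu \le \liminf_n \int f_n\,d\mu_n$ for every bounded continuous $\varphi$ with $0\le\varphi\le g$.

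Next I would run the covering argument. Fix such a $\varphi$ and $\eta>0$. For each $x\in\Rd$, since $\varphi(x)-\eta<g(x)$ there are $\delta_x>0$ and $N_x\in\N$ with $f_n(x')>\varphi(x)-\eta$ for all $n\ge N_x$ and all $x'\in B_{\delta_x}(x)$; shrinking $\delta_x$ using the continuity of $\varphi$ we may also assume $|\varphi(x')-\varphi(x)|<\eta$ on $B_{\delta_x}(x)$, so that $f_n>\varphi-2\eta$ on $B_{\delta_x}(x)$ for all $n\ge N_x$. Set $A_N := \bigcup\{B_{\delta_x}(x) : x\in\Rd,\ N_x\le N\}$. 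Each $A_N$ is open, the $A_N$ increase with $N$, and $\bigcup_N A_N=\Rd$ since every $x$ lies in $A_N$ once $N\ge N_x$; moreover, for $n\ge N$ and $y\in A_N$ one has $y\in B_{\delta_x}(x)$ for some $x$ with $N_x\le N\le n$, hence $f_n(y)>\varphi(y)-2\eta$, and together with $f_n\ge 0$ this gives the pointwise bound $f_n\ge(\varphi-2\eta)^+ 1_{A_N}$ for all $n\ge N$. For fixed $N,\eta$ the function $\psi_{N,\eta}:=(\varphi-2\eta)^+ 1_{A_N}$ is bounded, nonnegative, and lower semicontinuous (a product of two nonnegative lower semicontinuous functions), hence an increasing pointwise limit of functions in $C_b(\Rd)$; combined with $\mu_n\wsto\mu$ and monotone convergence this yields $\liminf_n \int\psi_{N,\eta}\,d\mu_n\ge\int\psi_{N,\eta}\,d\mu$. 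Therefore $\liminf_n\int f_n\,d\mu_n\ge\int\psi_{N,\eta}\,d\mu$; letting $N\to\infty$ (so $1_{A_N}\uparrow 1$) and then $\eta\downarrow 0$ (so $(\varphi-2\eta)^+\uparrow\varphi$), monotone convergence gives $\liminf_n\int f_n\,d\mu_n\ge\int\varphi\,d\mu$, and taking the supremum over admissible $\varphi$ finishes the proof.

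The only genuinely delicate points are the lower semicontinuity of the double-$\liminf$ integrand $g$ and, more importantly, the exhaustion step: one must upgrade the purely pointwise information ``$\varphi(x)\le\liminf_{n,x'\to x} f_n(x')$'' to a bound that is uniform over an open set, which is exactly where the continuity (not merely lower semicontinuity) of the test function $\varphi$ is used — it lets a single ball $B_{\delta_x}(x)$ control $f_n$ from below by $\varphi-2\eta$ simultaneously at all nearby points and all large $n$. Everything else is monotone convergence plus the portmanteau direction of weak-$^*$ convergence for probability measures.
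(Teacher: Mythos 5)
Your argument is correct. Note, however, that the paper does not prove this lemma at all: it is recalled from the literature (Feinberg--Kasyanov--Zadoianchuk and Ambrosio--Gigli--Savar\'e are cited), so there is no in-paper proof to compare against; what you have written is a self-contained substitute for that citation, and it follows the standard route. The three ingredients all check out: (i) the joint lower limit $g(x)=\sup_{\delta,N}\inf\{f_n(x'): n\geq N,\ x'\in B_\delta(x)\}$ is lower semicontinuous by your ball-inclusion argument, hence Borel, so $\int g\,d\mu$ makes sense; (ii) the truncated inf-convolutions reduce the claim to bounded Lipschitz $\varphi$ with $0\leq\varphi\leq g$ via monotone convergence; (iii) the exhaustion by the open sets $A_N$ turns the pointwise information into the uniform minorant $f_n\geq(\varphi-2\eta)^+1_{A_N}$ for $n\geq N$, and the portmanteau inequality for bounded nonnegative lower semicontinuous functions (valid here because $\wsto$ is convergence in duality with $C_b$, so no tightness issue arises) finishes the estimate; discarding finitely many indices before taking $\liminf_n$ is harmless, and the two monotone passages $N\to\infty$, $\eta\downarrow0$ are fine. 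One point worth making explicit if this were to be included: your proof uses full (non-punctured) neighborhoods in the definition of $\liminf_{n\to\infty,\,x'\to x}f_n(x')$, which is the convention under which the lemma is true (with punctured neighborhoods it fails at atoms of $\mu$) and is also the convention used where the paper applies the lemma, since there the joint liminf is evaluated by continuity in both variables.
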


\textbf{Acknowledgments:} The authors thank Andrew Bernoff, Andrea Bertozzi, Eric Carlen, Yanghong Huang, Inwon Kim, Dejan Slep\v cev, and Fangbo Zhang for many helpful discussions.

\bibliographystyle{abbrv}
\bibliography{HeightConstrainedAgg}

\end{document}